\title{Lecture notes on embedded contact homology}
\author{Michael Hutchings\footnote{Partially supported by NSF grant DMS-1105820.}}
\date{}
\newcommand{\mc}[1]{{\mathcal #1}}
\numberwithin{equation}{section}
\newtheorem{theorem}{Theorem}[section]
\newtheorem{proposition}[theorem]{Proposition}
\newtheorem{lemma}[theorem]{Lemma}
\newtheorem{lemma-definition}[theorem]{Lemma-Definition}
\newtheorem*{intpos}{Intersection Positivity}
\newtheorem*{adjfor}{Adjunction Formula}
\newtheorem*{gcc}{Gromov Compactness via Currents}
\newtheorem*{raf}{Relative Adjunction Formula}
\newtheorem*{bpechi}{Basic Properties of the ECH Index}
\newtheorem*{indexinequality}{Index Inequality}
\newtheorem*{writhebound}{Writhe Bound}
\newtheorem*{partitionconditions}{Partition Conditions}
\theoremstyle{definition}
\newtheorem{definition}[theorem]{Definition}
\newtheorem{remark}[theorem]{Remark}
\newtheorem{remarks}[theorem]{Remarks}
\newtheorem{example}[theorem]{Example}
\newtheorem{exercise}[theorem]{Exercise}
\newcommand{\floor}[1]{\left\lfloor #1 \right\rfloor}
\newcommand{\ceil}[1]{\left\lceil #1 \right\rceil}
\renewcommand{\frak}{\mathfrak}
\newcommand{\C}{{\mathbb C}}
\newcommand{\Q}{{\mathbb Q}}
\newcommand{\R}{{\mathbb R}}
\newcommand{\N}{{\mathbb N}}
\newcommand{\Z}{{\mathbb Z}}
\newcommand{\op}{\operatorname}
\newcommand{\dbar}{\overline{\partial}}
\newcommand{\zbar}{\overline{z}}
\newcommand{\Spinc}{\op{Spin}^c}
\newcommand{\Spin}{\op{Spin}}
\newcommand{\Ker}{\op{Ker}}
\newcommand{\tensor}{\otimes}
\newcommand{\vu}{\nu}
\newcommand{\rb}[1]{\raisebox{1.5ex}[-1.5ex]{#1}}
\newcommand{\bpm}{\begin{pmatrix}}
\newcommand{\epm}{\end{pmatrix}}
\newcommand{\ind}{\op{ind}}
\renewcommand{\epsilon}{\varepsilon}
\begin{document}

\setcounter{tocdepth}{2}

\maketitle

\begin{abstract}
  These notes give an introduction to embedded contact homology
  (ECH) of contact three-manifolds, gathering together many basic
  notions which are scattered across a number of papers. We also
  discuss the origins of ECH, including
  various remarks and examples which have not been previously
  published.  Finally, we review the recent application to
  four-dimensional symplectic embedding problems.  This article is
  based on lectures given in Budapest and Munich in the summer of
  2012, a series of accompanying blog postings at
  \verb=floerhomology.wordpress.com=, and related lectures at UC
  Berkeley in Fall 2012.  There is already a brief introduction to ECH
  in the article \cite{icm}, but the present notes give much more
  background and detail.
\end{abstract}

\tableofcontents

\section{Introduction}

We begin by describing an application of ECH to four-dimensional symplectic embedding problems. We will then give an overview of the basic structure of ECH and how it leads to the application.

\subsection{Symplectic embeddings in four dimensions}
\label{sec:embedding}

Let $(X_0,\omega_0)$ and $(X_1,\omega_1)$ be symplectic four-manifolds, possibly with boundary or corners. A {\em symplectic embedding\/} of $(X_0,\omega_0)$ into $(X_1,\omega_1)$ is a smooth embedding $\varphi:X_0\to X_1$ such that $\varphi^*\omega_1=\omega_0$. It is interesting to ask when such a symplectic embedding exists.

This is a nontrivial question already for domains in $\R^4$.
For example, given $a,b>0$, define the {\em ellipsoid\/}
\begin{equation}
\label{eqn:ellipsoid}
E(a,b) = \left\{(z_1,z_2)\in\C^2\;\bigg|\; \frac{\pi|z_1|^2}{a}+\frac{\pi|z_2|^2}{b}\le 1\right\}.
\end{equation}
Here we identify $\C^2=\R^4$ with coordinates $z_k=x_k+y_k$ for $k=1,2$, with the standard symplectic form $\omega=\sum_{k=1}^2dx_kdy_k$. In particular, define the {\em ball\/} $B(a)=E(a,a)$. Also, define the {\em polydisk}
\begin{equation}
\label{eqn:polydisk}
P(a,b) = \left\{(z_1,z_2)\in\C^2\;\big|\; \pi|z_1|^2\le a,\; \pi|z_2|^2\le b\right\}.
\end{equation}
We can now ask, when does one ellipsoid or polydisk symplectically embed into another?

A landmark in the theory of symplectic embeddings is Gromov's nonsqueezing theorem from 1985. The four-dimensional case of this theorem asserts that $B(r)$ symplectically embeds into $P(R,\infty)$ if and only if $r\le R$.

The question of when one four-dimensional ellipsoid symplectically embeds into another was answered only in 2010, by McDuff. To state the embedding criterion, let $N(a,b)$ denote the sequence of all nonnegative integer linear combinations of $a$ and $b$, arranged in nondecreasing order, and indexed starting at $0$. For example,
\begin{equation}
\label{eqn:N11}
N(1,1) = (0,1,1,2,2,2,\ldots)
\end{equation}
and
\begin{equation}
\label{eqn:N12}
N(1,2) = (0,1,2,2,3,3,4,4,4,5,5,5,\ldots).
\end{equation}

\begin{theorem}[McDuff \cite{m}]
\label{thm:m}
There is a symplectic embedding $\op{int}(E(a,b))\to E(c,d)$ if and only if $N(a,b)\le N(c,d)$, i.e.\ $N(a,b)_k\le N(c,d)_k$ for each $k \ge 0$.
\end{theorem}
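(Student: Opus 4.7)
The plan is to prove the two directions separately. For necessity, I would appeal to the theory of ECH capacities (to be developed later in the notes): to every symplectic four-manifold $(X,\omega)$ with suitable boundary one assigns a sequence $0 = c_0(X,\omega) \le c_1(X,\omega) \le \cdots$ defined via the action filtration on the ECH chain complex, and these are monotone under symplectic embedding of the interior. A direct computation with the Reeb flow on $\partial E(a,b)$, whose only simple periodic orbits (for irrational $a/b$) have actions $a$ and $b$, shows that $c_k(E(a,b)) = N(a,b)_k$. The ``only if'' direction then follows at once, approximating by irrational values if necessary.

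The ``if'' direction is the deep part, and I would proceed in three steps. First, I would use the weight expansion trick (due to McDuff, building on Opshtein and Traynor) to reduce the problem to a ball-packing question: up to arbitrarily small loss of volume, $E(a,b)$ is equivalent for symplectic embedding purposes to a disjoint union of standard balls $\bigsqcup_i B(w_i)$ whose radii form the weight expansion of $a/b$ from the continued fraction algorithm, and similarly for $E(c,d)$. This turns the embedding question into whether one disjoint collection of balls symplectically embeds into a ball with a smaller disjoint collection removed.

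Second, I would translate this ball-packing problem into the existence of a symplectic form on a blow-up of $\mathbb{CP}^2$ lying in a prescribed cohomology class, following the framework of McDuff--Polterovich. Third, I would invoke Taubes's $SW = Gr$ identification together with the Li--Liu inflation technique to construct such a form, once one has checked that under the hypothesis $N(a,b) \le N(c,d)$ the candidate cohomology class pairs nonnegatively with every exceptional class. The hardest step will be this last positivity verification: ruling out an exceptional sphere that would block the embedding requires a subtle combinatorial argument linking the effective classes in the blow-up to the inequalities among the entries of the sequences $N(a,b)$ and $N(c,d)$, and it is here that the full force of the numerical hypothesis is used.
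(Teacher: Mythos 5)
The paper does not actually prove this theorem: it is quoted from McDuff \cite{m}, and the notes only establish the ``only if'' direction, as an immediate consequence of the Monotonicity and Ellipsoid properties of ECH capacities (Theorem~\ref{thm:ECHcapacities}). Your treatment of that direction matches the paper's: $c_k(E(a,b))=N(a,b)_k$ via the action filtration, plus monotonicity under embeddings, with a limiting argument for rational $a/b$. One small caution there: the computation of $c_k(E(a,b))$ is not purely a statement about the Reeb dynamics on $\partial E(a,b)$ --- one also needs to know that the $U$ map sends each generator to the next one down (Proposition~\ref{prop:Uell}), which in the notes comes either from Seiberg--Witten theory or from the holomorphic curve count in \S\ref{sec:Uell}.

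For the ``if'' direction, your outline is a faithful summary of McDuff's actual argument in the literature: the weight expansion reduces the ellipsoid problem to a ball-packing problem, the McDuff--Polterovich framework translates that into the existence of a symplectic form in a prescribed cohomology class on a blow-up of $\C P^2$, and Taubes's $SW=Gr$ together with Li--Liu inflation produces such a form once the class pairs nonnegatively with all exceptional classes. But as written this is a roadmap rather than a proof: the step you yourself flag as the hardest --- showing that the numerical hypothesis $N(a,b)\le N(c,d)$ forces the required positivity on every exceptional class --- is essentially the entire content of \cite{m}, and you have not supplied it. Since the notes explicitly state that ECH says nothing about the existence half and defer it to McDuff, there is no proof in the paper to compare this part against; just be aware that what you have actually established, at the level of detail of these notes, is only the obstruction half.
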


For example, it is not hard to deduce from Theorem~\ref{thm:m}, together with \eqref{eqn:N11} and \eqref{eqn:N12}, that 
 $\op{int}(E(1,2))$ symplectically embeds into $B(c)$ if and only if $c\ge 2$.

Given more general $a,b,c,d$, it can be nontrivial to decide whether $N(a,b)\le N(c,d)$. For example, consider the problem of an embedding an ellipsoid into a ball, i.e.\ the case $c=d$. By scaling, we can encode this problem into a single function $f:[1,\infty)\to[1,\infty)$, where $f(a)$ is defined to be the infimum over $c$ such that $E(1,a)$ symplectically embeds into $B(c)=E(c,c)$. 

In general, if there is a symplectic embedding of $(X_0,\omega_0)$ into $(X_1,\omega_1)$, then necessarily
\begin{equation}
\label{eqn:volumeconstraint}
\op{vol}(X_0,\omega_0)\le \op{vol}(X_1,\omega_1),
\end{equation}
where in four dimensions
\[
\op{vol}(X,\omega) = \frac{1}{2}\int_X\omega\wedge\omega.
\]
In particular, the ellipsoid has volume $\op{vol}(E(a,b))=ab/2$, cf.\ equation \eqref{eqn:toricvolume},  so it follows from the volume constraint \eqref{eqn:volumeconstraint} that $f(a)\ge\sqrt{a}$.

McDuff-Schlenk computed the function $f$ explicitly and found that the volume constraint is the only constraint if $a$ is sufficiently large, while for smaller $a$ the situation is more interesting. In particular, their calculation implies the following\footnote{An analogue of Theorem~\ref{thm:ms} for symplectically embedding $\op{int}(E(1,a))$ into $P(c,c)$  was recently worked out in \cite{fm}. This is equivalent to symplectically embedding $\op{int}(E(1,a))$ into $E(c,2c)$, by Remark~\ref{rem:sharp}(b) and equation \eqref{eqn:p11e12} below.}:

\begin{theorem}[McDuff-Schlenk \cite{ms}]
\label{thm:ms}
\begin{itemize}
\item
On the interval $\left[1,\left(1+\sqrt{5}/2\right)^4\right)$, the function $f$ is piecewise linear, given by a ``Fibonacci staircase''.
\item
The interval $\left[\left(1+\sqrt{5}/2\right)^4,(17/6)^2\right]$ is divided into finitely many intervals, on each of which either $f$ is linear or $f(a)=\sqrt{a}$.
\item
On the interval $\left[(17/6)^2,\infty\right)$, we have $f(a)=\sqrt{a}$.
\end{itemize}
\end{theorem}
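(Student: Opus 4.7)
The plan is to use Theorem~\ref{thm:m} to convert the embedding question into a purely combinatorial optimization, then perform a delicate number-theoretic analysis. Since $N(c,c) = c \cdot N(1,1)$ where $N(1,1) = (0,1,1,2,2,2,3,\ldots)$, McDuff's theorem gives the clean formula
\[
f(a) = \sup_{k \ge 1}\frac{N(1,a)_k}{N(1,1)_k},
\]
in which $N(1,1)_k$ is the unique integer $d$ with $\binom{d+1}{2} \le k < \binom{d+2}{2}$, and $N(1,a)_k$ is the $(k{+}1)$-st smallest element of $\{m+na : m,n \in \Z_{\ge 0}\}$. Each pair $(m,n)$ with $N(1,a)_k = m+na$ contributes a lattice-point obstruction $f(a) \ge (m+na)/N(1,1)_k$. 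Counting the lattice points $(m',n') \in \Z_{\ge 0}^2$ with $m' + n'a \le L$ gives the asymptotic $N(1,a)_k \sim \sqrt{2ak}$ and $N(1,1)_k \sim \sqrt{2k}$, so the ratio is asymptotically $\sqrt{a}$, matching the volume constraint $f(a) \ge \sqrt{a}$ noted before the theorem.

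For the regime $a \ge (17/6)^2$, the goal is to show that no obstruction beats $\sqrt{a}$, i.e.\ $(m+na)^2 \le a\,N(1,1)_k^2$ for every relevant triple $(m,n,k)$. The asymptotics above confine any sharper obstruction to a bounded region of $(m,n,k)$-space, and a finite check identifies a single explicit obstruction that becomes equality at $a=(17/6)^2$ and is strictly dominated for $a$ above this value, establishing sharpness. For the Fibonacci staircase on $[1,\phi^4)$ with $\phi = (1+\sqrt{5})/2$, the plan is to exploit the fact that odd-indexed Fibonacci numbers satisfy $F_{2n+1}/F_{2n-1}\to\phi^2$ and parametrize optimal lattice-point configurations for $a$ near $F_{2n+1}^2/F_{2n-1}^2$. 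These produce an infinite sequence of ``steps'' of $f$ at heights $F_{2n+1}/F_{2n-1}$, with explicit linear segments interpolating between them; an induction along the Fibonacci recursion shows these are the \emph{only} sharp obstructions in this range, with accumulation point $(\phi^4,\phi^2)$ sitting exactly on the volume curve.

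The genuinely difficult regime is the middle interval $[\phi^4,(17/6)^2]$. Here the argument proceeds by showing that only finitely many obstructions $(m,n,k)$ still beat the volume bound: combining $(m+na)^2 > a\,N(1,1)_k^2$ with the lattice-counting asymptotics forces $k$, and hence $(m,n)$, into an explicit bounded range. One then enumerates the surviving candidates and verifies on each sub-interval which of them is dominant, producing a finite piecewise description in which $f(a)$ is either linear or equal to $\sqrt{a}$. I expect this to be the main obstacle: the candidate obstructions interact in subtle ways, and establishing their mutual dominance on each sub-interval--together with pinpointing $(17/6)^2$ as the exact value beyond which every one of them is overtaken by the volume curve--is a delicate case analysis rather than a single conceptual argument.
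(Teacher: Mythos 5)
This theorem is quoted from McDuff--Schlenk \cite{ms}; the notes give no proof of it, and in fact they explicitly warn (right after the statement) that Theorems~\ref{thm:m} and \ref{thm:ms} ``were proved by different methods'' and that deducing \ref{thm:ms} from \ref{thm:m} is ``a subtle number-theoretic problem.'' Your proposal is exactly that deduction, so it is a genuinely different route from the original: McDuff--Schlenk do not compare the sequences $N(1,a)$ and $N(1,1)$ term by term, but rather reduce the ellipsoid embedding to a ball-packing problem via the weight decomposition of $E(1,a)$ and then analyze exceptional classes in blowups of $\C P^2$ under Cremona moves; the Fibonacci staircase emerges from an explicit family of exceptional divisors. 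Your starting point is correct: $f(a)=\sup_{k\ge1}N(1,a)_k/N(1,1)_k$ follows from Theorem~\ref{thm:m} together with $N(c,c)=c\,N(1,1)$, your description of $N(1,1)_k$ via triangular numbers is right, and the leading-order lattice count does give ratio $\to\sqrt{a}$.

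That said, as written this is a strategy rather than a proof, and the gaps are exactly where the difficulty lives. First, to ``confine any sharper obstruction to a bounded region'' you need a uniform quantitative estimate, not the asymptotic $N(1,a)_k\sim\sqrt{2ak}$: the relevant inequality $N(1,a)_k\le\sqrt{a}\,N(1,1)_k$ must be checked for \emph{all} $k$, and the second-order (perimeter) terms in the lattice-point count of the triangle $\{m+na\le L\}$ versus $\{m+n\le d\}$ are precisely what decide whether a given $k$ can violate it; you assert but do not produce such a bound. Second, the Fibonacci staircase step is not an interpolation exercise: showing that the obstructions at $a=(F_{2n+1}/F_{2n-1})^2$ are sharp and that \emph{no other} pair $(m,n,k)$ beats them anywhere on $[1,\tau^4)$ is the heart of the matter, and ``an induction along the Fibonacci recursion'' needs to be exhibited, including the identification of which index $k$ realizes the supremum at each step. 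Third, the middle interval $[\tau^4,(17/6)^2]$ requires an explicit finite list of candidate obstructions and a verified dominance analysis; you correctly flag this as the main obstacle but leave it entirely open. So the proposal is a plausible and genuinely different attack, but none of the three bullet points of the theorem is actually established by it.
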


Note that Theorems~\ref{thm:m} and \ref{thm:ms} were proved by different methods. It is a subtle number-theoretic problem to deduce Theorem~\ref{thm:ms} directly from Theorem~\ref{thm:m}.

\subsection{Properties of ECH capacities}
\label{sec:capacities}

Embedded contact homology can be used to prove the obstruction half of Theorem~\ref{thm:m}, namely the fact that if $\op{int}(E(a,b))$ symplectically embeds into $E(c,d)$ then $N(a,b)\le N(c,d)$. This follows from the more general theory of ``ECH capacities''. Here are some of the key properties of ECH capacities; the definition of ECH capacities will be given in \S\ref{sec:definecapacities}.

\begin{theorem}\cite{qech}
\label{thm:ECHcapacities}
For each symplectic four-manifold $(X,\omega)$ (not necessarily connected, possibly with boundary or corners), there is a sequence of real numbers
\[
0 = c_0(X,\omega) \le c_1(X,\omega) \le \cdots \le \infty,
\]
called {\em ECH capacities\/}, with the following properties:
\begin{description}
\item{(Monotonicity)}
If $(X_0,\omega_0)$ symplectically embeds into $(X_1,\omega_1)$, then
\begin{equation}
\label{eqn:monotonicity}
c_k(X_0,\omega_0)\le c_k(X_1,\omega_1)
\end{equation}
for all $k\ge 0$.
\item{(Conformality)}
If $r$ is a nonzero real number, then
\[
c_k(X,r\omega)=|r|c_k(X,\omega).
\]
\item{(Ellipsoid)}
\begin{equation}
\label{eqn:ellipsoidcapacities}
c_k(E(a,b))=N(a,b)_k.
\end{equation}
\item{(Polydisk)}
\begin{equation}
\label{eqn:polydiskcapacities}
c_k(P(a,b))=\min\left\{am+bn\;\big|\; m,n\in\N,\; (m+1)(n+1)\ge k+1\right\}.
\end{equation}
\item{(Disjoint union)}
\[
c_k\left(\coprod_{i=1}^n(X_i,\omega_i)\right) = \max_{k_1+\cdots+k_n=k}\sum_{i=1}^nc_{k_i}(X_i,\omega_i).
\]
\item{(Volume)} \cite{vc} If $(X,\omega)$ is a Liouville domain (see Definition~\ref{def:LD}) with all ECH capacities finite (for example a star-shaped domain in $\R^4$), then
\begin{equation}
\label{eqn:volume}
\lim_{k\to\infty}\frac{c_k(X,\omega)^2}{k} = 4\op{vol}(X,\omega).
\end{equation}
\end{description}
\end{theorem}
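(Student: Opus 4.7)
The plan is to first fix a working definition of $c_k(X,\omega)$: for a Liouville domain with contact boundary $(\partial X,\lambda)$, one sets
\[
c_k(X,\omega) \eqdef \inf\left\{L>0 \;\big|\; \sigma_k \in \op{image}\bigl(ECH^L(\partial X,\lambda)\to ECH(\partial X,\lambda)\bigr)\right\},
\]
where $\sigma_0,\sigma_1,\ldots$ is a distinguished sequence of ECH classes (starting with the contact class and obtained by iterating the $U$-map) and $ECH^L$ denotes action-filtered ECH. General symplectic four-manifolds are then handled by exhausting with Liouville subdomains and taking a supremum.

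Monotonicity then falls out of the functoriality of ECH under exact symplectic cobordisms: a symplectic embedding $\varphi:(X_0,\omega_0)\to(X_1,\omega_1)$ produces a cobordism between $\partial X_0$ and $\partial X_1$, whose induced map on ECH (constructed via Seiberg--Witten Floer cohomology and the Taubes isomorphism) preserves the distinguished classes $\sigma_k$ while not increasing action, giving $c_k(X_0)\le c_k(X_1)$. Conformality is immediate, since scaling $\omega$ by $r$ scales the boundary Liouville form and hence all symplectic actions by $|r|$, leaving the chain complex otherwise unchanged.

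For the Ellipsoid formula, assume first $a/b$ is irrational. The Reeb flow on $\partial E(a,b)$ has exactly two simple orbits $\gamma_1,\gamma_2$ of actions $a,b$ (both elliptic), so every ECH generator is of the form $\gamma_1^m\gamma_2^n$ with action $am+bn$. A direct ECH index computation shows that sorting these orbit sets by the $U$-map filtration reproduces the sequence $N(a,b)$; the general case follows by continuity of both sides in $(a,b)$. The Polydisk formula requires a preliminary smoothing of the corners of $\partial P(a,b)$; after perturbation one finds Reeb orbit sets indexed by $(m,n)\in\N^2$, and the ECH index calculation produces $(m+1)(n+1)$ generators with action approaching $am+bn$, yielding \eqref{eqn:polydiskcapacities}. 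Disjoint union follows from a K\"unneth-type isomorphism $ECH(Y_1\sqcup Y_2)\cong ECH(Y_1)\otimes ECH(Y_2)$, under which the distinguished classes split as $\sigma_k\mapsto \sum_{k_1+k_2=k}\sigma_{k_1}\otimes\sigma_{k_2}$.

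The main obstacle is the Volume property, a Weyl-type asymptotic substantially deeper than the other items. The rough idea (carried out in \cite{vc}) is to count ECH generators with bounded action: the ECH index on an orbit set $\prod\gamma_i^{m_i}$ grows quadratically in the $m_i$ while the action grows linearly, so heuristically the number of generators of action at most $L$ behaves like $\tfrac{1}{2}L^2/\op{vol}(X,\omega)$ to leading order, forcing $c_k(X,\omega)^2/k \to 4\op{vol}(X,\omega)$. Converting this count into genuine asymptotics for the spectral invariants $c_k$ (and not merely for some formal grading) requires Taubes's Seiberg--Witten Weyl-law estimates to ensure that the relevant ECH classes are actually realized by orbit sets of near-minimal action; both the upper and lower bounds carry serious analytic content and are the heart of the argument.
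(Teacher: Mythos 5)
Your overall architecture matches the paper's: define $c_k$ via the action filtration, the $U$ map(s), and the contact invariant on the boundary; get Monotonicity from cobordism maps constructed through Seiberg--Witten theory; get Conformality from scaling of the action; compute the ellipsoid directly from the two elliptic orbits and the grading; reduce Disjoint Union to a K\"unneth splitting plus algebra; and defer the Volume asymptotics to \cite{vc}. Two points deserve correction, one minor and one substantive.

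The minor point: the cobordism produced by a symplectic embedding of one Liouville domain into another, namely $X_1\setminus\varphi(\op{int}(X_0))$, is in general only \emph{weakly} exact ($\omega_1$ is exact on it, but no primitive need restrict to the contact forms on the boundary). This is exactly why the paper introduces weakly exact cobordisms and restricts Theorem~\ref{thm:wesc} to the class $\Gamma=0$: the action-nonincreasing property \eqref{eqn:daf4} fails for weakly exact cobordisms unless $[\alpha_\pm]=0$. Your appeal to ``functoriality under exact symplectic cobordisms'' would not cover the Monotonicity property as stated.

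The substantive gap is the Polydisk property. Smoothing the corner of $\partial P(a,b)$ does not produce a chain complex generated by orbit sets ``indexed by $(m,n)\in\N^2$'' with two distinguished orbits: along the smoothed corner torus the Reeb direction rotates through every positive rational slope, so one obtains a Morse--Bott circle of Reeb orbits in the class $(m,n)$ for \emph{every} pair of relatively prime nonnegative integers, and after perturbation the generators are labeled convex lattice polygons exactly as in the $T^3$ computation of \S\ref{sec:t3}. Extracting \eqref{eqn:polydiskcapacities} then requires computing the differential and the $U$ map on this large complex, which is the content of \cite{t3} (and ultimately rests on nontrivial holomorphic curve counts via \cite{taubestropical,parker}); the formula emerges as the solution of a discrete isoperimetric problem over lattice polygons measured in the dual norm of the rectangle, not from a count of $(m+1)(n+1)$ generators of action near $am+bn$. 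The paper's route is to prove the toric formula \eqref{eqn:toric} for convex domains $\Omega$ \emph{not touching the axes} (Theorem~\ref{thm:toric}), where the boundary is genuinely a $T^3$ with a monotone Reeb field, and then to transfer the answer to $P(a,b)$ by the squeezing argument $P((1-\epsilon)a,(1-\epsilon)b)\subset X_\Omega\subset P((1+\epsilon)a,(1+\epsilon)b)$ together with Monotonicity and Conformality. Your direct-smoothing route can in principle be made to work, but only by importing that entire machinery; as written, the step from ``perturb the corner'' to the formula does not go through.
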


In particular, the Monotonicity and Ellipsoid properties immediately imply the obstruction half of Theorem~\ref{thm:m}. Theorem~\ref{thm:ECHcapacities} does not say anything about the other half of Theorem~\ref{thm:m}, namely the existence of symplectic embeddings.

The Volume property says that for Liouville domains with all ECH capacities finite, the asymptotic behavior of the Monotonicity property \eqref{eqn:monotonicity} as $k\to\infty$ recovers the volume constraint \eqref{eqn:volumeconstraint}.

\begin{exercise}
\label{ex:ellvol}
Check the volume property \eqref{eqn:volume} when $(X,\omega)$ is an ellipsoid $E(a,b)$. (See answer in \S\ref{sec:answers}.)
\end{exercise}

\begin{remark}
\label{rem:sharp}
Here is what we know about the sharpness of the ECH obstruction for some other symplectic embedding problems.
\begin{description}
\item{(a)} ECH capacities give a sharp obstruction to symplectically embedding a disjoint union of balls of possibly different sizes into a ball. This follows by comparison with work of McDuff \cite{msdi} and Biran \cite{biran} from the 1990's which solved this embedding problem. See \cite{pnas} for details.
\item{(b)} It follows from work of M\"uller that ECH capacities give a sharp obstruction to embedding an ellipsoid into a polydisk, see \cite{pnas} and \cite{fm}.
\item{(c)}
ECH capacities do not give a sharp obstruction to symplectically embedding a polydisk into an ellipsoid. For example, one can check that
\begin{equation}
\label{eqn:p11e12}
c_k(P(1,1))=c_k(E(1,2))
\end{equation}
for all $k$, so ECH capacities give no obstruction to symplectically embedding $P(1,1)$ into $E(a,2a)$ when $a>1$. However the Ekeland-Hofer capacities imply that $P(1,1)$ does not symplectically embed into $E(a,2a)$ when $a<3/2$; these capacities are $(1,2,3,\ldots)$ and $(a,2a,2a,3a,4a,4a,\ldots)$ respectively \cite{eh,chls}. The Ekeland-Hofer obstruction is sharp, because it follows from \eqref{eqn:ellipsoid} and \eqref{eqn:polydisk} that $P(1,1)$, as defined, is a subset of $E(3/2,3)$. 
\item{(d)}
We know very little about when one polydisk can be symplectically embedded into another or how good the ECH obstruction to this is.
\end{description}
\end{remark}

In \S\ref{sec:toric} we will compute the ECH capacities of a larger family of examples, namely ``toric domains'' in $\C^2$.

\subsection{Overview of ECH}
\label{sec:overview}

We now outline the definition of embedded contact homology; details will be given in \S\ref{sec:defech}.

Let $Y$ be a closed oriented three-manifold. Recall that a {\em contact form\/} on $Y$ is a $1$-form $\lambda$ on $Y$ such that $\lambda\wedge d\lambda>0$ everywhere. The contact form $\lambda$ determines the {\em contact structure\/} $\xi=\Ker\lambda$, which is an oriented two-plane field, and the {\em Reeb vector field\/} $R$ characterized by $d\lambda(R,\cdot)=0$ and $\lambda(R)=1$.

A {\em Reeb orbit\/} is a closed orbit of $R$, i.e.\ a map $\gamma:\R/T\Z\to Y$ for some $T>0$, modulo reparametrization, such that $\gamma'(t)=R(\gamma(t))$. A Reeb orbit is either embedded in $Y$, or an $m$-fold cover of an embedded Reeb orbit for some integer $m>1$.

We often want to assume that the Reeb orbits are ``cut out transversely'' in the following sense. Given a Reeb orbit $\gamma$ as above, the {\em linearized return map\/} is a symplectic automorphism $P_\gamma$ of the symplectic vector space $(\xi_{\gamma(0)},d\lambda)$, which is defined as the derivative of the time $T$ flow of $R$. The Reeb orbit $\gamma$ is called {\em nondegenerate\/} if $1$ is not an eigenvalue of $P_\gamma$. The contact form $\lambda$ is called nondegenerate if all Reeb orbits are nondegenerate. This holds for generic contact forms. %\note{Reference? Cieliebak-Frauenfelder?}

A nondegenerate Reeb orbit $\gamma$ is called {\em elliptic\/} if the eigenvalues of $P_\gamma$ are on the unit circle, so that $P_\gamma$ is conjugate to a rotation. Otherwise $\gamma$ is {\em hyperbolic\/}, meaning that the eigenvalues of $P_\gamma$ are real. There are two kinds of hyperbolic orbits: {\em positive hyperbolic\/} orbits for which the eigenvalues of $P_\gamma$ are positive, and {\em negative hyperbolic\/} orbits for which the eigenvalues of $P_\gamma$ are negative.

Assume now that $\lambda$ is nondegenerate, and fix a homology class $\Gamma\in H_1(Y)$. One can then define the {\em embedded contact homology\/} $ECH_*(Y,\xi,\Gamma)$ as follows. This is the homology of a chain complex $ECC(Y,\lambda,\Gamma,J)$. The chain complex is freely generated over $\Z/2$ by finite sets of pairs $\alpha=\{(\alpha_i,m_i)\}$ where:
\begin{itemize}
\item The $\alpha_i$ are distinct embedded Reeb orbits.
\item The $m_i$ are positive integers.
\item The total homology class $\sum_im_i[\alpha_i]=\Gamma$.
\item $m_i=1$ whenever $\alpha_i$ is hyperbolic.
\end{itemize}
It is a frequently asked question why the last condition is necessary; we will give one answer in \S\ref{sec:mappingtorusexample}--\ref{sec:gf} and another answer in \S\ref{sec:200pages}. Note also that ECH can be defined with integer coefficients, see \cite[\S9]{obg2}; however the details of the signs are beyond the scope of these notes, and $\Z/2$ coefficients are sufficient for all the applications we will consider here.

The chain complex differential is defined roughly as follows.
We call an almost complex structure $J$ on the ``symplectization'' $\R\times Y$ {\em symplectization-admissible\/} if $J$ is $\R$-invariant, 
 $J(\partial_s)=R$ where $s$ denotes the $\R$ coordinate on $\R\times Y$, and $J$ sends the contact structure $\xi$ to itself, rotating positively with respect to $d\lambda$. These are the standard conditions on $J$ for defining various flavors of contact homology. In the notation for the chain complex, $J$ is a generic symplectization-admissible almost complex structure on $\R\times Y$.

If $\alpha=\{(\alpha_i,m_i)\}$ and $\beta=\{(\beta_j,n_j)\}$ are chain complex generators, then the differential coefficient $\langle\partial\alpha,\beta\rangle\in\Z/2$ is a mod 2 count of $J$-holomorphic curves $C$ in $\R\times Y$, modulo $\R$ translation and equivalence of currents, satisfying two conditions. The first condition is that, roughly speaking, $C$ converges as a current to $\sum_im_i\alpha_i$ as $s\to+\infty$, and to $\sum_jn_j\beta_j$ as $s\to-\infty$. The second condition is that $C$ has ``ECH index'' equal to $1$. The definition of the ECH index is the key nontrivial part of the definition of ECH; the original references are \cite{pfh2,ir}, and we will spend considerable time explaining this in \S\ref{sec:defech}.  We will see in Proposition~\ref{prop:I03} that our assumption that $J$ is generic implies every ECH index $1$ curve is embedded, except possibly for multiple covers of ``trivial cylinders'' $\R\times\gamma$ where $\gamma$ is a Reeb orbit; hence the name ``embedded contact homology''.
We will explain in \S\ref{sec:differentialdefined} why $\partial$ is well-defined.
It is shown in \cite[\S7]{obg1} that $\partial^2=0$; we will introduce some of what is involved in the proof in \S\ref{sec:200pages}.

Let $ECH_*(Y,\lambda,\Gamma,J)$ denote the homology of the chain complex $ECC_*(Y,\lambda,\Gamma,J)$. It turns out that this homology does not depend on the almost complex structure $J$ or on the contact form $\lambda$ for $\xi$, and so defines a well-defined $\Z/2$-module $ECH_*(Y,\xi,\Gamma)$.
 In principle one should be able to prove this by counting holomorphic curves with ECH index zero, but there are unsolved technical problems with this approach which we will describe in \S\ref{sec:cobmap}. Currently the only way to prove the above invariance is using:

\begin{theorem}[Taubes \cite{e1}]
\label{thm:echswf}
If $Y$ is connected, then there is a canonical isomorphism of relatively graded modules (with $\Z/2$ or $\Z$ coefficients)
\begin{equation}
\label{eqn:echswf}
ECH_*(Y,\lambda,\Gamma,J) = \widehat{HM}^{-*}(Y,\frak{s}_{\xi}+\op{PD}(\Gamma)).
\end{equation}
\end{theorem}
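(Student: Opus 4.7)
The plan is to follow Taubes' program, which generalizes his earlier identification of Seiberg-Witten invariants with Gromov invariants for symplectic 4-manifolds. The key device is to deform the Seiberg-Witten equations on $Y$ by a large real parameter $r$: take the canonical spin-c structure $\frak{s}_\xi$ coming from the contact structure, twist by $\op{PD}(\Gamma)$, and perturb the curvature equation by an $r$-multiple of $d\lambda$. As $r \to \infty$, energy estimates will force the spinor zero set to concentrate along closed Reeb orbits, making direct comparison with ECH possible.

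First I would match generators. For $r$ sufficiently large and data generic, the aim is to prove a bijection between irreducible solutions of the $r$-perturbed SW equations and the ECH generators $\alpha = \{(\alpha_i, m_i)\}$. The spinor $\Psi$ decomposes according to the splitting of the spinor bundle induced by $\xi$; examining the transverse behavior near each component of the limiting zero set, one finds the normal data governed by a vortex equation whose $m$-vortex moduli space on each transverse disk contributes the integer multiplicity $m_i$. The same analysis is intended to show that hyperbolic orbits admit solutions only with multiplicity $1$, exactly reproducing the hyperbolic restriction in the definition of ECH generators.

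Next I would match differentials. On the $\reals \times Y$ side, consider the $r$-perturbed Seiberg-Witten gradient flow and show that its relevant instanton moduli spaces are in bijection with ECH-index-$1$ $J$-holomorphic currents, for $r$ large. This is a two-part compactness/gluing argument: sequences of SW instantons as $r \to \infty$ must converge (after rescaling) to broken holomorphic currents with matching asymptotics and multiplicities, and conversely a gluing argument starting from an ECH-index-$1$ embedded current should produce a unique nearby SW instanton. The index-$1$ constraints on the ECH side (the Index Inequality, Writhe Bound, and Partition Conditions foreshadowed in the theorem list above) must match the Seiberg-Witten spectral flow computation via the Relative Adjunction Formula. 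The grading identification $ECH_* = \widehat{HM}^{-*}$ is then traced to the opposite conventions on the two sides for the direction of the gradient flow.

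The hard part — and the content of roughly five long Taubes papers — is the uniform analysis in the $r \to \infty$ limit: proving that SW solutions really do concentrate along Reeb orbits, ruling out bubbling and loss of energy, executing gluing in the inverse direction, and handling the partition combinatorics that distinguishes elliptic from hyperbolic orbits. A separate step is to verify that the resulting chain-level isomorphism is, up to chain homotopy, independent of $r$, of $J$, and of the contact form within the class of $\xi$, so as to produce the canonical isomorphism asserted in the statement; as a byproduct this yields the invariance of $ECH_*(Y,\xi,\Gamma)$ mentioned immediately before the theorem, which currently has no independent proof.
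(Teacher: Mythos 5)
Your outline faithfully reproduces the architecture of Taubes's proof: the large-$r$ perturbation of the three-dimensional Seiberg--Witten equations by the contact form, the concentration of the spinor's zero locus along Reeb orbits, the vortex-equation analysis governing multiplicities (and forcing multiplicity one at hyperbolic orbits), the correspondence between instantons and ECH-index-one currents via spectral flow, and the chain-homotopy arguments establishing independence of $r$, $J$, and $\lambda$. Note, however, that these notes do not prove Theorem~\ref{thm:echswf} at all --- it is quoted as a black box from \cite{e1} and its sequels --- so there is no internal argument to measure your proposal against, and what you have written is a table of contents for those papers rather than a proof. Every sentence of your second through fourth paragraphs names a theorem whose proof occupies dozens to hundreds of pages (the a priori estimates forcing concentration, the exponential decay and curvature bounds ruling out energy loss, the bijectivity of the gluing map, the obstruction-bundle combinatorics distinguishing elliptic from hyperbolic partitions), and none of it is carried out or even reduced to a checkable claim. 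As you yourself concede, "the hard part" is all of it.

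One small technical correction: the perturbation Taubes uses adds an $r$-multiple of the contact form $\lambda$ (more precisely, a term $-ir\lambda$, suitably Hodge-dualized) to the curvature equation, not an $r$-multiple of $d\lambda$; the two are not proportional in general, and it is the former that makes the symplectization-compatible almost complex structure appear in the $r\to\infty$ limit. Also, the claim that the generator correspondence "reproduces the hyperbolic restriction" deserves more care: what the analysis actually shows is that for large $r$ the perturbed equations admit no irreducible solutions whose limiting orbit set carries a hyperbolic orbit with multiplicity greater than one, which is a nondegeneracy statement about the relevant linearized operator over the vortex moduli space, not an immediate consequence of the transverse vortex picture.
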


\noindent
Here $\widehat{HM}^*$ denotes the ``from'' version of Seiberg-Witten Floer cohomology defined by Kronheimer-Mrowka \cite{km}, and $\frak{s}_{\xi}$ denotes a spin-c structure determined by the oriented 2-plane field $\xi$, see \S\ref{sec:SW3}. The relative grading is explained in \S\ref{sec:differential}.
Kutluhan-Lee-Taubes \cite{klt1} and Colin-Ghiggini-Honda \cite{cgh} also showed that both sides of \eqref{eqn:echswf} are isomorphic
to the Heegaard Floer homology $HF^+(-Y,\frak{s}_\xi+\op{PD}(\Gamma))$ defined in \cite{os}.
The upshot is that ECH is a topological invariant of $Y$, except that one needs to shift $\Gamma$ if one changes 
the contact structure.

\begin{remark}
\label{rem:ag}
In fact, both Seiberg-Witten Floer cohomology and ECH have absolute gradings by homotopy classes of oriented two-plane fields \cite{km,ir}, and Taubes's isomorphism \eqref{eqn:echswf} respects these absolute gradings \cite{cg}. Thus one can write the isomorphism \eqref{eqn:echswf} as $ECH_{\mathfrak{p}}(Y,\lambda,J)=\widehat{HM}^{\mathfrak{p}}(Y)$ where $\mathfrak{p}$ denotes a homotopy class of oriented two-plane fields on $Y$.
\end{remark}

Although ECH does not depend on the contact form, because it is defined using the contact form it has applications to contact geometry. For example, Theorem~\ref{thm:echswf}, together with known properties of Seiberg-Witten Floer cohomology, implies the three-dimensional {\em Weinstein conjecture\/}: every contact form on a closed connected three-manifold has at least one Reeb orbit.
Indeed, Taubes's proof of the Weinstein conjecture in \cite{tw1} can be regarded as a first step towards proving Theorem~\ref{thm:echswf}.

The reason that Theorem~\ref{thm:echswf} implies the Weinstein conjecture is that if there is no closed orbit, then $\lambda$ is nondegenerate and
\[
ECH_*(Y,\xi,\Gamma) = \left\{\begin{array}{cl} \Z/2, & \Gamma=0,\\
0, & \Gamma\neq 0.
\end{array}
\right.
\]
Here the $\Z/2$ comes from the empty set of Reeb orbits, which is a legitimate chain complex generator when $\Gamma=0$.  However results of Kronheimer-Mrowka \cite{km} imply that if $c_1(\xi)+2\op{PD}(\Gamma)\in H^2(Y;\Z)$ is torsion (and by a little algebraic topology one can always find a class $\Gamma\in H_1(Y)$ with this property), then $\widehat{HM}^{*}(Y,\frak{s}_\xi+\Gamma)$ is infinitely generated, which is a contradiction.

Note that although $ECH(Y,\xi,\Gamma)$ is infinitely generated for $\Gamma$ as above, there might not exist infinitely many embedded Reeb orbits. To give a counterexample, first recall that in any symplectic manifold $(M,\omega)$, a {\em Liouville vector field\/} is a vector field $\rho$ such that $\mc{L}_\rho\omega=\omega$. A hypersurface $Y\subset M$ is of {\em contact type\/} if there exists a Liouville vector field $\rho$ transverse to $Y$ defined in a neighborhood of $Y$. In this case the ``Liouville form'' $\imath_\rho\omega$ restricts to a contact form on $Y$, whose Reeb vector field is parallel to the Hamilonian vector field $X_H$ where $H:M\to\R$ is any smooth function having $Y$ as a regular level set.

For example, the radial vector field
\[
\rho = \frac{1}{2}\sum_{k=1}^2\left(x_k\frac{\partial}{\partial x_j}
+y_k\frac{\partial}{\partial y_k}\right)
\]
is a Liouville vector field defined on all of $\R^4$. It follows that if $Y$ is a hypersurface in $\R^4$ which is ``star-shaped'', meaning transverse to the radial vector field $\rho$, then the Liouville form
\begin{equation}
\label{eqn:lambdastd}
\lambda=\frac{1}{2}\sum_{k=1}^2\left(x_kdy_k - y_kdx_k\right)
\end{equation}
restricts to a contact form on $Y$, with Reeb vector field determined as above.

\begin{example}
\label{ex:ell1}
If $Y=\partial E(a,b)$ is the boundary of an ellipsoid, then it follows from the above discussion that the Liouville form $\lambda$ in \eqref{eqn:lambdastd} restricts to a contact form on $Y$, whose Reeb vector field is given in polar coordinates by
\[
R = \frac{2\pi}{a}\frac{\partial}{\partial \theta_1} + \frac{2\pi}{b}\frac{\partial}{\partial \theta_2}.
\]
If $a/b$ is irrational, then there are just two embedded Reeb orbits, which we denote by $\gamma_1=(z_2=0)$ and $\gamma_2=(z_1=0)$. The linearized return map $P_{\gamma_1}$ is rotation by $2\pi a/b$, and the linearized return map $P_{\gamma_2}$ is rotation by $2\pi b/a$, so both of these Reeb orbits are elliptic. A generator of the ECH chain complex then has the form $\gamma_1^{m_1}\gamma_2^{m_2}$, where this notation indicates the set consisting of the pair $(\gamma_1,m_1)$ (if $m_1\neq 0$) and the pair $(\gamma_2,m_2)$ (if $m_2\neq 0$). For grading reasons to be explained in \S\ref{sec:ellipsoid}, the differential $\partial$ is identically zero. Thus $ECH(\partial E(a,b),\lambda,0)$ has one generator for each pair of nonnegative integers.
\end{example}

By making stronger use of the isomorphism \eqref{eqn:echswf}, one can prove some slight refinements of the Weinstein conjecture. For example, there are always at least two embedded Reeb orbits \cite{gh}; and if $\lambda$ is nondegenerate and $Y$ is not a sphere or a lens space then there at least three embedded Reeb orbits \cite{wh}. To put this in perspective, Colin-Honda \cite{ch} used linearized contact homology to show that for many contact structures, every contact form has infinitely many embedded Reeb orbits. The only examples of closed contact three-manifolds we know of with only finitely many embedded Reeb orbits are the ellipsoid examples in Example~\ref{ex:ell1}, and quotients of these on lens spaces, with exactly two embedded Reeb orbits.

{\em Historical note.\/}
The original motivation for the definition of ECH was to find a symplectic model for Seiberg-Witten Floer homology, so that an isomorphism of the form \eqref{eqn:echswf} would hold\footnote{More precisely, we first defined an analogous theory for mapping tori of symplectomorphisms of surfaces, called {\em periodic Floer homology\/}, and conjectured that this was isomorphic to Seiberg-Witten Floer homology, see \cite[\S1.1]{pfh2}. This conjecture was later proved by Lee and Taubes \cite{lee-taubes}. 
Initially it was not clear if ECH would also be isomorphic to Seiberg-Witten Floer homology because the geometry of contact manifolds is slightly different than that of mapping tori.
%(The fiberwise symplectic form on the mapping torus is replaced by a %contact form, so that in ECH there is no a priori upper bound on the %length of a Reeb orbit in a given homology class; this is related to  %one of the main difficulties in Taubes's proof of the Weinstein %conjecture, which is to obtain upper bounds on the ``energy'' of %Seiberg-Witten solutions.)
However the calculation of the ECH of $T^3$ then provided nontrivial evidence that this is the case, see \cite[\S1.1]{t3}.
}, analogously to Taubes's Seiberg-Witten $=$ Gromov theorem for closed symplectic four-manifolds. We will explain this motivation in detail in \S\ref{sec:origins}--\ref{sec:defech}.

\subsection{Additional structure on ECH}
\label{sec:addstr}

The definition of ECH capacities uses four additional structures on ECH, which we now briefly describe.

\paragraph{1. The $U$ map.}
Assuming that $Y$ is connected, there is a degree $-2$ map
\begin{equation}
\label{eqn:U}
U:ECH_*(Y,\xi,\Gamma) \longrightarrow ECH_{*-2}(Y,\xi,\Gamma).
\end{equation}
This is induced by a chain map
which is defined similarly to the differential, except that instead of counting ECH index 1 curves modulo $\R$ translation, it counts ECH index 2 curves that pass through a base point $(0,z)\in \R\times Y$.
Since $Y$ is connected, the induced map on homology
\begin{equation}
\label{eqn:UJ}
U:ECH_*(Y,\lambda,\Gamma,J) \longrightarrow ECH_{*-2}(Y,\lambda,\Gamma,J)
\end{equation}
does not depend on the choice of base point $z$, see \S\ref{sec:Udetails} for details. Taubes \cite{e5} showed that \eqref{eqn:UJ} agrees with an analogous $U$ map on Seiberg-Witten Floer cohomology, and in particular it gives a well-defined map \eqref{eqn:U}. Thus the $U$ map, like ECH, is in fact a topological invariant of $Y$.

If $Y$ is disconnected, then there is a different $U$ map for each component of $Y$. More precisely, suppose that $(Y,\lambda)=\coprod_{i=1}^n(Y_i,\lambda_i)$ with $Y_i$ connected, and let $\Gamma=(\Gamma_1,\ldots,\Gamma_n)\in H_1(Y)$. It follows from the definitions, and the fact that we are using coefficients in a field, that there is a canonical isomorphism
\[
ECH(Y,\xi,\Gamma) = \bigotimes_{i=1}^nECH(Y_i,\xi_i,\Gamma_i).
\]
The $U$ map on the left hand side determined by the component $Y_i$ is the tensor product on the right hand side of the $U$ map on $ECH(Y_i,\xi_i,\Gamma_i)$ with the identity on the other factors.

\paragraph{2. The ECH contact invariant.} 

ECH contains a canonical class defined as follows.
Observe that for any nondegenerate contact three-manifold $(Y,\lambda)$, the empty set of Reeb orbits is a generator of the chain complex $ECC(Y,\lambda,0,J)$. It follows from \eqref{eqn:daf} below that this chain complex generator is actually a cycle, i.e.
\[
\partial\emptyset = 0.
\]
(In this equation, the empty set is not the same as zero!)
ECH cobordism maps, described below, can be used to show that the homology class of this cycle does not depend on $J$ or $\lambda$, and thus represents a well-defined class
\[
c(\xi)\in ECH_*(Y,\xi,0),
\]
which we call the {\em ECH contact invariant\/}.
Taubes \cite{e5} showed that under the isomorphism \eqref{eqn:echswf}, this agrees with a related contact invariant in Seiberg-Witten Floer cohomology.

Although ECH and the U map on it are topological invariants of the three-manifold $Y$, the contact invariant can distinguish some contact structures. For example, if $\xi$ is overtwisted then $c(\xi)=0$. This holds because, as shown in the appendix to \cite{yml}, if $\xi$ is overtwisted then one can find a contact form such that the shortest Reeb orbit $\gamma$ bounds a unique holomorphic curve (which is a holomorphic plane) in $\R\times Y$; the latter turns out to have ECH index 1, so $\partial\gamma=\emptyset$.
On the other hand, it follows using the ECH cobordism maps defined in \cite{field} that $c(\xi)\neq 0$ whenever $(Y,\xi)$ is strongly symplectically fillable; a special case of this is proved in Example~\ref{ex:filling} below.

\paragraph{3. Filtered ECH.}

There is a refinement of ECH which sees not just the contact {\em structure\/} but also the contact {\em form\/}. To describe this, recall that if $\gamma$ is a Reeb orbit, its {\em symplectic action\/} is defined by
\[
\mc{A}(\gamma) = \int_\gamma\lambda.
\]
If $\alpha=\{(\alpha_i,m_i)\}$ is an ECH generator, define its symplectic action by
\[
\mc{A}(\alpha) = \sum_im_i\mc{A}(\alpha_i).
\]
It follows from the conditions on the almost complex structure $J$ that the restriction of $d\lambda$ to any $J$-holomorphic curve in $\R\times Y$ is pointwise nonnegative. Consequently, by Stokes's theorem, the differential decreases\footnote{In fact the inequality on the right side of \eqref{eqn:daf} is strict,
% because if $C$ is a $J$-holomorphic curve from $\alpha$ to $\beta$ then $\int_Cd\lambda=0$ if and only if $C$ is $\R$-invariant as a current, in which case $C$ has ECH index zero and does not contribute to the differential. 
but we do not need this.} the symplectic action, i.e.
\begin{equation}
\label{eqn:daf}
\langle\partial\alpha,\beta\rangle\neq 0 \Longrightarrow \mc{A}(\alpha) \ge \mc{A}(\beta).
\end{equation}

Given $L\in\R$, define $ECC^L(Y,\lambda,\Gamma,J)$ to be the span of those generators $\alpha$ with $\mc{A}(\alpha)<L$. It follows from \eqref{eqn:daf} that this is a subcomplex of $ECC(Y,\lambda,\Gamma,J)$. The homology of this subcomplex is called {\em filtered ECH\/} and denoted by $ECH^L(Y,\lambda,\Gamma)$. It is shown in \cite[Thm.\ 1.3]{cc2} that filtered ECH does not depend on $J$. There is also a $U$ map (or $U$ maps when $Y$ is disconnected) defined on filtered ECH, which we continue to denote by $U$.

Unlike the usual ECH, filtered ECH depends heavily on the contact form $\lambda$. For example, if $Y=\partial E(a,b)$ with the standard contact form as in Example~\ref{ex:ell1}, then the symplectic action of a chain complex generator is given by
\begin{equation}
\label{eqn:ellipsoidaction}
\mc{A}(\gamma_1^{m_1}\gamma_2^{m_2}) = am_1+bm_2.
\end{equation}
Thus the rank of $ECH^L(\partial E(a,b))$ is the number of nonnegative integer linear combinations of $a$ and $b$ that are less than $L$. Obviously this depends on $a$ and $b$; but the ellipsoids for different $a$ and $b$ with their contact forms all determine the unique tight contact structure on $S^3$.
There is also a general scaling property: if $r>0$ is a positive constant, then there is a canonical isomorphism
\begin{equation}
\label{eqn:scaling}
ECH^L(Y,\lambda,\Gamma) = ECH^{rL}(Y,r\lambda,\Gamma).
\end{equation}

\paragraph{4. Cobordism maps.}

We now consider maps on ECH induced by cobordisms. For this purpose there are various kinds of cobordisms that one can consider. To describe these, let $(Y_+,\lambda_+)$ and $(Y_-,\lambda_-)$ be closed contact three-manifolds.

A {\em strong symplectic cobordism\/} from\footnote{Our use of the words ``from'' and ``to'' in this connection is controversial. In the usual TQFT language, one would say that $X$ is a cobordism from $Y_-$ to $Y_+$. However cobordism maps on ECH and other kinds of contact homology naturally go from the invariant of $Y_+$ to the invariant of $Y_-$. We apologize for any confusion.} $(Y_+,\lambda_+)$ to $(Y_-,\lambda_-)$ is a compact symplectic four-manifold $(X,\omega)$ with boundary \begin{equation}
\label{eqn:boundaryX}
\partial X = Y_+ - Y_-,
\end{equation}
such that $\omega|_{Y_\pm}=d\lambda_\pm$. Note that the signs in \eqref{eqn:boundaryX} are important; here $X$ has an orientation determined by the symplectic structure, while $Y_+$ and $Y_-$ have orientations determined by the contact structures. In particular, there is a distinction between the positive (or ``convex'') boundary $Y_+$ and the negative (or ``concave'') boundary $Y_-$.

An {\em exact symplectic cobordism\/} is a strong symplectic cobordism as above  such that there is a $1$-form $\lambda$ on $X$ with $d\lambda = \omega$ and $\lambda|_{Y_\pm}=\lambda_\pm$.
 
A strong (resp.\ exact) {\em symplectic filling\/} of $(Y,\lambda)$ is a strong (resp.\ exact) symplectic cobordism from $(Y,\lambda)$ to the empty set.

For example, if $X$ is a compact star-shaped domain in $\R^4$ with boundary $Y$, if $\omega$ is the standard symplectic form on $\R^4$, and if $\lambda$ is the Liouville form \eqref{eqn:lambdastd}, then $(X,\omega)$ is an exact symplectic filling of $(Y,\lambda|_Y)$.

Maps on ECH induced by exact symplectic cobordisms were constructed in \cite{cc2}, where they were used to prove the Arnold chord conjecture in three dimensions. More generally, maps on ECH induced by arbitrary strong symplectic cobordisms are constructed in \cite{field}.

To set up the theory of ECH capacities, we need a notion in between exact and strong symplectic cobordisms. Define a {\em weakly exact symplectic cobordism\/} to be a strong symplectic cobordism as above such that $\omega$ is exact (but $\omega$ need not have a primitive on $X$ which restricts to the contact forms on the boundary).

\begin{theorem}[{\cite[Thm.\ 2.3]{qech}}]
\label{thm:wesc}
Let $(X,\omega)$ be a weakly exact symplectic cobordism from $(Y_+,\lambda_+)$ to $(Y_-,\lambda_-)$, and assume that the contact forms $\lambda_\pm$ are nondegenerate. Then for each $L>0$ there are maps
\[
\Phi^L(X,\omega): ECH^L(Y_+,\lambda_+,0) \longrightarrow ECH^L(Y_-,\lambda_-,0)
\]
with the following properties:
\begin{description}
\item{(a)} $\phi^L(X,\omega)[\emptyset]=[\emptyset]$.
\item{(b)} If $U_+$ and $U_-$ are $U$ maps on $ECH^L(Y_\pm,\lambda_\pm,0)$ corresponding to components of $Y_\pm$ that are contained in the same component of $X$, then
\[
\phi^L(X,\omega)\circ U_+ = U_-\circ\phi^L(X,\omega).
\]
\end{description}
\end{theorem}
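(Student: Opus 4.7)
The plan is to construct the map $\Phi^L$ via Seiberg--Witten theory. One might first hope to define $\Phi^L$ directly by counting ECH index $0$ holomorphic curves in the completed cobordism $\overline{X} = ((-\infty,0]\times Y_-)\cup_{Y_-} X \cup_{Y_+} ([0,\infty)\times Y_+)$, equipped with a cobordism-admissible almost complex structure agreeing with symplectization-admissible $J_\pm$ on the ends; but as noted in \S\ref{sec:cobmap}, there are currently unsolved transversality problems with multiply covered index-$0$ curves. So instead I would invoke Taubes's isomorphism \eqref{eqn:echswf} and the SW cobordism maps of Kronheimer--Mrowka.

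First, I would equip $\overline{X}$ with a symplectic form extending $\omega$ by $d(e^s\lambda_\pm)$ on the ends and use the perturbed Seiberg--Witten equations with perturbation $r\omega$ (for $r$ large), as in Taubes's proof of Theorem~\ref{thm:echswf}. This yields an SW cobordism map $\widehat{HM}^*(Y_+,\mathfrak{s}_{\xi_+})\to\widehat{HM}^*(Y_-,\mathfrak{s}_{\xi_-})$ which, under \eqref{eqn:echswf}, gives a map on the (unfiltered) ECH of both ends in the class $\Gamma=0$.

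Second, I would upgrade this to a filtered map. The key point is that in Taubes's correspondence, for $r$ large the nonzero part of the SW energy of an instanton on $\overline{X}$ is, up to a controlled error, the symplectic area of a corresponding broken holomorphic curve with positive and negative asymptotic ends at ECH generators $\alpha_+$ and $\alpha_-$. For a weakly exact cobordism, choose a primitive $\lambda_X$ of $\omega$ on $X$. Since $d\lambda_X|_{Y_\pm}=d\lambda_\pm$, the difference $\lambda_X|_{Y_\pm}-\lambda_\pm$ is closed; and because $\Gamma=0$, the $1$-cycles $\sum_i m_i\alpha_i^\pm$ are null-homologous in $Y_\pm$, so the integral of this closed form over them vanishes. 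Stokes's theorem then gives $\mc{A}(\alpha_+)\ge\mc{A}(\alpha_-)$ for any holomorphic curve in $\overline{X}$ contributing to the map. Translating through the Taubes correspondence, this means that for each $L>0$ and $r$ sufficiently large, the SW cobordism map sends the span of generators with action less than $L$ into itself, and hence descends to the desired map $\Phi^L:ECH^L(Y_+,\lambda_+,0)\to ECH^L(Y_-,\lambda_-,0)$. Independence from auxiliary choices follows from the usual chain-homotopy argument together with the same action estimate applied to the parametrized moduli spaces.

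Third, I would verify properties (a) and (b). For (a), the empty set of Reeb orbits on each end corresponds under the Taubes correspondence to a canonical solution of the perturbed Seiberg--Witten equations (the one whose spinor has no zeros), and the SW cobordism map sends this canonical generator on $Y_+$ to the canonical generator on $Y_-$. For (b), the $U$ map agrees under \eqref{eqn:echswf} with the analogous $U$ map on Seiberg--Witten Floer cohomology by Taubes \cite{e5}; the latter is defined by a point insertion, and it commutes with any SW cobordism map as long as the chosen point lies in the same component of $\overline{X}$ as the corresponding end, by the standard argument of moving the point through the cobordism and reading off the commutator as a chain homotopy.

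The main obstacle is the filtered refinement in the second step: establishing, uniformly in $r$ and in a way compatible with the chain homotopies that prove invariance, that SW energy on the cobordism translates into symplectic action with an error small enough to preserve a fixed action window $(-\infty,L)$. This is precisely where weak exactness is essential, since without it an extra $\omega$-period term would appear in the action balance and could not be bounded by the input and output generators alone.
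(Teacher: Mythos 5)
Your proposal follows essentially the same route as the paper: the paper likewise defines $\Phi^L$ via perturbed Seiberg--Witten theory on the completed cobordism, uses the ``Holomorphic Curves axiom'' to reduce filtration-preservation to the existence of a broken holomorphic current, and deduces the action inequality \eqref{eqn:daf4} in the weakly exact case from exactly your Stokes argument with a primitive of $\omega$ and the null-homology of the orbit sets when $\Gamma=0$ (see \S\ref{sec:cobmap}). In both your write-up and the notes the analytic details are deferred to \cite{qech} and \cite{cc2}.
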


\begin{example}
\label{ex:filling}
If $Y_-=\emptyset$, i.e.\ if $(X,\omega)$ is a weakly exact symplectic filling of $(Y_+,\lambda_+)$, then the content of the theorem is that there are maps
\[
\Phi^L(X,\omega): ECH^L(Y_+,\lambda_+,0) \longrightarrow \Z/2
\]
with $\Phi^L(X,\omega)[\emptyset]=1$. In particular, it follows that $c(\xi_+)\neq 0 \in ECH(Y_+,\xi_+,0)$.
\end{example}

Theorem~\ref{thm:wesc} is proved using Seiberg-Witten theory, as we describe in \S\ref{sec:cobmap}. For now let us see how the above structure can be used to define ECH capacities.

\subsection{Definition of ECH capacities}
\label{sec:definecapacities}

Before defining ECH capacities of symplectic four-manifolds, we first need another three-dimensional definition. 

\paragraph{ECH spectrum.}

Let $(Y,\lambda)$ be a closed contact three-manifold, write $\xi=\Ker(\lambda)$ as usual, and assume that $c(\xi)\neq 0\in ECH(Y,\xi,0)$. We define a sequence of real numbers
\[
0 = c_0(Y,\lambda) < c_1(Y,\lambda) \le c_2(Y,\lambda) \le \cdots \le \infty,
\]
called the {\em ECH spectrum\/} of $(Y,\lambda)$, as follows.

Suppose first that $\lambda$ is nondegenerate and $Y$ is connected. Then $c_k(Y,\lambda)$ is the infimum of $L$ such that there is a class $\eta\in ECH^L(Y,\lambda,0)$ with $U^k\eta=[\emptyset]$. If no such class exists then $c_k(Y,\lambda)=\infty$. In particular, $c_k(Y,\lambda)<\infty$ if and only if $c(\xi)$ is in the image of $U^k$ on $ECH(Y,\xi,0)$.

\begin{example}
\label{ex:zeta}
Suppose $Y=\partial E(a,b)$ with $a/b$ irrational.
Denote the chain complex generators in order of increasing symplectic action by $\zeta_0,\zeta_1,\ldots$. We will see in \S\ref{sec:Uell} that $U\zeta_k=\zeta_{k-1}$ for $k>0$. It follows from this and \eqref{eqn:ellipsoidaction} that
\begin{equation}
\label{eqn:ckell}
c_k(\partial E(a,b))=N(a,b)_k.
\end{equation}
\end{example}

Continuing the definition of the ECH spectrum, if $(Y,\lambda) = \coprod_{i=1}^n(Y_i,\lambda_i)$ with $Y_i$ connected and $\lambda_i$ nondegenerate, let $U_i$ denote the $U$ map corresponding to the $i^{th}$ component. Then $c_k(Y,\lambda)$ is the infimum of $L$ such that there exists a class $\eta \in ECH^L(Y,\lambda,0)$ with
\begin{equation}
\label{eqn:multieta}
U_1^{k_1}\circ\cdots\circ U_n^{k_n}\eta=[\emptyset]
\end{equation}
whenever $k_1+\cdots+k_n=k$. It follows from some algebra
in \cite[\S5]{qech} that
\begin{equation}
\label{eqn:sdu}
c_k\left(\coprod_{i=1}^n(Y_i,\lambda_i)\right) = \max_{k_1+\cdots+k_n=k}\sum_{i=1}^nc_{k_i}(Y_i,\lambda_i).
\end{equation}

Finally, if $(Y,\lambda)$ is a closed contact three-manifold with $\lambda$ possibly degenerate, define $c_k(Y,\lambda)=\lim_{n\to\infty}c_k(Y,f_n\lambda)$, where $f_n:Y\to\R^{>0}$ are functions on $Y$ with $f_n\lambda$ nondegenerate and $\lim_{n\to\infty}f_n=1$ in the $C^0$ topology. It can be shown using Theorem~\ref{thm:wesc} that this is well-defined
and still satisfies \eqref{eqn:sdu}.
For example, equation \eqref{eqn:ckell} also holds when $a/b$ is rational.

\paragraph{ECH capacities.} We are now ready to define ECH capacities.

\begin{definition}
\label{def:LD}
A (four-dimensional) {\em Liouville domain\/} is a weakly\footnote{Our definition of ``Liouville domain'' is more general than the usual definition, and perhaps should be called a ``weak Liouville domain''. Ordinarily a ``Liouville domain'' is an exact symplectic filling.} exact symplectic filling $(X,\omega)$ of a contact three-manifold $(Y,\lambda)$.
\end{definition}

\begin{definition}
If $(X,\omega)$ is a four-dimensional Liouville domain with boundary $(Y,\lambda)$, define the {\em ECH capacities\/} of $(X,\omega)$ by
\[
c_k(X,\omega) = c_k(Y,\lambda)\in[0,\infty].
\]
\end{definition}

To see why this definition makes sense, first note that $c(\xi)\neq 0 \in ECH(Y,\xi,0)$ by Example~\ref{ex:filling}, so $c_k(Y,\lambda)$ is defined. We also need to explain why $c_k(X,\omega)$ does not depend on the choice of contact form $\lambda$ on $Y$ with $d\lambda=\omega|_Y$. Let $\lambda'$ be another such contact form. Assume that $\lambda$ and $\lambda'$ are nondegenerate (one can handle the degenerate case by taking a limit of nondegenerate forms). Since $d\lambda=d\lambda'$, the Reeb vector fields $R$ and $R'$ for $\lambda$ and $\lambda'$ are related by $R'=fR$ where $f:Y\to\R^{>0}$. Let $J$ be an almost complex structure on $\R\times Y$ as needed to define the ECH of $\lambda$. Let $J'$ be the almost complex structure on $\R\times Y$ which agrees with $J$ on the contact planes $\xi$ but sends $\partial_s\mapsto R'$. There is then a canonical isomorphism of chain complexes
\begin{equation}
\label{eqn:dll}
ECC^L(Y,\lambda,0,J) = ECC^L(Y,\lambda',0,J')
\end{equation}
which preserves the $U$ maps and the empty set. The reason is that the chain complexes $ECC(Y,\lambda,\Gamma,J)$ and $ECC(Y,\lambda',\Gamma,J')$ have the same generators, and when $\Gamma=0$ the symplectic actions as defined using $\lambda$ or $\lambda'$ agree by Stokes's theorem because $d\lambda=d\lambda'$. Furthermore the $J$-holomorphic curves in $\R\times Y$ agree with the $J'$-holomorphic curves after rescaling the $\R$ coordinate on $\R\times Y$ using the function $f$. And it follows immediately from \eqref{eqn:dll} that $c_k(Y,\lambda) = c_k(Y,\lambda')$.

For example, the Ellipsoid property of ECH capacities now follows from \eqref{eqn:ckell}.

\paragraph{Monotonicity for Liouville domains.}
We now explain why the Monotonicity property holds when $(X_0,\omega_0)$ and $(X_1,\omega_1)$ are Liouville domains. By a limiting argument, one can assume that $(X_0,\omega_0)$ symplectically embeds into the interior of $(X_1,\omega_1)$. For $i=0,1$, let $Y_i=\partial X_i$, and let $\lambda_i$ be a contact form on $Y_i$ with $\partial\lambda_i=\omega|_{Y_i}$. Then $(X\setminus\varphi(\op{int}(X_0)),\omega_1)$ is a weakly exact symplectic cobordism from $(Y_1,\lambda_1)$ to $(Y_0,\lambda_0)$. The Monotonicity property in this case now follows from:

\begin{lemma}
Let $(X,\omega)$ be a weakly exact symplectic cobordism from $(Y_+,\lambda_+)$ to $(Y_-,\lambda_-)$. Then
\[
c_k(Y_-,\lambda_-) \le c_k(Y_+,\lambda_+)
\]
for each $k\ge 0$.
\end{lemma}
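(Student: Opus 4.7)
The plan is to push a witness for $c_k(Y_+,\lambda_+)<L$ through the cobordism map $\Phi^L(X,\omega)$ supplied by Theorem~\ref{thm:wesc}, and verify that its image witnesses $c_k(Y_-,\lambda_-)\le L$; letting $L\searrow c_k(Y_+,\lambda_+)$ then gives the claim. Concretely, I fix $L>c_k(Y_+,\lambda_+)$ and, by definition of the ECH spectrum, produce $\eta_+\in ECH^L(Y_+,\lambda_+,0)$ with $(U_+^{(1)})^{k_1}\cdots(U_+^{(n)})^{k_n}\eta_+=[\emptyset]$ for every decomposition $k_1+\cdots+k_n=k$ over the components of $Y_+$. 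Setting $\eta_-:=\Phi^L(X,\omega)(\eta_+)\in ECH^L(Y_-,\lambda_-,0)$, the task reduces to showing that $(U_-^{(1)})^{\ell_1}\cdots(U_-^{(m)})^{\ell_m}\eta_-=[\emptyset]$ for every decomposition $\ell_1+\cdots+\ell_m=k$ over the components of $Y_-$.

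The main case is $X$ connected, where every $Y_+$-component and every $Y_-$-component lies in the same component of $X$, so Theorem~\ref{thm:wesc}(b) gives $\Phi^L\circ U_+^{(i)}=U_-^{(j)}\circ\Phi^L$ for \emph{every} pair $(i,j)$. Iterating this identity and then using property (a) together with the witness property of $\eta_+$, one computes
\[
(U_-^{(1)})^{\ell_1}\cdots(U_-^{(m)})^{\ell_m}\eta_- \;=\; \Phi^L\bigl((U_+^{(1)})^{\ell_1+\cdots+\ell_m}\eta_+\bigr) \;=\; \Phi^L\bigl((U_+^{(1)})^k\eta_+\bigr) \;=\; \Phi^L([\emptyset]) \;=\; [\emptyset],
\]
the third equality applying the witness property of $\eta_+$ to the decomposition $(k,0,\ldots,0)$. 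Here I tacitly use that $U$-maps associated to distinct components of $Y_-$ commute, which follows from the tensor-product decomposition of ECH over components.

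For disconnected $X$, I would decompose $X=\coprod_\alpha X^{(\alpha)}$ and factor $\Phi^L(X,\omega)$ accordingly: components $X^{(\alpha)}$ meeting both $Y_\pm$ are handled by the preceding argument, pure filling components (those with no $Y_+$ boundary) are handled by Example~\ref{ex:filling}, and the combinatorics of matching a given $Y_-$-decomposition to a suitable $Y_+$-decomposition is controlled by \eqref{eqn:sdu}. The substantive content of the lemma is really upstream in Theorem~\ref{thm:wesc}; granting that machinery, what remains is combinatorial bookkeeping, and the only genuinely delicate point I anticipate is the treatment of pure filling components in the disconnected case.
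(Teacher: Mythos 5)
Your argument is correct and is essentially the paper's proof: fix $L>c_k(Y_+,\lambda_+)$, take a witness class $\eta_+$, push it through $\Phi^L(X,\omega)$, and use Theorem~\ref{thm:wesc}(a),(b) to check that $\eta_-$ witnesses $c_k(Y_-,\lambda_-)\le L$. The connected-case computation is fine. The one place you overcomplicate is the disconnected case: there is no need to factor $\Phi^L$ over components of $X$, invoke Example~\ref{ex:filling}, or appeal to \eqref{eqn:sdu}. The only fact needed is that each component of $Y_-$ lies in the same component of $X$ as \emph{some} component of $Y_+$ --- which is exactly Exercise~\ref{ex:wesc} applied to each component of $X$ (so the ``pure filling components with $Y_-$ boundary but no $Y_+$ boundary'' that you flag as delicate simply do not exist). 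Given a decomposition $\ell_1+\cdots+\ell_n=k$ over the components of $Y_-$, choose for each $j$ a component $i(j)$ of $Y_+$ in the same component of $X$; Theorem~\ref{thm:wesc}(b) then trades $(U_1^-)^{\ell_1}\cdots(U_n^-)^{\ell_n}\eta_-$ for $\Phi^L\bigl((U_{i(1)}^+)^{\ell_1}\cdots(U_{i(n)}^+)^{\ell_n}\eta_+\bigr)$, and the exponents of the $U^+$ maps (after grouping repeats) still sum to $k$, so the witness property of $\eta_+$ applies directly. One small omission: Theorem~\ref{thm:wesc} requires $\lambda_\pm$ nondegenerate, so you should note at the outset that a limiting argument reduces to that case.
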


This lemma follows almost immediately from the fact that $c_k$ is defined solely in terms of the filtration, the $U$ maps, and the contact invariant, and these structures are preserved by the cobordism map. Here are the details.

\begin{proof}
By a limiting argument we may assume that the contact forms $\lambda_\pm$ are nondegenerate. Let $U_1^+,\ldots,U_m^+$ denote the $U$ maps on $ECH(Y_+,\lambda_+,0)$ associated to the components of $Y_+$, and let $U_1^-,\ldots,U_n^-$ denote the $U$ maps on $ECH(Y_-,\lambda_-,0)$ associated to the components of $Y_-$. Let $L>0$ and suppose that $c_k(Y_+,\lambda)<L$; it is enough to show that $c_k(Y_-,\lambda_-)\le L$. Since $c_k(Y_+,\lambda)<L$, there exists a class $\eta_+\in ECH^L(Y_+,\lambda_+,0)$ such that
\begin{equation}
\label{eqn:Ueta+}
(U_1^+)^{k_1}\cdots(U_m^+)^{k_m}\eta_+=[\emptyset]
\end{equation}
whenever $k_1+\cdots+k_m=k$.

Let
\[
\eta_- = \Phi^L(X,\omega)\eta_+ \in ECH^L(Y_-,\lambda_-,0).
\]
We claim that
\begin{equation}
\label{eqn:Ueta-}
(U_1^-)^{k_1}\cdots(U_n^-)^{k_n}\eta_-=[\emptyset]
\end{equation}
whenever $k_1+\cdots+k_n=k$, so that $c_k(Y_-,\lambda_-)\le L$.
To prove this, first note that by Exercise~\ref{ex:wesc} below, each component of $Y_-$ is contained in the same component of $X$ as some component of $Y_+$. Equation \eqref{eqn:Ueta-} then follows from equation \eqref{eqn:Ueta+} together with Theorem~\ref{thm:wesc}.
\end{proof}

\begin{exercise}
\label{ex:wesc}
Show that if $(X,\omega)$ is a weakly exact symplectic cobordism from $(Y_+,\lambda_+)$ to $(Y_-,\lambda_-)$ with $Y_-\neq\emptyset$, then $Y_+\neq\emptyset$.
(See answer in \S\ref{sec:answers}.)
\end{exercise}

\paragraph{Non-Liouville domains.}

We extend the definition of ECH capacities to symplectic four-manifolds which are not Liouville domains by a simple trick:
If $(X,\omega)$ is any symplectic four-manifold, define
\[
c_k(X,\omega) = \sup\{c_k(X',\omega')\},
\]
where the supremum is over Liouville domains $(X',\omega')$ that can be symplectically embedded into $X$. It is a tautology that this new definition of $c_k$ is monotone with respect to symplectic embeddings. And this new definition agrees with the old one when $(X,\omega)$ is already a Liouville domain, by the Monotonicity property for the old definition of $c_k$ with respect to symplectic embeddings of Liouville domains.

\paragraph{Properties of ECH capacities.}

The remaining properties of ECH capacities in Theorem~\ref{thm:ECHcapacities} are proved as follows.
The Disjoint Union property follows from \eqref{eqn:sdu}. The Conformality property follows from the definitions and the scaling property \eqref{eqn:scaling} when $r>0$, and a similar argument\footnote{
In particular, there is a canonical isomorphism of chain complexes (with $\Z/2$ coefficients)
\[
ECC^L(Y,\lambda,\Gamma,J) = ECC^L(Y,-\lambda,-\Gamma,-J).
\]
Note that the resulting isomorphism $ECH(Y,\xi,\Gamma)=ECH(Y,-\xi,-\Gamma)$ corresponds, under Taubes's isomorphism \eqref{eqn:echswf}, to ``charge conjugation invariance'' of Seiberg-Witten Floer cohomology (with $\Z/2$ coefficients).
} when $r<0$. We will prove the Polydisk property at the end of \S\ref{sec:toric}. The proof of the Volume property is beyond the scope of these notes; it is given in \cite{vc}, using ingredients from Taubes's proof of the Weinstein conjecture \cite{tw1}.

\section{Origins of ECH}
\label{sec:origins}

One of the main goals of these notes is to explain something about where ECH comes from. The starting point for the definition of ECH is Taubes's ``SW=Gr'' theorem \cite{swgr} asserting that the Seiberg-Witten invariants of a symplectic four-manifold agree with a ``Gromov invariant'' counting holomorphic curves. The basic idea of ECH is that it is a three-dimensional analogue of Taubes's Gromov invariant. So we will now review Taubes's Gromov invariant in such a way as to make the definition of ECH appear as natural as possible. The impatient reader may wish to skip ahead to the definition of ECH in \S\ref{sec:defech}, and refer back to this section when more motivation is needed.

\subsection{Taubes's ``SW=Gr'' theorem}

We first briefly recall the statement of Taubes's ``SW=Gr'' theorem.
Let $X$ be a closed connected oriented four-manifold. (All manifolds in these notes are smooth.) Let $b_2^+(X)$ denote the dimension of a maximal positive definite subspace $H_2^+(X;\R)$ of $H_2(X;\R)$ with respect to the intersection pairing. Let $\Spinc(X)$ denote the set of spin-c structures\footnote{A spin-c struture on an oriented $n$-manifold is a lift of the frame bundle from a principal $SO(n)$ bundle to a principal $\op{Spin}^c(n)=\op{Spin}(n)\times_{\Z/2}U(1)$ bundle. However we will not need this here.} on $X$; this is an affine space over $H^2(X;\Z)$.
If $b_2^+(X)>1$, one can define the {\em Seiberg-Witten invariant\/}
\begin{equation}
\label{eqn:sw}
SW(X): \Spin^c(X)\to \Z
\end{equation}
by counting solutions to the Seiberg-Witten equations, see e.g.\ \cite{morgan}. More precisely, the Seiberg-Witten invariant depends on a choice of ``homology orientation'' of $X$, namely an orientation of $H_0(X;\R)\oplus H_1(X;\R) \oplus H_2^+(X;\R)$. Switching the homology orientation will multiply the Seiberg-Witten invariant by $-1$. If $b_2^+(X)=1$, the Seiberg-Witten invariant \eqref{eqn:sw} is still defined, but depends on an additional choice of one of two possible ``chambers''; one can identify a chamber with an orientation of the line $H_2^+(X;\R)$.

While the Seiberg-Witten invariants are very powerful for distinguishing smooth four-manifolds, it is also nearly impossible to compute them directly except in very special cases (although there are axiomatic properties which one can use to compute the invariants for more interesting examples). However, Taubes showed that if $X$ has a symplectic form $\omega$, then the Seiberg-Witten invariants of $X$ are equal to a certain count of holomorphic curves, which are much easier to understand than solutions to the Seiberg-Witten equations. Namely, for each $A\in H_2(X)$, Taubes defines a ``Gromov invariant''
\[
Gr(X,\omega,A)\in\Z,
\]
which is a certain count of holomorphic curves in the homology class $A$, which we will review in \S\ref{sec:defgr} below. Further, the symplectic structure $\omega$ determines a distinguished spin-c structure $\frak{s}_\omega$, so that we can identify
\begin{equation}
\label{eqn:spinc}
\begin{split}
H_2(X) &= \Spinc(X),\\
A &\leftrightarrow \frak{s}_\omega + \op{PD}(A).
\end{split}
\end{equation}
We can now state:

\begin{theorem}[Taubes]
Let $(X,\omega)$ be a closed connected symplectic four-manifold with $b_2^+(X)>1$. Then $X$ has a homology orientation such that under the identification \eqref{eqn:spinc},
\[
SW(X) = Gr(X,\omega,\cdot).
\]
\end{theorem}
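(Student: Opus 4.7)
The plan is to follow Taubes's deformation strategy, turning the Seiberg–Witten equations into a geometric problem about pseudoholomorphic curves by introducing a large parameter $r$ and studying the $r\to\infty$ limit. First I would use the symplectic form $\omega$ to pick a compatible almost complex structure $J$ and a Hermitian metric, which identifies the positive spinor bundle for $\frak{s}_\omega+\op{PD}(A)$ with $E\oplus(K_X^{-1}\otimes E)$ where $E\to X$ is a complex line bundle with $c_1(E)=\op{PD}(A)$. In this splitting the spinor decomposes as $\Psi=(\alpha,\beta)$, and the unperturbed SW equations acquire a canonical reducible-like structure where $\beta=0$, $|\alpha|=1$ would be a model solution if $A=0$. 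The crucial move is to perturb the curvature equation by $-ir\omega$ and to rescale, obtaining a family $\op{SW}_r$ whose solution count is independent of $r$ (since $b_2^+(X)>1$ gives a well-defined invariant without chamber issues).

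Next I would carry out the $r\to\infty$ analysis, which is the heart of the argument. The key a priori estimates (from Taubes's papers on the $SW=Gr$ program) show that for a solution $(A,\alpha,\beta)$ of $\op{SW}_r$ with $r$ large, the component $\alpha$ is nearly of unit norm off a small neighborhood of its zero set $Z(\alpha)$, while $\beta$ decays exponentially in $\sqrt r\cdot\op{dist}(\cdot,Z(\alpha))$. Suitable rescalings produce, in the limit, a pseudoholomorphic subvariety $C\subset X$ in homology class $A$, together with local holomorphic data near each component. Convergence of currents and a compactness argument show that the zero sets $Z(\alpha_r)$ converge (with multiplicities) to such a holomorphic subvariety representing $A$, paired with $J$ chosen generically enough that its embedded components are transversely cut out.

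The third step would be to construct the inverse map: starting from a transversely cut out embedded $J$-holomorphic curve $C$ in class $A$, one glues in a model solution concentrated along $C$ to produce, for each large $r$, a unique solution of $\op{SW}_r$ nearby, with the correct orientation. Together with the convergence result, this gives a bijection for large $r$ between SW solutions and contributions to the Gromov count coming from curves with no multiply covered components. One also needs to choose the homology orientation of $X$ so that Taubes's gluing signs match the $\pm1$ signs assigned to embedded curves in $Gr(X,\omega,A)$.

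The main obstacle, and where I would spend the bulk of the effort, is the treatment of multiply covered tori. Taubes's Gromov invariant weights a holomorphic torus of square zero not just by $\pm1$ but by a subtle universal polynomial (the ``Taubes weight'') depending on the spectral flow of the linearized operator and on whether the torus is elliptic, positive hyperbolic, or negative hyperbolic, and it packages covers of such tori together. On the SW side, for each such torus there are many nearby solutions, one for each multiplicity, arising from different local models of $(\alpha,\beta)$ along the torus; counting these with signs yields exactly the same universal expression. Verifying that these two counts agree, component by component, requires a delicate computation of the spectral flow of Dirac-type operators on the normal bundle of the torus and a careful comparison with the generating function that defines the Gromov weight. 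Once this matching is established and combined with the embedded-curve bijection above, summing over classes $A$ and invoking deformation invariance of $SW$ (using $b_2^+>1$) yields $SW(X)=Gr(X,\omega,\cdot)$ under the identification \eqref{eqn:spinc}.
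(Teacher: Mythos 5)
The paper does not prove this theorem: it is quoted as Taubes's $SW=Gr$ theorem with a citation to \cite{swgr}, and the surrounding text only unpacks the \emph{statement} (the definition of $Gr$, the identification \eqref{eqn:spinc}, the $b_2^+$ hypothesis). So there is no in-paper argument to compare yours against; what you have written is a roadmap of Taubes's own multi-paper proof, and as a roadmap it is faithful: the spinor splitting $E\oplus(K_X^{-1}\otimes E)$, the perturbation by $-ir\omega$, the a priori estimates forcing $|\alpha|\to 1$ away from $Z(\alpha)$ and exponential decay of $\beta$, convergence of the zero sets as currents to a holomorphic subvariety in class $A$, the gluing construction giving the inverse correspondence with matching signs, and the identification of the multiple-cover contributions with the generating functions \eqref{eqn:gf} are exactly the steps of Taubes's argument.

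Two caveats. First, each sentence of your second through fourth paragraphs stands in for a substantial analytic theorem (the estimates, the current compactness, the gluing with its obstruction analysis, the wall-crossing-free invariance from $b_2^+>1$), none of which you establish; as written this is a plan, not a proof, and you should not present it as more. Second, a small but real inaccuracy: in the closed four-manifold setting the weight $w(C,d)$ attached to a square-zero torus is \emph{not} indexed by an elliptic/positive hyperbolic/negative hyperbolic trichotomy --- that classification only makes sense for tori of the form $S^1\times\gamma$ in a mapping torus (\S\ref{sec:mappingtorusexample}). In general the weight is the coefficient of $t^d$ in $f_{\pm,k}$, where $\pm$ is the sign $\epsilon(C)$ of the spectral flow for $C$ itself and $k\in\{0,1,2,3\}$ counts the connected double covers of $C$ whose sign disagrees with $\epsilon(C)$; the matching you must verify on the Seiberg--Witten side is against these four sign data, not against a Conley--Zehnder-type classification of the torus.
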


\begin{remark}
A version of this theorem also holds when $b_2^+(X)=1$. Here one needs to compute the Seiberg-Witten invariant using the chamber determined by the cohomology class of $\omega$. Also, in this case the definition of the Gromov invariant needs to be modified in the presence of symplectic embedded spheres of square $-1$, see \cite{liliu}.
\end{remark}

\subsection{Holomorphic curves in symplectic manifolds}

We now briefly review what we will need to know about holomorphic curves in order to define Taubes's Gromov invariant. Proofs of the facts recalled here may be found for example in \cite{msj}.

Let $(X^{2n},\omega)$ be a closed symplectic manifold. An {\em $\omega$-compatible almost complex structure\/} is a bundle map $J:TX\to TX$ such that $J^2=-1$ and $g(v,w) = \langle Jv,w\rangle$ defines a Riemannian metric on $X$. Given $\omega$, the space of compatible almost complex structures $J$ is contractible. Fix an $\omega$-compatible\footnote{Taubes's theorem presumably still works if one generalizes from compatible to tame almost complex structures.} almost complex structure $J$.

A {\em $J$-holomorphic curve\/} in $(X,\omega)$ is a holomorphic map $u:(\Sigma,j)\to(X,J)$ where $(\Sigma,j)$ is a compact Riemann surface (i.e.\ $\Sigma$ is a compact surface and $j$ is an almost complex structure on $\Sigma$), $u:\Sigma\to X$ is a smooth map, and $J\circ du=du\circ j$.  The curve $u$ is considered equivalent to $u':(\Sigma',j')\to (X,J)$ if there exists a holomorphic bijection $\phi:(\Sigma,j)\to(\Sigma',j')$  such that $u'\circ \phi = u$. Thus a $J$-holomorphic curve is formally an equivalence class of triples $(\Sigma,j,u)$ satisfying the above conditions.

We call a $J$-holomorphic curve {\em irreducible\/} if its domain is connected.

If $u:(\Sigma,j)\to(X,J)$ is an embedding, then the equivalence class of the $J$-holomorphic curve $u$ is determined by its image $C=u(\Sigma)$ in $X$. Indeed, an embedded $J$-holomorphic curve is equivalent to a closed two-dimensional submanifold $C\subset X$ such that $J(TC)=TC$.

More generally, a holomorphic curve $u:\Sigma\to X$ is called {\em somewhere injective\/} if there exists $z\in\Sigma$ such that $u^{-1}(u(z))=\{z\}$ and $du_z:T_z\Sigma\to T_{u(z)}X$ is injective. One can show that in this case $u$ is an embedding on the complement of a countable subset of $\Sigma$ (which is finite in the case of interest where $n=2$), and the equivalence class of $u$ is still determined by its image in $X$. On the other hand, $u$ is called {\em multiply covered\/} if there exists a branched cover $\phi:(\Sigma,j)\to(\Sigma',j')$ of degree $d>1$ and a holomorphic map $u':(\Sigma',j')\to(X,J)$ such that $u=u'\circ \phi$.

It is a basic fact that every irreducible holomorphic curve is either somewhere injective or multiply covered. In particular, every irreducible holomorphic curve is the composition of a somewhere injective holomorphic curve with a branched cover of degree $d\ge 1$. When $d>1$, the holomorphic curve is not determined just by its image in $X$; it depends also on the degree $d$, the images of the branch points in $X$, and the monodromy around the branch points.

Define the {\em Fredholm index\/} of a holomorphic curve $u:(\Sigma,j)\to (X,J)$ by
\begin{equation}
\label{eqn:ind}
\ind(u) = (n-3)\chi(\Sigma) + 2\langle c_1(TX),u_*[\Sigma]\rangle.
\end{equation}
Here $c_1(TX)$ denotes the first Chern class of $TX$, regarded as a complex vector bundle using the almost complex structure $J$. The isomorphism class of this complex vector bundle depends only on the symplectic structure and not on the compatible almost complex structure.

A transversality argument shows that if $J$ is generic, then for each somewhere injective holomorphic curve $u$, the moduli space of holomorphic curves near $u$ is a smooth manifold of dimension $\ind(u)$, cut out transversely in a sense to be described below.
 Unfortunately, this usually does not hold for multiply covered curves. Even if all somewhere injective holomorphic curves are cut out transversely, there can still be multiply covered holomorphic curves $u$ such that $\ind(u)$ is less than the dimension of the moduli space near $u$, or even negative. This is a major technical problem in defining holomorphic curve counting invariants in general, and it also causes some complications for ECH, as we will see in the proof that $\partial^2=0$ in \S\ref{sec:200pages} and especially in the construction of cobordism maps in \S\ref{sec:cobmap}.

\subsection{Deformations of holomorphic curves}

We now clarify what it means for a holomorphic curve to be ``cut out transversely''. To simplify the discussion we restrict attention to immersed curves, which are all we need to consider to define Taubes's Gromov invariant. 

Let $u:C\to X$ be an immersed $J$-holomorphic curve, which by abuse of notation we will usually denote by $C$. Then $C$ has a well defined normal bundle $N_C$, which is a complex vector bundle of rank $n-1$ over $C$. The derivative of the equation for $C$ to be $J$-holomorphic defines a first-order elliptic differential operator
\[
D_C:\Gamma(N_C) \longrightarrow \Gamma(T^{0,1}C\tensor N_C),
\]
which we call the {\em deformation operator\/} of $C$. Here $\Gamma$ denotes the space of smooth sections.

To explain this operator in more detail, we first recall some general formalism. Suppose $E\to B$ is a smooth vector bundle and $\psi:B\to E$ is a smooth section. Let $x\in B$ be a zero of $\psi$. Then the derivative of the section $\psi$ at $x$ defines a canonical map
\begin{equation}
\label{eqn:nablacan}
\nabla\psi:T_xB\to E_x.
\end{equation}
Namely, the derivative of $\psi$, regarded as a smooth map $B\to E$, has a differential $d\psi_x:T_xB\to T_{(x,0)}E$, and the map \eqref{eqn:nablacan} is obtained by composing this with the projection $T_{(x,0)}E=T_xB\oplus E_x\to E_x$.

To put holomorphic curves into the above framework, let $\mc{B}$ be the infinite dimensional (Frechet) manifold of immersed compact surfaces in $X$.  Given an immersed surface $u:C\to X$, let $N_C=u^*TX/TC$ denote the normal bundle to $C$, which is a rank $2n-2$ real vector bundle over $C$, and let $\pi_{N_C}:u^*TX\to N_C$ denote the quotient map. We can identify $T_C\mc{B}=\Gamma(N_C)$. There is an infinite dimensional vector bundle $\mc{E}\to \mc{B}$ whose fiber over $C$ is the space of smooth bundle maps $TC\to N_C$. We define a smooth section $\dbar:\mc{B}\to \mc{E}$ by defining $\dbar(C):TC\to N_C$ to be the map sending $v\mapsto \pi_{N_C}(Jv)$. Then $C$ is $J$-holomorphic if and only if $\dbar(C)=0$. In this case the derivative of $\dbar$ defines a map $\Gamma(N_C)\to\Gamma(T^*C\tensor N_C)$. Furthermore, since $C$ is $J$-holomorphic, the values of this map anticommute with $J$, so it is actually an operator $\Gamma(N_C)\to\Gamma(T^{0,1}C\tensor N_C)$. This is the deformation operator $D_C$.

One can write the operator $D_C$ in local coordinates as follows. Let $z=s+it$ be a local coordinate on $C$, use $id\zbar$ to locally trivialize $T^{0,1}C$, and choose a local trivialization of $N_C$ over this coordinate neighborhood. With respect to these coordinates and trivializations, the operator $D_C$ locally has the form
\[
D_C = \partial_s + J\partial_t + M(s,t).
\]
Here $M(s,t)$ is a real matrix of size $2n-2$ determined by the derivatives of $J$ in the normal directions to $C$.

We say that $C$ is {\em regular\/}, or ``cut out transversely'', if the operator $D_C$ is surjective. In this case the moduli space of holomorphic curves is a manifold near $C$, and its tangent space at $C$ is the kernel of $D_C$.

In the analysis one often needs to extend the operator $D_C$ to suitable Banach space completions of the spaces of smooth sections, for example to extend it to an operator
\begin{equation}
\label{eqn:DCext}
D_C:L^2_1(C,N_C) \longrightarrow L^2(C,T^{0,1}C\tensor N_C).
\end{equation}
Since $D_C$ is elliptic, the extended operator is Fredholm, and its kernel consists of smooth sections. It follows from the Riemann-Roch theorem that the index of this Fredholm operator is the Fredholm index $\ind(C)$ defined in \eqref{eqn:ind}. This is why the moduli space of holomorphic curves near a regular curve $C$, under our simplifying assumption that $C$ is immersed, has dimension $\ind(C)$.

\subsection{Special properties in four dimensions}
\label{sec:4dspecial}

In four dimensions, holomorphic curves enjoy three additional special properties which are important for our story.
To state the first special property, if $p$ is an isolated intersection point of surfaces $S_1$ and $S_2$ in $X$, let $m_p(S_1\cap S_2)\in\Z$ denote the intersection multiplicity at $p$.

\begin{intpos}
 Let $C_1$ and $C_2$ be distinct irreducible somewhere injective $J$-holomorphic curves in a symplectic four-manifold. Then the intersection points of $C_1$ and $C_2$ are isolated; and for each $p\in C_1\cap C_2$, the intersection multiplicity $m_p(C_1\cap C_2)>0$. Moreover, $m_p(C_1\cap C_2)=1$ if and only if $C_1$ and $C_2$ are embedded near $p$ and intersect transversely at $p$.
\end{intpos}

It is easy to see that if $C_1$ and $C_2$ are embedded near $p$ and intersect transversely at $p$, so that $m_p(C_1\cap C_2)=\pm1$, then in fact $m_p(C_1\cap C_2)=+1$, esentially because a complex vector space has a canonical orientation. The hard part of the theorem is to deal with the cases where $C_1$ and $C_2$ are not embedded near $p$ or do not intersect transversely at $p$.

In particular, intersection positivity implies that the homological intersection number
\[
[C_1]\cdot[C_2]\ = \sum_{p\in C_1\cap C_2}m_p(C_1\cap C_2)\ge 0,
\]
with equality if and only if $C_1$ and $C_2$ are disjoint. Note that the assumption that $C_1$ and $C_2$ are distinct is crucial. A single holomorphic curve $C$ can have $[C]\cdot [C]<0$; for example, the exceptional divisor in a blowup is a holomorphic sphere $C$ of square $-1$. What intersection positivity implies in this case is that the exceptional divisor is the unique holomorphic curve in its homology class.

The second special property of holomorphic curves in four dimensions is the adjunction formula. To state it, define a {\em singularity\/} of a somewhere injective $J$-holomorphic curve $C$ in a symplectic four-manifold $X$ to be a point in $X$ where $C$ is not locally an embedding. A {\em node\/} is a singularity given by a transverse self-intersection whose inverse image in the domain of $C$ consists of two points (where $C$ is an immersion). Let $\chi(C)$ denote the Euler characteristic of the domain of $C$ (which may be larger than the Euler characteristic of the image of $C$ in $X$ if there are singularities).

\begin{adjfor} Let $C$ be a somewhere injective $J$-holomorphic curve in a symplectic four-manifold $(X,\omega)$. Then the singularities of $C$ are isolated, and
\begin{equation}
\label{eqn:adj4}
\langle c_1(TX),[C]\rangle = \chi(C) + [C]\cdot[C]-2\delta(C)
\end{equation}
where $\delta(C)$ is a count of the singularities of $C$ with positive integer weights. Moreover, a singularity has weight $1$ if and only if it is a node.
\end{adjfor}

In particular, we have
\begin{equation}
\label{eqn:adj4i}
\chi(C) + [C]\cdot[C] - \langle c_1(TX),[C]\rangle \ge 0,
\end{equation}
with equality if and only if $C$ is embedded.

\begin{exercise}
\label{ex:adj}
Prove the adjunction formula in the special case when $C$ is immersed and the only singularities of $C$ are nodes.
\end{exercise}

The third special property of holomorphic curves in four dimensions is a version of Gromov compactness using currents, which does not require any genus bound. The usual version of Gromov compactness asserts that a sequence of holomorphic curves of fixed genus with an upper bound on the symplectic area has a subsequence which converges in an appropriate sense to a holomorphic curve. In the connection with Seiberg-Witten theory, multiply covered holomorphic curves naturally arise, but the information about the branch points, and hence about the genus of their domains, is not relevant. To keep track of the relevant information, define a {\em holomorphic current\/} in $X$ to be a finite set of pairs $\mc{C} = \{(C_i,d_i)\}$ where the $C_i$ are distinct irreducible somewhere injective $J$-holomorphic curves, and the $d_i$ are positive integers. 

\begin{gcc} (Taubes, \cite[Prop.\ 3.3]{tgc})
Let $(X,\omega)$ be a compact symplectic four-manifold, possibly with boundary, and let $J$ be an $\omega$-compatible almost complex structure. Let $\{\mc{C}_n\}_{n\ge 1}$ be a sequence of $J$-holomorphic currents (possibly with boundary in $\partial X$) such that $\int_{\mc{C}_n}\omega$ has an $n$-independent upper bound. Then there is a subsequence which converges as a current and as a point set to a $J$-holomorphic current $\mc{C}\subset X$ (possibly with boundary in $\partial X$).
\end{gcc}

Here ``convergence as a current'' means that if $\sigma$ is any 2-form then $\lim_{n\to\infty}\int_{\mc{C}_n}\sigma=\int_\mc{C}\sigma$. ``Convergence as a point set'' means that the corresponding subsets of $X$ converge with respect to the metric on compact sets defined by
\[
d(K_1,K_2) = \sup_{x_1\in K_1}\inf_{x_2\in K_2}d(x_1,x_2) + \sup_{x_2\in K_2}\inf_{x_1\in K_1}d(x_2,x_1).
\]

\subsection{Taubes's Gromov invariant}
\label{sec:defgr}

We now have enough background in place to define Taubes's Gromov invariant. While the definition is a bit complicated, we will be able to compute examples in \S\ref{sec:mappingtorusexample}, and this is a useful warmup for the definition of ECH.

\paragraph{What to count.}

Let $(X^4,\omega)$ be a closed connected symplectic four-manifold, and let $A\in H_2(X)$. We define the Gromov invariant $Gr(X,\omega,A)\in{\mathbb Z}$ as follows. Fix a generic $\omega$-compatible almost complex structure $J$. The rough idea is to count $J$-holomorphic currents representing the homology class $A$ in ``maximum dimensional moduli spaces''.

To explain the latter notion, define an integer
\begin{equation}
\label{eqn:I4}
I(A) = \langle c_1(TX),A\rangle + A\cdot A.
\end{equation}
In fact one can show that $I(A)$ is always even. The integer $I(A)$  is the closed four-manifold version of the ECH index, a crucial notion which we will introduce in \S\ref{sec:ECHindex}. For now, the significance of the integer $I(A)$ is the following. Let $C$ be a somewhere injective $J$-holomorphic curve. By \eqref{eqn:ind}, the Fredholm index of $C$ is given by
\begin{equation}
\label{eqn:ind4}
\ind(C) = -\chi(C) + 2\langle c_1(TX),[C]\rangle.
\end{equation}
It follows from this equation and the adjunction formula \eqref{eqn:adj4} that
\begin{equation}
\label{eqn:ie4}
\ind(C) = I([C]) - 2\delta(C).
\end{equation}
That is, the maximum possible value of $\ind(C)$ for a somewhere injective holomorphic curve $C$ with homology class $[C]=A$ is $I(A)$, which is attained exactly when $C$ is embedded.

The Gromov invariant $Gr(X,\omega,A)\in\Z$ is now a count of ``admissible'' holomorphic currents in the homology clas $A$.
Here the homology class of a holomorphic current $\mc{C}=\{(C_i,d_i)\}$ is defined by
\[
[\mc{C}] = \sum_id_i[C_i] \in H_2(X).
\]
Furthermore, the current $\mc{C}$ is called ``admissible'' if $d_i=1$ whenever $C_i$ is a sphere with $[C_i]\cdot[C_i]<0$.

If $I(A)<0$, then there are no admissible holomorphic currents in the homology class $A$ as we will show in a moment, and we define $Gr(X,\omega,A)=0$.

The most important case for our story is when $I(A)=0$. The admissible holomorphic currents in this case are described by the following lemma.

\begin{lemma}
\label{lem:I04}
Let $\mc{C}=\{(C_i,d_i)\}$ be an admissible holomorphic current with homology class $[\mc{C}]=A$. Then $I(A)\ge 0$. Moreover, if $I(A)=0$, then the following hold:
\begin{description}
\item{(a)} The holomorphic curves $C_i$ are embedded and disjoint.
\item{(b)} $d_i=1$ unless $C_i$ is a torus with $[C_i]\cdot[C_i]=0$.
\item{(c)} $\ind(C_i)=I([C_i])=0$ for each $i$.
\end{description}
\end{lemma}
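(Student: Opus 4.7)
The plan is to expand $I(A)$ as a quadratic form in the pairs $(C_i,d_i)$ and check term-by-term positivity, using the three special properties of four-dimensional holomorphic curves recalled in \S\ref{sec:4dspecial} together with generic transversality.

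First, I would write out
\[
I(A) = \Bigl\langle c_1(TX), \sum_i d_i [C_i]\Bigr\rangle + \Bigl(\sum_i d_i [C_i]\Bigr)\cdot\Bigl(\sum_j d_j [C_j]\Bigr),
\]
and, using \eqref{eqn:ie4} applied to each somewhere injective $C_i$ to replace $\langle c_1(TX),[C_i]\rangle$, rearrange into
\[
I(A) = \sum_i d_i\, I([C_i]) \;+\; \sum_i d_i(d_i-1)\, [C_i]\cdot[C_i] \;+\; 2\!\!\sum_{i<j}\! d_id_j\, [C_i]\cdot[C_j].
\]
(The first term absorbs $d_i[C_i]\cdot[C_i]$, leaving $d_i(d_i-1)[C_i]\cdot[C_i]$ in the diagonal self-intersection sum.)

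The three summands are each nonnegative. For the first, since $J$ is generic and each $C_i$ is somewhere injective, $\ind(C_i)\ge 0$, so \eqref{eqn:ie4} gives $I([C_i]) = \ind(C_i) + 2\delta(C_i) \ge 0$. For the third, intersection positivity applied to the distinct somewhere injective curves $C_i\ne C_j$ gives $[C_i]\cdot[C_j]\ge 0$. The middle term is the one requiring the admissibility hypothesis: when $d_i>1$, I would do a case analysis on $g_i :=$ genus of $C_i$. If $g_i\ge 2$, the adjunction formula combined with $\ind(C_i)\ge 0$ forces $[C_i]\cdot[C_i] \ge g_i-1 > 0$; if $g_i=1$, it forces $[C_i]\cdot[C_i]\ge 0$; and if $g_i=0$, admissibility directly requires $[C_i]\cdot[C_i]\ge 0$. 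Thus every factor in $d_i(d_i-1)[C_i]\cdot[C_i]$ is nonnegative, establishing $I(A)\ge 0$.

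When $I(A)=0$, each of the three summands vanishes separately. Vanishing of the first summand gives $I([C_i])=0$ for all $i$, hence $\ind(C_i)=0$ and $\delta(C_i)=0$, so each $C_i$ is embedded (this is (c) and the embeddedness half of (a)). Vanishing of the third summand gives $[C_i]\cdot[C_j]=0$ for $i\ne j$; by intersection positivity this forces $C_i\cap C_j=\emptyset$, completing (a). The main work is in (b): from the middle term, whenever $d_i>1$ we must have $[C_i]\cdot[C_i]=0$, and the genus case analysis above shows this is only compatible with $g_i=1$ (since $g_i\ge 2$ forces $[C_i]^2>0$, while the sphere case combined with $\ind(C_i)=0$ and $\delta(C_i)=0$ would force $[C_i]^2=-1$, contradicting admissibility).

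The main obstacle is organizing the genus case analysis cleanly; in particular, one has to carry the adjunction formula \eqref{eqn:adj4} and the index formula \eqref{eqn:ind4} in parallel to pin down the sign of $[C_i]\cdot[C_i]$ in each genus, and the argument for (b) uses all three of transversality, adjunction, and the admissibility condition simultaneously. Everything else is bookkeeping.
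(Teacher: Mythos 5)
Your proposal is correct and follows essentially the same route as the paper: expand $I(A)$ as a quadratic form in the $d_i$, then use genericity ($\ind(C_i)\ge 0$), the identity \eqref{eqn:ie4}, the adjunction inequality (equivalently $\chi(C_i)+2[C_i]\cdot[C_i]\ge 0$, i.e.\ $[C_i]^2\ge g_i-1$), intersection positivity, and admissibility to show each summand is nonnegative, reading off (a)--(c) from the vanishing of each term when $I(A)=0$. Your exclusion of the square-zero sphere in (b) via $\ind=0$, $\delta=0\Rightarrow[C_i]^2=-1$ is the same computation the paper phrases as ``a sphere with $[C_i]^2=0$ would have $I([C_i])=2$.''
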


\begin{proof}
It follows directly from the definition of $I$ that if $B_1,B_2\in H_2(X)$ then
\begin{equation}
\label{eqn:Isum}
I(B_1+B_2)=I(B_1)+I(B_2)+2B_1\cdot B_2.
\end{equation}
Applying this to $A=\sum_id_i[C_i]$ gives
\begin{equation}
\label{eqn:I(A)}
I(A) = \sum_id_iI([C_i]) + \sum_i(d_i^2-d_i)[C_i]\cdot[C_i] + \sum_{i\neq j}[C_i]\cdot [C_j].
\end{equation}
Now the terms on the right hand side are all nonnegative. To see this, first note that $\ind(C_i)\ge 0$, since we are assuming that $J$ is generic so that $C_i$ is regular. So by \eqref{eqn:ie4} we have $I([C_i])\ge 0$, with equality only if $C_i$ is embedded. In addition, if we combine the inequality $\ind(C_i)\ge 0$ with the adjunction formula \eqref{eqn:adj4i} for $C_i$, we find that
\begin{equation}
\label{eqn:adjchi}
\chi(C_i) + 2[C_i]\cdot[C_i]\ge 0
\end{equation}
with equality only if $C_i$ is embedded. In particular, the only way that $[C_i]\cdot[C_i]$ can be negative is if $C_i$ is an embedded sphere with square $-1$; and in this case admissibility forces $d_i=1$, so that the corresponding term in \eqref{eqn:I(A)} is zero. Finally, we know by intersection positivity that $[C_i]\cdot[C_j]\ge 0 $ with equality if and only if $C_i$ and $C_j$ are disjoint. We conclude that $I(A)\ge 0$, and if $I(A)=0$ then the curves $C_i$ are embedded and disjoint, $\ind(C_i)=I([C_i])=0$, and $d_i>1$ only if $C_i$ is a torus with square zero. (The inequality \eqref{eqn:adjchi} also allows $[C_i]\cdot[C_i]=0$ when $C_i$ is a sphere, but this would require $I([C_i])= 2$ and so cannot happen here.)
\end{proof}

One consequence of this lemma is that when $I(A)=0$, we have a finite set of holomorphic currents to count:

\begin{lemma}
\label{lem:finite4}
If $I(A)=0$, then the set of admissible holomorphic currrents $\mc{C}$ with homology class $[\mc{C}]=A$ is finite.
\end{lemma}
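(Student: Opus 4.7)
The plan is to argue by contradiction, combining Gromov Compactness via Currents with the regularity of $J$. Suppose there were infinitely many distinct admissible holomorphic currents $\mc{C}_n = \{(C_{n,i}, d_{n,i})\}$ in class $A$. Then $\int_{\mc{C}_n}\omega = \langle[\omega], A\rangle$ is independent of $n$, so after passing to a subsequence, Gromov Compactness via Currents produces a $J$-holomorphic current $\mc{C} = \{(D_j, e_j)\}$ such that $\mc{C}_n \to \mc{C}$ both as currents and as point sets. Convergence as currents preserves the homology class, so $[\mc{C}] = A$ and in particular $I([\mc{C}]) = 0$.

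The next step is to verify that the limit $\mc{C}$ is itself admissible, so that Lemma~\ref{lem:I04} applies to it. Since $J$ is generic, every somewhere injective $J$-holomorphic curve $E$ is regular, so $\ind(E) \ge 0$; combined with \eqref{eqn:ind4} and the adjunction formula, this forces any embedded $J$-holomorphic sphere $E$ with $[E]\cdot[E] < 0$ to satisfy $[E]\cdot[E] = -1$ and $\ind(E) = 0$, and hence to be isolated in its moduli space. If some component $D_j$ of $\mc{C}$ is such a sphere, then for $n$ large any somewhere injective component of $\mc{C}_n$ whose image is close to $D_j$ must coincide with $D_j$; since the components of each $\mc{C}_n$ are distinct, at most one of them equals $D_j$, and admissibility of $\mc{C}_n$ forces its multiplicity to be $1$. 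Passing to the limit as currents then gives $e_j = 1$.

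With admissibility of $\mc{C}$ established, Lemma~\ref{lem:I04} implies that each $D_j$ is embedded, the $D_j$ are pairwise disjoint, and $\ind(D_j) = 0$; in particular each $D_j$ is isolated in the space of somewhere injective $J$-holomorphic curves. For $n$ sufficiently large, point-set convergence forces every (necessarily irreducible and hence connected) somewhere injective component of $\mc{C}_n$ to be contained in a small tubular neighborhood of exactly one $D_j$, and by isolation it must equal $D_j$. Writing $\mc{C}_n = \{(D_j, f_{n,j})\}$ with nonnegative integer multiplicities, current convergence gives $f_{n,j} \to e_j$, hence $f_{n,j} = e_j$ for $n$ large, so $\mc{C}_n = \mc{C}$ eventually, contradicting distinctness.

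I expect the main obstacle to be the local rigidity step used twice above: that an embedded regular $J$-holomorphic curve $D$ with $\ind(D) = 0$ is the unique somewhere injective $J$-holomorphic curve in a small tubular neighborhood of its image. Any such nearby curve can be parameterized as a small section of the normal bundle $N_D$ solving an equation whose linearization at zero is the deformation operator of $D$; since this operator is an isomorphism in the regular, $\ind = 0$ setting, the implicit function theorem forces the section to vanish. This is standard, but slightly delicate for curves arising as limits, since a priori one only has point-set closeness and must first upgrade to the section description via a local compactness argument.
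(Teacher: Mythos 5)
Your overall strategy is the same as the paper's: extract a limiting current $\mc{C}_\infty$ via Gromov compactness, check it is admissible, apply Lemma~\ref{lem:I04} to see its components are embedded, disjoint, and rigid, and then derive a contradiction from the convergence. The admissibility step, which the paper omits, is handled plausibly. However, there is a genuine gap in your local rigidity step, and it occurs exactly in the case the paper flags as requiring extra input. You assert that any somewhere injective $J$-holomorphic curve contained in a small tubular neighborhood of an isolated embedded index-zero curve $D_j$ ``can be parameterized as a small section of the normal bundle $N_{D_j}$'' and hence must equal $D_j$ by the implicit function theorem. This is false in general: an irreducible somewhere injective curve in the tubular neighborhood can be a \emph{multi-section}, covering $D_j$ with degree $d>1$ under the projection $N_{D_j}\to D_j$. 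Such a curve is not close to $D_j$ in the moduli space of somewhere injective curves (it has different genus and homology class $d[D_j]$), so isolation of $D_j$ and surjectivity of $D_{D_j}$ say nothing about it. When $e_j=1$ the current convergence forces degree one and your argument goes through; but when $D_j$ is a square-zero torus with $e_j>1$ — precisely the case permitted by Lemma~\ref{lem:I04} — the approximating currents $\mc{C}_n$ could a priori contain a connected genus-one multi-section of degree $e_j$ over $D_j$, which has $I=\ind=0$ and is not excluded by any index count.

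Ruling out these multi-sections is the content of the additional lemma from \cite{taubes:counting} that the paper invokes: for generic $J$, the \emph{unbranched multiple covers} of embedded square-zero tori are also regular, hence rigid (their deformation operators have index zero and trivial kernel), and a multi-section converging to the $d$-fold cover would produce a nontrivial element of that kernel. This is a special, nontrivial transversality statement — multiple covers are not regular for generic $J$ in general — so it cannot be absorbed into the standard IFT argument you describe. Your proof is complete only under the additional hypothesis that every $e_j=1$; to finish you must cite or prove the regularity of unbranched covers of square-zero tori.
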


\begin{proof}
Suppose $\{\mc{C}_k\}_{k=1,2,\ldots}$ is an infinite sequence of distinct such currents. By Gromov compactness with currents, the sequence converges as a current and a point set to a holomorphic current $\mc{C}_\infty$. Convergence as a current implies that $[\mc{C}_\infty]=A$. An argument using the Fredholm index which we omit shows that $\mc{C}_\infty$ is also admissible. Then by Lemma~\ref{lem:I04}, $\mc{C}_\infty=\{(C_i,d_i)\}$ where $\ind(C_i)=0$ for each $i$ and $d_i=1$ unless $C_i$ is a torus of square zero. We are assuming that $J$ is generic, so each $C_i$ is isolated in the moduli space of holomorphic curves. If every $d_i=1$, one can use convergence as a current and a point set to show that possibly after passing to a subsequence, each $\mc{C}_k$ has an embedded component such that the sequence of these embedded components converges in the smooth topology to $C_i$, which is a contradiction. If any $d_i>1$, one needs an additional lemma from \cite{taubes:counting} asserting that if $J$ is generic, then the unbranched multiple covers of the tori of square zero are also regular.
\end{proof}

\paragraph{How to count.}

When $I(A)=0$, we define $Gr(X,\omega,A)\in\Z$ to be the sum, over all admissible holomorphic currents $\mc{C}=\{(C_i,d_i)\}$ with homology class $[\mc{C}]=A$, of a weight $w(\mc{C})\in\Z$ which we now define. The weight is given by a product of weights associated to the irreducible components,
\[
w(\mc{C})=\prod_iw(C_i,d_i).
\]
To complete the definition, we need to define the integer $w(C,d)$ when $C$ is an irreducible embedded holomorphic curve with $\ind=0$, and $d$ is a positive integer (which is $1$ unless $C$ is a torus with square $0$).

If $d=1$, then $W(C,1)=\epsilon(C)\in\{\pm1\}$ is defined as follows. 
Roughly speaking, $\epsilon(C)$ is the sign of the determinant of the operator $D_C$, which is the sign of the spectral flow from $D_{C}$ (extended as in \eqref{eqn:DCext}) to a complex linear operator. What this means is the following: one can show that there exists a differentiable 1-parameter family of operators $\{D_t\}_{t\in[0,1]}$ between the same spaces such that $D_0=D_{C}$; the operator $D_1$ is complex linear; there are only finitely many $t$ such that $D_t$ is not invertible; and for each such $t$, the operator $D_t$ has one-dimensional kernel, and the derivative of $D_t$ defines an isomorphism from the kernel of $D_t$ to the cokernel of $D_t$. Then $\epsilon(C)$ is simply $-1$ to the number of such $t$. One can show that this is well-defined, and we will compute some examples in \S\ref{sec:mappingtorusexample}.

It remains to define the weights $w(C,d)$ when $d>1$ and $C$ is a torus of square zero. The torus $C$ has three connected unbranched double covers, classified by nonzero elements of $H^1(C;{\mathbb Z}/2)$. By \cite{taubes:counting}, if $J$ is generic then the corresponding doubly covered holomorphic curves are regular. Each of these double covers then has a sign $\epsilon$ defined above. The weight $w(C,d)$ depends only on $d$, the sign of $C$, and the number of double covers with each sign. We denote this number by $f_{\pm,k}(d)$, where $\pm$ indicates the sign $\epsilon(C)$, and $k\in\{0,1,2,3\}$ is the number of double covers whose sign disagrees with that of $C$. To define the numbers $f_{\pm,k}(d)$, combine them into a generating function
\[
f_{\pm,k}=1+\sum_{d\ge 1}f_{\pm,k}(d)t^d.
\]
Then
\begin{equation}
\label{eqn:gf}
\begin{split}
f_{+,0} &= \frac{1}{1-t},\\
f_{+,1}&=1+t,\\
f_{+,2} &= \frac{1+t}{1+t^2},\\
f_{+,3} &= \frac{(1+t)(1-t^2)}{1+t^2},\\
f_{-,k}&=\frac{1}{f_{+,k}}.
\end{split}
\end{equation}

Where do these generating functions come from? It is shown in \cite{taubes:counting} that $Gr(X,\omega,A)$ is independent of the choice of $J$ and invariant under deformation of the symplectic form $\omega$; another proof is given in \cite{ip}. This invariance requires the generating functions $f_{\pm,k}$ to satisfy certain relations, because of bifurcations of holomorphic curves that can occur as one deforms $J$ or $\omega$. For example, it is possible for a pair of cancelling tori with opposite signs to be created or destroyed, and this forces the relation $f_{+,k}f_{-,k}=1$. We will see another relation in the example in \S\ref{sec:mappingtorusexample}. One still has some leeway in choosing the generating functions to obtain an invariant of symplectic four-manifolds; however the choice above is the one that agrees with Seiberg-Witten theory, for reasons we will explain in \S\ref{sec:gf}.

\paragraph{The case $I(A)>0$.}
To define the Gromov invariant $Gr(X,\omega,A)$ when  $I(A)\ge 0$, choose $I(A)/2$ generic points $x_1,\ldots,x_{I(A)/2}\in X$. Then $Gr(X,A)$ is a count of admissible holomorphic currents $\mc{C}$ in the homology class $A$ that pass through all of the points $x_1,\ldots,x_{I(A)/2}$. We omit the details as this case is less important for motivating the definition of ECH, although it is related to the $U$ map introduced in \S\ref{sec:addstr}. The Gromov invariants for classes $A$ with $I(A)>0$ are interesting when $b_2^+(X)=1$. However the
``simple type conjecture'' for Seiberg-Witten invariants implies that if $b_2^+(X)>1$ and $b_1(X)=0$, then $Gr(X,\omega,A)=0$ for all classes $A$ with $I(A)>0$.

\subsection{The mapping torus example}
\label{sec:mappingtorusexample}

We now compute Taubes's Gromov invariant for an interesting family of examples, namely mapping tori cross $S^1$, for $S^1$-invariant homology classes. This example will indicate what the generators of the ECH chain complex should be.

\paragraph{Mapping tori.}

Let $(\Sigma,\omega)$ be a closed connected symplectic two-manifold and let $\phi$ be a symplectomorphism from $(\Sigma,\omega)$ to itself. The {\em mapping torus\/} of $\phi$ is the three-manifold
\[
\begin{split}
Y_\phi &= [0,1]\times \Sigma/\sim,\\
(1,x) &\sim (0,\phi(x)).
\end{split}
\]
The three-manifold $Y_\phi$ fibers over $S^1={\mathbb R}/{\mathbb Z}$ with fiber $\Sigma$, and $\omega$ defines a symplectic form on each fiber. We denote the $[0,1]$ coordinate on $[0,1]\times\Sigma$ by $t$. The vector field $\partial_t$ on $[0,1]\times \Sigma$
descends to a vector field on $Y_\phi$, which we also denote by $\partial_t$. A fixed point of the map $\phi^p$ determines a periodic orbit of the vector field $\partial_t$ of period $p$, and conversely a simple periodic orbit of $\partial_t$ of period $p$ determines $p$ fixed points of $\phi^p$.

The fiberwise symplectic form $\omega$ extends to a closed 2-form on $Y_\phi$ which annihilates $\partial_t$, and which we still denote by $\omega$. We then define a symplectic form $\Omega$ on $S^1\times Y_\phi$ by
\begin{equation}
\label{eqn:Omegamt}
\Omega = ds\wedge dt + \omega
\end{equation}
where $s$ denotes the $S^1$ coordinate.

We will now calculate the Gromov invariant $Gr(S^1\times Y_\phi,\Omega,A)$, where
\[
A = [S^1]\times \Gamma\in H_2(S^1\times Y_\phi)
\]
for some $\Gamma\in H_1(Y_\phi)$. Observe to start that $I(A)=0$, so we just need to count holomorphic currents of the type described in Lemma~\ref{lem:I04}.

\paragraph{Almost complex structure.}

Choose a fiberwise $\omega$-compatible almost complex structure $J$ on the fibers of $Y_\phi\to S^1$.  That is, for each $t\in S^1={\mathbb R}/{\mathbb Z}$, choose an almost complex structure $J_t$ on the fiber over $t$, such that $J_t$ varies smoothly with $t$. Note that compatibility here just means that $J_t$ rotates positively with respect to the orientation on $\Sigma$.

The fiberwise almost complex structure extends to a unique almost complex structure $J$ on $S^1\times Y_\phi$ such that 
\begin{equation}
\label{eqn:Jst}
J\partial_s=\partial_t.
\end{equation}
It is an exercise to check that $J$ is $\Omega$-compatible.

\paragraph{Holomorphic curves.}

If $\gamma\subset Y_\phi$ is an embedded periodic orbit of $\partial_t$, then it follows from \eqref{eqn:Jst} that $S^1\times\gamma\subset S^1\times Y$ is an embedded $J$-holomorphic torus. These are all the holomorphic curves we need to consider, because of the following lemma.

\begin{lemma}
\label{lem:S1inv}
If ${\mc C}=\{(C_i,d_i)\}$ is a $J$-holomorphic current in $S^1\times Y_\phi$ with homology class $A=[S^1]\times\Gamma$, then each $C_i$ is a torus $S^1\times\gamma$ with $\gamma$ a periodic orbit of $\partial_t$.
\end{lemma}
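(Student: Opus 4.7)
The plan is to use the vanishing symplectic area of $A$ against the fiberwise 2-form $\omega$ to force each irreducible component $C_i$ of $\mc{C}$ to be everywhere tangent to the integrable distribution $\op{span}(\partial_s, \partial_t)$, and then to identify the compact leaves of this distribution as the tori $S^1 \times \gamma$ over closed orbits of $\partial_t$. There is no real ``hard part''; the main conceptual point is an energy identity that exploits the special structure $\Omega = ds \wedge dt + \omega$ of the symplectic form.

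First I would show $\int_A \omega = 0$, where by $\omega$ I now mean its pullback from $Y_\phi$ to $S^1 \times Y_\phi$. Since $\omega$ is pulled back from $Y_\phi$ and satisfies $\iota_{\partial_t}\omega = 0$ on $Y_\phi$, it satisfies both $\iota_{\partial_s}\omega = 0$ and $\iota_{\partial_t}\omega = 0$ on $S^1 \times Y_\phi$. Parametrizing a representative of $[S^1] \times \Gamma$ by $(s,t) \mapsto (s, \gamma_0(t))$ for some loop $\gamma_0$ representing $\Gamma$, the pullback of $\omega$ to $S^1 \times S^1$ then vanishes identically. Next I would verify the pointwise nonnegativity $\omega(v, Jv) \geq 0$ for any tangent vector to a $J$-holomorphic curve: writing $v = a\partial_s + b\partial_t + e$ with $e$ in the fiber direction of $Y_\phi \to S^1$, the identities $J\partial_s = \partial_t$ and $J^2 = -1$ give $Jv = a\partial_t - b\partial_s + J_te$, and then $\omega(v, Jv) = \omega_t(e, J_te) \geq 0$ by fiberwise $\omega$-compatibility of $J_t$, with equality iff $e = 0$. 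Combining this with $\sum_i d_i \int_{C_i} \omega = \int_A \omega = 0$ forces $\int_{C_i} \omega = 0$ for every $i$, and hence $T_pC_i \subset \op{span}(\partial_s, \partial_t)$ at every smooth point of every component.

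The distribution $\op{span}(\partial_s, \partial_t)$ is integrable since $[\partial_s, \partial_t] = 0$, and its leaves are the orbits of the $\R^2$-action generated by the two vector fields. Such a leaf is compact if and only if the associated $\partial_t$-orbit in $Y_\phi$ is closed, in which case it is precisely the embedded torus $S^1 \times \gamma$. Since $C_i$ is compact and connected (its domain being connected by irreducibility) and is everywhere tangent to this distribution, its image lies in a single compact leaf; dimension counting forces the image to fill out that leaf, and somewhere injectivity then identifies $C_i$ as the holomorphic curve $S^1 \times \gamma$ for an embedded closed orbit $\gamma$ of $\partial_t$, completing the proof.
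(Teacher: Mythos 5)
Your proposal is correct and follows essentially the same route as the paper: both use $\langle A,[\omega]\rangle=0$ together with the pointwise inequality $\omega(v,Jv)\ge 0$ (with equality exactly on $\op{span}(\partial_s,\partial_t)$) to force each $\int_{C_i}\omega=0$ and hence tangency to that distribution. You merely spell out the final identification of the compact leaves with the tori $S^1\times\gamma$, which the paper leaves implicit.
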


\begin{proof}
We have $\langle A,[\omega]\rangle=0$, because the class $A$ is $S^1$-invariant while $\omega$ is pulled back via the projection to $Y_\phi$. On the other hand, by the construction of $J$, the restriction of $\omega$ to any $J$-holomorphic curve $C$ is pointwise nonnegative, with equality only where $C$ is tangent to the span of $\partial_s$ and $\partial_t$ (or singular). Thus $\int_{C_i}\omega=0$ for each $i$, and then each $C_i$ is everywhere tangent to $\partial_s$ and $\partial_t$.
\end{proof}

\paragraph{Transversality and nondegeneracy.}

We now determine when the holomorphic tori $S^1\times\gamma$ are regular.

Let $\gamma$ be a periodic orbit of period $p$, and let $x\in\Sigma$ be one of the corresponding fixed points of $\phi^p$. The fixed point $x$ of $\phi^p$ is called {\em nondegenerate\/} if the differential $d\phi^p_x:T_x\Sigma\to T_x\Sigma$ does not have $1$ as an eigenvalue. In this case, the {\em Lefschetz sign\/} is the sign of $\det(1-d\phi^p_x)$. Also, since the linear map $d\phi^p_x$ is symplectic, we can classify the fixed point $x$ as elliptic, positive hyperbolic, or negative hyperbolic according to the eigenvalues of $d\phi^p_x$, just as we did for Reeb orbits in \S\ref{sec:overview}. In particular, the Lefschetz sign is $+1$ if the fixed point is elliptic or negative hyperbolic, and $-1$ if the fixed point is positive hyperbolic. We say that the periodic orbit $\gamma$ is nondegenerate if the fixed point $x$ is nondegenerate. All of the above conditions depend only on $\gamma$ and not on the choice of corresponding fixed point $x$.

The following lemma tells us that if all periodic orbits $\gamma$ are nondegenerate (which will be the case if $\phi$ is generic), then for any $S^1$-invariant $J$, all the $J$-holomorphic tori that we need to count are regular\footnote{This is very lucky; in other $S^1$-invariant situations, obtaining transversality for $S^1$-invariant $J$ may not be possible. See e.g.\ \cite{fabert,farris} for examples of this difficulty and ways to deal with it.}.

\begin{lemma}
\label{lem:KerDC}
The $J$-holomorphic torus $C=S^1\times\gamma$ is regular
 if and only if the periodic orbit $\gamma$ is nondegenerate. In this case, the sign $\epsilon(C)$ agrees with the Lefschetz sign.
\end{lemma}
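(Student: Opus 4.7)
The plan is to exploit the $S^1$-invariance of the torus $C = S^1 \times \gamma$ and of $J$ to reduce the analysis of the deformation operator $D_C$ to a one-dimensional spectral problem along $\gamma$.

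First I would identify $N_C$ and the form of $D_C$. Using the decomposition $TX = \R\partial_s \oplus \R\partial_t \oplus V$, where $V$ is the vertical tangent bundle of $Y_\phi \to S^1$, together with $TC = \R\partial_s \oplus \R\partial_t$, there is a canonical isomorphism $N_C \cong \pi^*(V|_\gamma)$, with $\pi \colon C \to \gamma$ the projection. Choose a symplectic trivialization of $V|_\gamma$ compatible with $J_t$ and pull it back to $N_C$; then the deformation operator takes the form
$$D_C \psi = \partial_s \psi + L_\gamma \psi, \qquad L_\gamma = J_0 \partial_t + S(t),$$
where $S(t)$ is a symmetric matrix-valued function determined by the linearization of the $\partial_t$-flow. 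A short computation shows that $L_\gamma$ is self-adjoint on $L^2$-sections of $V|_\gamma$, and that a solution of $L_\gamma \psi = 0$ has the form $\psi(t) = \Phi(t) v_0$ with $\Phi(t)$ the linearized $\partial_t$-flow along $\gamma$; imposing $\psi(p) = \psi(0)$ then yields a canonical identification $\ker L_\gamma \cong \ker(1 - d\phi^p_x)$.

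Next I would compute $\ker D_C$ via complex Fourier decomposition in $s$. Expanding $\psi(s,t) = \sum_{n \in \Z} e^{2\pi i n s} \psi_n(t)$ and feeding this into $D_C \psi = 0$ gives $L_\gamma \psi_n = -2\pi i n\, \psi_n$; since $L_\gamma$ has real spectrum, only the $n = 0$ mode contributes, so $\ker D_C \cong \ker L_\gamma \cong \ker(1 - d\phi^p_x)$. Meanwhile the Fredholm index is $\ind(D_C) = -\chi(C) + 2\langle c_1(TX),[C]\rangle = 0$, because $\chi(T^2) = 0$ and the complex line bundle $V|_\gamma$ over a circle is trivial. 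Hence $D_C$ is surjective iff it is injective iff $\gamma$ is nondegenerate, which proves the regularity claim.

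For the sign, assume $\gamma$ is nondegenerate and deform $D_C$ through $D_C^\tau = \partial_s + J_0 \partial_t + \tau S(t)$, perturbed at the endpoint $\tau = 0$ by a small generic $\C$-linear term to make it invertible. The same Fourier argument shows that zero crossings of $D_C^\tau$ occur only in the $n = 0$ mode, so $\epsilon(C) = (-1)^{\mathrm{sf}}$, where $\mathrm{sf}$ is the spectral flow of the self-adjoint one-parameter family $L_\gamma^\tau$; each crossing corresponds to a $\tau$ at which $1$ is an eigenvalue of the deformed symplectic monodromy $\Phi^\tau(p) \in \mathrm{Sp}(2,\R)$. The main obstacle is matching the parity of this signed count with the Lefschetz sign $\mathrm{sign}\,\det(1-d\phi^p_x)$; this is done by tracking how the pair of eigenvalues of $\Phi^\tau(p)$ evolves with $\tau$, verifying case by case that the parity gives $+1$ for elliptic and negative hyperbolic $\gamma$ and $-1$ for positive hyperbolic $\gamma$.
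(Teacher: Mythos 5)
Your proposal is correct, and its backbone is the same as the paper's: identify $N_C$ with the pullback of the vertical bundle $T^{vert}Y_\phi|_\gamma$, write $D_C$ as $\partial_s$ plus a first-order operator in $t$, and identify the $s$-independent part of the kernel with $\Ker(1-d\phi^p_x)$. You differ in two sub-steps, both legitimately. First, to see that every element of $\Ker(D_C)$ is $S^1$-invariant, you complexify and Fourier-decompose in $s$, using the reality of the spectrum of the formally self-adjoint operator $L_\gamma$ to kill the $n\neq 0$ modes; the paper instead argues homologically (Exercise~\ref{ex:KerDC}): exponentiating $\psi\in\Ker(D_C)$ produces a surface homologous to $C$, forcing $\int_C\omega(\partial_s\psi,\nabla_t\psi)=0$, while the equation $\nabla_t\psi=J\partial_s\psi$ makes the integrand $\|\partial_s\psi\|^2$. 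Your version is more computational and is essentially the same mechanism that reappears in the asymptotic analysis of Lemma~\ref{lem:hwz2}; the paper's is softer. Second, for the sign, you contract the zeroth-order term via $\tau S(t)$ and compute the spectral flow of the resulting family of self-adjoint operators on the circle uniformly in all three cases, relating each crossing to the deformed symplectic monodromy acquiring eigenvalue $1$; the paper instead exhibits a complex-linear representative of $D_C$ outright in the elliptic case (so $\epsilon(C)=+1$ with no crossings at all) and then reduces the hyperbolic cases to the elliptic one by a further $S^1$-invariant deformation. Your route is the standard Conley--Zehnder-style spectral-flow computation and buys a uniform treatment of all orbit types, at the cost that the concluding case-by-case parity check against $\operatorname{sign}\det(1-d\phi^p_x)$ --- which you, like the paper, defer --- is where the real content sits; you should also make sure the path (or the final complex-linear perturbation) is chosen generically so that all crossings are simple and transverse, as required by the definition of $\epsilon(C)$.
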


\begin{proof}
Since the deformation operator
\[
D_C:\Gamma(N_C)\longrightarrow \Gamma(T^{0,1}C\tensor N_C)
\]
has index zero, $C$ is regular if and only if $\Ker(D_C)=\{0\}$.

To determine $\Ker(D_C)$, we need to understand the deformation operator $D_C$ more explicitly.
To start, identify $N_C$ with the pullback of the normal bundle to $\gamma$ in $Y_\phi$. The latter can be identified with $T^{vert}Y_\phi|_\gamma$, where $T^{vert}Y_\phi$ denotes the vertical tangent bundle of the fiber bundle $Y_\phi\to S^1$. The linearization of the flow $\partial_t$ along $\gamma$ defines a connection $\nabla$ on the bundle $T^{vert}Y_\phi|_\gamma$. 

\begin{exercise}
\label{ex:DC}
With the above identifications, if  we use $i(ds-idt)$ to trivialize $T^{0,1}C$, then
\[
D_C = \partial_s + J\nabla_t.
\]
\end{exercise}

\begin{exercise}
\label{ex:KerDC}
(See answer in \S\ref{sec:answers}.)
Every element of $\Ker(D_C)$ is $S^1$-invariant, so $\Ker(D_C)$ is identified with the kernel of the operator
\[
\nabla_t:\Gamma(T^{vert}Y_\phi|_\gamma) \longrightarrow \Gamma(T^{vert}Y_\phi|_\gamma).
\]
\end{exercise}

\begin{exercise}
\label{ex:KerJnablat}
Let $p$ denote the period of $\gamma$ and let $x$ be a fixed point of $\phi^p$ corresponding to $\gamma$. Then there is a canonical identification
\[
\Ker(\nabla_t) = \Ker(1-d\phi^p_x).
\]
\end{exercise}

The above three exercises imply that $C$ is regular if and only if $\gamma$ is nondegenerate.

To prove that $\epsilon(C)$ agrees with the Lefschetz sign when $\gamma$ is nondegenerate, suppose first that $\gamma$ is elliptic. Then one can choose a basis for $T_x\Sigma$ in which $d\phi_x^p$ is a rotation. It follows that one can choose a trivialization of $T^{vert}Y_\phi|_\gamma$ in which the parallel transport of the connection $\nabla$ between any two points is a rotation. One can now choose $J$ to be the standard almost complex structure in this trivialization. With these choices, the operator $D_C$ is complex linear, so $\epsilon(C)=1$. The same will be true for any other choice of $J$, because one can find a path between any two almost complex structures $J$, and by the exercises above the operator $D_C$ will never have a nontrivial kernel. On the other hand, the Lefschetz sign is $+1$ in this case because the eigenvalues of $d\phi_x^p$ are complex conjugates of each other.

To prove that $\epsilon(C)$ agrees with the Lefschetz sign when $\gamma$ is not elliptic, one deforms the operator $D_C$ in an $S^1$-invariant fashion to look like the elliptic case and uses the above exercises to show that the spectral flow changes by $\pm1$ whenever one switches between the elliptic case and the positive hyperbolic case, cf.\ \cite[Lem.\ 2.6]{salamon97}.
\end{proof}

\paragraph{How to count multiple covers.}

Assume now that $\phi$ is generic so that all periodic orbits $\gamma$ are nondegenerate.
Then by the above lemmas, the Gromov invariant $Gr(S^1\times Y_\phi,\Omega,[S^1]\times\Gamma)$ counts unions of (possibly multiply covered) periodic orbits of $\partial_t$ in $Y_\phi$ with total homology class $\Gamma$. We now determine the weight with which each union of periodic orbits is counted.

For each embedded torus $C=S^1\times\gamma$, there is a generating function $f_\gamma(t)$ from \eqref{eqn:gf} encoding how its multiple covers are counted; the coefficient of $t^d$ is the number of times we count the current given by the $d$-fold cover of $C$.

\begin{lemma}
\label{lem:gf}
\[
f_\gamma(t) = \left\{\begin{array}{ll} (1-t)^{-1}=1+t+t^2+\cdots, & \mbox{$\gamma$  elliptic},\\ 1-t, & \mbox{$\gamma$ positive hyperbolic},\\ 1+t, &  \mbox{$\gamma$ negative hyperbolic}.
\end{array}
\right.
\]
\end{lemma}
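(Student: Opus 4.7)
The plan is to identify, for each type of $\gamma$, the pair $(\epsilon(C),k)$ in which $\epsilon(C)\in\{\pm1\}$ is the sign of $C=S^1\times\gamma$ and $k\in\{0,1,2,3\}$ is the number of connected unbranched double covers of $C$ whose sign disagrees with $\epsilon(C)$.  Once $(\epsilon(C),k)$ is known, the three cases follow by reading off $f_\gamma=f_{\epsilon(C),k}$ from \eqref{eqn:gf}.

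By Lemma~\ref{lem:KerDC}, $\epsilon(C)$ equals the Lefschetz sign of the fixed point $x$ of $\phi^p$, which is $+$ when $\gamma$ is elliptic or negative hyperbolic and $-$ when $\gamma$ is positive hyperbolic.  The three double covers of $C$ correspond to the three nonzero classes in $H^1(C;\Z/2)=\Z/2\langle[s]\rangle\oplus\Z/2\langle[\gamma]\rangle$: the class $[\gamma]$ gives the cover by $S^1\times\gamma^2$, where $\gamma^2$ is the double iterate of $\gamma$, while the classes $[s]$ and $[s]+[\gamma]$ give two further covers that double the $S^1$ factor.

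For the $[\gamma]$-cover, the same analysis as in Lemma~\ref{lem:KerDC} applied to the $S^1$-invariant map $S^1\times\gamma^2\to S^1\times Y_\phi$ identifies its sign with the Lefschetz sign of $d\phi^{2p}_x=(d\phi^p_x)^2$.  A direct eigenvalue calculation gives sign $+$ when $\gamma$ is elliptic (rotation squared is again rotation) and sign $-$ when $\gamma$ is positive or negative hyperbolic (in either case the square has two positive eigenvalues $\ne 1$, so it is positive hyperbolic).  For the remaining two covers I use the $\Z/2$-splitting of the deformation operator: if $C'\to C$ is the cover corresponding to $\alpha\in H^1(C;\Z/2)$, then under the deck action $D_{C'}=D_C\oplus D_C^\alpha$, where $D_C^\alpha$ is the analogous Cauchy-Riemann operator on $N_C\otimes L_\alpha$ for the real line bundle $L_\alpha$ classified by $\alpha$; correspondingly $\epsilon(C')=\epsilon(C)\cdot\epsilon(D_C^\alpha)$.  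When $\alpha\in\{[s],[s]+[\gamma]\}$ the Fourier modes of $D_C^\alpha$ along $s$ lie in $\Z+\tfrac12$, so the zero mode never appears; in the model from the proof of Lemma~\ref{lem:KerDC}, where $J$ is chosen to make $D_C$ complex linear, the same is true of $D_C^\alpha$, giving $\epsilon(D_C^\alpha)=+$.  A spectral flow argument then shows that this sign is preserved as $d\phi^p_x$ is deformed through the nondegenerate symplectic matrices, because a zero-eigenvalue crossing of $D_C^\alpha$ requires $d\phi^p_x$ to have $-1$ as an eigenvalue, and at such a locus the contributions from the $\pm m$ half-integer modes cancel in pairs.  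Hence $\epsilon(D_C^\alpha)=+$ in all three cases, and the $[s]$- and $[s]+[\gamma]$-covers each have sign $\epsilon(C)$.

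Assembling the cases: when $\gamma$ is elliptic, all three covers have sign $+=\epsilon(C)$, so $k=0$ and $f_\gamma=f_{+,0}=(1-t)^{-1}$; when $\gamma$ is positive hyperbolic, all three covers have sign $-=\epsilon(C)$, so $k=0$ and $f_\gamma=f_{-,0}=1-t$; and when $\gamma$ is negative hyperbolic, only the $[\gamma]$-cover has the opposite sign $-$, so $k=1$ and $f_\gamma=f_{+,1}=1+t$.  The hardest step is the spectral flow argument for $D_C^{[s]}$ and $D_C^{[s]+[\gamma]}$ in the negative hyperbolic case, where the natural deformation from the elliptic model must pass through the locus on which $d\phi^p_x$ has $-1$ as an eigenvalue and $D_C^\alpha$ ceases to be invertible.
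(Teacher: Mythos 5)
Your proposal is correct and follows essentially the same route as the paper's proof: identify $\epsilon(C)$ and $\epsilon(C_t)$ with the Lefschetz signs of $d\phi^p_x$ and $(d\phi^p_x)^2$ respectively, show that the two double covers which double the $S^1$ factor have the same sign as $C$, and read off $f_{\pm,k}$ from \eqref{eqn:gf}; your resulting sign tables $(+,+,+,+)$, $(-,-,-,-)$, $(+,+,+,-)$ agree with the paper's. The one place where you work harder than necessary is the sign of the $[s]$- and $([s]+[\gamma])$-covers: the half-integer-mode cancellation at a $(-1)$-eigenvalue locus never actually has to be confronted, because the argument of Exercise~\ref{ex:KerDC} (the vanishing of $\int_C\omega(\partial_s\psi,\nabla_t\psi)\,ds\,dt$, which applies verbatim to $s$-antiperiodic sections and forces $\partial_s\psi\equiv 0$, hence $\psi\equiv 0$) shows your operator $D_C^\alpha$ is never singular along the deformation, so its spectral flow vanishes outright --- which is essentially the paper's one-line treatment of $C_s$ and $C_{s,t}$.
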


\begin{proof}
To compute the generating function $f_\gamma(t)$, we need to compute the sign of $C$ (which we have already done in Lemma~\ref{lem:KerDC}) as well as the signs of the three connected double covers of $C$. Let $C_s$ denote the double cover obtained by doubling in the $s$ direction, let $C_t$ denote the double cover obtained by doubling in the $t$ direction, and let $C_{s,t}$ denote the third connected double cover. We have $\epsilon(C_s)=\epsilon(C)$, because one can compute the kernels of the operators $D_{C_s}$ and $D_C$ in the same way. After a change of coordinates, one can similarly show that $\epsilon(C_{s,t})=\epsilon(C)$. Finally $\epsilon(C_t)$ is the sign corresponding to the double cover of $\gamma$, which is positive if $\gamma$ is elliptic, and negative if $\gamma$ is positive or negative hyperbolic. So the signs are as shown in the following table:
\[
\begin{array}{c|c|c|c|c}
\gamma & \epsilon(C) & \epsilon(C_s) & \epsilon(C_{s,t}) & \epsilon(C_t)\\
\cline{1-5}
\mbox{elliptic} & +1 & +1 & +1 & +1 \\
\cline{1-5}
\mbox{positive hyperbolic} & -1 & -1 & -1 & -1 \\
\cline{1-5} 
\mbox{negative hyperbolic} & +1 & +1 & +1 & -1
\end{array}
\]
The lemma now follows from these sign calculations and \eqref{eqn:gf}.
\end{proof}

\paragraph{Conclusion.}

The above calculation shows the following:

\begin{proposition}
\label{prop:S1Y}
Let $\phi$ be a symplectomorphism of a closed connected surface $(\Sigma,\omega)$ such that all periodic orbits of $\phi$ are nondegenerate. Then $Gr(S^1\times Y_\phi,\Omega,[S^1]\times\Gamma)$ is a signed count of finite sets of pairs $\{(\gamma_i,d_i)\}$ where:
\begin{description}
\item{(i)} the $\gamma_i$ are distinct embedded periodic orbits of $\phi_t$,
\item{(ii)} the $d_i$ are positive integers,
\item{(iii)} $\sum_id_i[\gamma_i]=\Gamma\in H_1(Y)$, and
\item{(iv)} $d_i=1$ whenever $\gamma_i$ is hyperbolic.
\end{description}
The sign associated to a set $\{(\gamma_i,d_i)\}$ is $-1$ to the number of $i$ such that $\gamma_i$ is positive hyperbolic.
\end{proposition}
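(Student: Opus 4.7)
The plan is to chain together the preceding three lemmas of this subsection with the structural results on Taubes's Gromov invariant. Since $A = [S^1]\times\Gamma$ satisfies $I(A)=0$ (as noted before Lemma~\ref{lem:S1inv}), Lemma~\ref{lem:I04} applies: any admissible holomorphic current $\mc{C}=\{(C_i,d_i)\}$ with $[\mc{C}]=A$ has embedded, pairwise disjoint components $C_i$ with $\ind(C_i)=I([C_i])=0$, and $d_i=1$ unless $C_i$ is a torus of square zero.

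Next I would use Lemma~\ref{lem:S1inv} to identify each component $C_i$ with a torus of the form $S^1\times\gamma_i$, where $\gamma_i$ is an embedded periodic orbit of $\partial_t$ in $Y_\phi$. Each such torus is $S^1$-invariant, hence is homologous to its translate in the $S^1$ direction and therefore has self-intersection zero; this confirms that Lemma~\ref{lem:I04}(b) does not restrict $d_i$ in this setting. The disjointness of the $C_i$ forces the orbits $\gamma_i$ to be distinct (giving (i)), and the homology condition $[\mc{C}]=[S^1]\times\Gamma$ reduces, upon projecting out the $S^1$ factor, to $\sum_id_i[\gamma_i]=\Gamma$, which is (iii). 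The positivity of the multiplicities is (ii) by definition. Nondegeneracy of all periodic orbits combined with Lemma~\ref{lem:KerDC} guarantees that every torus $S^1\times\gamma_i$ is regular, so the count is well-defined.

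It remains to assemble the weight $w(\mc{C})=\prod_iw(C_i,d_i)$. For the multiply covered contributions, Lemma~\ref{lem:gf} gives the generating function $f_{\gamma_i}(t)$. Inspecting the three cases: for positive hyperbolic $\gamma_i$ the generating function $1-t$ has zero coefficient on $t^d$ for $d\ge 2$, and similarly $1+t$ for negative hyperbolic kills $d\ge 2$. Hence every term in the sum defining the Gromov invariant with $d_i\ge 2$ at a hyperbolic $\gamma_i$ contributes zero and may be discarded, yielding condition (iv). For the surviving configurations, Lemma~\ref{lem:KerDC} identifies each sign $\epsilon(C_i)$ with the Lefschetz sign of the corresponding fixed point of $\phi^{p_i}$, which is $+1$ for elliptic and negative hyperbolic $\gamma_i$ and $-1$ for positive hyperbolic $\gamma_i$; the coefficients of $t^{d_i}$ in the elliptic generating function $(1-t)^{-1}$ are all $+1$, so they introduce no additional signs. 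Multiplying these contributions gives the total sign $(-1)^{h_+}$, where $h_+$ is the number of positive hyperbolic $\gamma_i$.

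There is no substantive obstacle: the analytic content has already been packaged in Lemmas~\ref{lem:I04}, \ref{lem:S1inv}, \ref{lem:KerDC}, and \ref{lem:gf}, so the argument is really a bookkeeping translation between Taubes's count of admissible currents and the combinatorial data of orbit sets. The one point requiring a brief verification, which I would flag explicitly, is the self-intersection zero property of the tori $S^1\times\gamma$, which is what allows elliptic orbits to appear with arbitrary multiplicity under Lemma~\ref{lem:I04}(b); this follows immediately from $S^1$-invariance.
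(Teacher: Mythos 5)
Your proposal is correct and takes essentially the same route as the paper, whose proof is just the two-step chain ``Lemma~\ref{lem:S1inv} identifies the currents as unions of tori $S^1\times\gamma_i$ with multiplicities, and Lemma~\ref{lem:gf} shows the weight vanishes unless (iv) holds and otherwise equals $(-1)^{\#\{\text{positive hyperbolic }\gamma_i\}}$''; you have merely made explicit the background appeals to Lemma~\ref{lem:I04} and Lemma~\ref{lem:KerDC}. One small correction to the point you flag: the torus $C=S^1\times\gamma$ is $S^1$-invariant, so its translate in the $S^1$ direction is $C$ itself; the right way to see $[C]\cdot[C]=0$ is to push $\gamma$ off itself inside $Y_\phi$ (the normal bundle of $C$ is pulled back from the normal bundle of the circle $\gamma$ in the $3$-manifold $Y_\phi$, which is a trivial oriented plane bundle), giving a disjoint homologous torus.
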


\begin{proof}
It follows from Lemma~\ref{lem:S1inv} that $Gr(S^1\times Y_\phi,\Omega,[S^1]\times\Gamma)$ is a count, with appropriate weights, of finite sets $\{(\gamma_i,d_i)\}$ satisfying conditions (i)--(iii). The weight associated to a set $\{(\gamma_i,d_i)\}$ is the product over $i$ of the coefficient of $t^{d_i}$ in the generating function $f_{\gamma_i}(t)$. By Lemma~\ref{lem:gf}, this weight is zero unless condition (iv) holds, in which case it is $\pm1$ and given as claimed.
\end{proof}

\subsection{Two remarks on the generating functions}
\label{sec:gf}

We now attempt to motivate the generating functions \eqref{eqn:gf} a
bit more, by explaining why they are what they are in the mapping torus example.

\paragraph{1.} One could try to define an invariant of the isotopy class of $\phi$ by counting multiple covers of tori $S^1\times\gamma$ using other generating funtions. For example, suppose
we choose generating functions $e(t)$, $h_+(t)$, and $h_-(t)$, and replace the generating functions in Lemma~\ref{lem:gf} by
\[
f_\gamma(t) = \left\{\begin{array}{ll} e(t), & \mbox{$\gamma$  elliptic},\\ h_+(t), & \mbox{$\gamma$ positive hyperbolic},\\ h_-(t), &  \mbox{$\gamma$ negative hyperbolic}.
\end{array}
\right.
\]

These generating functions must satisfy certain relations in order to give an isotopy invariant of $\phi$. First, as one isotopes $\phi$, it is possible for a bifurcation to occur in which an elliptic orbit cancels a positive hyperbolic orbit of the same period. To obtain invariance under this bifurcation, we must have
\begin{equation}
\label{eqn:bd}
e(t)h_+(t)=1.
\end{equation}
Second, a ``period-doubling'' bifurcation can occur in which an elliptic orbit turns into a negative hyperbolic orbit of the same period and an elliptic orbit of twice the period. For invariance under this bifurcation we need
\begin{equation}
\label{eqn:pd}
e(t)=h_-(t)e(t^2).
\end{equation}
In fact, any triple of generating functions $e(t)$, $h_+(t)$, and $h_-(t)$ satisfying the relations \eqref{eqn:bd} and \eqref{eqn:pd} will give rise to an invariant of the isotopy class of $\phi$.

The generating functions in Lemma~\ref{lem:gf} are $e(t)=(1-t)^{-1}$ and $h_\pm(t)=1\mp t$, which of course satisfy the relations \eqref{eqn:bd} and \eqref{eqn:pd}.
If we allowed multiply covered hyperbolic orbits also and counted them with their Lefschetz signs, then the generating functions would be $e(t)=(1-t)^{-1}$, $h_+(t)=1-t-t^2-\cdots$, and $h_-(t)=1+t-t^2+\cdots$, which do not satisfy the above relations. Throwing out all multiple covers and defining $e(t)=h_-(t)=1+t$ and $h_+(t)=1-t$ would not work either\footnote{There are of course other triples of generating functions which satisfy the above relations. For example, the Euler characteristic of the mapping torus analogue of symplectic field theory \cite{egh} (just using the $q$ variables) is computed by the generating functions
\[
\begin{split}
e(t)&=(1-t)^{-1}(1-t^2)^{-1}\cdots,\\
h_+(t) &= (1-t)(1-t^2)\cdots,\\
h_-(t) &= (1-t)^{-1}(1-t)^{-3}\cdots
\end{split}
\]
Here the omission of even powers of $(1-t)^{-1}$ in $h_-(t)$ corresponds to the omission of ``bad'' orbits, without which we would not have invariance under period doubling.}.

\paragraph{2.} Given that there are different triples of generating functions that satisfy the relations \eqref{eqn:bd} and \eqref{eqn:pd}, why is the triple in Lemma~\ref{lem:gf} the right one for determining the Seiberg-Witten invariant of $S^1\times Y_\phi$? Here is one answer: 
Let $[\Sigma]\in H_2(S^1\times Y_\phi)$ denote the homology class of a fiber of $Y_\phi\to S^1$. One can use Proposition~\ref{prop:S1Y} and the Lefschetz fixed point theorem to show that for each nonnegative integer $d$, we have
\[
\sum_{\Gamma\cdot [\Sigma] = d}Gr(S^1\times Y_\phi,\Omega,[S^1]\times\Gamma) = L(\op{Sym}^d\phi),
\]
where $\op{Sym}^d\phi$ denotes the homeomorphism from the $d^{th}$ symmetric product of $\Sigma$ to itself determined by $\phi$, and $L$ denotes the Lefschetz number.
This is what we are supposed to get, because Salamon \cite{salamon99} showed that the corresponding Seiberg-Witten invariant is a signed count of fixed points of a smooth perturbation of $\op{Sym}^d\phi$. (Similar considerations locally in a neighborhood of a holomorphic torus arise in Taubes's work in \cite{swgr} which originally led to the generating functions.)

\subsection{Three dimensional Seiberg-Witten theory}
\label{sec:SW3}

We now briefly review two basic ways to use the Seiberg-Witten equations on four-manifolds to define invariants of three-manifolds.

Let $Y$ be a closed oriented connected three-manifold. A {\em spin-c structure\/} on $Y$ can be regarded as an equivalence class of 
oriented two-plane fields (two-dimensional subbundles of $TY$), where two oriented two-plane fields are considered equivalent if they are homotopic on the complement of a ball in $Y$. The set of spin-c structures on $Y$ is an affine space over $H^2(Y;\Z)$. A spin-c structure $\frak{s}$ has a first Chern class $c_1(\frak{s})\in H^2(Y;\Z)$, and $\frak{s}$ is called ``torsion'' when $c_1(\frak{s})$ is torsion. A spin-c structure on $Y$ is equivalent to an $S^1$-invariant spin-c structure on $S^1\times Y$, or an $\R$-invariant spin-c structure on $\R\times Y$.

The first way to define invariants of $Y$ is to consider the Seiberg-Witten invariants of the four-manifold $S^1\times Y$ for $S^1$-invariant spin-c structures. These invariants are the ``Seiberg-Witten invariants'' of $Y$, which we denote by $SW(Y,\frak{s})\in\Z$, and it turns out that they count $S^1$-invariant solutions to the Seiberg-Witten equations.  Since $b_2^+(S^1\times Y)=b_1(Y)$, these invariants are well-defined\footnote{$S^1\times Y$ has a canonical homology orientation, so there is no sign ambiguity in the definition.} when $b_1(Y)>0$, up to a choice of chamber when $b_1(Y)=1$. There is also a distinguished ``zero'' chamber to use when $b_1(Y)=1$ and ${\mathfrak s}$ is not torsion.  Proposition~\ref{prop:S1Y} computed this invariant when $Y$ is a mapping torus\footnote{When $b_1(Y)=1$, we used the ``symplectic'' chamber, which disagrees with the ``zero'' chamber for spin-c structures corresponding to $\Gamma\in H_1(Y_\phi)$ with $\Gamma\cdot[\Sigma]>g(\Sigma)-1$. If $\Gamma\in H_1(Y)$ corresponds to a torsion spin-s structure then $\Gamma\cdot[\Sigma]=g(\Sigma)-1$.}. Indeed, we saw that the invariant counts $S^1$-invariant holomorphic curves.

In general, however, the Seiberg-Witten invariants of three-manifolds are not very interesting, because it was shown by Meng-Taubes \cite{meng-taubes} and Turaev \cite{turaev} that they agree with a kind of Reidemeister torsion of $Y$.

The second, more interesting way to define invariants of $Y$, constructed by Kronheimer-Mrowka \cite{km}, is to ``categorify'' the previous invariant by defining a chain complex (over ${\mathbb Z}$) whose generators are ${\mathbb R}$-invariant solutions to the Seiberg-Witten equations on ${\mathbb R}\times Y$, and whose differential counts non-${\mathbb R}$-invariant solutions to the Seiberg-Witten equations on ${\mathbb R}\times Y$ which converge to two different ${\mathbb R}$-invariant solutions as the ${\mathbb R}$-coordinate converges to $\pm\infty$. If the spin-c structure ${\mathfrak s}$ is non-torsion, then the homology of this chan complex is a well-defined invariant $HM_*(Y,{\mathfrak s})$, called ``Seiberg-Witten Floer homology'' or ``monopole Floer homology''. This is a relatively ${\mathbb Z}/d$-graded ${\mathbb Z}$-module, where $d$ denotes the divisibility of $c_1({\mathfrak s})$ in $H^2(Y;{\mathbb Z})$ mod torsion (which turns out to always be an even integer). This means that it splits into $d$ summands, and there is a well-defined grading difference in ${\mathbb Z}/d$ between any two of them, which is additive for the pairwise differences between any three summands. Each summand is finitely generated. There is also a canonical ${\mathbb Z}/2$-grading, with respect to which the Euler characteristic of the Seiberg-Witten Floer homology $HM_*(Y,{\mathfrak s})$ is the Seiberg-Witten invariant $SW(Y,{\mathfrak s})$.

If ${\mathfrak s}$ is torsion, then there is a difficulty in defining Seiberg-Witten Floer homology caused by ``reducible'' solutions to the Seiberg-Witten equations. There are two ways to resolve this difficulty, which lead to two versions of Seiberg-Witten Floer homology, which are denoted by $\widehat{HM}_*(Y,{\mathfrak s})$ and $\check{HM}_*(Y,{\mathfrak s})$. These are relatively ${\mathbb Z}$-graded; the former is zero in sufficiently negative grading, and the latter is zero in sufficiently positive grading. They fit into an exact triangle
\[
\overline{HM}_*(Y,{\mathfrak s})\to \check{HM}_*(Y,{\mathfrak s})\to\widehat{HM}_*(Y,{\mathfrak s}) \to \overline{HM}_{*-1}(Y,{\mathfrak s}))\to\cdots
\]
where $\overline{HM}_*(Y,\frak{s})$ is a third invariant which is computable in terms of the triple cup product on $Y$. In particular, $\overline{HM}_*(Y,\frak{s})$ is two-periodic, i.e.\ $\overline{HM}_*(Y,\frak{s})=\overline{HM}_{*+2}(Y,\frak{s})$, and nonzero in at least half of the gradings. In conjunction with the above exact triangle, this implies that $\widehat{HM}_*$ (resp.\ $\check{HM}_*$) is likewise 2-periodic and nontrivial when the grading is sufficiently positive (resp.\ negative). This fact is the key input from Seiberg-Witten theory to the proof of the Weinstein conjecture, see \S\ref{sec:overview}.

If ${\mathfrak s}$ is not torsion, then both $\widehat{HM}_*(Y,{\mathfrak s})$ and $\check{HM}_*(Y,{\mathfrak s})$ are equal to the invariant $HM_*(Y,{\mathfrak s})$ discussed previously. 

\subsection{Towards ECH}

The original motivation for defining ECH was to find an analogue of Taubes's $SW=Gr$ theorem for a three-manifold. That is, we would like to identify Seiberg-Witten Floer homology with an appropriate analogue of Taubes's Gromov invariant for a three-manifold $Y$. The latter should be the homology of a chain complex which is generated by ${\mathbb R}$-invariant holomorphic curves in ${\mathbb R}\times Y$, and whose differential counts non-${\mathbb R}$-invariant holomorphic curves in ${\mathbb R}\times Y$.

For holomorphic curve counts to make sense, ${\mathbb R}\times Y$ should have a symplectic structure. This is the case for example when $Y$ is the mapping torus of a symplectomorphism $\phi$; the symplectic form \eqref{eqn:Omegamt} on $S^1\times Y_\phi$ also makes sense on ${\mathbb R}\times Y_\phi$. The analogue of Taubes's Gromov invariant in this case is the ``periodic Floer homology'' of $\phi$; it is the homology of a chain complex which is generated by the unions of periodic orbits counted in Proposition~\ref{prop:S1Y}, and its differential counts certain holomorphic curves in ${\mathbb R}\times Y$. The definition of PFH is given in \cite{pfh2,pfh3}, and it shown in \cite{lee-taubes} that PFH agrees with Seiberg-Witten Floer homology.

Which holomorphic curves to count in the PFH differential is a subtle matter which we will explain below.
However, since not every three-manifold is a mapping torus, we will instead carry out the analogous construction of ECH for contact three-manifolds\footnote{To spell out the analogy here, both mapping tori and contact structures
are examples of the more general notion of ``stable Hamiltonian structure''. A {\em stable Hamiltonian structure\/} on an oriented 3-manifold consists of a $1$-form $\lambda$ and a closed $2$-form $\omega$ such that $\lambda\wedge\omega>0$ and $d\lambda=f\omega$ with $f:Y\to\R$. These data determine an oriented 2-plane field $\xi=\Ker(\lambda)$ and a ``Reeb vector field'' $R$ characterized by $\omega(R,\cdot)=0$ and $\lambda(R)=1$. For a mapping torus, $\lambda=dt$, $\omega\equiv 0$, $f\equiv 0$, and $R=\partial_t$. For a contact structure, $\omega=d\lambda$, $f\equiv 1$, and $R$ is the usual Reeb vector field. A version of ECH for somewhat more general stable Hamiltonian structures with $f\ge 0$ appears in the work of Kutluhan-Lee-Taubes \cite{klt2}.
% However it is not currently known how to define ECH for arbitrary stable Hamiltonian structures in which $f$ can change sign.
%(The claim in \cite[\S1.2.5]{ir} that this is ``straightforward'' is erroneous.)
}, which is more general since every oriented three-manifold admits a contact structure. 
Finding the appropriate definition of the ECH chain complex is not obvious, but Taubes's $SW=Gr$ theorem and the computation of $Gr$ for mapping tori give us a lot of hints. 

\section{The definition of ECH}
\label{sec:defech}

Guided by the discussion in \S\ref{sec:origins}, we now define the embedded contact homology of a contact three-manifold $(Y,\lambda)$, using $\Z/2$ coefficients for simplicity.

Assume that $\lambda$ is nondegenerate and fix $\Gamma\in H_1(Y)$. 
We wish to define the chain complex $ECC_*(Y,\lambda,\Gamma,J)$, where $J$ is a generic symplectization-admissible almost complex structure on $\R\times Y$, see \S\ref{sec:overview}.

Define an {\em orbit set\/} in the homology class $\Gamma$ to be a finite set of pairs $\{(\alpha_i,m_i)\}$ where the $\alpha_i$ are distinct embedded Reeb orbits, the $m_i$ are positive integers, and $\sum_im_i[\alpha_i]=\Gamma\in H_1(Y)$.
Motivated by Proposition~\ref{prop:S1Y}, we define the the chain complex to be generated by orbit sets as above such that $m_i=1$ whenever $\alpha_i$ is hyperbolic.  (We also need to study orbit sets not satisfying this last condition in order to develop the theory.) Proposition~\ref{prop:S1Y} also suggests that there should be a canonical $\Z/2$-grading by the parity of the number of $i$ such that $\alpha_i$ is positive hyperbolic, and we will see in \S\ref{sec:differential} that this is the case.

The differential should count $J$-holomorphic currents in $\R\times Y$ by analogy with the Gromov invariant. The three key formulas that entered into the definition of the Gromov invariant were the Fredholm index formula \eqref{eqn:ind4}, the adjunction formula \eqref{eqn:adj4}, and the definition of $I$ in \eqref{eqn:I4}. To define the ECH differential we need analogues of these three formulas for holomorphic curves in $\R\times Y$, plus one additional ingredient, the ``writhe bound''. We now explain these.

\subsection{Holomorphic curves and holomorphic currents}

We consider $J$-holomorphic curves of the form $u:(\Sigma,j)\to({\mathbb R}\times Y,J)$ where the domain $(\Sigma,j)$ is a punctured compact Riemann surface. If $\gamma$ is a (possibly multiply covered) Reeb orbit, a {\em positive end\/} of $u$ at $\gamma$ is a puncture near which $u$ is asymptotic to ${\mathbb R}\times\gamma$ as $s\to\infty$. This means that a neighborhood of the puncture can be given coordinates $(\sigma,\tau)\in ({\mathbb R}/T{\mathbb Z})\times[0,\infty)$ with $j(\partial_\sigma)=\partial_\tau$ such that $\lim_{\sigma\to\infty}\pi_{\mathbb R}(u(\sigma,\tau))=\infty$ and $\lim_{\sigma\to\infty}\pi_Y(u(s,\cdot))=\gamma$. A {\em negative end\/} is defined analogously with $\sigma\in(-\infty,0]$ and $s\to -\infty$. We assume that all punctures are positive ends or negative ends as above. We mod out by the usual equivalence relation on holomorphic curves, namely composition with biholomorphic maps between domains.

Let $\alpha=\{(\alpha_i,m_i)\}$ and $\beta=\{(\beta_j,n_j)\}$ be orbit sets in the class $\Gamma$.  
Define a {\em $J$-holomorphic current\/} from $\alpha$ to $\beta$ to be a finite set of pairs $\mc{C}=\{(C_k,d_k)\}$ where the $C_k$ are distinct irreducible somewhere injective $J$-holomorphic curves in $\R\times Y$, the $d_k$ are positive integers, $\mc{C}$ is asymptotic to $\alpha$ as a current as the $\R$ coordinate goes to $+\infty$, and $\mc{C}$ is asymptotic to $\beta$ as a current as the $\R$ coordinate goes to $-\infty$. This last condition means that the positive ends of the curves $C_k$ are at covers of the Reeb orbits $\alpha_i$, the sum over $k$ of $d_k$ times the total covering multiplicity of all ends of $C_k$ at covers of $\alpha_i$ is $m_i$, and analogously for the negative ends.
Let $\mc{M}(\alpha,\beta)$ denote the set of $J$-holomorphic currents from $\alpha$ to $\beta$. 
A holomorphic current $\mc{C}=\{(C_k,d_k)\}$ is ``somewhere injective'' if $d_k=1$ for each $k$, in which case it is ``embedded'' if furthermore each $C_k$ is embedded and the $C_k$ are pairwise disjoint.

Let $H_2(Y,\alpha,\beta)$ denote the set of $2$-chains $\Sigma$ in $Y$ with
\[
\partial\Sigma=\sum_im_i\alpha_i-\sum_jn_j\beta_j,
\]
modulo boundaries of $3$-chains. Then $H_2(Y,\alpha,\beta)$ is an affine space over $H_2(Y)$, and every $J$-holomorphic current $\mc{C}\in{\mathcal M}(\alpha,\beta)$ defines a class $[\mc{C}]\in H_2(Y,\alpha,\beta)$.

\subsection{The Fredholm index in symplectizations}
\label{sec:ind}

We now state a symplectization analogue of the index formula \eqref{eqn:ind}.

\begin{proposition}
\label{prop:ind}
If $J$ is generic, then every somewhere injective $J$-holomorphic curve $C$ in $\R\times Y$ is regular (i.e.\ an appropriate deformation operator is surjective), so the moduli space of $J$-holomorphic curves as above near $C$ is a manifold. Its dimension is the Fredholm index given by equation \eqref{eqn:ind3} below.
\end{proposition}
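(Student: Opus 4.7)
The plan is to follow the standard transversality strategy for somewhere injective curves, adapted to the symplectization setting where $J$ is required to be $\R$-invariant. The cleanest reference for this kind of argument is Dragnev's thesis, but I would organize the argument in four steps.

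First, I would set up the linearized operator. Fix a somewhere injective $C=u(\Sigma)$ with positive ends at covers of Reeb orbits $\alpha_i$ and negative ends at covers of $\beta_j$. Choose cylindrical coordinates at the punctures and a symplectic trivialization $\tau$ of $\xi$ along each asymptotic orbit. The linearization of the $\dbar_J$ equation, restricted to sections of the normal bundle $N_C$ that decay exponentially at the ends at a small rate less than the minimum spectral gap of the asymptotic operators, gives a Fredholm operator $D_C$ between appropriate weighted Sobolev completions. The asymptotic operators are invertible because $\lambda$ is nondegenerate, so Fredholmness follows from standard elliptic theory on cylindrical ends. Riemann--Roch plus the relationship between spectral flow and the Conley--Zehnder index then computes the Fredholm index in terms of $\chi(\Sigma)$, the relative first Chern number of $N_C$ relative to $\tau$, and the Conley--Zehnder indices of the asymptotic orbits relative to $\tau$; this yields the stated formula \eqref{eqn:ind3}.

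Second, I would set up the universal moduli space. Let $\mc{J}$ be the Fréchet space of symplectization-admissible almost complex structures on $\R\times Y$, or rather a suitable separable Banach manifold approximation (e.g.\ the Floer $C^\eps$ space). Consider the universal moduli space $\mc{M}^*$ of pairs $(J,C)$ where $C$ is a somewhere injective $J$-holomorphic curve with the prescribed asymptotic data, cut out as the zero set of a smooth section of a Banach bundle. Its linearization at $(J,C)$ is $D_C$ plus the derivative in the $J$-direction; the goal is to show that this combined linearization is surjective for every $(J,C)\in\mc{M}^*$, so that $\mc{M}^*$ is a smooth Banach manifold and the Sard--Smale theorem applied to the projection $\mc{M}^*\to\mc{J}$ yields a residual set of regular $J$.

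Third—and this is the main obstacle—I would prove surjectivity of the universal linearization. The key point is to find, on any somewhere injective $J$-holomorphic $C$, a \emph{geometrically} injective point $z\in\Sigma$ such that $du_z$ is injective, the composition $\pi_Y\circ u$ is an immersion at $z$, and $(\pi_Y\circ u)^{-1}(\pi_Y(u(z)))=\{z\}$. The existence of such a dense set of points is what allows one to perturb $J$—which lives on $Y$, not on $\R\times Y$—and thereby hit any element of $\Coker(D_C)$ via a bump-function perturbation of $J$ supported in a small neighborhood of $\pi_Y(u(z))$. To establish such points are dense, one argues in two steps: (i) the set of points where $du$ is injective is open and dense, by unique continuation for $\dbar_J$ together with somewhere injectivity (excluding branched multiple covers); (ii) away from the countable set where $\pi_Y\circ u$ fails to be an immersion, the remaining obstruction is that the $\R$-translates of $u$ might all share image with $u$ in $Y$, which would force $C$ to be a trivial cylinder over a Reeb orbit; since a trivial cylinder is somewhere injective only in the $m=1$ case and is itself regular by direct computation, we may assume $C$ is not such, and conclude that the geometrically injective points are generic. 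The actual hitting-the-cokernel argument is then the usual one: pair a putative cokernel element $\eta$ against a perturbation $Y$ supported near $\pi_Y(u(z))$, apply unique continuation to $\eta$ to conclude $\eta\equiv 0$.

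Fourth, combining the above with Sard--Smale on the projection $\mc{M}^*\to\mc{J}$ shows that there is a residual (hence dense) subset of $\mc{J}$ for which every somewhere injective curve is regular; intersecting countably many such sets over all choices of asymptotic data and relative homology classes still gives a residual set. For such generic $J$, the moduli space of somewhere injective curves is a smooth manifold of dimension $\ind(C)$ given by \eqref{eqn:ind3}. The main obstacle, as indicated, is the existence of geometrically injective points in the symplectization setting, since the $\R$-invariance of $J$ obstructs the naive perturbation argument one uses in closed symplectic manifolds; handling trivial cylinders and their multiple covers separately (they are excluded from the hypothesis anyway, as they are not somewhere injective when the multiplicity exceeds one) is the clean way around this.
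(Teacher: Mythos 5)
Your plan is correct and takes essentially the same route as the paper, whose entire ``proof'' of this proposition is a citation of Dragnev for the transversality statement and of Schwarz for the index computation; your four steps are precisely an outline of those two arguments. You have also correctly isolated the one genuinely delicate point, namely that the $\R$-invariance of admissible $J$ forces one to find points where the projection of the curve to $Y$ is injective, with (covers of) trivial cylinders handled separately.
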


If $C$ has $k$ positive ends at Reeb orbits $\gamma_1^+,\ldots,\gamma_k^+$ and $l$ negative ends at Reeb orbits $\gamma_1^-,\ldots,\gamma_l^-$, the {\em Fredholm index\/} of $C$ is defined by
\begin{equation}
\label{eqn:ind3}
\ind(C) = -\chi(C) + 2c_\tau(C) + \sum_{i=1}^k CZ_\tau(\gamma_i^+)-\sum_{i=1}^l CZ_\tau(\gamma_i^-),
\end{equation}
where the terms on the right hand side are defined as follows. First, $\tau$ is a trivialization of $\xi$ over the Reeb orbits $\gamma_i^\pm$, which is symplectic with respect to $d\lambda$. Second, $\chi(C)$ denotes the Euler characteristic of the domain of $C$ as usual.
Third,
\[
c_\tau(C) = c_1(\xi|_C,\tau)\in\Z
\]
is the {\em relative first Chern class\/} of the complex line bundle $\xi|_C$ with respect to the trivialization $\tau$. To define this, note that the trivialization $\tau$ determines a trivialization of $\xi|_C$ over the ends of $C$, up to homotopy. One chooses a generic section $\psi$ of $\xi|_C$ which on each end is nonvanishing and constant with respect to the trivialization on the ends. One then defines $c_1(\xi|_C,\tau)$ to be the algebraic count of zeroes of $\psi$.

To say more about what the relative first Chern class depends on, note that $C\in \mc{M}(\alpha,\beta)$ for some orbit sets $\alpha=\{(\alpha_i,m_i)\}$ and $\beta=\{(\beta_j,n_j)\}$ in the same homology class. Write $Z=[C]\in H_2(Y,\alpha,\beta)$. Then in fact $c_1(\xi|_C,\tau)$ depends only on $\alpha$, $\beta$, $\tau$, and $Z$. To see this, let $S$ be a compact oriented surface with boundary, and let $f:S\to [-1,1]\times Y$ be a smooth map, such that $f|_{\partial S}$ consists of positively oriented covers of $\{1\}\times \alpha_i$ with total multiplicity $m_i$ and negatively oriented covers of $\{-1\}\times\beta_j$ with total multiplicity $n_j$, and the projection of $f$ to $Y$ represents the relative homology class $Z$. Then $c_1(f^*\xi,\tau)\in\Z$ is defined as before.

\begin{exercise}
\begin{description}
\item{(a)}
The relative first Chern class
$c_1(f^*\xi,\tau)$ above depends only on $\alpha$, $\beta$, $\tau$, and $Z$, and so can be denoted by $c_\tau(Z)$.
\item{(b)}
If $Z'\in H_2(Y,\alpha,\beta)$ is another relative homology class, then
\[
c_\tau(Z) - c_\tau(Z') = \langle c_1(\xi),Z-Z'\rangle,
\]
where on the right hand side, $c_1(\xi)\in H^2(Y;\Z)$ denotes the usual first Chern class of the complex line bundle $\xi\to Y$.
\end{description}
\end{exercise}

Continuing with the explanation of the index formula \eqref{eqn:ind3},
$CZ_\tau(\gamma)\in{\mathbb Z}$ denotes the {\em Conley-Zehnder index\/} of $\gamma$ with respect to the trivialization $\tau$. To define this, pick a parametrization $\gamma:{\mathbb R}/T{\mathbb Z}\to Y$. Let $\{\psi_t\}_{t\in{\mathbb R}}$ denote the one-parameter group of diffeomorphisms of $Y$ given by the flow of $R$. Then $d\psi_t:T_{\gamma(0)}Y\to T_{\gamma(t)}Y$ induces a symplectic linear map $\phi_t:\xi_{\gamma(0)}\to\xi_{\gamma(t)}$, which using our trivialization $\tau$ we can regard as a $2\times 2$ symplectic matrix. In particular, $\phi_0=1$, and $\phi_T$ is the linearized return map (in our trivialization), which does not have 1 as an eigenvalue. We now define $CZ_\tau(\gamma)\in{\mathbb Z}$ to be the Conley-Zehnder index of the family of symplectic matrices $\{\phi_t\}_{t\in[0,T]}$, which is given explicitly as follows. (See e.g.\ \cite[\S2.4]{salamon97} for the general definition of the Conley-Zehnder index for paths of symplectic matrices in any dimension.)

 If $\gamma$ is hyperbolic, let $v\in{\mathbb R}^2$ be an eigenvector of $\phi_T$; then the family of vectors $\{\phi_t(v)\}_{t\in[0,T]}$ rotates by angle $\pi k$ for some integer $k$ (which is even in the positive hyperbolic case and odd in the negative hyperbolic case), and
\[
CZ_\tau(\gamma)=k.
\]
If $\gamma$ is elliptic, then we can change the trivialization so that each $\phi_t$ is rotation by angle $2\pi \theta_t\in{\mathbb R}$ where $\theta_t$ is a continuous function of $t\in[0,T]$ and $\theta_0=0$. The number $\theta=\theta_T\in{\mathbb R}\setminus{\mathbb Z}$ is called the ``rotation angle'' of $\gamma$ with respect to $\tau$, and
\begin{equation}
\label{eqn:CZell}
CZ_\tau(\gamma)=2\lfloor\theta\rfloor+1.
\end{equation}

\begin{exercise}
\label{ex:ind3}
The right hand side of the index formula \eqref{eqn:ind3} does not depend on $\tau$, even though the individual terms in it do. (See hint in \S\ref{sec:answers}.)
\end{exercise}

The proof of Proposition~\ref{prop:ind} consists of a tranversality argument in \cite{dragnev} and an index calculation in \cite{schwarz}. As usual, the somewhere injective assumption is necessary; there is no $J$ for which transversality holds for all multiply covered curves. For example, transversality fails for some branched covers of trivial cylinders, see Exercise~\ref{ex:partialorder} below.

\subsection{The relative adjunction formula}

Our next goal is to obtain an analogue of the adjunction formula \eqref{eqn:adj4} for a somewhere injective holomorphic curve in $\R\times Y$. To do so we need to re-interpret each term in the formula \eqref{eqn:adj4} in the symplectization context; and there is also a new term arising from the asymptotic behavior of the holomorphic curve.

\begin{raf}
\cite[Rmk.\ 3.2]{pfh2}
Let $C\in\mc{M}(\alpha,\beta)$ be somewhere injective. Then $C$ has only finitely many singularities, and
\begin{equation}
\label{eqn:adj3}
c_\tau(C) = \chi(C) + Q_\tau(C) + w_\tau(C) - 2\delta(C).
\end{equation}
\end{raf}

Here $\tau$ is a trivialization of $\xi$ over the Reeb orbits $\alpha_i$ and $\beta_j$;  the left hand side is the relative first Chern class defined in \S\ref{sec:ind}; $\chi(C)$ is the Euler characteristic of the domain as usual; and $\delta(C)\ge 0$ is an algebraic count of singularities with positive integer weights as in \S\ref{sec:4dspecial}. The term $Q_\tau(C)$ is the ``relative intersection pairing'', which is a symplectization analogue of the intersection number $[C]\cdot[C]$ in the closed case. The new term $w_\tau(C)$ is the ``asymptotic writhe''. Let us now explain both of these. 

\paragraph{The relative intersection pairing.}

Given a class $Z\in H_2(Y,\alpha,\beta)$, we want to define the relative intersection pairing $Q_\tau(Z)\in{\mathbb Z}$.

To warm up to this, recall that given a closed oriented 4-manifold $X$, and given a class $A\in H_2(X)$, to compute $A\cdot A$ one can choose two embedded oriented surfaces $S,S'\subset X$ representing the class $A$ that intersect transversely, and count the intersections of $S$ and $S'$ with signs.

In the symplectization case, we could try to choose two embedded (except at the boundary) oriented surfaces $S,S'\subset[-1,1]\times Y$ representing the class $Z$ such that
\[
\partial S = \partial S' = \sum_im_i\{1\}\times \alpha_i-\sum_jn_j\{-1\}\times\beta_j,
\]
and $S$ and $S'$ intersect transversely (except at the boundary), and algebraically count intersections of the interior of $S$ with the interior of $S'$. However this count of intersections is not a well-defined function of $Z$, because if one deforms $S$ or $S'$, then intersection points can appear or disappear at the boundary.

To get a well-defined count of intersections, we need to specify something about the boundary behavior. The choice of trivialization $\tau$ allows us to do this. We require that the projections to $Y$ of the intersections of $S$ and $S'$ with $(1-\epsilon,1]\times Y$ are embeddings, and their images in a transverse slice to $\alpha_i$ are unions of rays which do not intersect and which do not rotate with respect to the trivialization $\tau$ as one goes around $\alpha_i$.  Likewise, 
the projections to $Y$ of the intersections of $S$ and $S'$ with $[-1,-1+\epsilon)\times Y$ are embeddings, and their images in a transverse slice to $\beta_j$ are unions of rays which do not intersect and which do not rotate with respect to the trivialization $\tau$ as one goes around $\beta_j$.
If we count the interior intersections of two such surfaces $S$ and $S'$, then we get an integer which depends only on $\alpha,\beta,Z$, and $\tau$, and we denote this integer by $Q_\tau(Z)$. For more details see \cite[\S2.4]{pfh2} and \cite[\S2.7]{ir}.

If $\mc{C}\in{\mathcal M}(\alpha,\beta)$ is a $J$-holomorphic current, write $Q_\tau(\mc{C})=Q_\tau([\mc{C}])$.

\paragraph{The asymptotic writhe.}

Given a somewhere injective $J$-holomorphic curve $C\in{\mathcal M}(\alpha,\beta)$, consider the slice $C\cap(\{s\}\times Y)$. If $s>>0$, then the slice $C\cap(\{s\}\times Y)$ is an embedded curve which is the union, over $i$, of a braid $\zeta_i^+$ around the Reeb orbit $\alpha_i$ with $m_i$ strands. This fact, due to Siefring \cite{siefring1}, is shown along the way to proving the writhe bound \eqref{eqn:writhebound} below, see Lemma~\ref{lem:siefring}. This, together with an analogous statement for the negative ends and the fact that the singularities of $C$ are isolated, implies that $C$ has only finitely many singularities. Since the braid $\zeta_i^+$ is embedded for all $s>>0$, its isotopy class does not depend on $s>>0$.

We can use the trivialization $\tau$ to identify the braid $\zeta_i^+$ with a link in $S^1\times D^2$. The writhe of this link, which we denote by $w_\tau(\zeta_i^+)\in\Z$, is defined by identifying $S^1\times D^2$ with an annulus cross an interval, projecting $\zeta_i^+$ to the annulus, and counting crossings with signs. We use the sign convention in which counterclockwise rotations in the $D^2$ direction as one goes counterclockwise around $S^1$ contribute positively to the writhe; this is opposite the usual convention in knot theory, but makes sense in the present context.

Likewise, the slice $C\cap(\{s\}\times Y)$ for $s<<0$ is the union over $j$ of a braid $\zeta_j^-$ around the Reeb orbit $\beta_j$ with $n_j$ strands, and this braid has a writhe $w_\tau(\zeta_j^-)\in\Z$.

We now define the {\em asymptotic writhe\/} of $C$ by
\[
w_\tau(C) = \sum_iw_\tau(\zeta_i^+) - \sum_jw_\tau(\zeta_j^-).
\]
This completes the definition of all of the terms in the relative adjunction formula \eqref{eqn:adj3}.

\begin{exercise}
\label{ex:adj3}
Show that the two sides of the relative adjunction formula \eqref{eqn:adj3} change the same way if one changes the trivialization $\tau$. (See hint in \S\ref{sec:answers}.)
\end{exercise}

Here is an outline of the proof of the relative adjunction formula \eqref{eqn:adj3} in the special case where $C$ is immersed and the only singularities of $C$ are nodes. Let $N_C$ denote the normal bundle of $C$, which can be identified with $\xi|_C$ near the ends of $C$. We compute $c_1(N_C,\tau)$ in two ways.
First, the decomposition $({\mathbb C}\oplus\xi)|_C = T(\R\times Y)|_C = TC\oplus N_C$ implies that
\[
c_\tau(C) = \chi(C) + c_1(N_C,\tau),
\]
see \cite[Prop.\ 3.1(a)]{pfh2}. Second, one can count the intersections of $C$ with a nearby surface and compare with the definition of $Q_\tau$ to show that
\[
c_1(N_C,\tau) = Q_\tau(C) + w_\tau(C)-2\delta(C),
\]
cf.\ \cite[Prop.\ 3.1(b)]{pfh2}.

\subsection{The ECH index}
\label{sec:ECHindex}

We come now to the key nontrivial part of the definition in ECH, which is to define an analogue of the quantity $I$ in \eqref{eqn:I4} for relative homology classes in symplectizations.

Let $C\in\mc{M}(\alpha,\beta)$ be somewhere injective. By \eqref{eqn:ind3}, we can write the Fredholm index of $C$ as
\[
\ind(C) = -\chi(C) + 2c_\tau(C) + CZ_\tau^{ind}(C),
\]
where $CZ_\tau^{ind}(C)$ is shorthand for the Conley-Zehnder term that appears in $\ind$, namely the sum over all positive ends of $C$ at a Reeb orbit $\gamma$ of $CZ_\tau(\gamma)$ (these Reeb orbits are covers of the Reeb orbits $\alpha_i$), minus the corresponding sum for the negative ends of $C$. We know that if $J$ is generic then ${\mathcal M}(\alpha,\beta)$ is a manifold near $C$ of dimension $\ind(C)$. We would like to bound this dimension in terms of the relative homology class $[C]$.

If $\gamma$ is an embedded Reeb orbit and $k$ is a positive integer, let $\gamma^k$ denote the $k$-fold iterate of $\gamma$.

\begin{definition}
If $Z\in H_2(Y,\alpha,\beta)$, define the {\em ECH index\/}
\begin{equation}
\label{eqn:I3}
I(\alpha,\beta,Z) = c_\tau(Z) + Q_\tau(Z) + CZ_\tau^I(\alpha,\beta),
\end{equation}
where $CZ_\tau^I$ is the Conley-Zehnder term that appears in $I$, namely
\begin{equation}
\label{eqn:CZI}
CZ_\tau^I(\alpha,\beta) = \sum_i\sum_{k=1}^{m_i}CZ_\tau(\alpha_i^k) - \sum_j\sum_{k=1}^{n_j}CZ_\tau(\beta_j^k).
\end{equation}
If $C\in{\mathcal M}(\alpha,\beta)$, define $I(C)=I(\alpha,\beta,[C])$.
\end{definition}

Note that the Conley-Zehnder terms $CZ_\tau^{ind}(C)$ and $CZ_\tau^I(\alpha,\beta)$ are quite different.  The former just involves the Conley-Zehnder indices of orbits corresponding to ends of $C$; while the latter sums up the Conley-Zehnder indices of all iterates of $\alpha_i$ up to multiplicity $m_i$, minus the Conley-Zehnder indices of all iterates of $\beta_j$ up to multiplicity $n_j$. For example, if $C$ has positive ends at $\alpha_i^3$ and $\alpha_i^5$ (and no other positive ends at covers of $\alpha_i$), then the corresponding contribution to $CZ_\tau^{ind}(C)$ is $CZ_\tau(\alpha_i^3)+CZ_\tau(\alpha_i^5)$, while the contribution to $CZ_\tau^I(\alpha,\beta)$ is $\sum_{k=1}^8CZ_\tau(\alpha_i^k)$.

\begin{bpechi}
\begin{description}
\item{(Well Defined)} The ECH index $I(Z)$ does not depend on the choice of trivialization $\tau$.
\item{(Index Ambiguity Formula)} If $Z'\in H_2(\alpha,\beta)$ is another relative homology class, then
\begin{equation}
\label{eqn:iaf}
I(Z) - I(Z') = \langle Z-Z',c_1(\xi) + 2\op{PD}(\Gamma)\rangle.
\end{equation}
\item{(Additivity)} If $\delta$ is another orbit set in the homology class $\Gamma$, and if $W\in H_2(Y,\beta,\delta)$, then $Z+W\in H_2(Y,\alpha,\delta)$ is defined and
\[
I(Z+W)=I(Z)+I(W).
\]
\item{(Index Parity)}
If $\alpha$ and and $\beta$ are generators of the ECH chain complex (i.e.\ all hyperbolic orbits have multiplicity $1$), then
\begin{equation}
\label{eqn:indexparity}
(-1)^{I(Z)} = \epsilon(\alpha)\epsilon(\beta),
\end{equation}
where $\epsilon(\alpha)$ denotes $-1$ to the number of positive hyperbolic orbits in $\alpha$.
\end{description}
\end{bpechi}

\begin{exercise}
Prove the above basic properties. (See \cite[\S3.3]{pfh2}.)
\end{exercise}

We now have the following analogue of \eqref{eqn:ie4}, which is the key result that gets ECH off the ground.

\begin{indexinequality}
If $C\in{\mathcal M}(\alpha,\beta)$ is somewhere injective, then
\begin{equation}
\label{eqn:ii}
\ind(C)\le I(C)-2\delta(C).
\end{equation}
\end{indexinequality}

\noindent
In particular, $\ind(C)\le I(C)$, with equality only if $C$ is embedded.

The index inequality follows immediately by combining the definition of the ECH index in \eqref{eqn:I3}, the formula for the Fredholm index in \eqref{eqn:ind3}, the relative adjunction formula \eqref{eqn:adj3}, and the following inequality:

\begin{writhebound}
If $C\in\mc{M}(\alpha,\beta)$ is somewhere injective, then
\begin{equation}
\label{eqn:writhebound}
w_\tau(C) \le CZ_\tau^I(\alpha,\beta) - CZ_\tau^{ind}(C).
\end{equation}
\end{writhebound}

\noindent
The proof of the writhe bound will be outlined in \S\ref{sec:writhebound}.

\paragraph{Holomorphic curves with low ECH index}

The index inequality \eqref{eqn:ii} is most of what is needed to prove the following analogue of Lemma~\ref{lem:I04}. Below, a {\em trivial cylinder\/} means a cylinder ${\mathbb R}\times \gamma\subset{\mathbb R}\times Y$ where $\gamma$ is an embedded Reeb orbit.

\begin{proposition}
\label{prop:I03}
Suppose $J$ is generic. Let $\alpha$ and $\beta$ be orbit sets and let $\mc{C}\in{\mathcal M}(\alpha,\beta)$ be any $J$-holomorphic current in $\R\times Y$, not necessarily somewhere injective. Then:
\begin{description}
\item{0.} $I(\mc{C}) \ge 0$, with equality if and only if $\mc{C}$ is a union of trivial cylinders with multiplicities.
\item{1.} If $I(\mc{C})=1$, then $\mc{C}=\mc{C}_0\sqcup C_1$, where $I(\mc{C}_0)=0$, and $C_1$ is embedded and has $\ind(C_1)=I(C_1)=1$.
\item{2.} If $I(\mc{C})=2$, and if $\alpha$ and $\beta$ are generators of the chain complex $ECC_*(Y,\lambda,\Gamma,J)$, then $\mc{C}=\mc{C}_0\sqcup C_2$, where $I(\mc{C}_0)=0$, and $C_2$ is embedded and has $\ind(C_2)=I(C_2)=2$.
\end{description}
\end{proposition}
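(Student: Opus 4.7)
The plan is to parallel the proof of Lemma~\ref{lem:I04} in the closed four-dimensional case, but using symplectization-specific input based on the writhe bound. First I would decompose $\mc{C} = \{(C_k, d_k)\}$ into its distinct somewhere injective irreducible components; each $C_k$ determines its own orbit sets $\alpha^{(k)}, \beta^{(k)}$ at its positive and negative ends, and thus an individual ECH index $I(C_k) \eqdef I(\alpha^{(k)}, \beta^{(k)}, [C_k])$. Since $J$ is generic, Proposition~\ref{prop:ind} gives $\ind(C_k)\ge 0$, and then the index inequality \eqref{eqn:ii} yields $I(C_k) \ge \ind(C_k) + 2\delta(C_k) \ge 0$, with equality only if $C_k$ is embedded with $\ind(C_k)=0$.

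The heart of the proof is the inequality
\begin{equation}
\label{eqn:plankey}
I(\mc{C}) \;\ge\; \sum_k d_k\, I(C_k) + 2\sum_{k<l} d_k d_l\, Q_\tau(C_k, C_l) + R(\mc{C}),
\end{equation}
where $R(\mc{C})\ge 0$ is a multiplicity remainder with two sources: the self-intersection discrepancy $(d_k^2-d_k)Q_\tau(C_k)$ produced when $C_k$ is counted with multiplicity $d_k>1$, and the Conley--Zehnder discrepancy between $CZ_\tau^I(\alpha,\beta)$, which sums over iterates up to the total multiplicities in $\alpha$ and $\beta$, and $\sum_k d_k CZ_\tau^I(\alpha^{(k)},\beta^{(k)})$, which only sees the multiplicities at the ends of each individual $C_k$. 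Nonnegativity of both the cross $Q_\tau$ term and of $R(\mc{C})$ is what does the real work here, and is essentially equivalent to the sharp form of the writhe bound combined with asymptotic intersection positivity for holomorphic half-cylinders converging to iterated Reeb orbits; this is exactly where the partition conditions enter implicitly, and it is the step I would expect to be the hardest to nail down rigorously.

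Granting \eqref{eqn:plankey}, Part~0 follows immediately: $I(\mc{C})\ge 0$, and $I(\mc{C})=0$ forces $I(C_k)=0$ for every $k$, all cross-terms to vanish, and $R(\mc{C})=0$. Each $C_k$ is then embedded with $\ind(C_k)=0$, and saturation of the writhe bound pins down the slices of $C_k$ as unlinked trivial braids around a single embedded Reeb orbit, identifying $C_k$ as a trivial cylinder. For Part~1, $I(\mc{C})=1$ combined with \eqref{eqn:plankey} leaves exactly one nonvanishing contribution: some $C_k$ with $d_k=1$ and $I(C_k)=1$, and all other components trivial cylinders; the index inequality together with $\ind(C_k)\ge 0$ then forces $\delta(C_k)=0$ and $\ind(C_k)=1$. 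For Part~2, \eqref{eqn:plankey} leaves three combinatorial possibilities: a single simple $C_k$ with $I(C_k)=2$; a single doubly covered $C_k$ with $I(C_k)=1$; or two distinct simple components each with $I=1$. The hypothesis that $\alpha$ and $\beta$ are ECH generators, together with the index parity identity \eqref{eqn:indexparity}, rules out the latter two: a doubly covered component of odd $I$ forces a positive hyperbolic orbit to appear with multiplicity $\ge 2$ in $\alpha$ or $\beta$, contradicting the generator condition, and two simple components each of odd $I$ would likewise concatenate to force two positive hyperbolic orbits into $\alpha$ or $\beta$ at the same embedded Reeb orbit. Only the first case survives, and then Part~1's argument gives $C_2$ embedded with $\ind(C_2)=I(C_2)=2$.
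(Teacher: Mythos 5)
Your overall architecture---decompose $\mc{C}$ into simple components, then bound $I(\mc{C})$ from below by $\sum_k d_kI(C_k)$ plus nonnegative cross and multiplicity terms---is the architecture of \cite[Thm.\ 5.1]{ir}, not of the proof in these notes, and the superadditivity inequality you display is asserted rather than proved. This is a genuine gap: under your approach that inequality carries the entire content of the proposition, and your sketch of why the remainder $R(\mc{C})$ is nonnegative does not work termwise (the self-intersection discrepancy $(d_k^2-d_k)Q_\tau(C_k)$ can be negative, as can the Conley--Zehnder discrepancy; only a particular combination of these with writhe and asymptotic linking contributions is nonnegative, and establishing that requires the full asymptotic analysis of \cite[\S5]{ir}). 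The notes sidestep all of this with a short trick you should know: since $J$ is $\R$-invariant, replace each pair $(C_k,d_k)$ by the union of $d_k$ \emph{distinct $\R$-translates} of $C_k$. Provided no trivial cylinder is multiply covered (a case handled separately at the end via \cite[Prop.\ 7.1]{pfh2}), the resulting curve $C'$ is somewhere injective and represents the same relative class, so the already-established index inequality \eqref{eqn:ii} applied once to $C'$, together with additivity of $\ind$, gives $\sum_kd_k\ind(C_k)\le I(\mc{C})-2\delta(C')$ in one line. Note also that the mechanism identifying the $I=0$ components as trivial cylinders is not saturation of the writhe bound, as you suggest, but the fact that for generic $J$ every nontrivial irreducible somewhere injective curve has $\ind\ge 1$, since its $\R$-translates already sweep out a one-parameter family inside an $\ind$-dimensional moduli space.

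On part (2): your exclusion of a doubly covered nontrivial component via ECH index parity is a legitimate variant of the argument in the notes (which instead observes that all ends of such a component are elliptic, so its Fredholm index is even, hence at least $2$, hence contributes at least $4$ to the left side of \eqref{eqn:readoff}). However, your exclusion of two distinct simple components each with $I=1$ does not work as stated: index parity forces each component to have an end at \emph{some} positive hyperbolic orbit, but nothing forces these to be the same embedded orbit (or the same end of the cobordism), so no hyperbolic orbit of multiplicity $\ge 2$ need appear in $\alpha$ or $\beta$. The notes are themselves terse about this configuration; the careful case analysis is in \cite{obg1}, and if you want to complete your argument you will need input beyond parity, not the coincidence of the two hyperbolic orbits.
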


\begin{proof}
Let $\mc{C}=\{(C_k,d_k)\}$ be a holomorphic current in ${\mathcal M}(\alpha,\beta)$. We first consider the special case in which $d_k=1$ whenever $C_k$ is a trivial cylinder.

Since $J$ is $\R$-invariant, any $J$-holomorphic curve can be translated in the ${\mathbb R}$-direction to make a new $J$-holomorphic curve. Let $C'$ be the union over $k$ of the union of $d_k$ different translates of $C_k$. Then $C'$ is somewhere injective, thanks to our simplifying assumption that $d_k=1$ whenever $C_k$ is a trivial cylinder. So the index inequality applies to $C'$ to give
\[
\ind(C') \le I(C') - 2\delta(C').
\]
Now because the Fredholm index $\ind$ is additive under taking unions of holomorphic curves, and because the ECH index $I$ depends only on the relative homology class, this gives
\begin{equation}
\label{eqn:readoff}
\sum_kd_k\ind(C_k) \le I(\mathcal{C}) - 2\delta(C').
\end{equation}
Since $J$ is generic, we must have $\ind(C_k)\ge 0$, with equality if and only if $C_k$ is a trivial cylinder. Parts (0) and (1) of the Proposition can now be immediately read off from the inequality \eqref{eqn:readoff}.

To prove part (2), we just need to rule out the case where there is one nontrivial $C_k$ with $d_k=2$. In this case, since $\alpha$ and $\beta$ are ECH generators, all ends of $C_k$ must be at elliptic Reeb orbits. It then follows from the Fredholm index formula \eqref{eqn:ind3} that $\ind(C_k)$ is even. Thus $\ind(C_k)\ge 2$, contradicting the inequality \eqref{eqn:readoff}.

To remove the simplifying assumption, one can show that if $\mc{C}$ contains no trivial cylinders and if $\mc{T}$ is a union of trivial cylinders, then
\[
I(\mc{C}\cup \mc{T})\ge I(\mc{C}) + 2\#(\mc{C}\cap\mc{T}),
\]
compare \eqref{eqn:Isum}. This is proved in \cite[Prop.\ 7.1]{pfh2}, and a more general statement bounding the ECH index of any union of holomorphic currents is proved in \cite[Thm.\ 5.1]{ir}. Now by intersection positivity, $\#(\mc{C}\cap\mc{T})\ge 0$, with equality if and only if $\mc{C}$ and $\mc{T}$ are disjoint. The proposition for $\mc{C}\cup\mc{T}$ then follows from the proposition for $\mc{C}$.
\end{proof}

\subsection{The ECH differential}
\label{sec:differential}

We can now define the differential $\partial$ on the chain complex $ECC_*(Y,\lambda,\Gamma,J)$.  If $\alpha$ and $\beta$ are orbit sets and $k$ is an integer, define
\[
\mc{M}_k(\alpha,\beta) = \{\mc{C}\in\mc{M}(\alpha,\beta)\mid I(\mc{C})=k\}.
\]
If $\alpha$ is a chain complex generator, we define
\[
\partial\alpha =\sum_\beta\#(\mc{M}_1(\alpha,\beta)/\R)\beta,
\]
where the sum is over chain complex generators $\beta$, and `$\#$' denotes the mod 2 count. Here $\R$ acts on $\mc{M}_1(\alpha,\beta)$ by translation of the $\R$ coordinate on $\R\times Y$; and by Proposition~\ref{prop:I03} the quotient is a discrete set.
We will show in \S\ref{sec:differentialdefined}, analogously to Lemma~\ref{lem:finite4}, that $\mc{M}_1(\alpha,\beta)/\R$ is finite so that the count $\#(\mc{M}_1(\alpha,\beta)/\R)$ is well defined. Next, it follows from the inequality \eqref{eqn:daf} and Exercise~\ref{ex:fro} below that for any $\alpha$, there are only finitely many $\beta$ with $\mc{M}(\alpha,\beta)$ nonempty, so $\partial\alpha$ is well defined.

\begin{exercise}
\label{ex:fro}
If $\lambda$ is a nondegenerate contact form on $Y$ and if $L$ is a real number, then $\lambda$ has only finitely many Reeb orbits with symplectic action less than $L$.
\end{exercise}

The proof that $\partial^2=0$ is much more difficult, and we will give an introduction to this in \S\ref{sec:200pages}. Modulo this and the other facts we have not proved, we have now defined $ECH_*(Y,\lambda,\Gamma,J)$, and as reviewed in the introduction this is an invariant $ECH_*(Y,\xi,\Gamma)$.

\subsection{The grading}
\label{sec:grading}

The chain complex $ECC_*(Y,\lambda,\Gamma,J)$, and hence its homology, is relatively ${\mathbb Z}/d$ graded, where $d$ denotes the divisibility of $c_1(\xi)+2\op{PD}(\Gamma)$ in $H^2(Y;{\mathbb Z})$ mod torsion. That is, if $\alpha$ and $\beta$ are two chain complex generators, we can define their ``index difference'' $I(\alpha,\beta)$ by choosing an arbitrary $Z\in H_2(Y,\alpha,\beta)$ and setting
\[
I(\alpha,\beta) = \left[I(\alpha,\beta,Z)\right] \in {\mathbb Z}/d.
\]
This is well defined by the index ambiguity formula \eqref{eqn:iaf}. When the chain complex is nonzero, we can further define an absolute $\Z/d$ grading by picking some generator $\beta$ and declaring its grading to be zero, so that the grading of any other generator is $\alpha$ is
\[
|\alpha| = I(\alpha,\beta).
\]
By the Additivity property of the ECH index, the differential decreases this absolute grading by $1$.

\begin{remarks}
(1) In particular, if $\Gamma=0$, then the empty set of Reeb orbits is a generator of the chain complex, which represents a homology class depending only on $Y$ and $\xi$, see \S\ref{sec:addstr}. Thus  $ECH_*(Y,\xi,0)$ has a canonical absolute ${\mathbb Z}/d$ grading in which the empty set has grading zero.

(2) It follows from the Index Parity property \eqref{eqn:indexparity} that for every $\Gamma$ there is a canonical absolute $\Z/2$ grading on $ECH_*(Y,\xi,\Gamma)$ by the parity of the number of positive hyperbolic Reeb orbits.
\end{remarks}

\subsection{Example: the ECH of an ellipsoid}
\label{sec:ellipsoid}

To illustrate the above definitions, we now compute $ECH_*(Y,\lambda,0,J)$, where $Y$ is the three-dimensional ellipsoid $Y=\partial E(a,b)$ with $a/b$ irrational, and $\lambda$ is the contact form given by the restriction of the Liouville form \eqref{eqn:lambdastd}. We already saw in Example~\ref{ex:ell1} that the chain complex generators have the form $\gamma_1^{m_1}\gamma_2^{m_2}$ with $m_1,m_2\ge 0$. Since the Reeb orbits $\gamma_1$ and $\gamma_2$ are elliptic, it follows from the Index Parity property \eqref{eqn:indexparity} that the grading difference between any two generators is even, so the differential vanishes identically for any $J$.

\paragraph{The grading.}

To finish the computation of the homology, we just need to compute the grading of each generator.
We know from \S\ref{sec:grading} that the chain complex has a canonical $\Z$-grading, where the empty set (corresponding to $m_1=m_2=0$) has grading zero. The grading of $\alpha=\gamma_1^{m_1}\gamma_2^{m_2}$ can then be written as
\begin{equation}
\label{eqn:ellipsoidgrading}
|\alpha| = 
I(\alpha,\emptyset) = c_\tau(\alpha) + Q_\tau(\alpha) +
CZ_\tau^I(\alpha).
\end{equation}
Here $c_\tau(\alpha)$ is shorthand for $c_\tau(Z)$, and $Q_\tau(\alpha)$ is shorthand for $Q_\tau(Z)$, where $Z$ is the unique element of $H_2(Y,\alpha,\emptyset)$; and $CZ_\tau^I(\alpha)$ is shorthand for $CZ_\tau^I(\alpha,\emptyset)$.

To calculate the terms on the right hand side of \eqref{eqn:ellipsoidgrading}, we first need to choose a trivialization $\tau$ of $\xi$ over $\gamma_1$ and $\gamma_2$. Under the identification $T\R^4=\C\oplus\C$, the restriction of $\xi$ to $\gamma_1$ agrees with the second $\C$ summand, and the restriction of $\xi$ to $\gamma_2$ agrees with the first $\C$ summand. We use these identifications to define the trivialization $\tau$ that we will use.

The calculations in Example~\ref{ex:ell1} imply that with respect to this trivialization $\tau$, the rotation angle (see \S\ref{sec:ind}) of $\gamma_1$ is $a/b$, and the rotation angle of $\gamma_2$ is $b/a$. So by the formula \eqref{eqn:CZell} for the Conley-Zehnder index, we have
\[
CZ_\tau^I(\alpha) = \sum_{k=1}^{m_1}\left(2\lfloor ka/b\rfloor + 1\right) + \sum_{k=1}^{m_2}\left(2\lfloor kb/a\rfloor + 1\right).
\]
The remaining terms in \eqref{eqn:ellipsoidgrading} are given as follows:

\begin{exercise}
\label{ex:ellcq}
$c_\tau(\alpha) = m_1+m_2$, and $Q_\tau(\alpha) = 2m_1m_2$.
\end{exercise}

Putting the above together, we get that
\begin{equation}
\label{eqn:Iell}
I(\alpha) = 2\bigg((m_1+1)(m_2+1)-1 + \sum_{k=1}^{m_1}\lfloor ka/b\rfloor + \sum_{k=1}^{m_2}\lfloor kb/a\rfloor \bigg).
\end{equation}
In particular, this is a nonnegative even integer.

How many generators are there of each grading? By Taubes's isomorphism \eqref{eqn:echswf}, together with the calculation of the Seiberg-Witten Floer homology of $S^3$ in \cite{km}, we should get
\begin{equation}
\label{eqn:ECHell}
ECH_*(\partial E(a,b),\lambda,0,J) = \left\{ \begin{array}{cl} \Z/2, & *=0,2,4,\ldots,\\ 0, & \mbox{otherwise}.\end{array}\right.
\end{equation}

\begin{exercise}
\label{ex:ECHell}
Deduce \eqref{eqn:ECHell} from \eqref{eqn:Iell}. That is, show that \eqref{eqn:Iell} defines a bijection from the set of pairs of nonnegative integers $(m_1,m_2)$ to the set of nonnegative even integers. (See hint in \S\ref{sec:answers}.)
\end{exercise}

\subsection{The $U$ map}
\label{sec:Udetails}

We now explain some more details of the $U$ map which was introduced in \S\ref{sec:addstr}, following \cite[\S2.5]{wh}.

Suppose $Y$ is connected, and choose a point $z\in Y$ which is not on any Reeb orbit. Let $\alpha$ and $\beta$ be generators of the chain complex $ECC_*(Y,\lambda,\Gamma,J)$, and let $\mc{C}\in\mc{M}_2(\alpha,\beta)$ be a holomorphic current with $(0,z)\in\mc{C}$. By Proposition~\ref{prop:I03}, we have $\mc{C}=\mc{C}_0\sqcup C_2$ where $\mc{I}(\mc{C}_0)=0$, and $C_2$ is embedded and $\ind(C_2)=2$. Since $\mc{C}_0$ is a union of trivial cylinders and $z$ is not on any Reeb orbit, it follows that $(0,z)\in C_2$. Let $N_{(0,z)}C_2$ denote the normal bundle to $C_2$ at $(0,z)$. There is then a natural map
\begin{equation}
\label{eqn:Uregular}
T_{\mc{C}}\mc{M}_2(\alpha,\beta)\to N_{(0,z)}C_2.
\end{equation}
Transversality arguments as in Proposition~\ref{prop:ind} can be used to show that if $J$ is generic then the map \eqref{eqn:Uregular} is an isomorphism for all holomorphic currents $\mc{C}$ as above. In particular, this implies that the set of holomorphic currents $\mc{C}$ as above is discrete.
For $J$ with this property, we define a chain map
\[
U_z:ECC_*(Y,\lambda,\Gamma,J) \longrightarrow ECC_{*-2}(Y,\lambda,\Gamma,J)
\]
by
\[
U_z\alpha = \sum_\beta\#\{\mc{C}\in\mc{M}_2(\alpha,\beta)\mid (0,z)\in \mc{C}\}\beta,
\]
where $\#$ denotes the mod $2$ count as usual.

A compactness argument similar to the proof that $\partial$ is defined in \S\ref{sec:differentialdefined} shows that $U_z$ is defined. Likewise, the proof that $\partial^2=0$ introduced in \S\ref{sec:200pages} can be modified to show that $\partial U_z = U_z\partial$.

To show that the map \eqref{eqn:UJ} on ECH induced by $U_z$ does not depend on $z$,
suppose $z'\in Y$ is another point which is not on any Reeb orbit. Since there are only countably many Reeb orbits, we can choose an embedded path $\eta$ from $z$ to $z'$ which does not intersect any Reeb orbit. Define a map
\[
K_\eta: ECC_*(Y,\lambda,\Gamma,J) \longrightarrow ECC_{*-1}(Y,\lambda,\Gamma,J)
\]
by
\[
K_\eta\alpha = \sum_\beta\#\{(\mc{C},y)\in\mc{M}_1(\alpha,\beta)\times Y\mid (0,y)\in \mc{C}\}\beta.
\]
Similarly to the proof that $\partial$ is well-defined, $K_\eta$ is well-defined if $J$ is generic. Similarly to the proof that $\partial^2=0$, one proves the chain homotopy equation
\begin{equation}
\label{eqn:Keta}
\partial K_\eta + K_\eta\partial = U_z - U_{z'}.
\end{equation}

\begin{remark}
If $z=z'$, then it follows from \eqref{eqn:Keta} that $K_\eta$ induces a map on ECH of degree $-1$. In fact this map depends only on the homology class of the loop $\eta$, and thus defines a homomorphism from $H_1(Y)$ to the set of degree $-1$ maps on $ECH_*(Y,\xi,\Gamma)$. See \cite[\S12.1]{t3} for more about this map and an example where it is nontrivial, and \cite{e5} for the proof that it agrees with an analogous map on Seiberg-Witten Floer cohomology.
\end{remark}

\subsection{Partition conditions}
\label{sec:partitions}

The definitions of the ECH differential and the U map do not directly specify the topological type of the holomorphic currents to be counted. However it turns out that most of this information is determined indirectly.
We now explain how the covering multiplicities of the Reeb orbits at the ends of the nontrivial component of such a holomorphic current are uniquely determined if one knows the trivial cylinder components. (We will further see in \S\ref{sec:J0} that the genus of the nontrivial part of the holomorphic current is then determined by its relative homology class.)

Let $\alpha=\{(\alpha_i,m_i)\}$ and $\beta=\{(\beta_j,n_j)\}$ be orbit sets, and let $C\in\mc{M}(\alpha,\beta)$ be somewhere injective. For each $i$, the curve $C$ has ends at covers of $\alpha_i$ whose total covering multiplicity is $m_i$. The multiplicities of these covers are a partition of the positive integer $m_i$ which we denote by $p_i^+(C)$.  For example, if $C$ has two positive ends at $\alpha_i$, and one positive end at the triple cover of $\alpha_i$, then $m_i=5$ and $p_i^+(C)=(3,1,1)$. Likewise, the covering multiplicities of the negative ends of $C$ at covers of $\beta_j$ determine a partition of $n_j$, which we denote by $p_j^-(C)$.

For each embedded Reeb orbit $\gamma$ and each positive integer $m$, we will shortly define two partitions of $m$, the ``positive partition'' $p_\gamma^+(m)$ and the ``negative partition\footnote{In \cite{pfh2,ir}, $p_\gamma^+(m)$ is called the ``outgoing partition'' and denoted by $p_\gamma^{\op{out}}(m)$, while $p_\gamma^-(m)$ is called the ``incoming partition'' and denoted by $p_\gamma^{\op{in}}(m)$. It is never too late to change your terminology to make it clearer.}'' $p_\gamma^-(m)$. We then have:

\begin{partitionconditions} Suppose equality holds in the Writhe Bound \eqref{eqn:writhebound} for $C$. (This holds for example if $C$ is the nontrivial component of a holomorphic current that contributes to the ECH differential or the $U$ map.) Then $p_i^+(C)=p_{\alpha_i}^+(m_i)$ and $p_j^-(C)=p_{\beta_j}^-(n_j)$.
\end{partitionconditions}

The partitions $p_\gamma^\pm(m)$ are defined as follows. If $\gamma$ is positive hyperbolic, then
\[
p_\gamma^+(m) = p_\gamma^-(m) = (1,\ldots,1).
\]
Thus, if equality holds in the writhe bound for $C$, then $C$ can never have an end at a multiple cover of a positive hyperbolic Reeb orbit. If $\gamma$ is negative hyperbolic, then
\[
p_\gamma^+(m) = p_\gamma^-(m) = \left\{\begin{array}{cl} (2,\ldots,2), & \mbox{$m$ even,}\\ (2,\ldots,2,1), & \mbox{$m$ odd.}
\end{array}\right.
\]

Suppose now that $\gamma$ is elliptic with rotation angle $\theta$ with respect to some trivialization $\tau$ of $\xi|_\gamma$, see \S\ref{sec:ind}. Then $p_\gamma^\pm(m)=p_\theta^\pm(m)$, where the partitions $p_\theta^\pm(m)$ are defined as follows.

To define $p_\theta^+(m)$, let $\Lambda_\theta^+(m)$ be the maximal concave polygonal path in the plane (i.e.\ graph of a concave function) with vertices at lattice points which starts at the origin, ends at $(m,\lfloor m\theta\rfloor)$, and lies below the line $y=\theta x$. That is, $\Lambda_\theta^+(m)$ is the non-vertical part of the boundary of the convex hull of the set of lattice points $(x,y)$ with $0\le x\le m$ and $y\le\theta x$.
 Then $p_\theta^+(m)$ consists of the horizontal displacements of the segments of $\Lambda_\theta^+(m)$ connecting consecutive lattice points. 

The partition $p_\theta^-(m)$ is defined analogously from the path $\Lambda_\theta^-(m)$, which is the minimal convex polygonal path with vertices at lattice points which starts at the origin, ends at $(m,\lceil m\theta\rceil)$, and lies above the line $y=\theta x$. An equivalent definition is $p_\theta^-(m)=p_{-\theta}^+(m)$.

The partition $p_\theta^\pm(m)$ depends only on the class of $\theta$ in $\R/\Z$, and so $p_\gamma^\pm(m)$ does not depend on the choice of trivialization $\tau$.

The simplest example, which we will need for the computations in \S\ref{sec:examples}, is that if $\theta\in(0,1/m)$, then
\begin{equation}
\label{eqn:partitionexample}
\begin{split}
p_\theta^+(m) & = (1,\ldots,1),\\
p_\theta^-(m) &= (m).
\end{split}
\end{equation}
The partitions are more complicated for other $\theta$, see Figure~\ref{fig:partitions}.

\begin{figure}
\[
\begin{array}{|c||c|c|c|c|c|c|c|}
\hline
 & 2 & 3 & 4 & 5 & 6 & 7 & 8\\ \hline \hline
7/8,1 &  &  &  &  &  &  & 8 \\
\cline{8-8}
6/7,7/8 &  &  &  &  & 6 & \rb{7} & 7,1 \\
\cline{7-8}
5/6,6/7 &  &  & 4 & \rb{5} &  & 6,1 & 6,2 \\
\cline{6-8}
4/5,5/6 &  & 3 &  &  & 5,1 & 5,2 & 5,3 \\
\cline{5-8}
3/4,4/5 &  &  &  & 4,1 & 4,2 & 4,3 & 4,4 \\
\cline{4-8}
5/7,3/4 & 2 &  &  &  &  & 7 & 7,1 \\
\cline{7-8}
2/3,5/7 &  &  & \rb{3,1} & \rb{3,2} & \rb{3,3} & 3,3,1 & 3,3,2 \\
\cline{3-8}
5/8,2/3 &  &  &  &  &  &  & 8 \\
\cline{8-8}
3/5,5/8 &  &  &  & \rb{5} & \rb{5,1} & \rb{5,2} & 5,2,1 \\
\cline{5-8}
4/7,3/5 &  & \rb{2,1} & \rb{2,2} &  &  & 7 & 7,1 \\
\cline{7-8}
1/2,4/7 &  &  &  & \rb{2,2,1} & \rb{2,2,2} & 2,2,2,1 & 2,2,2,2 \\
\cline{2-8}
3/7,1/2 &  &  &  &  & & 7 & 7,1 \\
\cline{7-8}
2/5,3/7 &  &  &  & \rb{5} & \rb{5,1} & 5,1,1 & 5,3 \\
\cline{5-8}
3/8,2/5 &  & \rb{3} & \rb{3,1} &  &  &  & 8 \\
\cline{8-8}
1/3,3/8 &  &  &  & \rb{3,1,1} & \rb{3,3} & \rb{3,3,1} & 3,3,1,1 \\
\cline{3-8}
2/7,1/3 &  &  &  &  &  & 7 & 7,1 \\
\cline{7-8}
1/4,2/7 & 1,1 &  & \rb{4} & \rb{4,1} & \rb{4,1,1} & 4,1,1,1 & 4,4 \\
\cline{4-8}
1/5,1/4 &  &  &  & 5 & 5,1 & 5,1,1 & 5,1,1,1 \\
\cline{5-8}
1/6,1/5 &  & 1,1,1 &  &  & 6 & 6,1 & 6,1,1 \\
\cline{6-8}
1/7,1/6 &  &  & 1,1,1,1 &  &
& 7 & 7,1 \\
\cline{7-8}
1/8,1/7 &  &  &  & \rb{1,\ldots,1} & 1,\ldots,1 &  & 8 \\
\cline{8-8}
0,1/8 &  &  &  &  &  &
\rb{1,\ldots,1} & 1,\ldots,1 \\ \hline
\end{array}
\]
\caption{The positive partitions $p_\theta^+(m)$
for $2\le m\le 8$ and all $\theta$.  The left column shows the
interval in which $\theta\mod 1$ lies, and the top row indicates $m$. (Borrowed from \cite{pfh2})}
\label{fig:partitions}
\end{figure}

If $m>1$, then $p_\theta^+(m)$ and $p_\theta^-(m)$ are disjoint. (This makes the gluing theory to prove $\partial^2=0$ nontrivial, see \S\ref{sec:200pages}.) This is a consequence of the following exercise, which may help in understanding the partitions.

\begin{exercise}
\label{ex:partitions}
(See answer in \S\ref{sec:answers}.)
Write $p_\theta^+(m)=(q_1,\ldots,q_k)$ and $p_\theta^-(m)=(r_1,\ldots,r_l)$.
\begin{description}
\item{(a)} Show that if $(a,b)$ is an edge vector of the path $\Lambda_\theta^+(m)$, then $b=\floor{a\theta}$.
\item{(b)}  Show that $\sum_{i\in I}\floor{q_i\theta}=\left\lfloor\sum_{i\in I}q_i\theta\right\rfloor$ for each subset $I\subset\{1,\ldots,k\}$.
\item{(c)}  Show that there do not exist proper subsets $I\subset\{1,\ldots,k\}$ and $J\subset\{1,\ldots,l\}$ such that $\sum_{i\in I}q_i = \sum_{j\in J}r_j$.
\end{description}
\end{exercise}

Here is a related combinatorial exercise, some of which is needed for the proofs that $\partial$ is well-defined and $\partial^2=0$ in \S\ref{sec:differentialdefined} and \S\ref{sec:200pages}.

\begin{exercise}
\label{ex:partialorder}
(See answer in \S\ref{sec:answers}.)
Fix an irrational number $\theta$ and a positive integer $m$. Suppose $\gamma$ is an embedded elliptic Reeb orbit with rotation angle $\theta$.
\begin{description}
\item{(a)} Show that if $u:C\to\R\times\gamma$ is a degree $m$ branched cover, regarded as a holomorphic curve in $\R\times Y$, then the Fredholm index\footnote{The Fredholm index of a possibly multiply covered curve $u:C\to\R\times Y$ is defined as in \eqref{eqn:ind3}, with $c_\tau(C)$ replaced by $c_1(u^*\xi,\tau)$.} $\ind(u)\ge 0$.
\item{(b)}
If $(a_1,\ldots,a_k)$ and $(b_1,\ldots,b_l)$ are partitions of $m$, define $(a_1,\ldots,a_k)\ge (b_1,\ldots,b_l)$ if there is a branched cover $u$ of $\R\times\gamma$ with positive ends at $\gamma^{a_i}$, negative ends at $\gamma^{b_j}$, and $\ind(u)=0$. Show that $\ge$ is a partial order on the set of partitions of $m$.
\item{(c)}
Show that $p_\theta^-(m)\ge p_\theta^+(m)$.
\item{(d)}
Show that there does not exist any partition $q$ with $q>p_\theta^-(m)$ or $p_\theta^+(m)>q$.
\end{description}
\end{exercise}

\begin{remark}
If $\mc{C}\in\mc{M}(\alpha,\beta)$ contributes to the differential or the $U$ map, and if $\mc{C}$ contains trivial cylinders, then additional partition conditions must hold; see \cite[Prop.\ 7.1]{pfh2} and \cite[Lem.\ 7.28]{obg1} for these conditions.
\end{remark}

%Namely, let $m_i'\le m_i$ denote the total multiplicity of the nontrivial positive ends at $\alpha_i$, and let $n_j'\le n_j$ denote the total multiplicity of the nontrivial negative ends at $\beta_j$. Then
%\[
%\begin{split}
%p_{\alpha_i}^+(m_i'+k)=p_{\alpha_i}^+(m_i)\cup p_{\alpha_i}^+(k),& \quad\quad 0\le k \le m_i-m_i',\\
%p_{\beta_j}^-(n_j'+k)=p_{\beta_j}^-(n_j)\cup p_{\beta_j}^-(k),& \quad\quad 0\le k \le n_j-n_j'.
%\end{split}
%\]
%(See \cite[Prop.\ 7.1]{pfh2} for the proof, and see \cite[Lem. 7.28]{obg1} for equivalent versions of these conditions for elliptic orbits.) For example, if $\alpha_i$ is elliptic with rotation angle $\theta\in(0,1/m_i)$, then $\mc{C}$ cannot contain both a trivial cylinder over $\alpha_i$ and a nontrivial negative end at $\alpha_i$, because $p_{\alpha_i}^-(m_i)=(m_i)$, which is not the union of two smaller partitions.

\section{More examples of ECH}
\label{sec:examples}

The calculation of the ECH of an ellipsoid in \S\ref{sec:ellipsoid} was fairly simple because we just had to determine the grading of each generator. We now outline some more complicated calculations which require counting holomorphic curves. These are useful for further understanding the machinery, and relevant to the symplectic embedding obstructions described in \S\ref{sec:capacities}.

\subsection{The $U$ map on the ECH of an ellipsoid}
\label{sec:Uell}

We first return to the ellipsoid example from \S\ref{sec:ellipsoid}. Recall from \eqref{eqn:ECHell} that $ECH_*(\partial E(a,b),\lambda,0)$ has one generator of grading $2k$ for each $k=0,1,\ldots$; denote this generator by $\zeta_k$. To calculate the ECH capacities of $E(a,b)$ in \S\ref{sec:definecapacities}, we needed:

\begin{proposition}
\label{prop:Uell}
For any $J$, the $U$ map on $ECH_*(\partial E(a,b),\lambda,0,J)$ is given by
\begin{equation}
\label{eqn:Uzeta}
U\zeta_k=\zeta_{k-1}, \quad k>0.
\end{equation}
\end{proposition}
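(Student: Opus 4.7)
The plan is to reduce the problem to showing $U\zeta_k \neq 0$, and then deduce this nonvanishing from the identification of ECH with Seiberg-Witten Floer cohomology.

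First I would observe that the chain complex differential vanishes identically, as already noted in \S\ref{sec:ellipsoid}: every generator $\gamma_1^{m_1}\gamma_2^{m_2}$ has no positive hyperbolic factors, so by the Index Parity property \eqref{eqn:indexparity} every relative ECH index is even, and in particular no index-$1$ current can exist between two generators. Consequently, the chain map $U_z$ of \S\ref{sec:Udetails} coincides with its induced map on homology, and by Exercise~\ref{ex:ECHell} the only generator of grading $2(k-1)$ is $\zeta_{k-1}$. Therefore $U\zeta_k\in\{0,\zeta_{k-1}\}$, and it suffices to prove that $U\zeta_k$ is nonzero for every $k>0$.

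The cleanest way to establish this nonvanishing is to use the isomorphism of Theorem~\ref{thm:echswf}. By Taubes's result cited after \eqref{eqn:UJ}, this isomorphism intertwines the ECH $U$ map with the $U$ map on $\widehat{HM}^{-*}(S^3,\frak{s}_\xi)$ defined by Kronheimer-Mrowka. Since $S^3$ carries a unique spin-c structure and its Seiberg-Witten Floer cohomology is computed in \cite{km} to be a free module of rank one over the polynomial ring $\F_2[U]$ (concentrated in the appropriate half-line of gradings matching the ECH computation \eqref{eqn:ECHell}), the SW $U$ map is injective on every graded piece where it is defined. Transporting this back through \eqref{eqn:echswf} forces $U\zeta_k\neq 0$, hence $U\zeta_k=\zeta_{k-1}$.

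The main obstacle of a \emph{direct} holomorphic-curve proof — which I would view as the substantive content one might hope to extract — is that one has to identify, for some convenient $J$, a unique (mod $2$) embedded $I=2$ curve from $\gamma_1^{m_1}\gamma_2^{m_2}$ to the neighboring generator of grading $2(k-1)$ that passes through a generic base point, and to check the partition conditions of \S\ref{sec:partitions} force this neighboring generator to be precisely $\zeta_{k-1}$ (rather than some other $\gamma_1^{m_1'}\gamma_2^{m_2'}$). For the $J$ induced by the standard complex structure on $\C^2$, candidates are the holomorphic planes coming from the restriction of complex lines $\{\mu z_1 + \nu z_2 = c\}$, but organizing these into the right index-$2$ moduli spaces and checking transversality and the writhe-equality case of \eqref{eqn:writhebound} is delicate. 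Since the $U$ map on homology is independent of $J$, the SW route above sidesteps these difficulties entirely.
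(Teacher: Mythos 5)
Your argument is correct, but it follows the route that the paper mentions in one sentence and then deliberately sets aside. The reduction is clean: since every generator $\gamma_1^{m_1}\gamma_2^{m_2}$ is built from elliptic orbits, index parity kills the differential, Exercise~\ref{ex:ECHell} gives exactly one generator in each even grading, and hence $U\zeta_k\in\{0,\zeta_{k-1}\}$; nonvanishing then follows from Taubes's identification of the ECH and Seiberg--Witten $U$ maps \cite{e5} together with the computation of $\widehat{HM}^{-*}(S^3)$ in \cite{km}, whose $U$ map is an isomorphism between the graded pieces relevant for $k>0$. This is precisely what the text means when it says the proposition ``follows from the isomorphism with Seiberg-Witten theory.'' The body of \S\ref{sec:Uell}, however, is the direct ECH proof that you flag only as an obstacle: for $a=1-\epsilon$, $b=1+\epsilon$ with $\epsilon$ small relative to $k$, one (i) uses the partition conditions \eqref{eqn:partitionexample} and the Fredholm index to force the nontrivial component of a contributing current to be a genus-zero cylinder or plane with prescribed ends (Exercises~\ref{ex:Uell1} and \ref{ex:Uell2}), (ii) gets the regularity needed to define $U_z$ for free from automatic transversality, conditions \eqref{eqn:at} and \eqref{eqn:ai}, and (iii) counts the curves by degenerating to the Morse--Bott round sphere, where they become gradient flow lines on $\C P^1$ or meromorphic sections of the tautological bundle. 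Your route buys brevity, all $a,b$ and all $J$ at once, and no transversality worries; the direct route stays inside ECH, avoids the machinery of \cite{e1,e5}, and is the instructive content of the section. One small imprecision in your write-up: $\widehat{HM}^{-*}(S^3)$ is a one-sided tower on which $U$ has one-dimensional kernel at the bottom (matching $U\zeta_0=0$), so it is not literally a free module over $\Z/2[U]$; but the injectivity you actually use, on the pieces with $k>0$, does hold.
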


As mentioned in Example~\ref{ex:zeta}, this follows from the isomorphism with Seiberg-Witten theory.
However it is instructive to try to prove Proposition~\ref{prop:Uell} directly in ECH, without using Seiberg-Witten theory.

First of all, we can see directly in this case that the $U$ map does not depend on the almost complex structure $J$. The idea is that if we generically deform $J$, then similarly to the compactness part of the proof that $\partial^2=0$, see Lemma~\ref{lem:compactness2}, the chain map $U_z$ can change only if at some time there is a broken holomorphic curve containing a level with $I=1$. But there are no $I=1$ curves by the Index Parity property \eqref{eqn:indexparity} since all Reeb orbits are elliptic. 

We now sketch a direct proof of Proposition~\ref{prop:Uell} in the special case when $a=1-\epsilon$ and $b=1+\epsilon$ where $\epsilon>0$ is sufficiently small with respect to $k$.  (One can probably prove the general case similarly with more work.)

If $\epsilon$ is sufficiently small with respect to $k$, then $\zeta_k$ is the $k^{th}$ generator in the sequence
\[
1,\gamma_1,\gamma_2,\gamma_1^2,\gamma_1\gamma_2,\gamma_2^2,\gamma_1^3,\gamma_1^2\gamma_2,\gamma_1\gamma_2^2,\gamma_2^3,\ldots
\]
(indexed starting at $k=0$). So to prove Proposition~\ref{prop:Uell} in our special case, it is enough to show the following:

\begin{lemma}
\label{lem:Uell}
If $a=1-\epsilon$ and $b=1+\epsilon$, then
the $U$ map on $ECH_*(\partial E(a,b),\lambda,0,J)$ is given by:
\begin{description}
\item{(a)}
$U(\gamma_1^i\gamma_2^j)=\gamma_1^{i+1}\gamma_2^{j-1}$ if $j>0$ and $\epsilon>0$ is sufficiently small with respect to $i+j$.
\item{(b)}
$U(\gamma_1^i)=\gamma_2^{i-1}$ if $i>0$ and $\epsilon>0$ is sufficiently small with respect to $i$.
\end{description}
\end{lemma}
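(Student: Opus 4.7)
The plan is to combine a grading uniqueness argument (which forces the form of $U\alpha$) with a direct count of the relevant $I=2$ holomorphic cylinders through a base point.

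First, use the grading formula \eqref{eqn:Iell} to verify that, for $\epsilon$ sufficiently small relative to $i+j$, the proposed target $\beta$ is the unique ECH generator with $|\beta|=|\alpha|-2$. Since $U$ lowers grading by $2$, this reduces both (a) and (b) to showing that the mod-$2$ coefficient $\langle U\alpha,\beta\rangle$ equals $1$. Note also that the $U$-map is independent of $J$ here, since all generators are elliptic, so the Index Parity \eqref{eqn:indexparity} rules out any $I=1$ holomorphic currents that could cause a bifurcation as $J$ varies.

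Next, identify the relevant holomorphic currents. By Proposition~\ref{prop:I03}, any $\mathcal{C}\in\mathcal{M}_2(\alpha,\beta)$ with $(0,z)\in\mathcal{C}$ splits as $\mathcal{C}_0\sqcup C_2$, where $\mathcal{C}_0$ is a union of trivial cylinders over $\gamma_1,\gamma_2$ and $C_2$ is embedded with $\ind(C_2)=I(C_2)=2$ and contains $(0,z)$. For $\epsilon$ small, the fractional parts of $\theta_1=a/b$ and $\theta_2=b/a$ lie just below $1$ and just above $0$ respectively, so \eqref{eqn:partitionexample} gives
\[
p^+_{\gamma_1}(m)=p^-_{\gamma_2}(m)=(m),\qquad p^-_{\gamma_1}(m)=p^+_{\gamma_2}(m)=(1,1,\ldots,1).
\]
Combined with conservation of orbit multiplicities between $\alpha$, $\beta$, and $\mathcal{C}_0$ and the index formula \eqref{eqn:ind3}, these Partition Conditions force $C_2$ to be an embedded holomorphic cylinder: from $\gamma_1^i$ to $\gamma_2^{i-1}$ in case (b), and (after checking that all alternative $(s,s')$ give the wrong parity of ends or cannot achieve $\ind(C_2)=2$ with $c_\tau$ consistent with the relative adjunction formula \eqref{eqn:adj3}) from $\gamma_2$ to $\gamma_1$ in case (a), with the leftover multiplicities carried by $\mathcal{C}_0$.

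Finally, count these cylinders through $(0,z)$ modulo $2$. The natural approach is a Morse--Bott analysis: regard $\partial E(1-\epsilon,1+\epsilon)$ as a small perturbation of the round sphere $S^3=\partial E(1,1)$, whose Reeb flow has a $\mathbb{CP}^1$-family of Hopf fibers with $\gamma_1,\gamma_2$ emerging as the two poles after the perturbation (chosen to correspond to a Morse function $f$ on $S^2=\mathbb{CP}^1$ with exactly these two critical points). The Morse--Bott moduli of holomorphic cylinders joining the two limiting orbit families is in bijection with gradient trajectories of $f$ on $S^2$ from one pole to the other; through a generic base point exactly one such trajectory passes, and automatic transversality for embedded $\ind=2$ curves in dimension four upgrades this to a count of exactly one embedded ECH cylinder through $(0,z)$, confirming $\langle U\alpha,\beta\rangle=1$. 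The hard part will be the Morse--Bott-to-nondegenerate correspondence in this last step, in particular verifying regularity and the one-to-one matching for the iterated case (b) where both ends of $C_2$ are multiply covered; a cleaner shortcut, as alluded to in Example~\ref{ex:zeta}, would be to invoke Taubes's isomorphism \eqref{eqn:echswf} together with the known $U$-action on $\widehat{HM}^*(S^3)$, but the spirit of the lemma is to remain within ECH.
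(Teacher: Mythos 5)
Your Steps 1 and 2 track the paper's proof closely and are correct: the reduction via Proposition~\ref{prop:I03}, the partition data $p^+_{\gamma_1}(m)=p^-_{\gamma_2}(m)=(m)$ and $p^-_{\gamma_1}(m)=p^+_{\gamma_2}(m)=(1,\ldots,1)$, the use of $\ind(C_2)=2$ to pin down the topological type of $C_2$ (a $\gamma_2\to\gamma_1$ cylinder in case (a); a cylinder from $\gamma_1^i$ to $\gamma_2^{i-1}$, degenerating to a plane when $i=1$, in case (b)), and the appeal to automatic transversality (both for regularity of $C_2$ and for the injectivity of the evaluation map \eqref{eqn:Uregular}) are all exactly what the paper does. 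The grading-uniqueness observation that reduces everything to showing $\langle U\alpha,\beta\rangle=1$ is a reasonable packaging consistent with \eqref{eqn:ECHell}. The count for case (a) via negative gradient flow lines of the Morse function on $\C P^1$ also matches the paper.

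The gap is the count in case (b), which you explicitly defer as ``the hard part'' and never carry out — and the mechanism you propose for it is the wrong one, not merely incomplete. The Morse--Bott correspondence between holomorphic cylinders and gradient trajectories of $f$ on $\C P^1$ applies only to the ``thin'' cylinders connecting simple orbits in the degenerating $S^2$-family (the case (a) curve). The case (b) curve has a positive end at the $i$-fold cover $\gamma_1^i$ and a negative end at $\gamma_2^{i-1}$; it is a genuinely ``thick'' holomorphic curve that already exists for the round contact form on $S^3$ and does not arise from any gradient trajectory. The paper counts these by identifying $\R\times S^3$ with $\mc{L}\setminus Z$, where $\mc{L}$ is the tautological line bundle over $\C P^1$ (Exercise~\ref{ex:meromorphic}): intersection positivity and a linking-number computation show $C_2$ meets each Hopf fiber once, hence is the graph of a meromorphic section of $\mc{L}$ with a zero of order $i$ at $p_1$ and a pole of order $i-1$ at $p_2$, and there is exactly one such section through a generic point of $\mc{L}\setminus Z$. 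Without this (or some substitute) your argument establishes part (a) but not part (b); falling back on the Seiberg--Witten isomorphism, as you suggest, would concede the point of the exercise.
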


\begin{proof} The proof has three steps.

{\em Step 1.\/} We first determine the types of holomorphic curves we need to count.

Let $\mc{C}$ be a holomorphic current that contributes to $U_z(\gamma_1^i\gamma_2^j)$ where $i+j>0$. Write $\mc{C}=\mc{C}_0\sqcup C_2$ as in Proposition~\ref{prop:I03}. 
It follows from the partition conditions \eqref{eqn:partitionexample} that
$C_2$ has at most one positive end at a cover of $\gamma_1$, all positive ends of $C_2$ at covers of $\gamma_2$ have multiplicity $1$, all negative ends of $C_2$ at covers of $\gamma_1$ have multiplicity $1$, and $C_2$ has at most one negative end at a cover of $\gamma_2$.

\begin{exercise}
\label{ex:Uell1}
Deduce from this and the equation $\ind(C_2)=2$ that if $j=0$, then $C_2$ is a cylinder if $i>1$, and a plane if $i=1$, assuming that $\epsilon>0$ is sufficiently small with respect to $i$. (See answer in \S\ref{sec:answers}.)
\end{exercise}

\begin{exercise}
\label{ex:Uell2}
Similarly show that if $j>0$, then $C_2$ is a cylinder with a positive end at $\gamma_2$ and a negative end at $\gamma_1$, assuming that $\epsilon>0$ is sufficiently small with respect to $i+j$. (See answer in \S\ref{sec:answers}.)
\end{exercise}

{\em Step 2.\/} We now observe that the transversality conditions needed to define $U_z$, see \S\ref{sec:Udetails}, hold automatically  for any symplectization-admissible $J$. This follows from two general facts. First, if $C$ is an immersed irreducible $J$-holomorphic curve such that
\begin{equation}
\label{eqn:at}
2g(C) -2 + h_+(C) < \ind(C),
\end{equation}
then $C$ is automatically regular. Here $g(C)$ denotes the genus of the domain of $C$, and  $h_+(C)$ denotes the number of ends of $C$ at positive hyperbolic orbits, including even covers of negative hyperbolic orbits. This and much more general automatic transversality results are proved in \cite{wendl:at}. Second, if
\begin{equation}
\label{eqn:ai}
2g(C)-2+\ind(C)+h_+(C)=0,
\end{equation}
then every nonzero element of the kernel of the deformation operator of $C$ is nonvanishing\footnote{The left side of \eqref{eqn:ai} is called the ``normal Chern number'' by Wendl \cite{wendl:c}. Any holomorphic curve $u$ in $\R\times Y$ has normal Chern number $\ge 0$, with equality only if the projection of $u$ to $Y$ is an immersion. In favorable cases one can further show that the projection of $u$ to $Y$ is an embedding. One such favorable case is described in \cite[Prop.\ 3.4]{wh}, which is used to characterize contact three-manifolds in which all Reeb orbits are elliptic.}. If $C=C_2$ where $C_2$ is one of the holomorphic curves described in Step 1, then $C_2$ has genus zero, Fredholm index 2, and all ends at elliptic orbits, so both conditions \eqref{eqn:at} and \eqref{eqn:ai} hold, and we conclude that $C_2$ is regular and the map \eqref{eqn:Uregular} has no kernel, which is exactly the transversality needed to define $U_z$. 

{\em Step 3.} We now count the holomorphic curves $C_2$ described in Step 1. To do so, consider the case $a=b=1$. Here the contact form is not nondegenerate, as every point on $Y=S^3$ is on a Reeb orbit. Indeed, the set of embedded Reeb orbits can be identified with $\C P^1$, so that the map $S^3\to \C P^1$ sending a point to the Reeb orbit on which it lies is the Hopf fibration. This is an example of a ``Morse-Bott'' contact form.

It is explained by Bourgeois \cite{bourgeois} how one can understand holomorphic curves for a nondegenerate perturbation of a Morse-Bott contact form in terms of holomorphic curves for the Morse-Bott contact form itself.  In the present case, this means that we can understand holomorphic curves for the ellipsoid with $a=1-\epsilon$, $b=1+\epsilon$, in terms of holomorphic curves for the sphere with $a=b=1$. Specifically, let $p_i\in \C P^1$ denote the point corresponding to the Reeb orbit $\gamma_i$ for $i=1,2$. Choose a Morse function $f:\C P^1\to \R$ with an index $2$ critical point at $\gamma_2$ and and index $0$ critical point at $\gamma_1$. Then \cite{bourgeois} tells us the following.

First, a holomorphic cylinder for the perturbed contact form with a positive end at $\gamma_2$ and a negative end at $\gamma_1$ (modulo $\R$ translation) corresponds to a negative gradient flow line of $f$ from $p_2$ to $p_1$. If we choose a base point $\overline{z}\in \C P^1\setminus \{p_1,p_2\}$, then there is exactly one such flow line passing through $\overline{z}$. One can deduce from this that if we choose a base point $z\in Y$ which is not on $\gamma_1$ or $\gamma_2$, then there is exactly one holomorphic cylinder with a positive end at $\gamma_2$ and a negative end at $\gamma_1$ passing through $(0,z)$. This proves part (a) of Lemma~\ref{lem:Uell}.

Second, to prove part (b) of Lemma~\ref{lem:Uell}, we need to count holomorphic cylinders (or planes when $i=1$) $C$ for the Morse-Bott contact form with a positive end at $\gamma_1^i$, and a negative end at $\gamma_2^{i-1}$ when $i>0$, which pass through a base point. To count these, let $\mc{L}$ denote the tautological line bundle over $\C P^1$.
Let $J$ denote the canonical complex structure on $\mc{L}$, and let $Z\subset\mc{L}$ denote the zero section.

\begin{exercise}
\label{ex:meromorphic}
One can identify $\mc{L}\setminus Z\simeq \R\times S^3$ so that $J$ corresponds to a symplectization-admissible almost complex structure. A meromorphic section $\psi$ of $\mc{L}$ determines a holomorphic curve in $\R\times S^3$ with positive ends corresponding to the zeroes of $\psi$, and negative ends corresponding to the poles of $\psi$. Conversely, a holomorphic curve in $\R\times S^3$ which intersects each fiber of $\mc{L}\setminus Z\to \C P^1$, except for the fibers over the Reeb orbits at the positive and negative ends, transversely in a single point, comes from a meromorphic section of $\mc{L}$.
\end{exercise}

If $C$ is a holomorphic curve as in the paragraph preceding the above exercise, then by the definition of linking number in $S^3$, the curve $C$ has algebraic intersection number $1$ with each fiber of $\mc{L}\setminus Z$ over $\C P^1\setminus\{p_1,p_2\}$. By intersection positivity, $C$ intersects each such fiber transversely in a single point. It follows then from Exercise~\ref{ex:meromorphic} that to compute $U\gamma_1^i$, we need to count meromorphic sections of $\mc{L}$ with a zero of order $i$ at $p_1$, a pole of order $i-1$ at $p_2$, and no other zeroes or poles, which pass through a base point in $\mc{L}\setminus Z$. 
There is exactly one such meromorphic section, and this completes the proof of Lemma~\ref{lem:Uell}.
\end{proof}

\subsection{The ECH of $T^3$}
\label{sec:t3}

Our next example of ECH is more complicated, but will ultimately be useful in computing many examples of ECH capacities.
We consider
\[
Y=T^3=(\R/2\pi\Z)\times(\R/ \Z)^2.
\]
Let $\theta$ denote the $\R/2\pi\Z$ coordinate and let $x,y$ denote the two $\R/\Z$ coordinates. We start with the contact form
\begin{equation}
\label{eqn:lambda1}
\lambda_1 = \cos\theta\, dx + \sin\theta\, dy.
\end{equation}
Let $\xi_1=\Ker(\lambda_1)$; we now describe how to compute $ECH_*(T^3,\xi_1,0)$, following \cite{t3}.

\paragraph{Perturbing the contact form.}
The Reeb vector field associated to $\lambda_1$ is
\[
R_1 = \cos\theta\frac{\partial}{\partial x} + \sin\theta\frac{\partial}{\partial y}.
\]
If $\tan\theta\in\Q\cup\{\infty\}$, so that the vector $(\cos\theta,\sin\theta)$ is a positive real multiple of a vector $(a,b)$ where $a,b$ are relatively prime integers, then every point on $\{\theta\}\times(\R/\Z)^2$ is on an embedded Reeb orbit $\gamma$ in the homology class $(0,a,b)\in H_1(T^3)$.  The symplectic action of the Reeb orbit $\gamma$ is
\[
\mc{A}(\gamma) = \sqrt{a^2+b^2}.
\]
In particular, there is a circle $S_{a,b}$ of such Reeb orbits. Thus the contact form $\lambda_1$ is not nondegenerate; again it is Morse-Bott.

To compute the ECH of $\xi_1$, we will perturb $\lambda_1$ to a nondegenerate contact form. Given $a,b$, one can perturb the contact form $\lambda_1$ near $S_{a,b}$ so that, modulo longer Reeb orbits, the circle of Reeb orbits $S_{a,b}$ becomes just two embedded Reeb orbits, one elliptic with rotation angle slightly positive, and one positive hyperbolic. We denote these by $e_{a,b}$ and $h_{a,b}$. The orbits $e_{a,b}$ and $h_{a,b}$ are still in the homology class $(0,a,b)$, and have symplectic action close to $\sqrt{a^2+b^2}$, with the action of $e_{a,b}$ slightly greater than that of $h_{a,b}$. For any given $L>0$, one can perform such a perturbation for all of the finitely many pairs of relatively prime integers $(a,b)$ with $\sqrt{a^2+b^2}<L$, to obtain a contact form $\lambda$ for which the embedded Reeb orbits with symplectic action less than $L$ are the elliptic orbits $e_{a,b}$ and the hyperbolic orbits $h_{a,b}$ where $(a,b)$ ranges over all pairs of relatively prime integers with $\sqrt{a^2+b^2}<L$.

It is probably not possible to do this for $L=\infty$, i.e.\ to find a contact form such that the embedded Reeb orbits of all actions are the orbits $e_{a,b}$ and $h_{a,b}$ where $(a,b)$ ranges over all pairs of relatively prime integers. Rather, one can show that to calculate the ECH of $\xi_1$, we can perturb as above for a given $L$, compute the filtered ECH in symplectic action less than $L$, and take the direct limit as $L\to\infty$. In the calculations below, we only consider generators of symplectic action less than $L$, and we omit $L$ from the notation.

\paragraph{The generators.}

A generator of the chain complex $ECC_*(Y,\lambda,0,J)$ now consists of a finite set of Reeb orbits $e_{a,b}$ and $h_{a,b}$ with positive integer multiplicities, where each $h_{a,b}$ has multiplicity $1$, and the sum with multiplicities of all the vectors $(a,b)$ is $(0,0)$. To describe this more simply, if $(a,b)$ are relatively prime integers and if $m$ is a positive integer, let $e_{ma,mb}$ denote the elliptic orbit $e_{a,b}$ with multiplicity $m$; and let $h_{ma,mb}$ denote the hyperbolic orbit $h_{a,b}$, together with the elliptic orbit $e_{a,b}$ with multiplicity $m-1$ when $m>1$. A chain complex generator then consists of a finite set of symbols $e_{a,b}$ and $h_{a,b}$, where each $(a,b)$ is a pair of (not necessarily relatively prime) integers which are not both zero, no pair $(a,b)$ appears more than once, and the sum of the vectors $(a,b)$ that appear is zero. If we arrange the vectors $(a,b)$ head to tail in order of increasing slope, we obtain a convex polygon in the plane. Thus, a generator of the chain complex $ECC_*(Y,\lambda,0,J)$ can be represented as convex polygon $\Lambda$ in the plane, modulo translation, with vertices at lattice points, with each edge labeled either `$e$' or `$h$'. The polygon can be a 2-gon (for a generator such as $e_{a,b}e_{-a,-b}$) or a point (for the empty set of Reeb orbits). The symplectic action of the generator is approximately the Euclidean length of the polygon $\Lambda$.

\paragraph{The grading.}

The two-plane field $\xi_1$ is trivial; indeed $\partial_\theta$ defines a global trivialization $\tau$. Thus $c_1(\xi_1)=0$, and the chain complex $ECC_*(T^3,\lambda,0,J)$ has a canonical $\Z$-grading, in which the empty set has grading zero. 

\begin{lemma}
The canonical $\Z$-grading of a generator $\Lambda$ is given by
\begin{equation}
\label{eqn:t3grading}
|\Lambda| = 2(\mc{L}(\Lambda)-1) - h(\Lambda),
\end{equation}
where $\mc{L}(\Lambda)$ denotes the number of lattice points enclosed by $\Lambda$ (including lattice points on the edges), and 
 $h(\Lambda)$ denotes the number of edges of $\Lambda$ that are labeled `$h$'.
\end{lemma}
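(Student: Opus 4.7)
The plan is to compute each of the three terms in
\[
|\Lambda| = I(\Lambda,\emptyset,Z) = c_\tau(Z) + Q_\tau(Z) + CZ_\tau^I(\Lambda)
\]
separately, using the global trivialization $\tau$ of $\xi_1$ furnished by the sections $\partial_\theta$ and $-\sin\theta\,\partial_x + \cos\theta\,\partial_y$, and then combine them via Pick's theorem.

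First I would observe that because $\tau$ extends to a nowhere-vanishing global trivialization of $\xi_1$ over all of $T^3$, any 2-chain representing $Z$ admits a nowhere-vanishing section of $\xi_1$ which is $\tau$-constant near the boundary, so $c_\tau(Z)=0$.

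Next I would compute $CZ_\tau^I(\Lambda)$ directly from the Morse-Bott perturbation. By construction each $e_{a',b'}$ has rotation angle $\theta_{a',b'}$ that is positive but arbitrarily small relative to $\tau$, and each $h_{a',b'}$ is positive hyperbolic with $CZ_\tau(h_{a',b'})=0$. Choosing the perturbation so that $\lfloor k\theta_{a',b'}\rfloor = 0$ for all multiplicities $k$ arising in generators of action below the cutoff $L$, formula \eqref{eqn:CZell} yields $CZ_\tau(e_{a',b'}^k)=1$. An edge of lattice length $m$ labelled $e$ therefore contributes $\sum_{k=1}^m 1 = m$ to $CZ_\tau^I$, while an edge of lattice length $m$ labelled $h$ corresponds to $h_{a',b'}$ together with $e_{a',b'}$ of multiplicity $m-1$, contributing $0 + (m-1)=m-1$. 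Summing over edges gives
\[
CZ_\tau^I(\Lambda) = \mc{B}(\Lambda) - h(\Lambda),
\]
where $\mc{B}(\Lambda):=\sum_i m_i$ counts boundary lattice points.

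The main step is to prove $Q_\tau(Z) = 2A(\Lambda)$, where $A(\Lambda)$ is the Euclidean area enclosed by $\Lambda$. My approach is to build an explicit representative $\Sigma\subset [-1,1]\times T^3$ for $Z$, exploiting the product structure $T^3 = S^1_\theta \times T^2_{x,y}$ together with the fact that the edge-normal map $\partial\Lambda\to S^1_\theta$ has degree one, so that $\Sigma$ can be arranged to project onto $\Lambda$ (mod $\Z^2$) in $T^2_{x,y}$ and to have each $\theta$-slice a family of parallel segments in the direction of the corresponding edge. I would then count intersections with a $\tau$-admissible parallel pushoff $\Sigma'$: self-intersections along a single edge are killed by the pushoff, while cross-intersections between distinct edges $i,j$ reduce, via the intersection form on $H_1(T^2_{x,y})$, to signed contributions $a_ib_j - a_jb_i$; with the correct orientations the total becomes
\[
Q_\tau(Z) = \sum_{i<j}(a_i b_j - a_j b_i) = 2A(\Lambda),
\]
the last equality being the shoelace formula for a convex lattice polygon. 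This is the step I expect to be the main obstacle, since the $\tau$-admissible boundary condition and the sign-counting of cross-intersections must be tracked precisely; a useful safety net is to verify the formula first on unit lattice triangles and rectangles and then invoke the additivity of $Q_\tau$ to glue general polygons out of these.

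Finally, combining the three calculations and applying Pick's theorem $A(\Lambda) = \mc{I}(\Lambda) + \mc{B}(\Lambda)/2 - 1$ together with $\mc{L}(\Lambda) = \mc{I}(\Lambda) + \mc{B}(\Lambda)$, we obtain
\[
|\Lambda| \;=\; 0 + 2A(\Lambda) + \mc{B}(\Lambda) - h(\Lambda) \;=\; 2(\mc{L}(\Lambda)-1) - h(\Lambda),
\]
as claimed.
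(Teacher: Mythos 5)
Your proposal is correct and follows essentially the same route as the paper: $c_\tau=0$ from the global trivialization, $CZ_\tau^I(\Lambda)=m(\Lambda)-h(\Lambda)$ from the Morse--Bott perturbation, $Q_\tau=2\op{Area}(\Lambda)$, and Pick's theorem to assemble the pieces. The only difference is that you sketch a direct intersection-count argument for $Q_\tau(\Lambda)=2\op{Area}(\Lambda)$ (correctly identified as the hard step), whereas the paper defers this to the argument of \cite[Lem.\ 3.7]{pfh3}.
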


\begin{proof}
As in \eqref{eqn:ellipsoidgrading}, we can write the grading of a generator $\Lambda$ as
\[
|\Lambda| = c_\tau(\Lambda) + Q_\tau(\Lambda) +
CZ_\tau^I(\Lambda).
\]
Since $\tau$ is a global trivialization, $c_\tau(\Lambda)=0$. We also have $CZ_\tau(e_{a,b})=1$ and $CZ_\tau(h_{a,b})=0$; consequently,
\[
CZ_\tau^I(\Lambda) = m(\Lambda) - h(\Lambda),
\]
where $m(\Lambda)$ denotes the total divisibility of all edges of $\Lambda$. Finally, it is a somewhat challenging exercise (which can be solved by the argument in \cite[Lem.\ 3.7]{pfh3}) to show that
\[
Q_\tau(\Lambda) = 2\op{Area}(\Lambda)
\]
where $\op{Area}(\Lambda)$ denotes the area enclosed by $\Lambda$.
Now Pick's formula for the area of a lattice polygon asserts that
\[
2\op{Area}(\Lambda) = 2\mc{L}(\Lambda) - m(\Lambda) - 2.
\]
The grading formula \eqref{eqn:t3grading} follows from the above four equations.
\end{proof}

\paragraph{Combinatorial formula for the differential.}

Define a combinatorial differential
\[
\delta: ECC_*(T^3,\lambda,0,J) \longrightarrow ECC_{*-1}(T^3,\lambda,0,J)
\]
as follows. If $\Lambda$ is a generator, then $\delta\Lambda$
 is the sum over all labeled polygons $\Lambda'$ that are obtained from $\Lambda$ by ``rounding a corner'' and ``locally losing one `$h$'{}''. Here ``rounding a corner'' means replacing the polygon $\Lambda$ by the boundary of the convex hull of the set of enclosed lattice points with one corner removed. ``Locally losing one `$h$' '' means that of the two edges adjacent to the corner that is rounded, at least one must be labeled `$h$'; if only one is labeled `$h$', then all edges created or shortened by the rounding are labeled `$e$'; otherwise exactly one of the edges created or shortened by the rounding is labeled `$h$'. All edges not created or shortened by the rounding keep their previous labels.  It follows from \eqref{eqn:t3grading} that the combinatorial differential $\delta$ decreases the grading by $1$, since $\mc{L}(\Lambda')=\mc{L}(\Lambda)-1$ and $h(\Lambda')=h(\Lambda)-1$. A less trivial combinatorial fact, proved in \cite[Cor.\ 3.13]{t3}, is that $\delta^2=0$.

\begin{proposition}
\label{prop:t3}
\cite[\S11.3]{t3}
For every $L>0$, the perturbed contact form $\lambda$ and almost complex structure $J$ can be chosen so that up to symplectic action $L$, the ECH differential $\partial$ agrees with the combinatorial differential $\delta$.
\end{proposition}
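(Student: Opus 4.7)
The plan is to match the ECH differential $\partial$ with the combinatorial differential $\delta$ by (i) classifying the $I=1$ holomorphic currents via index constraints and partition conditions, and (ii) counting each such current explicitly using the Morse-Bott structure of $\lambda_1$.

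First I would handle the classification. Let $\Lambda$ and $\Lambda'$ be chain complex generators of action less than $L$, and let $\mc{C} \in \mc{M}_1(\Lambda,\Lambda')$. By Proposition~\ref{prop:I03}, $\mc{C} = \mc{C}_0 \sqcup C_1$, where $\mc{C}_0$ is a union of trivial cylinders and $C_1$ is embedded with $\ind(C_1) = I(C_1) = 1$. By shrinking the Morse-Bott perturbation relative to $L$, every elliptic orbit $e_{a,b}$ appearing with multiplicity $m$ in $\Lambda$ or $\Lambda'$ has rotation angle in $(0,1/m)$. The partition conditions of \S\ref{sec:partitions} combined with \eqref{eqn:partitionexample} then force positive ends of $C_1$ at covers of $e_{a,b}$ to use the partition $(1,\ldots,1)$, negative ends to use the partition $(m)$, and ends at $h_{a,b}$ to lie on the embedded orbit. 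Plugging these asymptotics into \eqref{eqn:ind3} with $CZ_\tau(e_{a,b}) = 1$ and $CZ_\tau(h_{a,b}) = 0$ and using $\ind(C_1) = 1$ pins down $g(C_1) = 0$ and forces exactly one hyperbolic asymptotic on $C_1$.

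Next I would match the classification with the combinatorics. The balance condition $\sum_i m_i[\alpha_i] = \sum_j n_j[\beta_j]$ in $H_1(T^3)$, together with the partition constraints just derived, shows that $\Lambda'$ differs from $\Lambda$ only near a single vertex $v$, where the two edges at $v$ are replaced by the lower/upper convex lattice path through the same enclosed lattice points minus $v$, i.e.\ by a rounding. The grading drop \eqref{eqn:t3grading} then forces the $h$-count to decrease by one, and the unique hyperbolic end of $C_1$ is attached exactly where the combinatorial rule $\delta$ places a surviving $h$-label: if both edges at $v$ are labeled $h$ then one new edge is labeled $h$, while if only one is labeled $h$ all edges affected by the rounding are labeled $e$. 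Conversely, for each rounding $\Lambda \to \Lambda'$ I would produce a contributing curve by working first with the unperturbed $\lambda_1$, for which there exist explicit holomorphic polygon curves in $\R \times T^3$ projecting to the region between $\Lambda$ and $\Lambda'$ in the $(x,y)$-torus (with Morse-Bott asymptotics on the circles $S_{a,b}$). Bourgeois's correspondence \cite{bourgeois} then identifies $I=1$ curves for the perturbed $\lambda$ with such Morse-Bott polygons decorated by gradient flow lines on each $S_{a,b} \cong S^1$ between the Morse-Bott limits and the critical points $e_{a,b}$, $h_{a,b}$ of the perturbing Morse function. A direct count gives exactly one such decorated curve modulo $2$ per rounding. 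Automatic transversality \cite{wendl:at} applies since $g(C_1) = 0$, $\ind(C_1) = 1$, and $h_+(C_1) \le 1$ yield $2g - 2 + h_+ \le -1 < \ind$.

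The main obstacle will be this last step: carrying out Bourgeois's correspondence carefully enough to verify that (a) the partition data at each end of $C_1$ matches exactly what the rounding prescribes, (b) the mod $2$ count of decorated curves equals one in both combinatorial cases (both adjacent edges $h$ versus only one), and (c) trivial-cylinder components in $\mc{C}_0$ contribute no extra multiplicity. Point (c) follows from the additional partition conditions on ends shared with trivial cylinders \cite[Prop.\ 7.1]{pfh2}, which combined with \eqref{eqn:partitionexample} force $\mc{C}_0$ to be disjoint from $C_1$ in the $(x,y)$-projection. The combinatorial matching in (a) and (b) is where the alignment between the geometry of convex lattice polygons, the partition combinatorics of elliptic orbits with small rotation angles, and the Morse theory on $S_{a,b}$ must conspire, and this is the crux of Proposition~\ref{prop:t3}.
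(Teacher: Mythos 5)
Your architecture (classify the $I=1$ currents by index and partition conditions, then count them via the Morse--Bott picture) is the right one, but your classification step contains an error that hides the two hardest points of the actual argument. Substituting the asymptotics into \eqref{eqn:ind3} yields $2g+2e_++h_++h_-=3$ (this is \eqref{eqn:3dl} in the paper's Lemma~\ref{lem:easyhalf}), where $e_+$ counts positive elliptic ends of $C_1$ and $h_\pm$ count positive/negative hyperbolic ends. Action considerations force $g=0$, but the index does \emph{not} force ``exactly one hyperbolic asymptotic'': the admissible solutions include $(e_+,h_+,h_-)=(0,2,1)$ (which really occurs, when both edges at the rounded corner carry an `$h$'), $(1,0,1)$, and $(0,3,0)$. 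The last two contribute to $\partial$ but correspond to no rounding, so they must be dealt with, not ignored. The case $(1,0,1)$ is a cylinder from $e_{a,b}$ to $h_{a,b}$; two such cylinders exist for each $(a,b)$, coming from the two gradient flow lines on the Morse--Bott circle $S_{a,b}$, and one must verify that they cancel mod $2$. The case $(0,3,0)$ -- three positive ends, all hyperbolic, with all negative ends elliptic -- cannot be excluded by index or partition considerations at all; ruling it out requires a careful choice of the perturbation and is a substantial separate step of the proof (\cite[\S11.3, Step 5]{t3}). Relatedly, your automatic-transversality estimate uses $h_+(C_1)\le 1$, which fails in the $(0,2,1)$ and $(0,3,0)$ cases; in the latter, $2g-2+h_+=1=\ind$, so \eqref{eqn:at} does not apply.

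Two further gaps. The assertion that the homology balance in $H_1(T^3)$ plus the partition conditions forces $\Lambda'$ to be a rounding of $\Lambda$ at a single vertex is not justified: the homology class only constrains the sum of the edge vectors. What one actually proves is that $\Lambda'$ can be translated so as to nest inside $\Lambda$, and this uses intersection positivity of $C_1$ against the $\R$-invariant cylinders (\cite[\S10.3]{t3}), an ingredient absent from your argument. Most seriously, the converse direction -- that each combinatorial rounding is realized by an odd number of holomorphic currents -- is not a ``direct count.'' Bourgeois's correspondence reduces the perturbed count to a count of Morse--Bott curves for $\lambda_1$, but it does not produce those curves: they are genus-zero curves with many ends whose existence and enumeration is the content of the tropical-geometry work of Taubes \cite{taubestropical} and Parker \cite{parker}. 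This is precisely the part of the proof the paper declines to explain because of its length, and your proposal compresses it to one sentence.
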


We will describe some of the proof of Proposition~\ref{prop:t3} at the end of this subsection.

The homology of the combinatorial differential $\delta$ is computed in \cite{t3} (there with $\Z$ coefficients), and the conclusion (with $\Z/2$ coefficients) is that
\begin{equation}
\label{eqn:echt3}
ECH_*(T^3,\xi_1,0) \simeq \left\{\begin{array}{cl} (\Z/2)^3, & *\ge 0,\\
0, & *<0.
\end{array}
\right.
\end{equation}

\begin{exercise}
\label{ex:ech0t3}
Prove that the homology of the combinatorial differential $\delta$ in degree $0$ is isomorphic to $(\Z/2)^3$.
\end{exercise}

\paragraph{The $U$ map.}

To compute ECH capacities, we do not need to know the homology \eqref{eqn:echt3}, but rather the following combinatorial formula for the $U$ map. Pick $\theta\in\R/2\pi\Z$ with $\tan\theta$ irrational. Define a combinatorial map
\[
U_\theta: ECC_*(T^3,\lambda,0,J) \longrightarrow ECC_{*-2}(T^3,\lambda,0,J)
\]
as follows. If $\Lambda$ is a generator, then it has a distinguished corner $c_\theta$ such that the oriented line $T$ through $c_\theta$ with direction vector $(\cos\theta,\sin\theta)$ intersects $\Lambda$ only at $c_\theta$, with the rest of $\Lambda$ lying to the left of $T$. Then $U_\theta$ is the sum over all generators $\Lambda'$ obtained from $\Lambda$ by rounding the distinguished corner $c_\theta$ and ``conserving the $h$ labels''.  To explain what this last condition means, note that $\Lambda'$ also has a distinguished corner $c_\theta'$. If the edge of $\Lambda$ preceding $c_\theta$ is labeled `$h$', then exactly one of the new or shortened edges of $\Lambda'$ preceding $c_\theta'$ is labeled `$h$'; otherwise all new or shortened edges of $\Lambda'$ preceding $c_\theta'$ are labeled `$e$'. Likewise for the edge of $\Lambda$ following $c_\theta$ and the new or shortened edges of $\Lambda'$ following $c_\theta'$. All other edge labels are unchanged.

To connect this with the $U$ map on ECH, let $z=(\theta,x,y)\in T^3$ 
where $x,y\in\R/\Z$ are arbitrary.

\begin{proposition}
\label{prop:t3u}
\cite[\S12.1.4]{t3}
For any $L>0$, one can choose $\lambda$ and $J$ as in Proposition~\ref{prop:t3} so that up to symplectic action $L$, we have $U_z=U_\theta$, modulo terms that decrease the number of `$h$' labels.
\end{proposition}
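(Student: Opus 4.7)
The plan is to adapt the proof of Proposition~\ref{prop:t3} for the differential, with the extra input being the base point constraint $(0,z)\in\mc{C}$. First, for a current $\mc{C}\in\mc{M}_2(\alpha,\beta)$ contributing to $U_z\alpha$, Proposition~\ref{prop:I03}(2) yields a decomposition $\mc{C}=\mc{C}_0\sqcup C_2$ with $\mc{C}_0$ a union of trivial cylinders and $C_2$ embedded satisfying $\ind(C_2)=I(C_2)=2$. Because $\tan\theta$ is irrational, the point $z=(\theta,x,y)$ lies on no Reeb orbit of the perturbed form $\lambda$ (its Reeb orbits sit in slices of rational slope), so the trivial cylinders of $\mc{C}_0$ avoid $(0,z)$ and hence $(0,z)\in C_2$.

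Second, one argues as in the proof of Proposition~\ref{prop:t3} that, for the Morse-Bott perturbation $\lambda$, each such $C_2$ arises as a Bourgeois-type building whose main levels are $\lambda_1$-holomorphic curves in $\R\times T^3$. The $\lambda_1$-levels project to convex lattice polygonal regions in the base $T^2$ whose edges match the asymptotic Reeb circles, so that the projection of $C_2$ to $T^2$ is a \emph{rounding} of $\Lambda$ in exactly the combinatorial sense used to define $\delta$. The new constraint is that the projection must contain $(x,y)$; because $(\cos\theta,\sin\theta)$ points in an irrational direction, the only rounding of $\Lambda$ whose projected region can contain $(x,y)$ for generic $(x,y)$ is the rounding at the unique corner $c_\theta$ whose adjacent edges straddle the direction $(\cos\theta,\sin\theta)$. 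This identifies the set of target generators $\Lambda'$ with those appearing in $U_\theta\Lambda$, and the label inheritance coming from the ends of $C_2$ matches the "conservation of $h$ labels" rule.

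Finally, one counts the curves contributing to each such $\Lambda'$ and shows the count equals $1$ modulo terms with strictly fewer $h$-labels. Transversality is automatic by the Wendl criteria \eqref{eqn:at}, \eqref{eqn:ai}: after setting aside any component carrying an extra hyperbolic end (this is exactly what the "modulo" clause discards), each component of $C_2$ is a genus-zero curve with only elliptic ends and so satisfies both automatic transversality conditions. The main obstacle is the Morse-Bott gluing step, carried out as follows: via the meromorphic-section correspondence used in Exercise~\ref{ex:meromorphic}, suitably adapted to the torus base with the line bundle determined by the divisor of rounded-corner ends, each target $\Lambda'$ corresponds to a count of meromorphic sections with prescribed divisor passing through $(x,y)\in T^2$, which is $1$ by a Riemann--Roch plus evaluation argument; one must then verify that each such section together with the appropriate Morse gradient segments along the circles $S_{a,b}$ glues to a unique $J$-holomorphic curve for $\lambda$. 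This gluing is handled by the general Morse-Bott machinery of \cite{bourgeois}, exactly as in the proof of Proposition~\ref{prop:t3}, and provides the bijection between $U_z\Lambda$ and $U_\theta\Lambda$ modulo the indicated error terms.
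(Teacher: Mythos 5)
Your overall architecture (decompose $\mc{C}=\mc{C}_0\sqcup C_2$ via Proposition~\ref{prop:I03}, pass to the Morse--Bott limit, identify the target generators combinatorially, then count) is the one the text has in mind when it says that ``similar arguments prove Proposition~\ref{prop:t3u}'', and your observation that $z$ lies on no Reeb orbit because $\tan\theta$ is irrational is correct. However, the step where the base point selects the distinguished corner is a genuine gap. The polygon $\Lambda$ lives in $H_1(T^2;\R)\cong\R^2$ (it records the homology classes $(a,b)$ of the orbits), not in the $(x,y)$-torus, so ``the projected region of the rounding contains $(x,y)$'' is not a meaningful constraint: the image of $C_2$ in the $(x,y)$-torus is not the region between $\Lambda$ and $\Lambda'$, and there is no reason a fixed $(x,y)$ should lie in the image of the curve for only one choice of corner. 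What actually singles out $c_\theta$ is the $\theta$-coordinate of $z$: the ends of the nontrivial component $C_2$ are at Reeb orbits lying in slices $\{\theta'\}\times(\R/\Z)^2$ with $(\cos\theta',\sin\theta')$ parallel to the corresponding edge vectors, and the projection of $C_2$ to the $\theta$-circle is the arc between the angles of the two edges adjacent to the rounded corner. Requiring $(0,z)\in C_2$ with $z=(\theta,x,y)$ forces $\theta$ to lie in that arc, i.e.\ forces the rounded corner to be $c_\theta$; the coordinates $(x,y)$ enter only afterwards, in cutting the relevant family of curves down to a single one (as in the gradient-flow-line count in the proof of Lemma~\ref{lem:Uell}(a)).

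The final counting step is also not established. The meromorphic-section correspondence of Exercise~\ref{ex:meromorphic} is specific to the Hopf fibration $S^3\to\C P^1$; for $T^3$ the Morse--Bott families are the circles $S_{a,b}$, and the enumeration of the Morse--Bott holomorphic curves is exactly the Taubes--Parker tropical analysis invoked after Lemma~\ref{lem:easyhalf}, which the text explicitly declines to reproduce. A ``Riemann--Roch plus evaluation'' argument over a torus base does not by itself produce the count of $1$, and you would in any case need to rule out contributions from curves covering the $\theta$-arc more than once. Two smaller points: $C_2$ is connected by Proposition~\ref{prop:I03}(2), so ``each component of $C_2$'' is vacuous; and the clause ``modulo terms that decrease the number of `$h$' labels'' specifies which coefficients of $U_z\Lambda$ are being computed --- it is not a license to discard curves with hyperbolic ends from the analysis, since one must still show that those curves only contribute to generators with fewer $h$ labels (and for such curves condition \eqref{eqn:ai} fails, so the transversality needed to define $U_z$ is no longer automatic and requires generic $J$).
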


In particular, if all edges of $\Lambda$ are labeled `$e$', then $U_z\Lambda$ is the generator $\Lambda'$ obtained from $\Lambda$ by rounding the distinguished corner $c_\theta$ and keeping all edges labeled `$e$'. (If $\Lambda$ is a point then $U_z\Lambda=0$.)

\paragraph{ECH spectrum.} We now use the above facts to compute the ECH spectrum of $(T^3,\lambda_1)$ in terms of a discrete isoperimetric problem.

\begin{proposition}
\label{prop:t3spectrum}
The ECH spectrum of $(T^3,\lambda_1)$ is given by
\begin{equation}
c_k(T^3,\lambda_1) = \min\left\{\ell(\Lambda)\;|\; \mc{L}(\Lambda) = k+1\right\},
\end{equation}
where the minimum is over closed convex polygonal paths $\Lambda$ with vertices at lattice points, $\ell$ denotes the Euclidean length, and $\mc{L}(\Lambda)$ denotes the number of lattice points enclosed by $\Lambda$, including lattice points on the edges.
\end{proposition}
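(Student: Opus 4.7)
The plan is to leverage the combinatorial descriptions of the ECH chain complex (Proposition~\ref{prop:t3}) and of the $U$-map (Proposition~\ref{prop:t3u}) on a nondegenerate perturbation $\lambda$ of $\lambda_1$ adapted to a given action threshold $L$, and then pass to the limit. Since $c_k(T^3,\lambda_1)$ is by definition $\lim c_k(T^3,f_n\lambda_1)$ along perturbations with $f_n\to 1$, and since the action of an edge-labeled polygon $\Lambda$ converges to the Euclidean length $\ell(\Lambda)$, it is enough to prove both inequalities up to error that vanishes as the perturbation does. Throughout, the key observation is that the all-$e$ generators form a subcomplex on which $\partial$ vanishes and on which $U_z$ acts by the corner-rounding operator $U_\theta$, which removes exactly one enclosed lattice point at each application.

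For the upper bound: given any convex lattice polygon $\Lambda$ with $\mc{L}(\Lambda)=k+1$, label every edge ``$e$'' to produce a generator $\Lambda^e\in ECC^L$ of action approximately $\ell(\Lambda)$. Since the combinatorial differential $\delta$ requires an $h$-label to be locally lost at a rounded corner, $\delta\Lambda^e=0$, and by Proposition~\ref{prop:t3} also $\partial\Lambda^e=0$. By Proposition~\ref{prop:t3u}, $U_z\Lambda^e = U_\theta\Lambda^e$ (the ``lower in $h$'' correction vanishes since there are no $h$-labels to destroy), and $U_\theta\Lambda^e$ is the all-$e$ polygon obtained by rounding the $\theta$-distinguished corner, with one fewer enclosed lattice point. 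Iterating $k$ times collapses $\Lambda^e$ to the empty set, giving $U_z^k[\Lambda^e]=[\emptyset]\in ECH^L$, hence $c_k(T^3,\lambda_1)\le\ell(\Lambda)$; minimizing over $\Lambda$ yields the upper bound.

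For the lower bound: assume $c_k(T^3,\lambda_1)<L$, so there exist a cycle $\alpha\in ECC^L$ and a chain $\beta\in ECC^L$ with $U_z^k\alpha=\emptyset+\partial\beta$. Filter $ECC$ by $F_p=$ span of generators with at most $p$ edges labeled $h$. Then $\partial$ strictly lowers $p$, $U_z$ preserves $F_p$, and $U_z=U_\theta$ on each graded piece $F_p/F_{p-1}$. Write $\alpha=\alpha_0+\alpha_{\ge 1}$ where $\alpha_0$ is the all-$e$ part. Because $V:=U_z-U_\theta$ strictly lowers $h$-count and kills $\alpha_0$, we have $U_z^k\alpha_0=U_\theta^k\alpha_0$ entirely inside $F_0$. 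Projecting the equation $U_z^k\alpha=\emptyset+\partial\beta$ onto $F_0$ and inductively tracking the contributions of $V$ applied to $\alpha_{\ge 1}$ and of $\partial$ applied to $\beta_{\ge 1}$ through the $h$-filtration yields $U_\theta^k\alpha_0=\emptyset$ in the all-$e$ subcomplex. Writing $\alpha_0=\sum_i\Lambda_i^e$, each summand satisfies $U_\theta^k\Lambda_i^e=0$, $\emptyset$, or a nontrivial polygon according to whether $\mc{L}(\Lambda_i^e)$ is $<k+1$, $=k+1$, or $>k+1$. A mod~$2$ count then forces at least one $\Lambda_i^e$ with $\mc{L}(\Lambda_i^e)=k+1$ to appear, and its action approximates $\ell(\Lambda_i^e)<L$, giving $\min\{\ell(\Lambda):\mc{L}(\Lambda)=k+1\}\le\ell(\Lambda_i^e)<L$.

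The main obstacle is the projection step in the lower bound: one must verify that the identity $U_z^k\alpha-\emptyset=\partial\beta$ really does descend to $U_\theta^k\alpha_0=\emptyset$ in the all-$e$ subcomplex, which requires untangling how $V(\alpha_{\ge 1})$ and $\partial\beta_1$ interact at $h$-level~$0$. I expect this to be a clean spectral sequence argument associated to the filtration $F_\bullet$, whose $E^1$-page is the all-$e$ complex with zero differential and $U$-action by $U_\theta$, but carrying out this book-keeping carefully (and checking compatibility with the action filtration) is the technical heart of the proof.
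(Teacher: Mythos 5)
Your upper bound is exactly the paper's argument: label all edges of a minimizer $e$, observe it is a cycle because $\delta$ needs an adjacent $h$-label, apply Proposition~\ref{prop:t3u} $k$ times to reach $\emptyset$, and pass to the Morse--Bott limit. That half is fine.

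The lower bound, however, has a genuine gap at precisely the step you flag. Taking the $h$-count-zero component of $U_z^k\alpha=\emptyset+\partial\beta$ does not give $U_\theta^k\alpha_0=\emptyset$; it gives
\[
U_\theta^k\alpha_0=\emptyset+\delta(\beta_1)+\big(\text{terms where }V=U_z-U_\theta\text{ hits }\alpha_{\ge1}\big),
\]
since $\partial$ lowers the $h$-count by exactly one (so the $h=0$ part of $\partial\beta$ is $\delta$ of the $h=1$ part of $\beta$, which need not vanish), and $V$ applied to the higher-$h$ part of $\alpha$ can also land in $F_0$. Nothing in your setup cancels these corrections, and the subsequent mod~$2$ count collapses without that. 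The paper avoids all of this chain-level bookkeeping by using the grading formula \eqref{eqn:t3grading}: since $U$ has degree $-2$ and $|\emptyset|=0$, any generator $\Lambda$ appearing in a cycle representing the relevant class has $|\Lambda|=2k$, hence $\mc{L}(\Lambda)=k+1+h(\Lambda)/2\ge k+1$, and then $\ell(\Lambda)\ge\ell(\Lambda_{k+h(\Lambda)/2})\ge\ell(\Lambda_k)$ because rounding corners of lattice polygons decreases length. Note also that your route, even if completed, would still need this last monotonicity-in-$\mc{L}$ fact or the grading input in some form; I recommend replacing the filtration argument with the grading one.
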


\begin{proof}
Fix a nonnegative integer $k$. Let $\Lambda_k$ be a length-minimizing closed convex polygon with vertices at lattice points subject to the constraint $\mc{L}(\Lambda_k)= k+1$. We need to show that $c_k(T^3,\lambda_1)=\ell(\Lambda_k)$.

Fix $z\in T^3$ for use in definining the chain map $U_z$. Choose $L>\ell(\Lambda_k)$, and let $\lambda$ and $J$ be a perturbed contact form and almost complex structure supplied by Propositions~\ref{prop:t3} and \ref{prop:t3u}. Label all edges of $\Lambda_k$ by `$e$' in order to regard $\Lambda_k$ as a generator of the chain complex $ECC(T^3,\lambda,0,J)$. Then $\Lambda_k$ is a cycle by Proposition~\ref{prop:t3}, and $U_z^k\Lambda_k=\emptyset$ by Proposition~\ref{prop:t3u}. Thus $c_k(T^3,\lambda)$ is less than or equal to the the symplectic action of $\Lambda_k$, which is approximately $\ell(\Lambda_k)$. It follows from the limiting definition of the ECH spectrum for degenerate contact forms in \S\ref{sec:definecapacities} that $c_k(T^3,\lambda_1)\le \ell(\Lambda_k)$.

To complete the proof, we now show that $c_k(T^3,\lambda_1)\ge \ell(\Lambda_k)$. It is enough to show that if $\Lambda$ is any other generator with $\langle U_z^k\Lambda,\emptyset\rangle \neq 0$, then $\ell(\Lambda)\ge \ell(\Lambda_k)$.  Since $|\Lambda|=2k$, it follows from the grading formula \eqref{eqn:t3grading} that
\[
\mc{L}(\Lambda) = k+1 + \frac{h(\Lambda)}{2}.
\]
We then have
\[
\ell(\Lambda) \ge \ell(\Lambda_{k+h(\Lambda)/2}) \ge \ell(\Lambda_k)
\]
where the first inequality holds by definition, and the second inequality holds because rounding corners of polygons decreases length\footnote{It is a combinatorial exercise to prove that rounding corners of polygons decreases length, see \cite[Lem.\ 2.14]{t3}.}.
\end{proof}

\paragraph{Computing the differential.}

We now indicate a bit of what is involved in the proof of Proposition~\ref{prop:t3}; similar arguments prove Proposition~\ref{prop:t3u}. For the application to ECH capacities, one may skip ahead to \S\ref{sec:toric}.

The easier half of the proof of Proposition~\ref{prop:t3} is to show that $\lambda$ and $J$ can be chosen so that
\begin{equation}
\label{eqn:easyhalf}
\langle\partial\Lambda,\Lambda'\rangle\neq 0\;  \Longrightarrow\; \langle\delta\Lambda,\Lambda'\rangle\neq 0.
\end{equation}
The following lemma is a first step towards proving \eqref{eqn:easyhalf}.

\begin{lemma}
\label{lem:easyhalf}
Let $\mc{C}\in\mc{M}(\Lambda,\Lambda')$ be a holomorphic current that contributes to the differential $\partial$, and write $\mc{C}=\mc{C}_0\sqcup C_1$ as in Proposition~\ref{prop:I03}. Then $C_1$ has genus zero, and one of the following three alternatives holds:
\begin{description}
\item{(i)}
$C_1$ is a cylinder with positive end at an embedded elliptic orbit $e_{a,b}$ and negative end at $h_{a,b}$.
\item{(ii)}
$C_1$ has two positive ends, and the number of positive ends at hyperbolic orbits is one more than the number of negative ends at hyperbolic orbits.
\item{(iii)}
$C_1$ has three positive ends, all at hyperbolic orbits; and all negative ends of $C_1$ are at elliptic orbits.
\end{description}
\end{lemma}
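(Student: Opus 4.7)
The plan is to combine the Fredholm index formula, the ECH index, and the partition conditions, exploiting the global trivialization $\tau = \partial_\theta$ of $\xi_1$, to obtain a clean numerical identity and then enumerate cases.

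Since $\partial_\theta$ trivializes $\xi_1$ globally, $c_\tau(C_1) = 0$ for every relative homology class. After choosing the Morse--Bott perturbation small enough relative to the action cutoff $L$, we have $CZ_\tau(e_{a,b}^k) = 1$ for every cover relevant below action $L$, and $CZ_\tau(h_{a,b}) = 0$. Let $e^\pm$ and $h^\pm$ count the positive and negative ends of $C_1$ at covers of elliptic and hyperbolic embedded orbits, respectively. The partition conditions force every end of $C_1$ at a cover of $h_{a,b}$ to be singly covered, every positive end at a cover of $e_{a,b}$ to be simply covered, and every negative end at a cover of $e_{a,b}$ to be maximally covered. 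Substituting $\chi(C_1) = 2 - 2g(C_1) - (e^+ + h^+) - (e^- + h^-)$ and $CZ^{ind}_\tau(C_1) = e^+ - e^-$ into \eqref{eqn:ind3}, together with $\ind(C_1) = 1$, yields the clean identity
\[
2g(C_1) + 2e^+ + h^+ + h^- = 3.
\]
Moreover the Index Parity property \eqref{eqn:indexparity} applied to $I(\mc{C}) = 1$ forces $h(\Lambda)+h(\Lambda')$ to be odd, and since trivial cylinders over positive hyperbolic orbits contribute evenly this is the same as requiring $h^+ + h^-$ to be odd.

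Next I would eliminate the cases with $n_+ = e^+ + h^+ = 0$ using nonnegativity of $\int_{C_1} d\lambda$, which rules out $(e^+, h^+, h^-) = (0, 0, 3)$ in the genus-zero family and $(0, 0, 1)$ in the genus-one family. The only remaining genus-one case consistent with the identity and the parity constraint is $(e^+, h^+, h^-) = (0, 1, 0)$, in which $C_1$ would have a single positive end at an embedded positive hyperbolic orbit $h_{a,b}$ and no negative ends; but polygon closure for $C_1$ requires the total homology class of the positive ends to equal that of the negative ends, forcing $(a, b) = (0, 0)$, which is impossible. Hence $g(C_1) = 0$. Among the surviving genus-zero quadruples $\{(0, 3, 0), (0, 2, 1), (0, 1, 2), (1, 1, 0), (1, 0, 1)\}$, the first matches alternative (iii), and both $(0, 2, 1)$ and $(1, 1, 0)$ have $n_+ = 2$ with $h^+ - h^- = 1$, matching alternative (ii).

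The main obstacle is ruling out the case $(e^+, h^+, h^-) = (0, 1, 2)$ and showing that in the case $(1, 0, 1)$ one must have $e^- = 0$, so that $C_1$ is a cylinder matching alternative (i). For both, I would invoke the ECH index condition $I(C_1) = 1$ together with an explicit formula for the relative intersection pairing $Q_\tau$ on $\R \times T^3$, in the spirit of the area formula $Q_\tau = 2\,\op{Area}$ that underlies the grading formula \eqref{eqn:t3grading}. Used in tandem with polygon closure for $C_1$ and the small-rotation-angle behavior of the elliptic orbits, this is expected to force the lattice geometry at the two ends of $C_1$ to degenerate in exactly the required way: outright incompatibility in the $(0, 1, 2)$ case, and proportionality of the edge vectors that kills all elliptic negative ends and matches the primitive classes in the $(1, 0, 1)$ case, thereby producing the cylinder in alternative (i).
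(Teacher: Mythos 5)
Your index bookkeeping is right: the identity $2g+2e^++h^++h^-=3$, the observation that $C_1$ must have a positive end, and the identification of $(0,3,0)$ with alternative (iii) and of $(0,2,1),(1,1,0)$ with alternative (ii) all match the paper. But there are two genuine gaps, both traceable to the same missing idea.

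First, your elimination of genus one is incorrect as stated. The triple $(e^+,h^+,h^-)=(0,1,0)$ does not force $C_1$ to have no negative ends: the index identity places no constraint whatsoever on $e^-$ (each negative elliptic end contributes $+1$ to $-\chi$ and $-1$ to $CZ_\tau^{\mathrm{ind}}$, so it cancels). A genus-one curve with one positive end at $h_{a,b}$ and several negative ends at covers of elliptic orbits in classes summing to $(a,b)$ is perfectly consistent with homology, so ``polygon closure'' does not give $(a,b)=(0,0)$.

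Second, for the two residual cases --- ruling out $(0,1,2)$ and showing $e^-=0$ in the case $(1,0,1)$ --- you only gesture at a hoped-for argument via $Q_\tau$ and lattice geometry (``is expected to force\dots''). This is not carried out, and it is the crux of the lemma. The tool the paper uses for all three of these steps is much more elementary: symplectic action plus the Euclidean triangle inequality. An orbit near the circle $S_{a,b}$ has action within an arbitrarily small $\epsilon$ of $\sqrt{a^2+b^2}$, with $\mc{A}(e_{a,b})$ slightly \emph{above} and $\mc{A}(h_{a,b})$ slightly \emph{below} that value; the differential strictly decreases action; and the classes $(a_i,b_i)$ of the negative ends of $C_1$ sum to the class of its positive ends. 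If the single positive end is at $h_{a,b}$ (which covers both the genus-one case and the case $(0,1,2)$), its action is below $\sqrt{a^2+b^2}\le\sum_i\sqrt{a_i^2+b_i^2}$, which is (up to $O(\epsilon)$) a lower bound for the total negative action --- a contradiction once one checks that equality in the triangle inequality cannot occur here. The same argument in the case $(1,0,1)$ shows the positive end is at $e_{a,b}$ and that a second negative end would make the negative action exceed $\mc{A}(e_{a,b})$, forcing $C_1$ to be the cylinder of alternative (i). You should replace the speculative $Q_\tau$ step with this action argument.
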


\begin{proof}
Let us first see what the Fredholm index formula \eqref{eqn:ind3} tells us about $C_1$. Let $g$ denote the genus of $C_1$, let $e_+$ denote the number of positive ends of $C_1$ at elliptic orbits, let $h_+$ denote the number of positive ends of $C_1$ at hyperbolic orbits, and let $e_-$ and $h_-$ denote the number of negative ends of $C_1$ at elliptic and hyperbolic orbits respectively. Then
\[
\chi(C_1) = 2-2g-e_+-h_+-e_--h_-
\]
and
\[
CZ_\tau^{\op{ind}}(C) = e_+ - e_-,
\]
so by the Fredholm index formula \eqref{eqn:ind3} we have
\[
\ind(C_1) = 2g - 2 + 2e_+ + h_+ + h_-.
\]
Since $ind(C_1)=1$, we obtain
\begin{equation}
\label{eqn:3dl}
2g + 2e_+ + h_+ + h_- = 3.
\end{equation}

Since the differential $\partial$ decreases symplectic action, $C_1$ has at least one positive end.

\begin{exercise}
\label{ex:t3action}
Further use the fact that the differential $\partial$ decreases symplectic action to show that $g=0$.
(See answer in \S\ref{sec:answers}.)
\end{exercise}

If $C_1$ has exactly one positive end, then similarly to the solution to Exercise~\ref{ex:t3action}, this positive end is at an elliptic orbit. By the partition conditions \eqref{eqn:partitionexample}, this positive end is at an embedded elliptic orbit $e_{a,b}$. Then, similarly to the solution to Exercise~\ref{ex:t3action}, $C_1$ has exactly one negative end, which is at $h_{a,b}$, so alternative (i) holds.

If $C_1$ has more than one positive end, then it follows from equation \eqref{eqn:3dl} that alternative (ii) or (iii) holds.
\end{proof}

Now $\langle\delta\Lambda,\Lambda'\rangle\neq 0$ is only possible in case (ii). So to prove \eqref{eqn:easyhalf} we would like to rule out alternatives (i) and (iii).
In fact alternative (i) cannot be ruled out; there are two holomorphic cylinders from $e_{a,b}$ to $h_{a,b}$ for each pair of relatively prime integers $(a,b)$. These arise in the Morse-Bott picture from the two flow gradient flow lines of the Morse function on the circle of Reeb orbits $S_{a,b}$ that we used to perturb the Morse-Bott contact form $\lambda_1$, similarly to the proof of Proposition~\ref{prop:Uell}(a). However these cylinders cancel\footnote{There is also a ``twisted'' version of ECH in which these cylinders do not cancel in the differential, see \cite[\S12.1.1]{t3}.} in the ECH differential $\partial$. Alternative (iii) may occur depending on how exactly one perturbs the Morse-Bott contact form $\lambda_1$. However it is shown in \cite[\S11.3, Step 5]{t3} that the perturbation $\lambda$ and almost complex structure $J$ can be chosen so that alternative (iii) does not happen.

The main remaining step in the proof of \eqref{eqn:easyhalf} is to show that $\lambda$ and $J$ above can be chosen so that if $\langle\partial\Lambda,\Lambda'\rangle\neq 0$, then the polygons $\Lambda$ and $\Lambda'$ can be translated so that $\Lambda'$ is nested inside $\Lambda$. The proof uses intersection positivity, see \cite[\S10.3]{t3}.

To complete the proof of Proposition~\ref{prop:t3}, we need to prove the converse of \eqref{eqn:easyhalf}, namely that $\lambda$ and $J$ above can be chosen so that if $\langle\delta\Lambda,\Lambda'\rangle\neq 0$ then $\langle\partial\Lambda,\Lambda'\rangle\neq 0$. One can calculate $\langle\partial\Lambda,\Lambda'\rangle$ by counting appropriate holomorphic curves for the Morse-Bott contact form $\lambda_1$. Work of Taubes \cite{taubestropical} and Parker \cite{parker} determines the latter curves in terms of tropical geometry. Unfortunately it would take us too far afield to explain this story here.

\subsection{ECH capacities of convex toric domains}
\label{sec:toric}

We now use the results of \S\ref{sec:t3} to compute the ECH capacities of a large family of examples. Let $\Omega$ be a compact domain in $[0,\infty)^2$ with piecewise smooth boundary. Define a ``toric domain'' or ``Reinhardt domain''
\[
X_\Omega = \left\{(z_1,z_2)\in\C^2\;\big|\; \left(\pi|z_1|^2,\pi|z_2|^2\right)\in\Omega\right\}.
\]
For example, if $\Omega$ is the triangle with vertices $(0,0)$, $(a,0)$, and $(0,b)$, then $X_\Omega$ is the ellipsoid $E(a,b)$. If $\Omega$ is the rectangle with vertices $(0,0)$, $(a,0)$, $(0,b)$, and $(a,b)$, then $X_\Omega$ is the polydisk $P(a,b)$.

Assume now that $\Omega$ is convex and does not touch the axes. We can then state a formula for the ECH capacities of $X_\Omega$, similar to Proposition~\ref{prop:t3spectrum}.
Let $\Omega'\subset\R^2$ be a translate of $\Omega$ that contains the origin in its interior. Let $\|\cdot\|$ denote the (not necessarily symmetric) norm on $\R^2$ that has $\Omega'$ as its unit ball. Let $\|\cdot\|^*$ denote the dual norm on $(\R^2)^*$, which we identify with $\R^2$ via the Euclidean inner product $\langle\cdot,\cdot\rangle$. That is, if $v\in\R^2$, then
\[
\|v\|^* = \max\left\{\langle v,w\rangle\mid w\in\partial\Omega'\right\}.
\]
If $\Lambda$ is a polygonal path in $\R^2$, let $\ell_\Omega(\Lambda)$ denote the length of the path $\Lambda$ as measured using the dual norm $\|\cdot\|^*$, i.e.\ the sum of the $\|\cdot\|^*$-norms of the edge vectors of $\Lambda$.

\begin{exercise}
\label{ex:dualloop}
If $\Lambda$ is a loop, then $\ell_\Omega(\Lambda)$ does not depend on the choice of translate $\Omega'$ of $\Omega$. (See answer in \S\ref{sec:answers}.)
\end{exercise}

\begin{theorem}
\label{thm:toric}
\cite[Thm.\ 1.11]{qech}\footnote{The definition of $X_\Omega$ in \cite{qech} is different, but symplectomorphic to the one given here.}
If $\Omega$ is convex and does not intersect the axes, then
\begin{equation}
\label{eqn:toric}
c_k(X_\Omega) = \min\left\{\ell_\Omega(\Lambda)\;|\; \mc{L}(\Lambda)= k+1\right\},
\end{equation}
where the minimum is over closed convex polygonal paths $\Lambda$ with vertices at lattice points, and $\mc{L}(\Lambda)$ denotes the number of lattice points enclosed by $\Lambda$, including lattice points on the edges.
\end{theorem}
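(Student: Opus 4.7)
The proof will closely parallel Proposition~\ref{prop:t3spectrum}, replacing the $T^3$ contact form $\lambda_1$ by the restriction of the Liouville form to $\partial X_\Omega$ and the Euclidean length by $\ell_\Omega$. Since $\Omega$ is disjoint from the axes, polar coordinates $z_k=r_ke^{i\theta_k}$ identify $X_\Omega$ with $\Omega\times T^2$ and $\partial X_\Omega$ with $T^3$. Writing $x_k=\pi r_k^2$, the Liouville form restricts to $\lambda=(2\pi)^{-1}(x_1\,d\theta_1+x_2\,d\theta_2)$. A direct computation shows that on the torus $\{p\}\times T^2$ over a smooth point $p\in\partial\Omega$ with outward normal $(\nu_1,\nu_2)$, the Reeb vector field is a positive multiple of $\nu_1\partial_{\theta_1}+\nu_2\partial_{\theta_2}$, so $\lambda$ is Morse--Bott of precisely the type studied in \S\ref{sec:t3}. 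The circles of simple closed Reeb orbits are indexed by primitive integer vectors $(a,b)$; by strict convexity (after a first approximation) each such $(a,b)$ arises as the outward normal direction at a unique point $p_{a,b}\in\partial\Omega$, and the corresponding $S^1$-family of orbits in the class $(0,a,b)\in H_1(T^3)$ has symplectic action $a(p_{a,b})_1+b(p_{a,b})_2$.

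Next, I would perturb exactly as in \S\ref{sec:t3}: up to any fixed action cutoff $L$, each circle of orbits splits into an elliptic orbit $e_{a,b}$ (with small positive rotation angle) and a positive hyperbolic orbit $h_{a,b}$. Chain complex generators are again labeled closed convex lattice polygons $\Lambda$ modulo translation. The action of a generator with edge vectors $(a_i,b_i)$ is approximately $\sum_i\bigl(a_i(p_{a_i,b_i})_1+b_i(p_{a_i,b_i})_2\bigr)$. Fixing any translate $\Omega'$ of $\Omega$ containing the origin with translation vector $c$, the dual-norm characterization gives $\|(a,b)\|^*=a(p_{a,b})_1+b(p_{a,b})_2-\langle(a,b),c\rangle$, and the $c$-corrections telescope since $\sum_i(a_i,b_i)=0$, so the action equals $\ell_\Omega(\Lambda)$ in the limit (by Exercise~\ref{ex:dualloop} this is independent of $\Omega'$). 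The grading formula $|\Lambda|=2(\mc{L}(\Lambda)-1)-h(\Lambda)$ carries over verbatim from \S\ref{sec:t3}: the derivation uses only the global trivialization $\tau=\partial_\theta$, the Conley--Zehnder indices of $e_{a,b}$ and $h_{a,b}$, and Pick's theorem, none of which depend on $\Omega$.

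Then I would argue that the combinatorial formulas for $\partial$ and $U_z$ from Propositions~\ref{prop:t3} and \ref{prop:t3u} transfer to $\partial X_\Omega$. Their proofs in \cite{t3} are essentially local around each torus of Reeb orbits, using only the qualitative structure of the Morse--Bott foliation (monotonicity of the Reeb slope along $\partial\Omega$, which is convexity), intersection positivity together with the nesting arguments of \cite[\S10]{t3}, and the tropical-curve analysis of Taubes and Parker. Granting this model, the proof follows Proposition~\ref{prop:t3spectrum}: let $\Lambda_k$ be a length-minimizing lattice polygon with $\mc{L}(\Lambda_k)=k+1$, and label all edges `$e$'; the combinatorial differential always strictly decreases the number of `$h$' labels, so $\partial\Lambda_k=0$, while $U_z^k\Lambda_k=[\emptyset]$ by successively rounding corners. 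Hence $c_k(\partial X_\Omega,\lambda)\le\ell_\Omega(\Lambda_k)$. Conversely, if $\Lambda$ satisfies $\langle U_z^k\Lambda,[\emptyset]\rangle\neq0$ then $|\Lambda|=2k$, so $\mc{L}(\Lambda)=k+1+h(\Lambda)/2$, and
\[
\ell_\Omega(\Lambda)\;\ge\;\ell_\Omega(\Lambda_{k+h(\Lambda)/2})\;\ge\;\ell_\Omega(\Lambda_k),
\]
the second inequality because the triangle inequality for $\|\cdot\|^*$ implies that rounding corners decreases $\ell_\Omega$. The case of $\partial\Omega$ not smooth or not strictly convex is handled by approximating $\Omega$ by strictly convex smooth domains and invoking the limiting definition of $c_k$ from \S\ref{sec:definecapacities}.

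The hard part will be checking that the combinatorial descriptions of $\partial$ and $U_z$ from \cite{t3} genuinely carry over. One must redo the choice of Morse--Bott perturbation and compatible almost complex structure so as to achieve the same automatic transversality, the same ruling out of alternative (iii) in the analogue of Lemma~\ref{lem:easyhalf}, and the same tropical count as in \cite{t3} uniformly in $\Omega$. Once this is in hand, everything else---the Reeb dynamics, action computation, grading formula, and minimax argument---is a direct, largely formal adaptation of the $T^3$ case.
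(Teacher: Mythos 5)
Your overall strategy is exactly the one the paper uses: exhibit $\partial X_\Omega$ as a Morse--Bott contact manifold of the same type as $(T^3,\lambda_1)$, with circles of Reeb orbits indexed by primitive outward normal directions to $\partial\Omega$ and actions given by the dual norm; then invoke the perturbation, grading formula, and combinatorial $\partial$ and $U_\theta$ of \S\ref{sec:t3}, and run the minimax argument of Proposition~\ref{prop:t3spectrum} with $\ell$ replaced by $\ell_\Omega$. The key property you isolate (monotone rotation of the Reeb direction along $\partial\Omega$, i.e.\ convexity) is the same one the paper identifies, and the limiting argument for non-smooth or non-strictly-convex $\Omega$ matches as well.

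There is, however, a concrete error at the very first step. The restriction of the standard Liouville form to $\partial X_\Omega$, namely $\lambda=\frac{1}{2\pi}(\mu_1\,d\theta_1+\mu_2\,d\theta_2)$ in the coordinates $\mu_k=\pi|z_k|^2$, is \emph{never} a contact form for the domains in question. Indeed $\lambda\wedge d\lambda=\frac{1}{(2\pi)^2}(\mu_2\,d\mu_1-\mu_1\,d\mu_2)\wedge d\theta_1\wedge d\theta_2$, which on $\partial X_\Omega$ vanishes exactly where the position vector $(\mu_1,\mu_2)$ is tangent to $\partial\Omega$, i.e.\ where a supporting line of $\Omega$ passes through the origin. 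Since $\Omega$ is compact, convex, and disjoint from the axes, the origin lies outside $\Omega$, so there are always two such supporting lines, and $\lambda\wedge d\lambda$ vanishes along two $T^2$-fibers; moreover on the arc of $\partial\Omega$ where $\langle p,\nu(p)\rangle<0$ your ``Reeb vector field'' would be a \emph{negative} multiple of $\nu_1\partial_{\theta_1}+\nu_2\partial_{\theta_2}$. The correct choice, which is what the paper makes, is the Liouville vector field $\rho=\sum_k(\mu_k-\eta_k)\partial_{\mu_k}$ radiating from an interior point $(\eta_1,\eta_2)$ of $\Omega$, giving the primitive $\lambda=\frac{1}{2\pi}\sum_k(\mu_k-\eta_k)\,d\theta_k$; this is a genuine contact form because $\Omega$ is star-shaped about $\eta$. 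With this choice the action of the orbit circle with normal $(a,b)$ is exactly $\|(a,b)\|^*$ computed from $\Omega'=\Omega-\eta$ --- precisely the dual-norm formula you already wrote down, with $c=\eta$ --- so after this one correction the rest of your argument goes through as in the paper.
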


\begin{remark}
One can weaken and possibly drop the assumption that $\Omega$ does not intersect the axes. For example, the formula \eqref{eqn:toric} is still correct when $\Omega$ is a triangle or rectangle with two sides on the axes, so that $X_\Omega$ is an ellipsoid or polydisk. This is a consequence of the following exercise, which should help with understanding the combinatorial formula \eqref{eqn:toric}.
\end{remark}

\begin{exercise}
\label{ex:wulff}
\begin{description}
\item{(a)}
Suppose that $\Omega$ is a convex polygon. Show that the minimum on the right hand side of \eqref{eqn:toric} is the same if it is taken over closed convex polygonal paths $\Lambda$ with arbitrary vertices whose edges are parallel to the edges of $\Omega$.
\item{(b)} Use part (a), together with the formulas \eqref{eqn:ellipsoidcapacities} and \eqref{eqn:polydiskcapacities} for the ECH capacities of ellipsoids and polydisks, to verify that equation \eqref{eqn:toric} is correct when $X_\Omega$ is an ellipsoid or a polydisk.
\end{description}
\end{exercise}

\begin{proof}[Proof of Theorem~\ref{thm:toric}.] We first need to understand a bit about the symplectic geometry of the domains $X_\Omega$.
Define coordinates $\mu_1,\mu_2\in(0,\infty)$ and $\theta_1,\theta_2\in\R/2\pi\Z$ on $(\C^*)^2$ by writing $z_k=\sqrt{\mu_k/\pi}e^{i\theta_k}$ for $k=1,2$. In these coordinates, the standard symplectic form on $\C^2$ restricts to
\begin{equation}
\label{eqn:omegatoric}
\omega=\frac{1}{2\pi}\sum_{k=1}^2d\mu_k\, d\theta_k.
\end{equation}
A useful corollary of this is that
\begin{equation}
\label{eqn:toricvolume}
\op{vol}(X_\Omega) = \op{area}(\Omega).
\end{equation}

\begin{exercise}
Use \eqref{eqn:omegatoric} to show that if $\Omega_1$ and $\Omega_2$ do not intersect the axes, and if $\Omega_2$ can be obtained from $\Omega_1$ by the action of $SL_2\Z$ and translation, then $X_{\Omega_1}$ is symplectomorphic to $X_{\Omega_2}$.
\end{exercise}

Now suppose that $\Omega$ has smooth boundary, does not intersect the axes, and is star-shaped with respect to some origin $(\eta_1,\eta_2)\in\op{int}(\Omega)$. This last condition means that each ray starting at $(\eta_1,\eta_2)$ intersects $\partial\Omega$ transversely. We claim then that $\partial X_\Omega$ is contact type, so that $\Omega$ is a Liouville domain. Indeed,
\[
\rho= \sum_{k=1}^2(\mu_k-\eta_k)\frac{\partial}{\partial \mu_k}
\]
is a Liouville vector field transverse to $\partial X_\Omega$, see \S\ref{sec:overview}.

To describe the resulting contact form $\lambda=\imath_\rho\omega$ on $\partial X_\Omega$, suppose further that $\Omega$ is strictly convex. Then $\partial X_\Omega$ is diffeomorphic to $T^3$ with coordinates $\theta_1,\theta_2,\phi$, where $\theta_1,\theta_2$ were defined above, and $(\cos\phi,\sin\phi)$ is the unit tangent vector to $\partial\Omega$, oriented counterclockwise. The contact form is now
\begin{equation}
\label{eqn:toriclambda}
\lambda=\frac{1}{2\pi}\sum_{k=1}^2(\mu_k-\eta_k)d\theta_k,
\end{equation}
and the Reeb vector field is
\begin{equation}
\label{eqn:toricreeb}
R = \frac{2\pi}{\|(\sin\phi,-\cos\phi)\|^*}
\left(\sin\phi\frac{\partial}{\partial\theta_1} - \cos\phi\frac{\partial}{\partial\theta_2}\right).
\end{equation}
Here $\|\cdot\|^*$ denotes the dual norm as above, defined using the translate of $\Omega$ by $-\eta$. This means that $\lambda$ has a circle of Reeb orbits for each $\phi$ for which $(\sin\phi,-\cos\phi)$ is a positive multiple of a vector $(a,b)$ where $a,b$ are relatively prime integers, and the symplectic action of each such Reeb orbit is the dual norm $\|(a,b)\|^*$.

For example, if $\Omega$ is a disk of radius 1 centered at $\eta$, then the contact form \eqref{eqn:toriclambda} agrees with the standard contact form \eqref{eqn:lambda1} on $T^3$ (via the coordinate change $\theta_1=2\pi x, \theta_2=2\pi y, \phi=\theta+\pi/2$), and the norm $\|\cdot\|^*$ is the Euclidean norm. So in this case, Theorem~\ref{thm:toric} follows from Proposition~\ref{prop:t3spectrum}. In the general case, by the arguments in \cite[Prop.\ 10.15]{t3}, the calculations in \S\ref{sec:t3} work just as well for the contact form \eqref{eqn:toriclambda}, except that symplectic action is computed using the dual norm $\|\cdot\|^*$ instead of the Euclidean norm. This proves Theorem~\ref{thm:toric} whenever the boundary of $\Omega$ is smooth and strictly convex. The general case of Theorem~\ref{thm:toric} follows by a limiting argument.
\end{proof}

The key property of the contact form \eqref{eqn:toriclambda} that makes the above calculation work is that the direction of the Reeb vector field \eqref{eqn:toricreeb} rotates monotonically with $\phi$.
It is an interesting open problem to compute the ECH capacities of $X_\Omega$ when $\Omega$ is star-shaped with respect to some origin but not convex. In this case the direction of the Reeb vector field no longer increases monotonically as one moves along $\partial \Omega$, so the calculations in \S\ref{sec:t3} do not apply, as there can be more than one circle of Reeb orbits in the same homology class.

\paragraph{Polydisks.}

We now prove the formula \eqref{eqn:polydiskcapacities} for the ECH capacities of a polydisk $P(a,b)$. Let $\Omega$ be a rectangle with sides of length $a$ and $b$ parallel to the axes which does not intersect the axes. It follows from Theorem~\ref{thm:toric} and Exercise~\ref{ex:wulff}(b) that
\[
c_k(X_\Omega) = \min\left\{am+bn\;\big|\; m,n\in\N, (m+1)(n+1)\ge k+1\right\}.
\]
So to prove equation \eqref{eqn:polydiskcapacities} for the ECH capacities of a polydisk, it is enough to show that
\begin{equation}
\label{eqn:polydiskomega}
c_k(P(a,b))=c_k(X_\Omega).
\end{equation}

Observe that $X_\Omega$ is symplectomorphic to the product of two annuli of area $a$ and $b$. Also, an annulus can be symplectically embedded into a disk of slightly larger area, and a disk can be symplectically embedded into an annulus of slightly larger area. Consequently, for any $\epsilon>0$, there are symplectic embeddings
\[
P((1-\epsilon)a,(1-\epsilon)b)\subset X_\Omega \subset P((1+\epsilon)a,(1+\epsilon)b).
\]
It follows from this and the Monotonicity and Conformality properties of ECH capacities that \eqref{eqn:polydiskomega} holds. Indeed, {\em any\/} symplectic capacity satisfying the Monotonicity and Conformality properties must agree on $P(a,b)$ and $X_\Omega$.

\section{Foundations of ECH}

We now give an introduction to some of the foundational matters which were skipped over in \S\ref{sec:defech}. The subsections below introduce foundational issues in the logical order in which they arise in developing the theory, but for the most part can be read in any order.

Below, fix a closed oriented three-manifold $Y$, a nondegenerate contact form $\lambda$ on $Y$, and a generic symplectization-admissible almost complex structure $J$ on $\R\times Y$. 

\subsection{Proof of the writhe bound and the partition conditions}
\label{sec:writhebound}

We now outline the proof of the writhe bound \eqref{eqn:writhebound} and the partition conditions in \S\ref{sec:partitions}. One can prove this one Reeb orbit at a time. That is, let $C$ be a somewhere injective $J$-holomorphic curve, let $\gamma$ be an embedded Reeb orbit, and suppose that $C$ has positive ends at covers of $\gamma$ with multiplicities $a_1,\ldots,a_k$ satisfying $\sum_{i=1}^ka_i=m$.
Let $N$ be a tubular neighborhood of $\gamma$ and let $\zeta=C\cap(\{s\}\times N)$ where $s>>0$. Let $\tau$ be a trivialization of $\xi|_\gamma$.
We then need to prove the following lemma (together with an analogus lemma for the negative ends which will follow by symmetry):

\begin{lemma}
\label{lem:wb}
If $s>>0$, then $\zeta$ is a braid whose writhe satisfies
\[
w_\tau(\zeta) \le \sum_{i=1}^m CZ_\tau(\gamma^i) - \sum_{i=1}^k CZ_\tau(\gamma^{a_i}),
\]
with equality only if $(a_1,\ldots,a_k) = p_\gamma^+(m)$.
\end{lemma}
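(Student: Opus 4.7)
The plan is to analyze $\zeta$ end-by-end via Siefring's asymptotic analysis \cite{siefring1}, bounding the writhe contribution of each end and the pairwise linking between distinct ends in terms of the integer windings of eigenfunctions of the asymptotic operators at iterates of $\gamma$. Fix a tubular neighborhood $N\simeq S^1\times D^2$ of $\gamma$ framed using $\tau$. Siefring's theorem says that for $s\gg 0$, each end of $C$ at $\gamma^{a_i}$ is $C^1$-close to the graph of $e^{\mu_i s}\varphi_i$, where $\varphi_i$ is a nonzero eigenfunction of the asymptotic operator $L_{\gamma^{a_i}}$ with negative eigenvalue $\mu_i$. Consequently, for $s$ large enough, the slice decomposes into disjoint braids $\zeta=\zeta_1\sqcup\cdots\sqcup\zeta_k$ with $\zeta_i$ having $a_i$ strands, and
\[
w_\tau(\zeta)=\sum_{i=1}^k w_\tau(\zeta_i)+2\sum_{i<j}\ell_\tau(\zeta_i,\zeta_j).
\]

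First I would estimate each self-writhe. Every eigenfunction of $L_{\gamma^a}$ has a well-defined integer winding number around $\gamma$ with respect to $\tau$; let $\alpha_\tau(\gamma^a)$ denote the maximum such winding taken over negative-eigenvalue eigenfunctions and $\beta_\tau(\gamma^a)$ the minimum taken over positive-eigenvalue ones, so that $CZ_\tau(\gamma^a)=\alpha_\tau(\gamma^a)+\beta_\tau(\gamma^a)$. Since $\mu_i<0$, the winding of $\varphi_i$ is at most $\alpha_\tau(\gamma^{a_i})$, and a direct geometric calculation bounds $w_\tau(\zeta_i)$ in terms of these winding data. For the off-diagonal terms, I would apply Siefring's relative asymptotic formula to the difference of two ends at iterates $\gamma^{a_i}$ and $\gamma^{a_j}$: this difference is itself controlled by an eigenfunction of the relevant asymptotic operator with negative eigenvalue, and intersection positivity converts its winding into an upper bound on $\ell_\tau(\zeta_i,\zeta_j)$.

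Summing the self-writhe and linking bounds and telescoping the winding contributions into Conley--Zehnder indices should yield exactly $\sum_{i=1}^m CZ_\tau(\gamma^i)-\sum_j CZ_\tau(\gamma^{a_j})$. For the equality case, tracing back where slack could enter reduces to a discrete optimization: for an elliptic orbit with rotation angle $\theta$, saturating every winding inequality simultaneously forces the partition $(a_1,\ldots,a_k)$ to consist of the horizontal edge lengths of a maximal concave polygonal path with lattice vertices from $(0,0)$ to $(m,\lfloor m\theta\rfloor)$. That path is $\Lambda_\theta^+(m)$ by definition, so the partition is $p_\gamma^+(m)$; the hyperbolic cases follow by parallel but simpler analyses using the partitions listed in \S\ref{sec:partitions}.

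The main obstacle is arithmetic rather than analytic. The Siefring asymptotics---both the leading-order formula for individual ends and the relative version for differences of pairs---are powerful and by now standard inputs. What takes real work is converting the raw winding bounds, indexed by iterates $\gamma^{a_i}$ and by pairs $(i,j)$, into the clean telescoped Conley--Zehnder sum on the right-hand side of Lemma~\ref{lem:wb}, and verifying that every inequality in that chain can be simultaneously saturated only by the combinatorially defined partition $p_\gamma^+(m)$.
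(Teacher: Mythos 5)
Your outline follows the paper's proof essentially step for step: leading-order asymptotics of each end (Hofer--Wysocki--Zehnder, supplemented by Siefring's relative asymptotics when decay rates or winding numbers coincide), the winding bound $\op{wind}_\tau(\zeta_i)\le\floor{CZ_\tau(\gamma^{a_i})/2}$, the decomposition $w_\tau(\zeta)=\sum_i w_\tau(\zeta_i)+\sum_{i\neq j}\ell_\tau(\zeta_i,\zeta_j)$ with the bounds $w_\tau(\zeta_i)\le\rho_i(a_i-1)$ and $\ell_\tau(\zeta_i,\zeta_j)\le\max(\rho_ia_j,\rho_ja_i)$, and then a combinatorial inequality whose equality case singles out $p_\gamma^+(m)$. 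The one step you defer as ``arithmetic rather than analytic'' is precisely where the paper does its remaining work --- the inequality \eqref{eqn:workhorse} is proved by interpreting the left-hand side as twice the area of a lattice region bounded by the path with edge vectors $(a_i,\floor{a_i\theta})$ and applying Pick's formula, with equality forcing that path to be $\Lambda_\theta^+(m)$ --- and your description of the saturation condition matches that argument.
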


To sketch the proof of Lemma~\ref{lem:wb}, we assume for simplicity that $C$ contains no trivial cylinders, although this assumption is easily dropped. We now need to recall some facts about the asymptotics of holomorphic curves. To set this up, identify $N\simeq (\R/\Z)\times D^2$ so that $\gamma$ corresponds to $(\R/\Z)\times\{0\}$, and the derivative of the identification along $\gamma$ sends $\xi|_\gamma$ to $\{0\}\oplus \C$ in agreement with the trivialization $\tau$. It turns out that a nontrivial positive end of $C$ at the $d$-fold cover of $\gamma$ can be written using this identification as the image of a map
\[
\begin{split}
u: [s_0,\infty)\times (\R/d\Z) &\longrightarrow \R\times (\R/\Z) \times D^2,\\
(s,t) &\longmapsto (s,\pi(t),\eta(s,t))
\end{split}
\]
where $s_0>>0$ and $\pi:\R/d\Z\to\R/\Z$ denotes the projection.

We now want to describe the asymptotics of the function $\eta(s,t)$.
Under our tubular neighborhood identification, the almost complex structure $J$ on $\xi|_\gamma$ defines a family of $2\times 2$ matrices $J_t$ with $J_t^2=-1$ parametrized by $t\in\R/\Z$. Also, the linearized Reeb flow along $\gamma$ determines a connection $\nabla_t=\partial_t+S_t$ on $\xi|_\gamma$, where $J_tS_t$ is a symmetric matrix for each $t\in\R/\Z$.
If $d$ is a positive integer, define the ``asymptotic operator'' $L_d$ on functions $\R/d\Z\to\C$ by
\[
L_d=J_{\pi(t)}(\partial_t+S_{\pi(t)}).
\]
Note that the operator $L_d$ is formally self-adjoint, so its eigenvalues are real.

\begin{lemma}
\label{lem:hwz1}
\cite{hwz1}
Given an end $\eta$ of a holomorphic curve as above,
there exist $c,\kappa>0$, and a nonzero eigenfunction $\varphi$ of $L_d$ with eigenvalue $\lambda>0$, such that
\[
\left|\eta(s,t) - e^{-\lambda s}\varphi(t)\right| < ce^{(-\lambda-\kappa)s}
\]
for all $(s,t)\in[s_0,\infty)\times (\R/d\Z)$.
\end{lemma}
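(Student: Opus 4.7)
My plan is to reduce the statement to a spectral-theoretic analysis of the asymptotic operator $L_d$, treating the nonlinear Cauchy--Riemann equation near the puncture as a small perturbation of $\partial_s + L_d$. First I would write out the holomorphic curve equation for $u(s,t)=(s,\pi(t),\eta(s,t))$ in the chosen tubular coordinates. The almost complex structure $J$ on $\R \times N$ is symplectization-admissible, so $J(\partial_s) = R$ along $\gamma$, and $J$ restricts to $J_t$ on the contact planes. Expanding $\bar\partial_J u = 0$ in $\eta$, the leading-order linearization recovers exactly the operator $L_d = J_{\pi(t)}(\partial_t + S_{\pi(t)})$, so the equation takes the form
\[
\partial_s \eta + L_d \eta + r(s,t,\eta,\partial_t\eta) = 0,
\]
where $r$ is a smooth function that vanishes to second order at $\eta=0$ (the higher-order terms come from the non-linearity of $J$ and the covariant derivative transverse to $\gamma$).

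Next I would establish that $\eta(s,\cdot)$ decays exponentially in $s$. Since the contact form $\lambda$ is nondegenerate and all covers $\gamma^d$ are nondegenerate, the self-adjoint operator $L_d$ on $L^2(\R/d\Z;\C)$ has no kernel, hence possesses a spectral gap around $0$. An energy/bootstrap argument, standard in this setting (cf.\ \cite{hwz1}), shows that for some $\mu > 0$,
\[
\|\eta(s,\cdot)\|_{C^1(\R/d\Z)} \;\le\; C e^{-\mu s}
\]
for $s \ge s_0$; in particular, the nonlinearity $r$ is $O(e^{-2\mu s})$. Define
\[
\lambda \eqdef -\liminf_{s\to\infty} \frac{1}{s}\log\|\eta(s,\cdot)\|_{L^2},
\]
which belongs to $(0,\infty]$. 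Using the spectral decomposition of $L_d$, write $\eta(s,\cdot) = \sum_{\nu} c_\nu(s)\varphi_\nu(\cdot)$ over an orthonormal eigenbasis with eigenvalues $\nu$; then $c_\nu$ satisfies the ODE $c_\nu'(s) = -\nu c_\nu(s) + \langle r, \varphi_\nu\rangle$, whose solution decomposes $\eta$ into pieces decaying at rate $e^{-\nu s}$ plus a nonlinear tail controlled by $e^{-2\mu s}$. Comparing decay rates forces $\lambda$ to be finite, positive, and equal to some eigenvalue of $L_d$.

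Having identified $\lambda$, I would set $\varphi(t)$ to be the projection of $e^{\lambda s}\eta(s,\cdot)$ onto the $\lambda$-eigenspace in the limit $s\to\infty$; this limit exists (and is nonzero, by definition of $\lambda$) because the ODE above for $c_\nu$ with $\nu = \lambda$ has solution $c_\lambda(s) = e^{-\lambda s}(\text{const} + O(e^{-\kappa s}))$ for some $\kappa>0$ coming from the gap between $\lambda$ and the next eigenvalue, together with the quadratic decay of $r$. The contributions from eigenvalues $\nu \neq \lambda$ then decay strictly faster than $e^{-\lambda s}$, and choosing $\kappa > 0$ to be the minimum of the spectral gap above $\lambda$ and of $\mu$ yields the desired estimate.

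The main obstacle is the bookkeeping for the nonlinear term $r$: one must show that its projection onto each eigenspace is genuinely $o(e^{-\lambda s})$ on the scale $e^{-(\lambda+\kappa)s}$, which requires iterating the initial crude exponential decay estimate to boost the decay rate, then feeding this improvement back into the ODE analysis of the $c_\nu$. This bootstrap is precisely the technical heart of \cite{hwz1}; once it is established, the remaining ingredients (existence of the limit $\varphi$, identification of $\lambda$ as an eigenvalue, control of the remainder) are routine consequences of the spectral decomposition of the self-adjoint operator $L_d$.
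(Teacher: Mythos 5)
The paper does not prove this lemma; it is quoted directly from Hofer--Wysocki--Zehnder \cite{hwz1}, and your outline reproduces the standard strategy of that reference: rewrite the Cauchy--Riemann equation near the puncture as $\partial_s\eta + L_d\eta = $ (quadratically small remainder), use nondegeneracy of $\gamma^d$ to get a spectral gap and hence exponential decay, then bootstrap and project onto the eigenbasis of the self-adjoint operator $L_d$ to extract the leading eigenfunction. Your sketch is a faithful account of the argument, with the genuinely hard analytic work (the decay bootstrap and the passage from $L^2$ to pointwise estimates) correctly identified and attributed to \cite{hwz1}.
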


To make use of this lemma, we need to know something about the eigenfunctions of $L_d$ with positive eigenvalues.

\begin{lemma}
\label{lem:hwz2}
Let $\varphi:\R/d\Z\to\C$ be a nonzero eigenfunction of $L_d$ with eigenvalue $\lambda$. Then:
\begin{description}
\item{(a)}
$\varphi(t)\neq 0$ for all $t\in\R/d\Z$, so $\varphi$ has a well-defined winding number around $0$, which we denote by $\op{wind}_\tau(\varphi)$.
\item{(b)}
If $\lambda>0$ then $\op{wind}_\tau(\varphi)\le \floor{CZ_\tau(\gamma^d)/2}$.
\end{description}
\end{lemma}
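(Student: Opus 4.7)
For part (a), the plan is to rewrite the eigenvalue equation $L_d\varphi=\lambda\varphi$ as a first-order homogeneous linear ODE in $t$. Since $J_{\pi(t)}^2=-1$, multiplying by $-J_{\pi(t)}$ gives
\[
\partial_t\varphi = -S_{\pi(t)}\varphi - \lambda J_{\pi(t)}\varphi,
\]
which has smooth coefficients. Uniqueness for first-order linear ODEs forces $\varphi\equiv 0$ if $\varphi(t_0)=0$ for any single $t_0$, so a nonzero eigenfunction is nowhere vanishing on $\R/d\Z$ and has a well-defined winding number around $0$.

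For part (b), the plan is to follow the Hofer--Wysocki--Zehnder spectral picture for the asymptotic operator $L_d$. The first step is to show that $\op{wind}_\tau(\varphi)$ depends only on the eigenvalue, not on the chosen eigenfunction, giving a map $\op{wind}:\sigma(L_d)\to\Z$. Indeed, if $\varphi_1,\varphi_2$ are linearly independent eigenfunctions for the same eigenvalue, then any nontrivial real linear combination $a\varphi_1+b\varphi_2$ is again an eigenfunction, so by (a) it is nowhere zero; equivalently $\varphi_2(t)$ is never a real multiple of $\varphi_1(t)$. Thus $\varphi_2/\varphi_1:\R/d\Z\to\C^\times$ lands in a single half-plane and has winding $0$, so $\op{wind}_\tau(\varphi_1)=\op{wind}_\tau(\varphi_2)$.

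The second step is monotonicity: with the sign convention of the excerpt, $\op{wind}$ is non-increasing in $\lambda$, and each integer value is attained by eigenvalues of total algebraic multiplicity exactly $2$. I would prove this by fixing an initial condition at $t=0$ and considering the resulting $\lambda$-family of solutions $\varphi_\lambda$ to the ODE above; this solution is periodic precisely when $\lambda\in\sigma(L_d)$, and a standard crossing-form calculation (as in HWZ, \emph{Properties of pseudoholomorphic curves in symplectisations II}) shows that the winding number of $\varphi_\lambda$ as a map $\R/d\Z\to\C^\times$ (defined off the eigenvalues of $L_d$) is locally constant in $\lambda$ and decreases by exactly the algebraic multiplicity as $\lambda$ crosses each eigenvalue.

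The third and main step, and where I expect the real work to be, is the HWZ formula relating the Conley--Zehnder index to these winding numbers:
\[
CZ_\tau(\gamma^d) \;=\; 2\op{wind}^{>0}(L_d) + p, \qquad p\in\{0,1\},
\]
where $\op{wind}^{>0}(L_d) := \sup\{\op{wind}(\lambda)\mid\lambda\in\sigma(L_d),\ \lambda>0\}$, with $p=0$ when the linearized return map is positive hyperbolic and $p=1$ otherwise. I would prove this by homotoping the path of symplectic matrices $\{\phi_t\}$ associated to the connection $\partial_t+S_t$ to a model path (a direct sum of standard rotations in the elliptic case, or hyperbolic blocks) on which both $CZ_\tau(\gamma^d)$ and $\op{wind}^{>0}(L_d)$ can be computed by hand, and then invoking homotopy invariance of both sides along homotopies through paths with nondegenerate endpoints. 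Granting this formula, the lemma is immediate: if $\varphi$ is an eigenfunction with $\lambda>0$ then monotonicity gives
\[
\op{wind}_\tau(\varphi) \;\le\; \op{wind}^{>0}(L_d) \;\le\; \lfloor CZ_\tau(\gamma^d)/2\rfloor.
\]
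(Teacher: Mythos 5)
Your proof of (a) is exactly the paper's: rewrite the eigenvalue equation as the first-order linear ODE $\partial_t\varphi=-\left(S_{\pi(t)}+\lambda J_{\pi(t)}\right)\varphi$ and invoke uniqueness of solutions. For (b) the paper gives no argument at all, simply citing \cite[\S3]{hwz2}, and your outline (winding depends only on the eigenvalue, is monotone in $\lambda$ with each integer attained with total multiplicity two, and $CZ_\tau(\gamma^d)=2\op{wind}^{>0}(L_d)+p$ with $p\in\{0,1\}$) is precisely the content of that reference, adapted correctly to the sign conventions here, so the approaches agree.
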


\begin{proof}
The eigenfunction $\varphi$ satisfies the ordinary differential equation
\[
\partial_t\varphi = -\left(S_{\pi(t)}+\lambda J_{\pi(t)}\right)\varphi.
\]
Assertion (a) then follows from the uniqueness of solutions to ODE's. Assertion (b) is proved in \cite[\S3]{hwz2}.
\end{proof}

\begin{example}
Suppose $\gamma$ is elliptic with monodromy angle $\theta$ with respect to $\tau$. 
We can then choose the trivialization $\tau$ so that
\[
\nabla_t=\partial_t-2\pi i\theta.
\]
Suppose $J$ is chosen so that $J_t$ is multiplication by $i$ for each $t$. Then
\[
L_d = i\partial_t + 2\pi\theta.
\]
Eigenfunctions of $L_d$ are complex multiples of the functions
\[
\varphi_n(t) = e^{2\pi i n t / d}
\]
for $n\in\Z$, with eigenvalues
\begin{equation}
\label{eqn:lambdan}
\lambda_n = -2\pi n/d + 2\pi\theta
\end{equation}
and winding number
\begin{equation}
\label{eqn:windn}
\op{wind}_\tau(\varphi_n)=n.
\end{equation}
It follows from \eqref{eqn:lambdan} and \eqref{eqn:windn} that $\lambda_n>0$ if and only if $\op{wind}_\tau(\varphi_n)<d\theta$.
This is consistent with Lemma~\ref{lem:hwz2}(b) since by \eqref{eqn:CZell} we have
\[
\floor{CZ_\tau\left(\gamma^d\right)/2}=\floor{d\theta}.
\]
\end{example}

Now return to the slice $\zeta=C\cap(\{s\}\times N)$ where $s>>0$. 
The positive ends of $C$ at covers of $\gamma$ determine loops $\zeta_1,\ldots,\zeta_k$ in $N$ transverse to the fibers of $N\to \gamma$. We conclude from Lemmas~\ref{lem:hwz1} and \ref{lem:hwz2} that $\zeta_i$ has a well-defined winding number around $\gamma$ with respect to $\tau$, which we denote by $\op{wind}_\tau(\zeta_i)$, and this satisfies
\begin{equation}
\label{eqn:windingbound}
\op{wind}_\tau(\zeta_i)\le\floor{CZ_\tau(\gamma^{a_i})/2}.
\end{equation}
To simplify notation, write $\rho_i = \floor{CZ_\tau(\gamma^{a_i})/2}$.

In principle the loops $\zeta_i$ might intersect themselves or each other, but we will see below that if $s$ is sufficiently large then they do not, so that their union is a braid $\zeta$. Its writhe is then given by
\begin{equation}
\label{eqn:totalwrithe}
w_\tau(\zeta) = \sum_{i=1}^k w_\tau(\zeta_i) + \sum_{i\neq j}\ell_\tau(\zeta_i,\zeta_j).
\end{equation}
Here $\ell_\tau(\zeta_i,\zeta_j)$ is the ``linking number'' of $\zeta_i$ and $\zeta_j$; this is defined like the writhe, except now we count crossings of $\zeta_i$ with $\zeta_j$ and divide by $2$. The terms on the right hand side of \eqref{eqn:totalwrithe} are bounded as follows:

\begin{lemma}
\label{lem:siefring}
If $s>>0$, then:
\begin{description}
\item{(a)}
Each component $\zeta_i$ is embedded and has writhe bounded by
\begin{equation}
\label{eqn:singlewrithe}
w_\tau(\zeta_i) \le \rho_i(a_i-1).
\end{equation}
\item{(b)}
If $i\neq j$, then $\zeta_i$ and $\zeta_j$ are disjoint, and
\[
\ell_\tau(\zeta_i,\zeta_j) \le \max(\rho_ia_j,\rho_ja_i).
\]
\end{description}
\end{lemma}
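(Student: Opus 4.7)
The strategy is to extract both inequalities from the leading-order asymptotic model of each end, using Lemma~\ref{lem:hwz1} to produce the model and Lemma~\ref{lem:hwz2} to control the winding of its eigenfunctions.

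First I would apply Lemma~\ref{lem:hwz1} to each positive end of $C$ at a cover of $\gamma$ of multiplicity $a_i$, producing a positive eigenvalue $\lambda_i$ of $L_{a_i}$ and a nonzero eigenfunction $\varphi_i:\R/a_i\Z\to\C$ with
\[
\eta_i(s,t) = e^{-\lambda_i s}\varphi_i(t) + O\bigl(e^{-(\lambda_i+\kappa_i)s}\bigr).
\]
Lemma~\ref{lem:hwz2}(a) gives $\varphi_i\neq 0$ everywhere, so for $s\gg 0$ the slice $\zeta_i$ is a $C^1$-small perturbation of the scaled embedded loop $t\mapsto (\pi(t), e^{-\lambda_i s}\varphi_i(t))$, which is an embedded $a_i$-strand braid in $\R/\Z\times D^2$. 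Lemma~\ref{lem:hwz2}(b) supplies the winding bound $n_i := \op{wind}_\tau(\varphi_i) \le \rho_i$.

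For part (a), the leading-order braid is isotopic to a standard $(a_i,n_i)$-torus braid whose writhe in the annular projection, with the sign convention fixed in the text, equals $n_i(a_i-1)$. This is a direct signed-crossing count: the model consists of $a_i$ evenly spaced strands whose argument increases by $2\pi n_i/a_i$ per period in the $\R/\Z$ direction, producing $n_i(a_i-1)$ signed crossings with consistent sign. Embeddedness of $\zeta_i$ for $s\gg 0$ follows from the leading model being embedded and the error being exponentially smaller, so no new crossings can arise. Combining with $n_i\le \rho_i$ yields \eqref{eqn:singlewrithe}.

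For part (b), I would split into cases according to the eigenvalues. If $\lambda_i<\lambda_j$, then $\zeta_j$ lies in a tube around $\gamma$ of radius $O(e^{-\lambda_j s})$, which is exponentially smaller than the radius of $\zeta_i$; for $s\gg 0$ the two braids are therefore disjoint and $\zeta_j$ is nested inside $\zeta_i$. Reading off the leading models, each of the $a_j$ strands of $\zeta_j$ contributes linkwise with $\zeta_i$ as if it were the core $\gamma$ itself, which has linking number $n_i$ with $\zeta_i$; hence $\ell_\tau(\zeta_i,\zeta_j) \le a_j n_i \le a_j\rho_i$. Symmetrically, $\ell_\tau(\zeta_i,\zeta_j) \le a_i\rho_j$ when $\lambda_j<\lambda_i$. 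The main obstacle is the coincident case $\lambda_i=\lambda_j$: the two ends can no longer be separated by exponential decay rates, and one must invoke that distinct ends of a somewhere-injective holomorphic curve with the same leading eigenvalue have linearly independent leading eigenfunctions --- otherwise a unique continuation/similarity principle for the deformed Cauchy-Riemann operator would force the two ends to coincide. A refined comparison within the $\lambda_i$-eigenspace, combined with the winding-number monotonicity in the eigenvalue from Lemma~\ref{lem:hwz2}(b), then yields disjointness and the combined bound $\ell_\tau(\zeta_i,\zeta_j) \le \max(a_j\rho_i, a_i\rho_j)$.
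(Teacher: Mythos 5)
Your overall strategy (leading asymptotics from Lemma~\ref{lem:hwz1} plus the winding bound from Lemma~\ref{lem:hwz2}) is the right one and matches the paper's, but there is a genuine gap in part (a). You assert that the leading-order model $t\mapsto(\pi(t),e^{-\lambda_i s}\varphi_i(t))$ is an embedded $a_i$-strand braid because $\varphi_i$ is nonvanishing. Nonvanishing is not enough: embeddedness of the model requires $\varphi_i(t)\neq\varphi_i(t+k)$ for all $t$ and all $k\not\equiv 0\pmod{a_i}$, and this fails precisely when $\op{wind}_\tau(\varphi_i)$ and $a_i$ have a common factor. In the model computation in the text, $\varphi_n(t)=e^{2\pi int/d}$ satisfies $\varphi_n(t+d/g)=\varphi_n(t)$ when $g=\gcd(n,d)>1$, so the leading-order ``braid'' is a $g$-fold cover of a shorter embedded loop: its strands literally coincide, and no $C^1$-smallness of the error term lets you conclude that $\zeta_i$ itself is embedded, nor compute its writhe, from the leading term alone. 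This is exactly where the paper invokes Siefring's higher-order asymptotic expansion \cite{siefring1}: one needs the subleading eigenfunctions to separate the strands that agree at leading order. Your torus-braid writhe count $n_i(a_i-1)$ and the resulting inequality are only justified by your argument in the coprime case (which is the elementary case \cite[Lem.~6.7]{pfh2}).

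A similar, smaller remark applies to part (b): you correctly identify the coincident-eigenvalue case as the obstacle, but ``linearly independent leading eigenfunctions'' is not by itself the statement you need --- when $a_i\neq a_j$ one must compare eigenfunctions living on different covers of $\gamma$, and even when the leading terms agree after lifting, disjointness and the linking bound come from Siefring's \emph{relative} asymptotic formula, which expresses the difference of the two ends as governed by an eigenfunction with strictly larger eigenvalue (and hence controlled winding). Since the paper itself defers this case to \cite{siefring1}, your level of detail there is acceptable, but in (a) the claim that the leading model is embedded is simply false in general and must be replaced by the higher-order analysis.
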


\begin{proof}
An analogous result was proved in \cite[\S6]{pfh2} in an analytically simpler situation. In the present case, parts of the argument require a result of Siefring \cite{siefring1} which generalizes Lemma~\ref{lem:hwz1} to give ``higher order'' asymptotics of holomorphic curves. We now outline how this works.

(a) If the integers $\op{wind}_\tau(\zeta_i)$ and $a_i$ are relatively prime, then an elementary argument in \cite[Lem.\ 6.7]{pfh2} related to Lemma~\ref{lem:hwz2}(a) shows that $\zeta_i$ is a torus braid, so that 
\begin{equation}
\label{eqn:iwb}
w_\tau(\zeta_i) = \op{wind}_\tau(\zeta_i)(a_i-1).
\end{equation}
The inequality \eqref{eqn:singlewrithe} now follows from this and the winding bound \eqref{eqn:windingbound}.
When $\op{wind}_\tau(\zeta_i)$ and $a_i$ have a common factor, one can prove that $\zeta_i$ is embedded and satisfies \eqref{eqn:iwb} using the analysis of Siefring.

(b) Let $\lambda_i$ and $\lambda_j$ denote the eigenvalues of the operators $L_{a_i}$ and $L_{a_j}$ associated to the ends $\zeta_i$ and $\zeta_j$ via Lemma~\ref{lem:hwz1}. If $\lambda_i<\lambda_j$, then it follows from Lemma~\ref{lem:hwz1} that the braid $\zeta_j$ is ``inside'' the braid $\zeta_i$ (assuming as always that we take $s$ sufficiently large), from which it follows that $\zeta_i$ and $\zeta_j$ do not intersect and
\begin{equation}
\label{eqn:ilb}
\ell_\tau(\zeta_i,\zeta_j) = \op{wind}_\tau(\zeta_i)a_j\le\rho_ia_j.
\end{equation}
The proof that $\zeta_i$ and $\zeta_j$ do not intersect and satisfy \eqref{eqn:ilb} when $\lambda_i=\lambda_j$ is more delicate and uses the analysis of Siefring.
\end{proof}

\begin{remark}
When $\rho_i$ and $a_i$ have a common factor one can strengthen the inequality \eqref{eqn:singlewrithe}; optimal bounds are given in \cite[\S3.1]{siefring2}. We do not need this strengthening when $\gamma$ is elliptic, but it is needed for the proof of the partition conditions when $\gamma$ is hyperbolic, see \cite[Lem.\ 4.16]{ir}.
\end{remark}

\begin{proof}[Proof of Lemma~\ref{lem:wb}.]
We will restrict attention to the most interesting case where $\gamma$ is elliptic with monodromy angle $\theta$. (See \cite[\S4]{ir} for the proof when $\gamma$ is hyperbolic.) By equation \eqref{eqn:CZell} we have $\rho_i=\floor{a_i\theta}$. So by equation \eqref{eqn:totalwrithe} and Lemma~\ref{lem:siefring}, it is enough to show that
\[
\sum_{i=1}^k\floor{a_i\theta}(a_i-1) + \sum_{i\neq j}\max(\floor{a_i\theta}a_j,\floor{a_j\theta}a_i) \le \sum_{i=1}^m(2\floor{i\theta}+1) - \sum_{i=1}^k(2\floor{a_i\theta}+1),
\]
with equality only if $(a_1,\ldots,a_k)=p_\theta^+(m)$. We can write the above inequality a bit more simply as
\begin{equation}
\label{eqn:workhorse}
\sum_{i, j=1}^n\max(\floor{a_i\theta}a_j,\floor{a_j\theta}a_i) \le 2\sum_{i=1}^m\floor{i\theta} - \sum_{i=1}^k\floor{a_i\theta}+m-k.
\end{equation}

To prove \eqref{eqn:workhorse}, following \cite{ir}, order the numbers $a_1,\ldots,a_k$ so that
\[
\frac{\floor{a_1\theta}}{a_1}\ge \frac{\floor{a_2\theta}}{a_2}\ge\cdots \ge\frac{\floor{a_k\theta}}{a_k}.
\]
Let $\Lambda$ be the path in the plane starting at $(0,0)$ with edge vectors $(a_1,\floor{a_1\theta}),\ldots,(a_k,\floor{a_k\theta})$ in that order. Let $P$ denote the region bounded by the path $\Lambda$, the horizontal line from $(0,0)$ to $(m,0)$, and the vertical line from $(m,0)$ to $\left(m,\sum_{i=1}^k\floor{a_i\theta}\right)$. Let $L$ denote the number of lattice points contained in $P$ (including the boundary), let $A$ denote the area of $P$, and let $B$ denote the number of lattice points on the boundary of $P$.

By dividing $P$ into rectangles and triangles, we find that the left hand side of \eqref{eqn:workhorse} is twice the area of $P$, i.e.\
\begin{equation}
\label{eqn:wh1}
2A = \sum_{i, j=1}^n\max(\floor{a_i\theta}a_j,\floor{a_j\theta}a_i).
\end{equation}
Counting by vertical strips, we find that the number of lattice points enclosed by $P$ is bounded by
\begin{equation}
\label{eqn:wh2}
L \le m+1+\sum_{i=1}^m\floor{i\theta},
\end{equation}
with equality if and only if the image of the path $\Lambda$ agrees with the image of the path $\Lambda_\theta^+(m)$ defined in \S\ref{sec:partitions}. In addition, the number of boundary lattice points satisfies
\begin{equation}
\label{eqn:wh3}
B \ge m + k + \sum_{i=1}^k\floor{a_i\theta},
\end{equation}
with equality if and only if none of the edge vectors of the path $\Lambda$ is divisible in $\Z^2$. Now Pick's formula for the area of a lattice polygon asserts that
\begin{equation}
\label{eqn:wh4}
2A = 2L - B - 2.
\end{equation}
Combining \eqref{eqn:wh1}, \eqref{eqn:wh2}, \eqref{eqn:wh3}, and \eqref{eqn:wh4}, we conclude that the inequality \eqref{eqn:workhorse} holds, with equality if and only if $\Lambda=\Lambda_\theta^+(m)$.
\end{proof}

\begin{remark}
Counts of lattice points in polygons have now arisen in two, seemingly independent, ways in our story: first in the above proof of the writhe bound and the partition conditions, and second in the calculation of the ECH of $T^3$ and the ECH capacities of toric domains in \S\ref{sec:t3} and \S\ref{sec:toric}. We do not know if there is a deep explanation for this.
\end{remark}

\subsection{Topological complexity of holomorphic curves}
\label{sec:J0}

The definitions of the ECH differential and the $U$ map do not directly specify the genus of the (nontrivial component of the) holomorphic currents to be counted. However this is determined indirectly by the relative homology class of the holomorphic current if one knows the trivial cylinder components, as we now explain. This fact is useful for understanding the holomorphic currents (see e.g.\ \cite[\S4.5]{wh} and \cite[Appendix]{lw} for applications), and also in the compactness argument in \S\ref{sec:differentialdefined} below.

Let $\alpha=\{(\alpha_i,m_i)\}$ and $\beta=\{(\beta_j,n_j)\}$ be orbit sets in the homology class $\Gamma$, and let $Z\in H_2(Y,\alpha,\beta)$.
Define\footnote{It is perhaps not optimal to denote this number by $J_0$ since $J$ usually denotes an almost complex structure. However the idea is that $J_0$ is similar to $I$ and so should be denoted by a nearby letter.}
\begin{equation}
\label{eqn:J0}
J_0(\alpha,\beta,Z) = -c_\tau(Z) + Q_\tau(Z) + CZ_\tau^{J_0}(\alpha,\beta),
\end{equation}
where
\begin{equation}
\label{eqn:CZJ}
CZ_\tau^{J_0}(\alpha,\beta) = \sum_i\sum_{k=1}^{m_i-1}CZ_\tau(\alpha_i^k) - \sum_j\sum_{k=1}^{n_j-1}CZ_\tau(\beta_j^k).
\end{equation}
The definition of $J_0$ is very similar to the definition of the ECH index in \eqref{eqn:I3} and \eqref{eqn:CZI}; however the sign of the relative first Chern class term is switched, and the Conley-Zehnder term is slightly different. More importantly, while $I$ bounds the {\em Fredholm index\/} of holomorphic curves, $J_0$ bounds the ``{\em topological complexity\/}'' of holomorphic curves.

To give a precise statement in the case that we will need to consider, let $\mc{C}\in\mc{M}(\alpha,\beta)$ be a holomorphic current. Suppose that $\mc{C}=\mc{C}_0\sqcup C$ where $\mc{C}_0$ is a union of trivial cylinders with multiplicities, and $C$ is somewhere injective. Let $n_i^+$ denote ``the number of positive ends of $\mc{C}$ at covers of $\alpha_i^+$'', namely the number of positive ends of $C$ at $\alpha_i^+$, plus $1$ if $\mc{C}_0$ includes the trivial cylinder $\R\times \alpha_i^+$ with some multiplicity. Likewise, let $n_j^-$ denote ``the number of negative ends of $\mc{C}$ at covers of $\beta_j^-$'', namely the number of negative ends of $C$ at $\beta_j^-$, plus $1$ if $\mc{C}_0$ includes the trivial cylinder $\R\times\beta_j^-$ with some multiplicity. Write $J_0(\mc{C})=J_0(\alpha,\beta,[\mc{C}])$.

\begin{proposition}
\label{prop:complexity}
Let $\alpha=\{(\alpha_i,m_i)\}$ and $\beta=\{(\beta_j,n_j)\}$ be generators of the ECH chain complex, and let $\mc{C}=\mc{C}_0\sqcup C\in\mc{M}(\alpha,\beta)$ be a holomorphic current as above. Then
\begin{equation}
\label{eqn:complexitybound}
-\chi(C) + \sum_i(n_i^+-1) + \sum_j(n_j^--1) \le J_0(\mc{C}).
\end{equation}
If $\mc{C}$ is counted by the ECH differential or the $U$ map, then equality holds in \eqref{eqn:complexitybound}.
\end{proposition}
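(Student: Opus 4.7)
The proof follows the template of the index inequality \eqref{eqn:ii}, with the ECH index $I$ replaced by $J_0$. First I would substitute the relative adjunction formula \eqref{eqn:adj3} for $C$ into the definition \eqref{eqn:J0} of $J_0$, applied to the full current $\mc{C}$, to obtain
\[
J_0(\mc{C}) = -\chi(C) - w_\tau(C) + 2\delta(C) + 2Q_\tau([C],[\mc{C}_0]) + Q_\tau([\mc{C}_0]) + CZ_\tau^{J_0}(\alpha,\beta).
\]
Here I use that $c_\tau$ is additive in the relative homology class and vanishes on trivial cylinders, while $Q_\tau$ is bilinear. By intersection positivity, $Q_\tau([C],[\mc{C}_0]) \ge 0$, since $C$ is somewhere injective and shares no component with any trivial cylinder in $\mc{C}_0$.

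Next I would bound $-w_\tau(C)$ from below by applying the refined form of the writhe bound from Lemma~\ref{lem:siefring} at each end of $C$, rather than the summed version \eqref{eqn:writhebound}, which is too weak for non-positive partitions. At an elliptic Reeb orbit $\alpha_i$ with rotation angle $\theta_i$, with the ends of $C$ at covers of $\alpha_i$ forming the partition $(a_{i,1},\ldots,a_{i,k_i^+})$ of $m_i - d_i$, one obtains
\[
w_\tau(\zeta_i^+) \le \sum_{l} \floor{a_{i,l}\theta_i}(a_{i,l}-1) + \sum_{l \ne l'} \max\bigl(\floor{a_{i,l}\theta_i}\,a_{i,l'},\,\floor{a_{i,l'}\theta_i}\,a_{i,l}\bigr),
\]
together with a sign-reversed analogue at the negative ends.

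The heart of the proof is then a combinatorial inequality, checked Reeb-orbit by Reeb-orbit: one must show that the $CZ_\tau^{J_0}$ contribution at $\alpha_i$, together with the piece of $Q_\tau([\mc{C}_0])$ coming from the trivial cylinder over $\alpha_i$ (when present), minus the local writhe bound, is at least $n_i^+ - 1$. For elliptic orbits this reduces to a lattice-polygon argument essentially identical to the one used to prove Lemma~\ref{lem:wb} via Pick's formula, with the crucial observation that the $CZ_\tau^{J_0}$ sum runs only up to $m_i - 1$ rather than $m_i$: the missing top iterate is precisely what converts the index-inequality slack into the ``$-1$ per Reeb orbit'' appearing on the left of \eqref{eqn:complexitybound}. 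For hyperbolic orbits, the ECH-generator condition $m_i = 1$ forces trivial partitions and the verification is immediate. I expect this case-by-case bookkeeping---in particular, the interplay between the trivial cylinder contribution $Q_\tau([\mc{C}_0])$ and the Conley-Zehnder terms at Reeb orbits where $C$ has ends alongside a trivial cylinder---to be the main technical obstacle.

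Finally, when $\mc{C}$ is counted by the ECH differential or the $U$ map, Proposition~\ref{prop:I03} gives $\mc{C} = \mc{C}_0 \sqcup C$ with $\ind(C) = I(C)$, forcing $\delta(C) = 0$, saturation of the writhe bound at every end, and the partition conditions; moreover, the proof of Proposition~\ref{prop:I03} shows that $C$ is disjoint from each trivial cylinder in $\mc{C}_0$, so $Q_\tau([C],[\mc{C}_0]) = 0$. Each of these becomes an equality at the corresponding step above, yielding equality in \eqref{eqn:complexitybound}.
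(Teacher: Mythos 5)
Your skeleton is the right one, and it is essentially how the proof cited by the paper (\cite[Lem.\ 3.5]{wh}, resting on \cite[\S 6]{ir}) is organized; note that the paper itself proves only the equality statement for embedded curves with $\mc{C}_0=\emptyset$ (the answer to Exercise~\ref{ex:complexity}) and defers everything else to those references. In the case $\mc{C}_0=\emptyset$ your reduction genuinely closes up: after substituting the relative adjunction formula, the required per-orbit inequality at a positive elliptic end is the workhorse inequality \eqref{eqn:workhorse} with the count of lattice points in the last column $x=m$ of the region $P$ from the proof of Lemma~\ref{lem:wb} sharpened from $\floor{m\theta}+1$ to $\sum_l\floor{a_l\theta}+1$; running Pick's formula with this sharper count produces exactly the extra $2\left(\floor{m\theta}-\sum_l\floor{a_l\theta}\right)$ of slack needed to trade the top iterate $CZ_\tau(\alpha_i^{m_i})$ missing from $CZ_\tau^{J_0}$ for the $n_i^+-1$ on the left of \eqref{eqn:complexitybound}, and the negative ends follow by the substitution $\theta\mapsto-\theta$. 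The equality discussion likewise matches the paper's exercise.

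The genuine gap is your handling of the cross term $Q_\tau([C],[\mc{C}_0])$. The relative intersection pairing is not the geometric intersection number: to compute it one must replace the trivial cylinders by pushoffs that do not rotate with respect to $\tau$, and the resulting count differs from $\#(C\cap\mc{C}_0)$ by the asymptotic linking numbers of the ends of $C$ around the orbits carrying trivial cylinders---at an elliptic orbit of rotation angle $\theta$ these are roughly $\floor{a\theta}$ for a positive end of multiplicity $a$ and $\ceil{b\theta}$ for a negative end of multiplicity $b$. Consequently neither ``$Q_\tau([C],[\mc{C}_0])\ge 0$ by intersection positivity'' nor ``disjointness implies $Q_\tau([C],[\mc{C}_0])=0$'' (used in your equality argument) is justified; the linking corrections are generally nonzero even when the curves are disjoint. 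These corrections must instead be carried into the per-orbit combinatorics at precisely those orbits where $C$ has ends alongside a trivial cylinder, where they interact with the facts that $n_i^+$ counts the trivial cylinder as $+1$ regardless of its multiplicity $d_i$ while $CZ_\tau^{J_0}$ still sums up to $m_i-1$. This is the content of \cite[\S 6]{ir} that the paper invokes, and it is exactly the portion of your self-identified ``main technical obstacle'' that the premature $Q_\tau\ge 0$ step would cause you to skip.
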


 For example, it follows from \eqref{eqn:complexitybound} that
% if $\mc{C}$ is counted by the ECH differential or the $U$ map, then:
 $J_0(\mc{C})\ge -1$, with equality only if $C$ is a plane with positive end at a cover of some Reeb orbit $\gamma$, and $\mc{C}_0$ does not contain any trivial cylinders over $\gamma$.
% If $J_0(\mc{C})=0$, then either (i) $C$ is a plane with positive end at a cover of $\gamma$ and $\mc{C}_0$ includes a trivial cylinder over $\gamma$, or (ii) $C$ is a cylinder which does not have two positive or two negative ends at covers of the same Reeb orbit, and $\mc{C}_0$ does not include trivial cylinders over Reeb orbits corresponding to the ends of $C$.
Proposition~\ref{prop:complexity} is proved in \cite[Lem.\ 3.5]{wh}, using more general results from \cite[\S6]{ir}. 

\begin{exercise}
\label{ex:complexity}
(See answer in \S\ref{sec:answers}.)
Use the relative adjunction formula \eqref{eqn:adj3} and the partition conditions in \S\ref{sec:partitions} to prove the following special case of Proposition~\ref{prop:complexity}: If $C\in\mc{M}(\alpha,\beta)$ is an embedded holomorphic curve which is counted by the ECH differential or the $U$ map, then
\[
-\chi(C) + \sum_i(n_i^+-1) + \sum_j(n_j^--1) = J_0(C).
\]
\end{exercise}

\subsection{Proof that $\partial$ is well defined}
\label{sec:differentialdefined}

Assume now that the almost complex structure $J$ on $\R\times Y$ is generic. To complete the proof in \S\ref{sec:differential} that the ECH differential $\partial$ is well-defined, we need to prove the following:

\begin{lemma}
\label{lem:mabfinite}
If $\alpha$ and $\beta$ are orbit sets, then $\mc{M}_1(\alpha,\beta)/\R$ is finite.
\end{lemma}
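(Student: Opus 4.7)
The plan is to argue by contradiction via a Gromov-type compactness argument. Suppose there is an infinite sequence $\{\mc{C}_n\} \subset \mc{M}_1(\alpha,\beta)$ representing pairwise distinct equivalence classes modulo $\R$-translation. The $d\lambda$-area $\int_{\mc{C}_n} d\lambda = \mc{A}(\alpha) - \mc{A}(\beta)$ is bounded independent of $n$, so an SFT/symplectization version of Taubes's Gromov Compactness via Currents (which, crucially, does not require a genus bound) extracts a subsequence converging, as currents and as point sets on every compact subset of $\R\times Y$, to a broken $J$-holomorphic current $\mc{C}_\infty = (\mc{C}_\infty^1,\ldots,\mc{C}_\infty^k)$, where $\mc{C}_\infty^i \in \mc{M}(\alpha_{i-1},\alpha_i)$ for some chain of intermediate orbit sets $\alpha = \alpha_0, \alpha_1, \ldots, \alpha_k = \beta$. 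Every intermediate orbit set has action at most $\mc{A}(\alpha)$, so by Exercise~\ref{ex:fro} there are only finitely many possibilities; passing to a further subsequence, we may assume that the chain of intermediate orbit sets and the relative homology classes of all levels are the same for every $n$.

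Additivity of the ECH index and Proposition~\ref{prop:I03}(0) now give $\sum_i I(\mc{C}_\infty^i) = I(\mc{C}_n) = 1$, with each $I(\mc{C}_\infty^i) \ge 0$ and equality forcing that level to consist entirely of trivial cylinders with multiplicities. Such levels are not counted separately in the broken limit (they merely record the matching of asymptotic data between adjacent nontrivial levels), so after discarding them we may assume $k = 1$ and $I(\mc{C}_\infty) = 1$. Proposition~\ref{prop:I03}(1) then decomposes $\mc{C}_\infty = \mc{C}_0^\infty \sqcup C_1^\infty$ with $\mc{C}_0^\infty$ a union of trivial cylinders and $C_1^\infty$ embedded of Fredholm index $1$. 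Genericity of $J$ and Proposition~\ref{prop:ind} make the moduli space of somewhere-injective curves near $C_1^\infty$ a smooth $1$-manifold, so modulo $\R$-translation $C_1^\infty$ is an isolated point of $\mc{M}_1(\alpha,\beta)/\R$. Convergence as currents and as point sets then forces the nontrivial component of $\mc{C}_n$ to coincide with $C_1^\infty$ modulo $\R$ for all sufficiently large $n$, and the trivial-cylinder part $\mc{C}_0^n$ is uniquely determined by the residual multiplicities at the ends of $\alpha$ and $\beta$, hence equals $\mc{C}_0^\infty$. Thus $\mc{C}_n$ represents the same class as $\mc{C}_\infty$ in $\mc{M}_1(\alpha,\beta)/\R$ for all large $n$, contradicting pairwise distinctness.

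The principal obstacle is the compactness step: without an a priori genus bound on the curves $C_1^n$, one cannot apply textbook Gromov compactness, and must instead invoke Taubes's version for currents. A secondary subtlety is ensuring that the trivial-cylinder parts $\mc{C}_0^n$ stabilize, which is handled by the finiteness of possible intermediate orbit sets together with the convergence as currents. Finally, to pass from "current limit equals $\mc{C}_\infty$" to "$\mc{C}_n$ equals $\mc{C}_\infty$ modulo $\R$" for large $n$, one really needs that the nontrivial component $C_1^\infty$ is an embedded, transversely cut-out, index-$1$ curve, so that the isolated point in $\mc{M}_1(\alpha,\beta)/\R$ is actually attained rather than merely approximated.
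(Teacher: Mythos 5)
Your overall strategy --- argue by contradiction, extract a broken limit, use index additivity and Proposition~\ref{prop:I03} to see that the limit is a single $I=1$ current with embedded nontrivial part, and then appeal to transversality --- is the same as the paper's. But there is a genuine gap at the final step, and it is exactly the point the paper is at pains to handle. You write that ``convergence as currents and as point sets then forces the nontrivial component of $\mc{C}_n$ to coincide with $C_1^\infty$ modulo $\R$ for all sufficiently large $n$.'' Transversality only makes $C_1^\infty$ isolated in the moduli space of holomorphic \emph{curves} of its own topological type; it does not make it isolated in the space of holomorphic \emph{currents}. A priori the nontrivial components $C_{\nu,1}$ could have unbounded genus and accumulate on $C_1^\infty$ as currents without any of them equaling it, so the current-level limit alone cannot produce the contradiction. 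You flag the absence of a genus bound as the reason one must use compactness for currents, but then the argument never returns to supply that bound, which is what is actually needed to close the loop.

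The paper fills this in three steps that are missing from your proposal. First, once the relative homology class $Z$ of the $C_{\nu,1}$ is fixed (which your argument does arrange, via the current-level compactness of Lemma~\ref{lem:9.8}), Proposition~\ref{prop:complexity} gives a $\nu$-independent bound on the genus of $C_{\nu,1}$ in terms of $J_0(\alpha',\beta',Z)$. Second, with the genus fixed one can apply the SFT compactness theorem of \cite{behwz} to get convergence to a broken holomorphic \emph{curve} $(u^0,\ldots,u^k)$, not merely a current. Third, the levels of this broken curve with $I=0$ are unions of branched covers of trivial cylinders --- invisible at the level of currents but present at the level of curves --- and one must rule them out using additivity of the Fredholm index together with Exercise~\ref{ex:partialorder}; only then is the limit a single somewhere injective $\ind=1$ curve, isolated modulo $\R$ by genericity, contradicting that it is a limit of distinct curves. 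Without these inputs the step from ``current limit'' to ``eventual equality'' is unjustified.
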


To prove this we want to assume that $\mc{M}_1(\alpha,\beta)$ is infinite and apply a compactness argument to obtain a contradiction. A relevant version of Gromov compactness was proved by Bourgeois-Eliashberg-Hofer-Wysocki-Zehnder \cite{behwz}. To describe this result, say that a holomorphic curve $u$ is ``nontrivial'' if it is not a union of trivial cylinders; branched covers of trivial cylinders with a positive number of branched points are considered nontrivial. If $u_+$ and $u_-$ are nontrivial holomorphic curves, define ``gluing data'' between $u_+$ and $u_-$ to consist of the following:
\begin{itemize}
\item
A bijection between the negative ends of $u^+$ and the positive ends of $u^-$ such that ends paired up under the bijection are at the same (possibly multiply covered) Reeb orbit $\gamma$.
\item
When $\gamma$ above is multiply covered with degree $m$, then the negative end of $u^+$ and the positive end of $u^-$ each determine a degree $m$ cover of the underlying embedded Reeb orbit, and the gluing data includes an isomorphism of these covering spaces (there are $m$ possible choices for this).
\end{itemize}
Finally, define a {\em broken holomorphic curve\/} to be a finite sequence $(u^0,\ldots,u^k)$, where each $u_i$ is a nontrivial holomorphic curve in $\R\times Y$ modulo $\R$ translation, called a ``level'', together with gluing data as above between $u^{i-1}$ and $u^i$ for each $i=1,\ldots,k$. It is shown in \cite{behwz} that if $\{u_\nu\}_{\nu\ge 0}$ is a sequence of holomorphic curves with fixed genus between the same sets of Reeb orbits, then there is a subsequence which converges in an appropriate sense to a broken holomorphic curve.

Unfortunately we cannot directly apply the above result to a sequence of holomorphic currents in $\mc{M}_1(\alpha,\beta)/\R$, because we do not have an a priori bound on the genus of their nontrivial components. One can obtain a bound on the genus of a holomorphic curve from Proposition~\ref{prop:complexity}, but this bound depends on the relative homology class of the holomorphic curve. To get control over the relative homology classes of holomorphic currents in $\mc{M}_1(\alpha,\beta)/\R$, we will first use a second version of Gromov compactness which we now state.

If $\alpha$ and $\beta$ are orbit sets, define a {\em broken holomorphic current\/} from $\alpha$ to $\beta$ to be a finite sequence of nontrivial $J$-holomorphic currents $(\mc{C}^0,\ldots,\mc{C}^k)$ in $\R\times Y$, modulo $\R$ translation, for some $k\ge 0$ such that there are orbit sets $\alpha=\gamma^0,\gamma^1,\ldots,\gamma^{k+1}=\beta$ with $\mc{C}^{i}\in\mc{M}(\gamma^i,\gamma^{i+1})/\R$ for $i=0,\ldots,k$. Here ``nontrivial'' means not a union of trivial cylinders with multiplicities. Let $\overline{\mc{M}(\alpha,\beta)/\R}$ denote the set of broken holomorphic currents from $\alpha$ to $\beta$.

We say that a sequence of holomorphic currrents $\{\mc{C}_\nu\}_{\nu\ge 0}$ in $\mc{M}(\alpha,\beta)/\R$ {\em converges\/} to the broken holomorphic current $(\mc{C}^0,\ldots,\mc{C}^k)$ if for each $i=0,\ldots,k$ there are representatives $\mc{C}_{\nu}^i\in\mc{M}(\alpha,\beta)$ of the equivalence classes $\mc{C}_\nu\in\mc{M}(\alpha,\beta)/\R$ such that the sequence $\{\mc{C}_\nu^i\}_{\vu\ge0}$ converges as a current and as a point set on compact sets to $\mc{C}^i$, see \S\ref{sec:4dspecial}.

\begin{lemma}
\label{lem:9.8}
\begin{description}
\item{(a)}
Any sequence $\{\mc{C}_\nu\}_{\nu\ge 0}$ of holomorphic currents in $\mc{M}(\alpha,\beta)/\R$ has a subsequence which converges to a broken holomorphic current $(\mc{C}^0,\ldots,\mc{C}^k)\in\overline{\mc{M}(\alpha,\beta)/\R}$.
\item{(b)} If the sequence $\{\mc{C}_\nu\}_{\nu\ge 0}$ converges to $(\mc{C}^0,\ldots,\mc{C}^k)$, then
\[
\sum_{i=0}^k[\mc{C}^i]=[\mc{C}_\nu]\in H_2(Y,\alpha,\beta)
\]
for all $\nu$ sufficiently large.
\end{description}
\end{lemma}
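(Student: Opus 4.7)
\medskip

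My plan is to apply Taubes's Gromov Compactness via Currents on compact slabs in $\R\times Y$ and then recover the broken structure by translating in the $\R$ direction. The key a priori bound is that for each $\mc{C}_\vu\in\mc{M}(\alpha,\beta)$, Stokes's theorem and the asymptotic behavior at $\pm\infty$ give
\[
\int_{\mc{C}_\vu}d\lambda \;=\;\mc{A}(\alpha)-\mc{A}(\beta),
\]
which is $\vu$-independent. Combined with pointwise nonnegativity of $ds\wedge\lambda$ on $J$-holomorphic curves and Exercise~\ref{ex:fro}, this yields a uniform bound on the symplectic area of $\mc{C}_\vu\cap([-T,T]\times Y)$ for each $T$.

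For part (a), I would first apply GCC on each $[-T,T]\times Y$ and diagonalize over $T\to\infty$ to extract a subsequence converging as a current and as a point set on compact subsets to some $J$-holomorphic current $\mc{C}^0$ on all of $\R\times Y$. Standard asymptotic analysis for holomorphic curves in symplectizations (as used in \S\ref{sec:writhebound}, via \cite{hwz1}) then shows that $\mc{C}^0$ has well-defined asymptotic orbit sets $\gamma^+$ as $s\to+\infty$ and $\gamma^-$ as $s\to-\infty$, with
\[
\mc{A}(\alpha)\ge\mc{A}(\gamma^+)\ge\mc{A}(\gamma^-)\ge\mc{A}(\beta).
\]
If $\gamma^+=\alpha$ and $\gamma^-=\beta$, we are done with $k=0$ after discarding any trivial cylinder components (which carry no $d\lambda$-area and so are the only possible obstruction to nontriviality). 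Otherwise, say $\mc{A}(\gamma^+)<\mc{A}(\alpha)$: then action equal to $\mc{A}(\alpha)-\mc{A}(\gamma^+)>0$ has escaped to $s=+\infty$. I would extract this lost action by choosing, for each $\vu$, a sequence of translates $s_\vu\to-\infty$ making the ``missing'' area reappear in a bounded slab, and apply GCC again to produce a new level $\mc{C}^{-1}$ with top asymptotic $\alpha$ and bottom asymptotic some intermediate orbit set $\gamma^1$ with $\mc{A}(\gamma^+)\le\mc{A}(\gamma^1)\le\mc{A}(\alpha)$. A symmetric procedure at the bottom extracts levels $\mc{C}^1,\ldots,\mc{C}^k$. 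The procedure terminates because by Exercise~\ref{ex:fro} there are only finitely many orbit sets with action in $[\mc{A}(\beta),\mc{A}(\alpha)]$, so only finitely many distinct breaking sets can occur; moreover, each genuine breaking strictly decreases the action difference across some level, so the number of nontrivial levels is uniformly bounded.

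For part (b), convergence as a current on each $[-T,T]\times Y$ says that for any closed $2$-form $\omega$ on $Y$, $\int_{\mc{C}_\vu\cap([-T,T]\times Y)}\omega\to\int_{\mc{C}^i\cap([-T,T]\times Y)}\omega$ for the appropriate level $\mc{C}^i$ (after translation). Since $H_2(Y,\alpha,\beta)$ is an affine space over the finitely generated abelian group $H_2(Y)$, the class $[\mc{C}_\vu]$ is determined, up to a discrete ambiguity, by the collection of such integrals against a basis of $H^2(Y;\R)$ plus the homology classes of the boundary orbit sets. Telescoping the levels via the intermediate orbit sets $\gamma^1,\ldots,\gamma^k$ shows that $\sum_{i=0}^k[\mc{C}^i]\in H_2(Y,\alpha,\beta)$ is well-defined, and equals $[\mc{C}_\vu]$ for $\vu$ large since the two sides agree against every closed $2$-form on $Y$ and sit in the same discrete affine space.

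The main obstacle will be the rigorous extraction of broken levels, i.e.\ choosing the translation sequences $s_\vu$ and verifying that no ``hidden'' concentration of $d\lambda$-area escapes in the gaps between consecutive levels. The crucial ingredient is a quantitative statement that if $\mc{C}_\vu\cap([-T_\vu^-,T_\vu^+]\times Y)$ has asymptotic orbit sets converging to $\gamma^+$ and $\gamma^-$ in the current sense, then the $d\lambda$-area of this slab converges to $\mc{A}(\gamma^+)-\mc{A}(\gamma^-)$; this is standard in the parametrized symplectic field theory compactness of \cite{behwz}, and its current-theoretic analogue follows by applying \cite{behwz} to each somewhere injective irreducible component of $\mc{C}_\vu$ (whose action is bounded by $\mc{A}(\alpha)$) and summing with the corresponding multiplicities, which are themselves bounded since $\sum d_k \mc{A}(C_k)\le\mc{A}(\alpha)$.
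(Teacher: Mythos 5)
Your proposal follows essentially the same route as the paper's proof: a Stokes-theorem action bound gives a uniform bound on the symplectic area of $\mc{C}_\nu\cap([a,b]\times Y)$, Gromov compactness via currents plus diagonalization produces a limit current on compact sets with well-defined asymptotic orbit sets, and $\R$-translations recover the action lost at $\pm\infty$ as additional levels, with termination guaranteed by the finiteness of orbit sets of bounded action. The only notable divergence is in part (b), where you pair against closed $2$-forms on $Y$ (which suffices since $H_2$ of a closed oriented $3$-manifold is torsion-free), whereas the paper argues that a jump in the relative homology class would force the subsequence to converge to a broken current with an extra level, excluded by action considerations; both arguments are at the same level of detail.
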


\begin{proof}
(a) The proof has three steps.

{\em Step 1.\/}
For each $\nu$, suppose that $\mc{C}_\nu^*\in\mc{M}(\alpha,\beta)$ is a representative of the equivalence class $\mc{C}_\nu\in\mc{M}(\alpha,\beta)/\R$. We claim that $\{\mc{C}_\nu^*\}_{\nu\ge 0}$ has a subsequence which converges as a current and a point set on compact sets to some holomorphic current $\hat{\mc{C}}$ in $\R\times Y$.

To prove the claim, let $a<b$. We apply Gromov compactness via currents, see \S\ref{sec:4dspecial}, to the sequence of intersections $\mc{C}_\nu^*\cap([a,b]\times Y)$. To see why this is applicable, note that $[a,b]\times Y$ is equipped with the symplectic form $\omega=d(e^s\lambda)$ where $s$ denotes the $\R$ coordinate, and $J$ is $\omega$-compatible. Assume that $\mc{C}_\nu^*$ is transverse to $\{a\}\times Y$ and $\{b\}\times Y$, which we can arrange by perturbing $a$ and $b$. Then by Stokes's theorem,
\[
\int_{\mc{C}_\nu^*\cap((-\infty,a]\times Y)}e^ad\lambda + \int_{\mc{C}_\nu^*\cap([a,b]\times Y)}\omega + \int_{\mc{C}_\nu^*\cap([b,\infty)\times Y)}e^bd\lambda = e^b\mc{A}(\alpha) - e^a\mc{A}(\beta).
\]
The conditions on $J$ imply that $d\lambda$ is everywhere nonnegative on $\mc{C}_\nu^*$. Thus we obtain the a priori bound
\[
\int_{\mc{C}_\nu^*\cap([a,b]\times Y)}\omega \le e^b\mc{A}(\alpha).
\]
Gromov compactness via currents now implies that we can pass to a subsequence so that the sequence $\{\mc{C}_\nu^*\cap([a,b]\times Y)\}_{\nu\ge 0}$ converges as a current and as a point set to some holomorphic current in $[a,b]\times Y$. By diagonalizing, we can pass to a subsequence so that the sequence
$\{\mc{C}_\nu^*\}_{\nu\ge 0}$ converges as a current and as a point set on compact sets to some holomorphic current $\hat{\mc{C}}$ in $\R\times Y$.

Steps 2 and 3 are a fairly standard argument which we will just outline.  See e.g.\ \cite[Lem.\ 9.8]{pfh2} for details in a similar situation.

{\em Step 2.\/} By applying Step 1 to translates of $\hat{\mc{C}}$, one shows that $\hat{\mc{C}}\in\mc{M}(\gamma^+,\gamma^-)$, where $\gamma^+$ and $\gamma^-$ are orbit sets with $\mc{A}(\alpha)\ge \mc{A}(\gamma^+) \ge \mc{A}(\gamma^-)\ge\mc{A}(\beta)$.

{\em Step 3.\/} One can now choose representatives $\mc{C}_\nu^*\in\mc{M}(\alpha,\beta)$ of the equivalence classes $\mc{C}_\nu$ so that the intersection of each $\mc{C}_\nu^*$ with $\{0\}\times Y$ contains a point with distance at least $\epsilon$ from all Reeb orbits of action less than or equal to $\mc{A}(\alpha)$.
One then applies Steps 1 and 2 to this sequence $\mc{C}_\nu^*$. The limiting current $\hat{\mc{C}}$ must be nontrivial. If $\gamma^+=\alpha$ and $\gamma^-=\beta$, then we are done. Otherwise one applies the same argument to different chioices of $\mc{C}_\nu^*$ to find the other holomorphic currents $\mc{C}^i$ in the limiting broken holomorphic current.

(b) If this fails, then one uses arguments from the proof of part (a) to pass to a further subsequence which converges to a broken holomorphic current including $\mc{C}^0,\ldots,\mc{C}^k$ together with at least one additional level. But this is impossible by symplectic action considerations.
\end{proof}

We can now complete the proof that the differential $\partial$ is well-defined.

\begin{proof}[Proof of Lemma~\ref{lem:mabfinite}.]
Suppose to get a contradiction that there is an infinite sequence $\{\mc{C}_\nu\}_{\nu\ge 0}$ of distinct elements of $\mc{M}_1(\alpha,\beta)/\R$. 

For each $\nu$, by Proposition~\ref{prop:I03} we can write $\mc{C}_\nu=\mc{C}_{\nu,0}\sqcup C_{\nu,1}$, where $\mc{C}_{\nu,0}$ is a union of trivial cylinders with multiplicities, and $C_{\nu,1}$ is somewhere injective with $I(C_{\nu,1})=\ind(C_{\nu,1})=1$.
Since there are only finitely many possibilities for the trival part $\mc{C}_{\nu,0}$, we can pass to a subsequence so that $\mc{C}_{\nu,0}$ is the same for all $\nu$. There are then orbit sets $\alpha'$ and $\beta'$ which do not depend on $\nu$ such that $C_{\nu,1}\in\mc{M}_1(\alpha',\beta')$ for each $\nu$.

By Lemma~\ref{lem:9.8}, we can pass to a further subsequence such that the holomorphic curves $C_{\nu,1}$ all represent the same relative homology class $Z\in H_2(Y,\alpha',\beta')$.

By Proposition~\ref{prop:complexity}, there is a $\nu$-independent upper bound on the genus of $C_{\nu,1}$ in terms of $J_0(\alpha,\beta,Z)$. Thus we can pass to a further subsequence so that the holomorphic curves $C_{\nu,1}$ all have the same genus.

{\em Now\/} we can apply the compactness result of \cite{behwz} to pass to a further subsequence so that the sequence of holomorphic curves $\{C_{\nu,1}\}_{\nu\ge 0}$ converges in the sense of \cite{behwz} to a broken holomorphic curve $(u^0,\ldots,u^k)$.

By the Additivity property of the ECH index, see \S\ref{sec:ECHindex}, we have $\sum_{i=0}^kI(u^i)=1$.  By Proposition~\ref{prop:I03}, one of the curves $u_i$ has $I=1$, and the rest of the curves $u^i$ have $I=0$ and are unions of branched covers of trivial cylinders.

We will now be a bit sketchy for the rest of the proof. By a similar additivity property of the Fredholm index which follows from \eqref{eqn:ind3}, we also have $\sum_{i=0}^k\ind(u^i)=1$.
It then follows from Exercise~\ref{ex:partialorder} that in fact there is no level $u^i$ with $I(u^i)=0$. Hence the limiting broken holomorphic curve is a single holomorphic curve $u^0$, which is somewhere injective and has $ind(u^0)=1$.  Since $J$ is generic, $u^0$ is an isolated point in the moduli space of holomorphic curves modulo translation. But this contradicts the fact that $u^0$ is the limit of the sequence of distinct curves $\{C_{\nu,1}\}_{\nu\ge 0}$.
\end{proof}

\subsection{Proof that $\partial^2=0$}
\label{sec:200pages}

The proof that $\partial^2=0$ is much more subtle than the proof that $\partial$ is defined, for reasons which we now explain.

Fix a generic $J$. Let $\alpha_+$ and $\alpha_-$ be generators of the chain complex $ECC_*(Y,\lambda,\Gamma,J)$. We would like to show that the coefficient $\langle\partial^2\alpha_+,\alpha_-\rangle =0$. To do so, consider the moduli space of $I=2$ holomorphic currents $\mc{M}_2(\alpha_+,\alpha_-)/\R$.

\begin{lemma}
\label{lem:compactness2}
Any sequence $\{\mc{C}_\nu\}_{\nu\ge 0}$ of holomorphic currents in $\mc{M}_2(\alpha_+,\alpha_-)/\R$ has a subsequence which converges either to an element of $\mc{M}_2(\alpha_+,\alpha_-)/\R$, or to a broken holomorphic currrent $(\mc{C}^+,\mc{C}^-)\in\overline{\mc{M}_2(\alpha_+,\alpha_-)/\R}$ with $I(\mc{C}^+)=I(\mc{C}^-)=1$.
\end{lemma}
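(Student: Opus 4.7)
The plan is to apply the compactness result Lemma~\ref{lem:9.8} and then use the ECH index additivity together with part (0) of Proposition~\ref{prop:I03} to constrain the combinatorics of the limit.

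First, I would apply Lemma~\ref{lem:9.8}(a) to extract a subsequence of $\{\mc{C}_\nu\}$ converging to some broken holomorphic current $(\mc{C}^0,\ldots,\mc{C}^k)\in\overline{\mc{M}(\alpha_+,\alpha_-)/\R}$. By Lemma~\ref{lem:9.8}(b), after passing to a further subsequence, we may arrange that
\[
[\mc{C}_\nu] = \sum_{i=0}^k [\mc{C}^i] \in H_2(Y,\alpha_+,\alpha_-)
\]
for all $\nu$ sufficiently large.

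Next, I would invoke the Additivity property of the ECH index (see the Basic Properties of the ECH Index in \S\ref{sec:ECHindex}) applied to the concatenation of the relative homology classes of the levels. Since $I(\mc{C}_\nu) = 2$ for each $\nu$, this yields
\[
\sum_{i=0}^k I(\mc{C}^i) = 2.
\]

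The key step is then to observe that each level $\mc{C}^i$ is by definition \emph{nontrivial}, i.e.\ not a union of trivial cylinders with multiplicities. Hence by Proposition~\ref{prop:I03}(0), every level satisfies $I(\mc{C}^i) \ge 1$. Combined with the preceding equation, this forces $k \le 1$, and only two possibilities remain: either $k=0$ with $I(\mc{C}^0)=2$, in which case $\mc{C}^0 \in \mc{M}_2(\alpha_+,\alpha_-)/\R$; or $k=1$ with $I(\mc{C}^0) = I(\mc{C}^1) = 1$, producing a broken holomorphic current $(\mc{C}^+,\mc{C}^-) \in \overline{\mc{M}_2(\alpha_+,\alpha_-)/\R}$ of the required form.

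There is no real obstacle here; the lemma is essentially a bookkeeping exercise once Lemma~\ref{lem:9.8} and the nonnegativity/rigidity statement of Proposition~\ref{prop:I03}(0) are in hand. The substantive work has already been done earlier: bounding genera via $J_0$ and applying current compactness to control the relative homology class of the limit in Lemma~\ref{lem:9.8}, and establishing $I\ge 0$ with equality only on unions of trivial cylinders in Proposition~\ref{prop:I03}. What makes this lemma useful, rather than hard, is precisely that the ECH index rules out any ``ghost'' intermediate level of index zero that is not a union of trivial cylinders --- the real difficulty, which is handling the gluing of the two $I=1$ levels $(\mc{C}^+,\mc{C}^-)$ to produce exactly the boundary of $\mc{M}_2(\alpha_+,\alpha_-)/\R$ and thus prove $\partial^2=0$, is deferred to the gluing analysis in \cite[\S7]{obg1}.
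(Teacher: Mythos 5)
Your argument is correct and is essentially identical to the paper's own proof: extract a convergent subsequence via Lemma~\ref{lem:9.8}, use Additivity of the ECH index to get $\sum_i I(\mc{C}^i)=2$, and note that each (by definition nontrivial) level has $I\ge 1$ by Proposition~\ref{prop:I03}, leaving only the two stated possibilities. Your closing remarks correctly locate the genuine difficulty in the gluing analysis rather than in this compactness statement.
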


\begin{proof}
By Lemma~\ref{lem:9.8}, there is a subsequence which converges to a broken holomorphic current $(\mc{C}^0,\ldots,\mc{C}^k)$, where by definition each $\mc{C}^i$ is nontrivial. By the Additivity property of the ECH index, $\sum_{i=0}^kI(\mc{C}^i)=2$. By Proposition~\ref{prop:I03}, $I(\mc{C}^i)\ge 1$ for each $i$. The lemma follows from these two facts.
\end{proof}

The usual strategy now would be to add one point to each end of $\mc{M}_2(\alpha_+,\alpha_-)/\R$ to form a compact one-manifold with boundary, whose boundary points correspond to ends converging to broken holomorphic currents as above. In the present situation this is not quite correct; in fact we do not even know a priori that the moduli space $\mc{M}_2(\alpha_+,\alpha_-)/\R$ has only finitely many components\footnote{The compactness result of \cite{behwz} does not imply that $\mc{M}_2(\alpha_+,\alpha_-)/\R$ has only finitely many components, because of the failure of transversality of branched covers of trivial cylinders that can arise as levels in limits of sequences of $\ind=2$ holomorphic curves.}. Instead, one can {\em truncate\/} the moduli space $\mc{M}_2(\alpha_+,\alpha_-)$, i.e.\ remove holomorphic currents which are ``close to breaking'' in an appropriate sense, to obtain a compact one-manifold with boundary $\mc{M}_2'(\alpha_+,\alpha_-)/\R$. The boundary is equipped with a natural map
\begin{equation}
\label{eqn:truncate}
\partial\left(\frac{\mc{M}_2'(\alpha_+,\alpha_-)}{\R}\right) \longrightarrow \bigsqcup_{\alpha_0}\frac{\mc{M}_1(\alpha_+,\alpha_0)}{\R} \times \frac{\mc{M}_1(\alpha_0,\alpha_-)}{\R}
\end{equation}
which sends a boundary point to the broken holomorphic current that it is ``close to breaking into''.  The details of this truncation procedure are explained in \cite[\S1.3]{obg1}.

To complete the proof that $\langle\partial^2\alpha_+,\alpha_-\rangle=0$, we want to show that $\langle\partial^2\alpha_+,\alpha_-\rangle$ counts boundary points of $\mc{M}_2'(\alpha_+,\alpha_-)/\R$. For this purpose let $\alpha_0$ be an orbit set and let $(\mc{C}^+,\mc{C}^-)\in (\mc{M}_1(\alpha_+,\alpha_0)/\R)\times (\mc{M}_1(\alpha_0,\alpha_-)/\R)$. We then want to show the following:
\begin{description}
\item{(1)} If $\alpha_0$ is a generator of the chain complex $ECC_*(Y,\lambda,\Gamma,J)$, then $(\mc{C}^+,\mc{C}^-)$ has $1$ (mod $2$) inverse image under the map \eqref{eqn:truncate}.
\item{(2)} If $\alpha_0$ is not a generator of the chain complex $ECC_*(Y,\lambda,\Gamma,J)$, i.e.\ if $\alpha_0$ includes a hyperbolic Reeb orbit with multiplicity greater than one, then $(\mc{C}^+,\mc{C}^-)$ has $0$ (mod $2$) inverse images under the map \eqref{eqn:truncate}.
\end{description}

The standard picture from symplectic field theory is that if $(u^+,u^-)$ is a broken holomorphic curve such that $u^+$ and $u^-$ are regular and have $\ind(u^+)=\ind(u^-)=1$, then for each choice of gluing data between $u^+$ and $u^-$, see \S\ref{sec:differentialdefined}, one can ``glue'' $u^+$ and $u^-$ to obtain a unique end of the moduli space of index $2$ holomorphic curves.

To describe the proof of (1) and (2) above,
% we want to show that the ``number of gluings'' of $\mc{C}^+$ and $\mc{C}^-$ (this notion is defined precisely in \cite[\S1.3]{obg1}) is either $1$ (mod $2$) or $0$ (mod $2$), depending on whether $\alpha_0$ is a generator of the ECH chain complex or not. To describe what is involved in proving this, 
let us restrict attention to the case where $\alpha_0$ consists of a single pair $(\gamma,m)$ where $\gamma$ is an embedded Reeb orbit and $m\ge 1$.
Write $\mc{C}^\pm=\mc{C}^\pm_0\sqcup C^\pm_1$ where $\mc{C}^\pm_0$ is a union of trivial cylinders with multiplicities and $C^\pm_1$ is somewhere injective with $\ind(C^\pm_1)=I(C^\pm_1)=1$. To further simplify the discussion, let us also assume that there are no trivial cylinders involved, i.e.\ $\mc{C}^\pm_0=\emptyset$.

\paragraph{Gluing in the hyperbolic case.}

Suppose first that $\gamma$ is positive hyperbolic. In this case, the partition conditions from \S\ref{sec:partitions} tell us that $C^+_1$ has $m$ negative ends at $\gamma$, and $C^-_1$ has $m$ positive ends at $\gamma$. It follows that there are $m!$ choices of gluing data between $C^+_1$ and $C^-_1$, see \S\ref{sec:differentialdefined}. Hence SFT gluing implies that $C^+_1$ and $C^-_1$ can be glued to obtain $m!$ different ends of the moduli space of index $2$ curves. The number of gluings $m!$ is odd (namely $1$) when $m=1$ and even when $m>1$, which is exactly what we want in order to prove (1) and (2) above.

Suppose next that $\gamma$ is negative hyperbolic. Let $k=\floor{m/2}$. Then by the partition conditions in \S\ref{sec:partitions}, the curve $C^+_1$ (resp.\ $C^-_1$) has $k$ negative (resp.\ positive) ends at the double cover of $\gamma$, together with one negative (resp.\ positive) end at $\gamma$ when $m$ is odd. It follows that there are $2^kk!$ choices of gluing data between $C^+_1$ and $C^-_1$. Again, this is odd (namely $1$) when $m=1$ and even when $m>1$, as desired.

Although we are using $\Z/2$ coefficients here, we remark that in the proof that $\partial^2=0$ with $\Z$ coefficients, work of Bourgeois-Mohnke \cite{bm} implies that in the above cases when $m>1$, half of the gluings have one sign and half of the gluings have the other sign, so that the signed count of gluings is still zero.

\paragraph{Gluing in the elliptic case.}

Suppose now that $\gamma$ is elliptic. If $m=1$ then there is one gluing as usual. But if $m>1$, then it follows from Exercise~\ref{ex:partitions}(c) that $p_\gamma^+(m)$ and $p_\gamma^-(m)$ are disjoint, so the covering multiplicities of the negative ends of $C^+_1$ at covers of $\gamma$ are disjoint from the covering multiplicities of the positive ends of $C^-_1$ at covers of $\gamma$. Hence, there does not exist {\em any\/} gluing data between $C^+_1$ and $C^-_1$.
So how can we glue them?

It helps to think backwards from the process of breaking.
If a sequence of holomorphic currents in $\mc{M}_2(\alpha_+,\alpha_-)/\R$ coverges to the broken holomorphic current $(C^+_1,C^-_1)$, then as in the proof of Lemma~\ref{lem:mabfinite}, we can pass to a subsequence which converges in the sense of \cite{behwz} to a broken holomorphic curve $(u^0,\ldots,u^k)$, with $\sum_{i=0}^k\ind(u^i)=\sum_{i=0}^kI(u^i)=2$. Since $\sum_{i=0}^k\ind(u^i)=2$, Exercise~\ref{ex:partialorder} implies that $u^0=C^+_1$, $u^k=C^-_1$, and each $u^i$ with $0<i<k$ is a union of branched covers of trivial cylinders.

To reverse this process, let $u^+$ and $u^-$ be any irreducible somewhere injective holomorphic curves with $\ind=1$, but not necessarily with $I=1$. Suppose that $u^+$ has negative ends at covers of the embedded elliptic orbit $\gamma$ of multiplicities $a_1,\ldots,a_k$ with $\sum_{i=1}^ka_i=m$ and no other negative ends, and $u^-$ has positive ends at covers of $\gamma$ of multiplicities $b_1,\ldots,b_l$ with $\sum_{j=1}^lb_j=m$ and no other positive ends. We can try to glue $u^+$ and $u^-$ to an $\ind=2$ curve as follows. First, try to find an $\ind=0$ branched cover $u^0$ of $\R\times\gamma$ of degree $m$ with positive ends at covers of $\gamma$ with multiplicities $a_1,\ldots,a_k$ and negative ends at covers of $\gamma$ with multiplicities $b_1,\ldots,b_l$; see Exercise~\ref{ex:partialorder} for a discussion of when such a branched cover exists. Second, try to glue $u^+$, $u^0$, and $u^-$ to a holomorphic curve. There is an obstruction to gluing here because $u^0$ is not regular. However one can also vary $u^0$. The obstructions to gluing for various $u^0$ comprise a section of an ``obstruction bundle'' over the moduli space of all branched covers $u^0$. The (signed) number of ways to glue is then the (signed) number of zeroes of this section of the obstruction bundle. See \cite[\S1]{obg2} for an introduction to this analysis.

This signed count of gluings is denoted by $\#G(u^+,u^-)$ and computed in \cite[Thm.\ 1.13]{obg1}.
The result is that $\#G(u^+,u^-)=\pm c_\gamma(a_1,\ldots,a_k|b_1,\ldots,b_l)$, where $c_\gamma(a_1,\ldots,a_k|b_1,\ldots,b_l)$ is a nonnegative integer which depends only on (the monodromy angle of) $\gamma$ and the multiplicities $a_i$ and $b_j$. It turns out that $c_\gamma(a_1,\ldots,a_k|b_1,\ldots,b_l)=1$ if (and only if) $(a_1,\ldots,a_k)=p_\gamma^-(m)$ and $(b_1,\ldots,b_l)=p_\gamma^+(m)$, see \cite[Ex.\ 1.29]{obg1}. Thus, up to signs, the number of gluings is $1$ in the case needed to show that $\partial^2=0$ (and in no other case).

%\note{Discuss ``big picture'' reasons for this?}

\subsection{Cobordism maps}
\label{sec:cobmap}

We now discuss what is involved in the construction of cobordism maps on ECH, as introduced in \S\ref{sec:addstr}.

\paragraph{Holomorphic curves in exact symplectic cobordisms}

We begin with the nicest kind of cobordism. Let $(Y_+,\lambda_+)$ and $(Y_-,\lambda_-)$ be nondegenerate contact three-manifolds, and let $(X,\omega)$ be an exact symplectic cobordism from $(Y_+,\lambda_+)$ to $(Y_-,\lambda_-)$. In this situation, one can define for each $L\in\R$ a cobordism map
\begin{equation}
\label{eqn:cobexact}
\Phi^L(X,\omega): ECH^L(Y_+,\lambda_+) \longrightarrow ECH^L(Y_-,\lambda_-)
\end{equation}
satisfying various axioms \cite[Thm.\ 1.9]{cc2}. Here
\[
ECH^L(Y,\lambda) = \bigoplus_{\Gamma \in H_1(Y)}ECH^L(Y,\lambda,\Gamma).
\]

The first step in the construction of the map \eqref{eqn:cobexact} is to ``complete'' the cobordism as follows. Let $\lambda$ be a primitive of $\omega$ on $X$ with $\lambda|_{Y_\pm}=\lambda_\pm$. If $\epsilon>0$ is sufficiently small, then there is a neighborhood $N_+$ of $Y_+$ in $X$, identified with $(-\epsilon,0]\times Y_+$, such that $\lambda = e^s\lambda_+$ where $s$ denotes the $(-\epsilon,0]$ coordinate. The neighborhood identification is the one for which $\partial/\partial s$ corresponds to the
unique vector field $\rho$ on $X$ with $\imath_\rho\omega=\lambda$. Likewise there is a neighborhood $N_-$ of $Y_-$ in $X$, identified with $[0,\epsilon)\times Y_-$, on which $\lambda = e^s\lambda_-$.
We now define the ``symplectization completion''
\[
\overline{X} = ((-\infty,0]\times Y_-)\cup_{Y_-} X \cup_{Y_+} ([0,\infty)\times Y_+),
\]
glued using the above neighborhood identifications.

Call an almost complex structure $J$ on $\overline{X}$ ``cobordism-admissible'' if it agrees with a symplectization-admissible almost complex structure $J_+$ for $\lambda_+$ on $[0,\infty)\times Y_+$, if it agrees with a symplectization-admissible almost complex structure $J_-$ for $\lambda_-$ on $(-\infty,0]\times Y_-$, and if it is $\omega$-compatible on $X$.

Given a cobordism-admissible almost complex structure $J$, one can consider $J$-holomorphic curves in $\overline{X}$ with positive ends at Reeb orbits in $Y_+$ and negative ends at Reeb orbits in $Y_-$, by a straightforward modification of the definition in the symplectization case in \S\ref{sec:ind}. If $J$ is generic, and if $C$ is a somewhere injective holomorphic curve as above, then the moduli space of holomorphic curves near $C$ is a manifold of dimension $\ind(C)$, where $\ind(C)$ is defined as in \eqref{eqn:ind3}, except that in the relative first Chern class term, the complex line bundle $\xi$ is replaced by $\det(TX)$.

Likewise, if $\alpha_\pm$ are orbit sets for $\lambda_\pm$, then there is a corresponding moduli space $\mc{M}(\alpha_+,\alpha_-)$ of $J$-holomorphic currents in $\overline{X}$.  One can define the ECH index $I$ of a holomorphic current in $\overline{X}$ as in \eqref{eqn:I3}, again replacing $\xi$ by $\det(TX)$ in the first Chern class term.
The index inequality \eqref{eqn:ii} then holds for somewhere injective holomorphic curves $C$ in $\overline{X}$,
by the same proof as in the symplectization case, see \cite[\S4]{ir}.
As in \S\ref{sec:differential}, let $\mc{M}_k(\alpha_+,\alpha_-)$ denote the set of holomorphic currents $\mc{C}\in\mc{M}(\alpha_+,\alpha_-)$ with ECH index $I(\mc{C})=k$.

We have the following important generalization of \eqref{eqn:daf}: If there exists $\mc{C}\in\mc{M}(\alpha_+,\alpha_-)$, then
\begin{equation}
\label{eqn:daf4}
\mc{A}(\alpha_+)\ge \mc{A}(\alpha_-).
\end{equation}
The reason is that by Stokes's theorem,
\begin{equation}
\label{eqn:Stokesexact}
{\mathcal A}(\alpha_+) - {\mathcal A}(\alpha_-) = \int_{\mc{C}\cap ([0,\infty)\times Y_+)}d\lambda_+ + \int_{\mc{C}\cap X}\omega + \int_{\mc{C} \cap ((-\infty,0]\times Y_-)}d\lambda_-,
\end{equation}
and the conditions on $J$ imply that each integrand is pointwise nonnegative on $\mc{C}$.

\paragraph{The trouble with multiple covers.}

One would now like to define a chain map
\[
\phi: ECC(Y_+,\lambda_+,J_+) \longrightarrow ECC(Y_-,\lambda_-,J_-)
\]
by declaring that if $\alpha_\pm$ are ECH generators for $\lambda_\pm$, then $\langle\phi\alpha_+,\alpha_-\rangle$ is the mod 2 count of $I=0$ holomorphic currents in $\mc{M}_0(\alpha_+,\alpha_-)$. The inequality \eqref{eqn:daf4} implies that only finitely many $\alpha_-$ could arise in $\phi\alpha_+$, and moreover we would get a chain map on the filtered complexes
\[
\phi^L:ECC^L(Y_+,\lambda_+,J_+) \longrightarrow ECC^L(Y_-,\lambda_-,J_-)
\]
for each $L>0$.

Unfortunately this does not work. The problem is that $\mc{M}_0(\alpha_+,\alpha_-)$ need not be finite, even if $J$ is generic. The compactness argument from \S\ref{sec:differentialdefined} does not carry over here, because the key Proposition~\ref{prop:I03} can fail in cobordisms. In particular, multiply covered holomorphic currents may have negative ECH index. We do know from \cite[Thm.\ 5.1]{ir} that the ECH index of a $d$-fold cover of a somewhere injective irreducible curve $C$ satisfies
\begin{equation}
\label{eqn:IdC}
I(dC) \ge dI(C) + \left(\frac{d^2-d}{2}\right) \left(2g(C)-2+\ind(C)+h(C)\right),
\end{equation}
where $g(C)$ denotes the genus of $C$, and $h(C)$ denotes the number of ends of $C$ at (positive or negative) hyperbolic orbits\footnote{Note that the magic number $2g(C)-2+\ind(C)+h(C)$ in \eqref{eqn:IdC} is similar to the normal Chern number in \eqref{eqn:ai}.}. If $J$ is generic then the index inequality implies that $I(C)\ge 0$; but $I(dC)<0$ is still possible when $2g(C)-2+\ind(C)+h(C)<0$.

To correctly define the coefficient $\langle\phi\alpha_+,\alpha_-\rangle$, one needs to take into account the entire ``compactification'' of $\mc{M}_0(\alpha_+,\alpha_-)$, namely the set of all broken holomorphic currents from $\alpha_+$ to $\alpha_-$ with total ECH index $0$. This moduli space may have many components of various dimensions, and each may make some contribution to the coefficient $\langle\phi\alpha_+,\alpha_-\rangle$. In fact, there is a simple example in which the coefficient $\langle\phi\alpha_+,\alpha_-\rangle$ must be nonzero, but there does not exist {\em any\/} $I=0$ holomorphic current from $\alpha_+$ to $\alpha_-$; rather, the contribution to $\langle\phi\alpha_+,\alpha_-\rangle$ comes from a broken holomorphic current with two levels, one of which is an $I=-1$ double cover. The example is the cobordism where $X=[0,1]\times Y$ which one obtains in trying to prove that ECH is unchanged under a period-doubling bifurcation. Even more interestingly, the orbit set in between the two levels is not a generator of the ECH chain complex, because it includes a doubly covered negative hyperbolic orbit.

Because of the above complications, it is a highly nontrivial, and currently unsolved problem, to define a chain map directly from the compactified moduli space of $I=0$ holomorphic currents.

\paragraph{Seiberg-Witten theory to the rescue.}

The definition of the cobordism map \eqref{eqn:cobexact} in \cite{cc2} instead counts solutions to the Seiberg-Witten equations, perturbed as in the proof of the isomorphism \eqref{eqn:echswf}.
The cobordism maps satisfy a ``Holomorphic Curves axiom'' which says among other things that for any cobordism-admissible $J$, the cobordism map is induced by a (noncanonical) chain map $\phi$ such that the coefficient $\langle\phi\alpha_+,\alpha_-\rangle\neq 0$ only if there exists a broken $J$-holomorphic current from $\alpha_+$ to $\alpha_-$. In particular, the coefficient $\langle\phi\alpha_+,\alpha_-\rangle\neq 0$ only if \eqref{eqn:daf4} holds, which is why the cobordism map preserves the symplectic action filtration.

\paragraph{The weakly exact case.}

If $(X,\omega)$ is only a weakly exact symplectic cobordism from $(Y_+,\lambda_+)$ to $(Y_-,\lambda_-)$, see \S\ref{sec:addstr}, then using Seiberg-Witten theory as above, one still gets a cobordism map
\[
\Phi^L(X,\omega): ECH^L(Y_+,\lambda_+,0) \longrightarrow ECH^L(Y_-,\lambda_-,0)
\]
which satisfies the Holomorphic Curves axiom. The reason why this map preserves the symplectic action filtrations is that a modification of the calculation in \eqref{eqn:Stokesexact} shows that in the weakly exact case, if there exists a holomorphic current $\mc{C}\in\mc{M}(\alpha_+,\alpha_-)$, and if moreover $[\alpha_\pm]=0\in H_1(Y_\pm)$, then the inequality \eqref{eqn:daf4} still holds, see \cite[Thm.\ 2.3]{qech}. It is this inequality which ultimately leads to all of the symplectic embedding obstructions coming from ECH capacities.

\section{Comparison of ECH with SFT}

To conclude, we now outline how ECH compares to the symplectic field theory (SFT) of Eliashberg-Givental-Hofer \cite{egh}. Although both theories are defined using the same ingredients, namely Reeb orbits and holomorphic curves, their features are quite different.

\paragraph{Dimensions.}

ECH is only defined for three-dimensional contact manifolds (and in some cases stable Hamiltonian structures) and certain four-dimensional symplectic cobordisms between them.
SFT is defined in all dimensions. It is an interesting question whether there exists an analogue of ECH in higher dimensions, and what that would mean.

\paragraph{Multiply covered Reeb orbits.}

In an ECH generator, we only care about the total multiplicity of each Reeb orbit. One can think of an ECH generator as a ``Reeb current''. In an SFT generator, one keeps track of individual covering multiplicities of Reeb orbits.
For example, if $\gamma_1$ is an elliptic Reeb orbit, and if $\gamma_k$ denotes the $k$-fold multiple cover of $\gamma_1$, then $\gamma_1^2$ and $\gamma_2$ are distinct SFT generators which correspond to the same ECH generator $\{(\gamma_1,2)\}$. Likewise, the SFT generators $\gamma_1^3$, $\gamma_2\gamma_1$ and $\gamma_3$ all correspond to the ECH generator $\{(\gamma_1,3)\}$.

\paragraph{Holomorphic curves.}

The full version of SFT counts all Fredholm index 1 holomorphic curves (after suitable perturbation to make the moduli spaces transverse). Other versions of SFT just count genus 0 Fredholm index 1 curves (rational SFT), or genus 0 Fredholm index 1 curves with one positive end (the contact homology algebra).

ECH counts holomorphic currents with ECH index 1, without explicitly specifying their genus (although the genus is more or less determined indirectly by the theory as explained in \S\ref{sec:J0}). These also have Fredholm index 1, although the way we are selecting a subset of the Fredholm index 1 curves to count (by setting the ECH index equal to 1) is very different from the way this is done in SFT (by setting the genus to 0, etc.).

\paragraph{Grading.}

SFT is relatively graded by the Fredholm index. 
ECH is relatively graded by the ECH index, and has an absolute grading by homotopy classes of oriented 2-plane fields.

\paragraph{Topological invariance.}

ECH depends only on the three-manifold, if one uses the absolute grading, as explained in Remark~\ref{rem:ag}.
SFT depends heavily on the contact structure; for example, the basic versions are trivial for overtwisted contact structures. On the other hand, ECH does contain the contact invariant (the homology class of the empty set of Reeb orbits) which can distinguish some contact structures, as explained in \S\ref{sec:addstr}. The ECH contact invariant is analogous to the unit in the contact homology algebra.

\paragraph{Disallowed Reeb orbits.}

In ECH, hyperbolic orbits cannot have multiplicity greater than 1.
In SFT, ``bad'' Reeb orbits are thrown out; in the three-dimensional case, a bad Reeb orbit is an even cover of a negative hyperbolic orbit. The reasons for discarding bad orbits in SFT are similar to the reasons for disallowing multiply covered hyperbolic orbits in ECH, see \S\ref{sec:gf} and \S\ref{sec:200pages}.

\paragraph{Keeping track of topological complexity.}

In SFT, there is a formal variable $\hbar$ which keeps track of the topological complexity of holomorphic curves; whenever one counts a curve with genus $g$ and $p$ positive ends, one multiplies by $\hbar^{p+g-1}$.
In ECH, topological complexity is measured by the number $J_0$ defined in \S\ref{sec:J0}. There is also a variant of $J_0$, denoted by $J_+$, which is closer to the exponent of $\hbar$, see \cite[\S6]{ir} and \cite[Appendix]{lw}.

\paragraph{U maps.}

ECH has a U map counting holomorphic curves passing through a base point, and also operations determined by elements of $H_1$ of the three-manifold, counting holomorphic curves intersecting a 1-cycle, see \S\ref{sec:Udetails}.
There are analogous structures on SFT, which can be more interesting for higher dimensional contact manifolds with lots of homology.

\paragraph{Algebra structure.}

SFT has some algebra structure (for example the contact homology algebra is an algebra). ECH does not. There is a natural way to ``multiply'' two ECH generators, by adding the multiplicities of all Reeb orbits in the two generators, but the differential and grading are not well behaved with respect to this ``multiplication''.

\paragraph{Legendrian knots.}

SFT defines invariants of Legendrian knots by counting holomorphic curves with boundary in $\R$ cross the Legendrian knot. No analogous construction in ECH is known, although one can define invariants of Legendrian knots using sutured ECH, see \cite[\S7.3]{sutured}.

\paragraph{Technical difficulties with multiply covered holomorphic curves.}

Both SFT and ECH have serious technical difficulties arising from multiply covered holomorphic curves of negative Fredholm index or ECH index.  In SFT, it is expected that the polyfold theory of Hofer-Wysocki-Zehnder \cite{polyfolds} will resolve these difficulties.  In ECH, we could manage these difficulties to prove that $\partial^2=0$ using holomorphic curves as outlined in \S\ref{sec:200pages}. Defining cobordism maps on ECH is harder, and it is not clear whether polyfolds will help, but fortunately one can define ECH cobordism maps using Seiberg-Witten theory, as described in \S\ref{sec:cobmap}.

\paragraph{Field theory structure.}

SFT can recover Gromov-Witten invariants of closed symplectic manifolds by cutting them into pieces along contact-type hypersurfaces.
ECH can similarly recover Taubes's Gromov invariant of closed symplectic four-manifolds \cite{field}.

\paragraph{Symplectic capacities.}

ECH can be used to define symplectic capacities. Other kinds of contact homology or SFT can also be used to define symplectic capacities, and this is an interesting topic for further research. For example, one can define an analogue of ECH capacities using linearized contact homology, and these turn out to agree with the Ekeland-Hofer capacities, at least for four-dimensional ellipsoids and polydisks, see Remark~\ref{rem:sharp}.

\appendix

\section{Answers and hints to selected exercises}
\label{sec:answers}

\begin{small}

\paragraph{\ref{ex:ellvol}.}
 We need to show that
\begin{equation}
\label{eqn:ellvol}
\lim_{k\to\infty}\frac{N(a,b)_k^2}{k} = 2ab.
\end{equation}
Given nonnegative integers $m$ and $n$, let $T(m,n)$ denote the triangle in the plane bounded by the $x$ and $y$ axes and the line $L$ through $(m,n)$ with slope $-b/a$. Then $N(a,b)_k=am+bn$ where $T(m,n)$ encloses $k+1$ lattice points (including the edges). When $k$ is large, the number of lattice points enclosed by $T(m,n)$ is the area of the triangle, plus an $O(k^{1/2})$ error. The line $L$ intersects the axes at the points $(a^{-1}N(a,b)_k,0)$ and $(0,b^{-1}N(a,b)_k)$, so its area is
\[
\op{area}(T(m,n))=\frac{N(a,b)_k^2}{2ab}.
\]
Thus
\[
k = \frac{N(a,b)_k^2}{2ab} + O(k^{1/2}).
\]
This implies \eqref{eqn:ellvol}.

\paragraph{\ref{ex:wesc}.}
It is enough to show that
\begin{equation}
\label{eqn:wescvol}
2\op{vol}(X,\omega) = \op{vol}(Y_+,\lambda_+) - \op{vol}(Y_-,\lambda_-),
\end{equation}
where $\op{vol}(Y,\lambda) = \int_Y\lambda \wedge d \lambda$. 
To prove \eqref{eqn:wescvol}, let $\lambda$ be a primitive of $\omega$ on $X$. Then by Stokes's theorem,
\[
2\op{vol}(X,\omega) = \int_{Y_+}\lambda\wedge d\lambda_+ - \int_{Y_-}\lambda\wedge d\lambda_-.
\]
Since $d\lambda=d\lambda_{\pm}$ on $Y_\pm$, by Stokes's theorem again we have
\[
\int_{Y_\pm}\lambda\wedge d\lambda_\pm = \op{vol}(Y_\pm,\lambda_\pm).
\]

%\paragraph{\ref{ex:adj}.}
%The assumptions imply that $C$ is immersed. Let $N$ denote the normal bundle to $C$. Then $TX|_C$ splits as a direct sum of complex line bundles,
%\[
%TX|_C = TC \oplus N.
%\]
%Taking the first Chern class of both sides, we get
%\[
%\langle c_1(TX),[C]\rangle = \chi(X) + \langle c_1(N),[C]\rangle.
%\]
%Now we have
%\[
%\langle c_1(N),[C]\rangle = \#\psi^{-1}(0)
%\]
%where $\psi$ is a generic section of $N$ and `$\#$' denotes the signed count. Let $C'$ be a smooth surface in $X$ obtained by pushing off $C$ via $\psi$. By the definition of the self-intersection number,
%\[
%[C]\cdot [C] = \#(C\cap C')
%\]
%Now each zero of $C$ gives rise to an intersection of $C$ with $C'$, and each node gives rise to two intersections of $C$ with $C'$, so
%\[
%\#(C\cap C') = \#\psi^{-1}(0) + 2\delta(C).
%\]
%Combining the above four equations proves the adjunction formula.

\paragraph{\ref{ex:KerDC}.}
We use an infinitesimal analogue of the proof of Lemma~\ref{lem:S1inv}.
Let $\psi\in\Ker(D_C)$. Let $\epsilon>0$ be small and let $C'$ be the image of the map $C\to S^1\times Y_\phi$ sending $z\mapsto \exp_z(\epsilon\psi(z))$. Then
\[
\int_{C'}\omega = \epsilon^2\int_C\omega(\partial_s\psi,\nabla_t\psi)ds\,dt + O(\epsilon^3).
\]
Since $C'$ is homologous to $C$, we have $\int_{C'}\omega=0$, so
\begin{equation}
\label{eqn:KerDC1}
\int_C\omega(\partial_s\psi,\nabla_t\psi)ds\,dt = 0.
\end{equation}
On the other hand, since $\psi\in\Ker(D_C)$, we have $\nabla_t\psi = J\partial_s\psi$, so the integrand above is
\begin{equation}
\label{eqn:KerDC2}
\omega(\partial_s\psi,\nabla_t\psi) = \|\partial_s\psi\|^2,
\end{equation}
where $\|\cdot\|$ denotes the metric on $T^{\op{vert}}Y|_\gamma$ determined by $\omega$ and $J$. It follows from \eqref{eqn:KerDC1} and \eqref{eqn:KerDC2} that $\partial_s\psi\equiv 0$.

\paragraph{\ref{ex:ind3}.} Given a Reeb orbit $\gamma$, the set of homotopy classes of trivializations of $\xi|_\tau$ is an affine space over $\Z$. For an appropriate sign convention, shifting the trivalization over $\gamma_i^\pm$ by $1$ shifts $c_1$ by $\mp1$ and shifts $CZ_\tau(\gamma_i^\pm)$ by $2$.

\paragraph{\ref{ex:ind3}.} For an appropriate sign convention, shifting the trivialization $\tau$ over $\alpha_i$ by $1$ shifts $c_1$ by $-m_i$, shifts $Q_\tau$ by $m_i^2$, and shifts $w_\tau$ by $-m_i(m_i-1)$.

\paragraph{\ref{ex:ECHell}.} Let $T$ be the triangle in the plane which is bounded by the coordinate axes together with the line through $(m_1,m_2)$ with slope $-a/b$, cf.\ the answer to Ex.~\ref{ex:ellvol}. Then $\frac{1}{2}I(\alpha)$ can be interpreted as the number of lattice points in the triangle $T$ (including the boundary) minus $1$.

\paragraph{\ref{ex:partitions}.} (a) Since the path $\Lambda_\theta^+(m)$ starts at the origin and stays below the line $y=\theta x$, the initial edge has slope less than $\theta$. Since the path is the graph of a concave function, every subsequent edge also has slope less than $\theta$. Thus $b\le \floor{a\theta}$. If $b<\floor{a\theta}$ then there is a lattice point which is above the path $\Lambda_\theta^+(m)$ but below the line $y=\theta x$, contradicting the definition of $\Lambda_\theta^+(m)$.

(b) Since the total vertical displacement of the path $\Lambda_\theta^+(m)$ is $\floor{m\theta}$, it follows from part (a) that
\[
\sum_{i=1}^k\floor{q_i\theta}=\floor{\sum_{i=1}^kq_i\theta}.
\]
Since $\floor{x}+\floor{y}\le \floor{x+y}$ for any real numbers $x,y$, we have
\[
\begin{split}
\sum_{i\in I}\floor{q_i\theta} &\le \floor{\sum_{i\in I}q_i\theta},\\
\sum_{i\in\{1,\ldots,k\}\setminus I}\floor{q_i\theta} & \le \floor{\sum_{i\in\{1,\ldots,k\}\setminus I}q_i\theta}.
\end{split}
\]
Adding the above two inequalities and comparing with the previous equation, we see that both inequalities must be equalities.

(c) Suppose that such proper subsets $I,J$ exist. Let $m_1=\sum_{i\in I}q_i=\sum_{j\in J}r_j$ and let $m_2=m-m_1$. By part (b) applied to the subsets $I$, $\{1,\ldots,k\}\setminus I$, and $\{1,\ldots,k\}$, we have
\[
\floor{m_1\theta} + \floor{m_2\theta} = \floor{m\theta}.
\]
By the analogue of part (b) for $p_\theta^-(m)$, we have
\[
\ceil{m_1\theta} + \ceil{m_2\theta} = \ceil{m\theta}.
\]
Subtracting the above two equations gives $2=1$.

\paragraph{\ref{ex:partialorder}.} (a) Without loss of generality $C$ is connected. Let $a_1,\ldots,a_k$ denote the covering multiplicities of the positive ends of $u$, and let $b_1,\ldots,b_l$ denote the covering multiplicities of the negative ends of $u$. Let $g$ denote the genus of $C$. By the Fredholm index formula \eqref{eqn:ind3} and the Conley-Zehnder index formula \eqref{eqn:CZell}, we have
\[
\begin{split}
\ind(u) &= 2g-2+k+l+ \sum_{i=1}^k\left(2\floor{a_i\theta}+1\right) -\sum_{j=1}^l\left(2\floor{b_j\theta}+1\right)\\
&= 2\left(g-1+\sum_{i=1}^k\ceil{a_i\theta} - \sum_{j=1}^l\floor{b_j\theta}\right)\\
&\ge 2\left(g-1+\ceil{m\theta}-\floor{m\theta}\right).
\end{split}
\]
Since $\ceil{m\theta}-\floor{m\theta}=1$, it follows that $\ind(u)\ge 0$.
%We note for future reference that equality holds only if $g=0$, $\sum_i\ceil{a_i\theta}=\ceil{m\theta}$, and $\sum_j\floor{b_j\theta}=\floor{m\theta}$.

(b) We need to check: (i) if $p\ge q$ and $q\ge r$ then $p\ge r$, and (ii) if $p\ge q$ and $q\ge p$ then $p=q$.

Suppose $u_1$ is a branched cover with positive ends corresponding to $p$ and negative ends corresponding to $q$, and $u_2$ is a branched cover with positive ends corresponding to $q$ and negative ends corresponding to $r$. Gluing these together gives a branched cover $u_1\#u_2$ (defined up to sliding the branched points around) with positive ends corresponding to $p$ and negative ends corresponding to $r$. It follows immediately from the index formula \eqref{eqn:ind3} that $\ind(u_1\#u_2)=\ind(u_1)+\ind(u_2)$. So if $\ind(u_1)=\ind(u_2)=0$, then $\ind(u_1\#u_2)=0$ also, and this proves (i). Now suppose further that $p=r$. Then $q=r$, because otherwise $u_1\#u_2$ has at least two branch points, so its domain has $\chi\le -2$, so $\ind(u_1\#u_2)\ge 2$, a contradiction. This proves (ii).

(c) Let $u$ be a connected genus $0$ branched cover with positive ends corresponding to $p_\theta^-(m)$ and negative ends corresponding to $p_\theta^+(m)$.
Write $p_\theta^-(m)=(a_1,\ldots,a_k)$ and $p_\theta^+(m)=(b_1,\ldots,b_l)$. By the calculation in part (a), we have
\[
\ind(u) = 2\left(\sum_{i=1}^k\ceil{a_i\theta} - \sum_{j=1}^l\floor{b_j\theta} - 1\right).
\]
By Exercise~\ref{ex:partitions}(b) we have $\sum_{i=1}^k\ceil{a_i\theta}=\ceil{m\theta}$, and by symmetry $\sum_{j=1}^l\floor{b_j\theta}=\floor{m\theta}$. Hence $\ind(u)=0$.

(d) Suppose there exists a partition $q$ with $p_\theta^+(m)>q$.
Write $p_\theta^+(m)=(a_1,\ldots,a_k)$ and $q=(b_1,\ldots,b_l)$.
By Exercise~\ref{ex:partitions}(b) we have $\sum_{i=1}^k\floor{a_i\theta}=\floor{m\theta}$.  By the calculation in part (a) above we have $\sum_{i=1}^k\ceil{a_i\theta}=\ceil{m\theta}$. These two equations imply that $k=1$. Thus the path $\Lambda_\theta^+(m)$ is just the line segment from $(0,0)$ to $(m,\floor{m\theta})$.

 Now the calculation in part (a) above also implies that $\sum_{j=1}^l\floor{b_j\theta}=\floor{m\theta}$. 
But this is impossible. To see why, order the numbers $b_j$ so that $\floor{b_j\theta}/b_j\ge \floor{b_{j+1}\theta}/b_{j+1}$. Let $\Lambda'$ be the path in the plane that starts at $(0,0)$ and whose edge vectors are the segments $(b_j,\floor{b_j\theta})$ in order of increasing $j$. Since $(b_1,\ldots,b_l)\neq (m)$ and since there are no lattice points above the path $\Lambda_\theta^+(m)$ and below the line $y=\theta x$, it follows that the path $\Lambda'$ is below the path $\Lambda_\theta^+(m)$, with the two paths intersecting only at $(0,0)$. Hence the right endpoint of $\Lambda'$ is below the right endpoint of $\Lambda_\theta^+(m)$, which means that $\sum_j\floor{b_j\theta} < \floor{m\theta}$.

By symmetry, there also does not exist a partition $q$ with $q > p_\theta^-(m)$.

\paragraph{\ref{ex:Uell1}.} 
By Exercise~\ref{ex:ellcq} we have $c_\tau(C_2)=1$. Since $\ind(C_2)=2$, it follows from \eqref{eqn:ind3} that
\[
\chi(C_2) = CZ_\tau^{\op{ind}}(C_2).
\]
If $\epsilon$ is sufficiently small with respect to $i$, then $CZ_\tau(\gamma_1^i)=2i-1$ when $i>0$, and $CZ_\tau(\gamma_2^{i-1})=2i-1$ when $i>1$. It follows that $CZ_\tau^{\op{ind}}(C_2)=0$ when $i>1$, and $CZ_\tau^{\op{ind}}(C_2)=1$ when $i=1$.

\paragraph{\ref{ex:Uell2}.} 
Without loss of generality, $\mc{C}_0=\emptyset$.
We then compute that
\[
CZ_\tau^{\op{ind}}(C_2) = \left\{\begin{array}{cl} i+j-1, & i>0,j>1\\
i+1, & i>0, j=1,\\
j, & i=0, j>1,\\
2, & i=0, j=1.
\end{array}
\right.
\]
On the other hand, letting $g$ denote the genus of $C_2$, we have
\[
\chi(C_2) = \left\{\begin{array}{cl} -2g-i-j-1, & i>0, j>1,\\
-2g-i-1, & i>0, j=1,\\
-2g-j, & i=0, j>1,\\
-2g, & i=0, j=1.
\end{array}
\right.
\]
Since $c_\tau(C_2)=0$ (by Exercise~\ref{ex:ellcq}) and $\ind(C_2)=2$, it follows from \eqref{eqn:ind3} that
\[
\chi(C_2) = CZ_\tau^{\op{ind}}(C_2)-2.
\]
Combining the above three equations, we find that if $i>0$ or $j>1$ then $g<0$, which is a contradiction. Thus $i=0$ and $j=1$, and combining the above three equations again we find that $g=0$.

\paragraph{\ref{ex:t3action}.}
Otherwise $g=1$. Then equation \eqref{eqn:3dl} (together with the fact that $C_1$ has at least one positive end) implies that $C_1$ has exactly one positive end at some hyperbolic orbit $h_{a,b}$, and all negative ends of $C_1$ are elliptic. Let $(a_1,b_1),\ldots,(a_k,b_k)$ denote the vectors corresponding to the negative ends. The action of $h_{a,b}$ is slightly less than $\sqrt{a^2+b^2}$, and the sum of the symplectic actions of the negative ends is slightly greater $\sum_{i=1}^k\sqrt{a_i^2+b_i^2}$.  Since the differential decreases symplectic action,
\[
\sum_{i=1}^k\sqrt{a_i^2+b_i^2} < \sqrt{a^2+b^2}.
\]
But this contradicts the triangle inequality, since $\sum_{i=1}^k(a_i,b_i)=(a,b)$, since $h_{a,b}$ is homologous in $T^3$ to $\sum_{i=1}^k e_{a_i,b_i}$.

\paragraph{\ref{ex:dualloop}.}
Let $\Lambda$ be any polygonal path with edge vectors
$v_1,\ldots,v_k$. Then
\[
\ell_\Omega(\Lambda) = \sum_{i=1}^k\langle v_i,w_i\rangle
\]
where $w_i\in\partial\Omega'$ is a point at which an outward normal
vector to $\Omega'$ is a positive multiple of $v_i$. (When $w_i$ is a
corner of $\partial\Omega'$, ``an outward normal vector'' means a
vector whose direction is between the directions of the limits of the
normal vectors on either side of $w_i$.) If we replace $\Omega'$ by its translate by some vector $\eta$, then the above formula is replaced by
\[
\ell_\Omega(\Lambda) = \sum_{i=1}^k\langle v_i,w_i+\eta\rangle.
\]
If $\Lambda$ is a loop, then the two formulas for $\ell_\Omega(\Lambda)$ agree since
$\sum_iv_i=0$.

\paragraph{\ref{ex:complexity}.}
By the relative adjunction formula \eqref{eqn:adj3}, and since equality holds in the writhe bound \eqref{eqn:writhebound}, we have
\[
-\chi(C) = -c_\tau(C) + Q_\tau(C) + CZ_\tau^I(C) - CZ_\tau^{\op{ind}}(C).
\]
So by the definition of $J_0$ in \eqref{eqn:J0} and \eqref{eqn:CZJ}, we need to show that
\[
\sum_i(n_i^+-1) + \sum_j(n_j^--1) = CZ_\tau^{J_0}(C) - CZ_\tau^I(C) + CZ_\tau^{\op{ind}}(C).
\]
This equation can be proved one Reeb orbit at a time. Namely, it is enough to show that for each $i$, if $C$ has positive ends at covers of $\alpha_i$ with multiplicities $q_1,\ldots,q_{n_i^+}$ where $\sum_{k=1}^{n_i^+}q_k=m_i$, then 
\begin{equation}
\label{eqn:onetime}
n_i^+-1 = -CZ_\tau(\alpha_i^{m_i}) + \sum_{k=1}^{n_i^+}CZ_\tau(\alpha^{q_k}),
\end{equation}
and an analogous equation for each Reeb orbit $\beta_j$.

To prove \eqref{eqn:onetime}, first note that if $\alpha_i$ is hyperbolic, then $n_i^+=m_i=1$ and the equation is trivial. Suppose now that $\alpha_i$ is elliptic with rotation angle $\theta$ with respect to $\tau$. Then \eqref{eqn:onetime} becomes
\[
0 = -2\floor{m_i\theta} + \sum_{k=1}^{n_i^+}2\floor{q_k\theta}.
\]
This last equation holds by the partition conditions and Exercise~\ref{ex:partitions}(b).

\end{small}


\begin{thebibliography}{99}

\begin{small}

\bibitem{biran} P. Biran, {\em From symplectic packing to algebraic geometry and back\/}, European Congress of Mathematicians, Vol. II 507--524, Progress in Math 202, Birkh\"auser.

\bibitem{bourgeois} F. Bourgeois, {\em A Morse-Bott approach to
    contact homology\/}, Symplectic and contact topology: interactions
  and perspectives (Toronto, 2001), 55--77, Fields Inst. Commun. {\bf
    35}, AMS, 2003.

\bibitem{behwz} F. Bourgeois, Y. Eliashberg, H. Hofer, K. Wysocki, and E. Zehnder, {\em Compactness results in symplectic field theory\/}, Geom. Topol. {\bf 7} (2003), 799-888.

\bibitem{bm} F. Bourgeois and K. Mohnke, {\em Coherent orientations in symplectic field theory\/}, Math. Z. {\bf 248} (2004), 123--146.

\bibitem{chls} K. Cieliebak, H. Hofer, J. Latschev, and F. Schlenk, {\em Quantitative symplectic geometry\/}, Dynamics, ergodic theory, and geometry 1-44, MSRI Publ 54, Cambridge University Press.

\bibitem{cgh} V. Colin, P. Ghiggini, and K. Honda, {\em $HF=ECH$ via
    open book decompositions: a summary}, arXiv:1103.1290.

\bibitem{sutured} V. Colin, P. Ghiggini, K. Honda, and M. Hutchings, {\em Sutures and contact homology I\/}, Geom. Topol. {\bf 15} (2011), 1749--1842.

\bibitem{ch} V. Colin and K. Honda, {\em Reeb vector fields and open book decompositions\/}, J. Eur. Math. Soc. {\bf 15} (2013), 443--507.

\bibitem{cg} D. Cristofaro-Gardiner, {\em The absolute gradings in embedded contact homology and Seiberg-Witten Floer cohomology\/}, Alg. Geom. Topol. {\bf 13} (2013), 2239-2260.

\bibitem{vc} D. Cristofaro-Gardiner, M. Hutchings, and V. Ramos, {\em The asymptotics of ECH capacities\/}, arXiv:1210.2167, to appear in Invent. Math.

\bibitem{gh} D. Cristofaro-Gardiner and M. Hutchings, {\em From one Reeb orbit to two\/}, arXiv:1202.4839.

\bibitem{dragnev} D. Dragnev, {\em Fredholm theory and transversality for noncompact pseudoholomorphic maps in symplectizations\/}, Comm. Pure Appl. Math. {\bf 57} (2004), 726--763.

\bibitem{egh} Y. Eliashberg, A. Givental and H. Hofer, {\em Introduction to symplectic field theory\/}, Geom. Funct. Anal. 2000, Special Volume, Part II, 560--673.

\bibitem{eh} I. Ekeland and H. Hofer, {\em Symplectic topology and Hamiltonian dynamics. II\/}, Math. Z. {\bf 203} (1990), 553--567.

\bibitem{fabert} O. Fabert, {\em Contact homology of Hamiltonian mapping tori\/}, Comment. Math. Helv. {\bf 85} (2010), 203--241.

\bibitem{farris} D. Farris, {\em The embedded contact homology of
    nontrivial circle bundles over Riemann surfaces\/}, UC Berkeley
  PhD thesis, 2011.

\bibitem{fm} D. Frenkel and D. M\"uller, {\em Symplectic embeddings of 4-dimensional ellipsoids into cubes\/}, arXiv:1210.2266.

\bibitem{hwz1} H. Hofer, K. Wysocki and E. Zehnder, {\em Properties of pseudoholomorphic curves in symplectisations. I. Asymptotics\/}, Ann. Inst. H. Poincar\'{e} Anal. Non Lin\'{e}aire {\bf 13} (1996), 337--379.

\bibitem{hwz2} H. Hofer, K. Wysocki and E. Zehnder, {\em Properties of pseudo-holomorphic curves in symplectisations. II. Embedding controls and algebraic invariants\/}, Geom. Funct. Anal. {\bf 5} (1995), 270--328.

\bibitem{polyfolds} H. Hofer, K. Wysocki and E. Zehnder, {\em A general Fredholm theory. I. A splicing-based differential geometry\/}, J. Eur. Math. Soc. {\bf 9} (2007), 841--876.

\bibitem{pfh2} M. Hutchings, {\em An index inequality for embedded
    pseudoholomorphic curves in symplectizations\/},
  J. Eur. Math. Soc. {\bf 4\/} (2002), 313--361.

\bibitem{ir} M. Hutchings, {\em The embedded contact homology index
    revisited\/}, New perspectives and challenges in symplectic field
  theory, 263--297, CRM Proc. Lecture Notes 49, Amer. Math. Soc.,
  2009.

\bibitem{icm} M. Hutchings, {\em Embedded contact homology and its
    applications\/}, in Proceedings of the 2010 ICM, vol. II,
  1022-1041.

\bibitem{qech} M. Hutchings, {\em Quantitative embedded contact
    homology\/}, J.\ Diff.\ Geom. {\bf 88\/} (2011), 231--266.

\bibitem{pnas} M. Hutchings, {\em Recent progress on symplectic embedding problems in four dimensions\/}, PNAS {\bf 108\/} (2011), 8093--8099.

\bibitem{field} M. Hutchings, {\em Embedded contact homology as a (symplectic) field theory\/}, in preparation.

\bibitem{pfh3} M. Hutchings and M. Sullivan, {\em The periodic Floer homology of a Dehn twist\/}, Alg. Geom. Topol. {\bf 5} (2005), 301--354.

\bibitem{t3} M. Hutchings and M. Sullivan, {\em Rounding corners of polygons and the embedded contact homology of $T^3$\/}, Geom. Topol. {\bf 10} (2006), 169--266.

\bibitem{obg1} M. Hutchings and C. H. Taubes, {\em Gluing
  pseudoholomorphic curves along branched covered cylinders I\/},
  J. Symplectic Geom. {\bf 5\/} (2007), 43--137.

\bibitem{obg2} M. Hutchings and C. H. Taubes, {\em Gluing
  pseudoholomorphic curves along branched covered cylinders II\/},
  J. Symplectic Geom. {\bf 7\/} (2009), 29--133.

\bibitem{wh} M. Hutchings and C. H. Taubes, {\em The Weinstein
    conjecture for stable Hamiltonian structures\/}, Geometry and
  Topology {\bf 13\/} (2009), 901--941.

\bibitem{cc2} M. Hutchings and C. H. Taubes, {\em Proof of the Arnold
    chord conjecture in three dimensions II\/}, Geom. Topol. {\bf 17} (2013), 2601--2688.

\bibitem{ip} E. Ionel and T. Parker, {\em The Gromov invariants of Ruan-Tian and Taubes\/}, Math. Res. Lett. {\bf 4} (1997), 521--532.

\bibitem{km} P.B. Kronheimer and T.S. Mrowka, {\em Monopoles and
    three-manifolds\/}, Cambridge University Press, 2008.

\bibitem{klt1} C. Kutluhan, Y-J. Lee and C. Taubes, {\em $HF=HM$ I:
    Heegaard Floer homology and Seiberg-Witten Floer homology\/},
  arXiv:1007.1979.

\bibitem{klt2} C. Kutluhan, Y-H. Lee and C. Taubes, {\em $HF=HM$ II: Reeb orbits and holomorphic curves for the ech/Heegaard-Floer correspondence\/}, arXiv:1008.1595.

\bibitem{lw} J. Latschev and C. Wendl, {\em Algebraic torsion in contact manifolds\/}, GAFA {\bf 21} (2011), 1144--1195, corrected version of appendix at arXiv:1009.3262.

\bibitem{lee-taubes} Y-J. Lee and C. Taubes, {\em Periodic Floer homology and Seiberg-Witten Floer cohomology\/}, J. Symplectic Geom. {\bf 10} (2012), 81--164.

\bibitem{liliu} T-J. Li and A-K. Liu, {\em The equivalence between $SW$ and $Gr$ in the case where $b^+=1$\/}, Int. Math. Res. Not. {\bf 7\/} (1999), 335--345.

\bibitem{msdi} D. McDuff, {\em From symplectic deformation to isotopy\/}, Topics in symplectic 4-manifolds (Irvine, CA, 1996), 85--99, First Int. Press Lect. Ser. I, International Press, Cambridge MA.

\bibitem{m} D. McDuff, {\em The Hofer conjeture on embedding symplectic ellipsoids\/}, J. Differential Geom. {\bf 88\/} (2011), 519--532.

\bibitem{msj} D. McDuff and D. Salamon, {\em J-holomorphic curves and symplectic topology\/}, AMS Colloquium Publications, 52.

\bibitem{ms} D. McDuff and F. Schlenk, {\em The embedding capacity of 4-dimensional symplectic ellipsoids\/}, Ann. of Math. {\bf 175} (2012), 1191-1282.

\bibitem{meng-taubes} G. Meng and C. H. Taubes, {\em $\underline{SW}=$Milnor torsion\/}, Math. Res. Lett. {\bf 3} (1996), 661--674.

\bibitem{morgan} J. Morgan, {\em The Seiberg-Witten equations and applications to the topology of smooth four-manifolds\/}, Mathematical Notes 44, Princeton Univ. Press, 1996.

\bibitem{os} P. Ozsv\'ath and Z. Szab\'o, {\em Holomorphic disks and topological invariants for closed three-manifolds\/}, Ann. of Math. {\bf 159} (2004), 1027--1158.

\bibitem{parker} B. Parker, {\em Holomorphic curves in Lagrangian torus fibrations\/}, Stanford University PhD thesis, 2005.

\bibitem{salamon97} D. Salamon, {\em Lectures on Floer homology\/}, Symplectic Geometry and Topology (Park City, UT, 1997), IAS/Park City Math. Ser. {\bf 7}, AMS, 1999.

\bibitem{salamon99} D. Salamon, {\em Seiberg-Witten invariants of mapping tori, symplectic fixed points, and Lefschetz numbers\/}, 6th G\"okova Geometry-Topology Conference, Turkish J. Math {\bf 23} (1999), 117--143.

\bibitem{schwarz} M. Schwarz, {\em Cohomology operations from $S^1$-cobordisms in Floer homology\/}, PhD thesis, ETH Z\"urich 1995.

\bibitem{siefring1} R. Siefring, {\em Relative asymptotic behavior of pseudoholomorphic half-cylinders\/}, Comm. Pure Appl. Math. {\bf 61} (2008), 1631--1684.

\bibitem{siefring2} R. Siefring, {\em Intersection theory of punctured pseudoholomorphic curves\/}, Geom. Topol. {\bf 15} (2011), 2351--2457.

\bibitem{taubes:counting} C. H. Taubes, {\em Counting pseudo-holomorphic submanifolds in dimension four\/}, J. Diff. Geom. {\bf 44} (1996), 818--893.

\bibitem{tgc} C. H. Taubes, {\em The structure of pseudoholomorphic
subvarieties for a degenerate almost complex structure and
  symplectic form on $S^1\times B^3$\/}, Geom. Topol. {\bf 2} (1998), 221-332.

\bibitem{swgr} C. H. Taubes, {\em Seiberg-Witten and Gromov invariants for symplectic 4-manifolds\/}, First International Press Lecture Series 2, International Press, 2000.

\bibitem{taubestropical} C. H. Taubes, {\em Pseudolomorphic punctured spheres in $\R\times (S^1\times S^2)$: properties and existence\/}, Geom. Topol. {\bf 10} (2006), 785--928.

\bibitem{tw1} C. H. Taubes, {\em The Seiberg-Witten equations and the
    Weinstein conjecture\/}, Geom. Topol. {\bf 11\/} (2007),
  2117-2202.

\bibitem{e1} C. H. Taubes, {\em Embedded contact homology and
    Seiberg-Witten Floer homology I\/}, Geometry and Topology {\bf
    14\/} (2010), 2497--2581.

\bibitem{e5} C. H. Taubes, {\em Embedded contact homology and
    Seiberg-Witten Floer homology V\/}, 
Geometry and Topology {\bf
   14\/} (2010), 2961--3000.

\bibitem{turaev} V. Turaev, {\em A combinatorial formulation for the Seiberg-Witten invariants of 3-manifolds\/}, Math. Res. Lett. {\bf 5} (1998), 583--598.

\bibitem{wendl:c} C. Wendl, {\em Compactness for embedded pseudoholomorphic curves in 3-manifolds\/}, J. Eur. Math. Soc. {\bf 12} (2010), 313--342.

\bibitem{wendl:at} C. Wendl, {\em Automatic transversality and orbifolds of punctured holomorphic curves in dimension four\/}, Comment. Math. Helv. {\bf 85} (2010), 347--407.

\bibitem{yml} M-L. Yau, {\em Vanishing of the contact homology of overtwisted contact 3-manifolds\/}, with an appendix by Y. Eliashberg. Bull. Inst. Math. Acad. Sin. {\bf 1} (2006). 211--229.

\end{small}

\end{thebibliography}
\end{document}